\documentclass{article}

\usepackage[english]{babel}
\usepackage{amssymb,amsmath,amsthm,dsfont}
\usepackage{textcomp, gensymb}
\usepackage[utf8]{inputenc}
\usepackage[title]{appendix}
\usepackage{color}
\usepackage{graphicx}
\usepackage{cite}
\usepackage{pdfpages}
\usepackage{hyperref}
\usepackage{dsfont}
\usepackage{mathtools}
\mathtoolsset{multlined-width=0.9\displaywidth}
\usepackage[capitalise]{cleveref}
\usepackage{enumitem}
\usepackage{empheq}
\usepackage{nicefrac}
\usepackage{stmaryrd}
\usepackage{authblk}
\usepackage{float}
\usepackage{esint}
\usepackage{multicol}
\usepackage{subcaption}
\usepackage{tikz}
\usepackage[ruled,vlined]{algorithm2e}
\usepackage[pagewise]{lineno}

\usepackage[letterpaper,top=2cm,bottom=2cm,left=3cm,right=3cm,marginparwidth=1.75cm]{geometry}

\usepackage{amsmath}
\usepackage{graphicx}
\usepackage{csquotes}

\newcommand{\eps}[1]{{#1}_{\varepsilon}}
\newcommand{\R}{\mathbb{R}}
\newcommand{\N}{\mathbb{N}}

\newcommand{\ezr}{{\e}}

\newcommand{\e}{\varepsilon}
\newcommand{\di}[1]{\,\mathrm{d}#1}
\newcommand{\dive}{\operatorname{div}}
\newcommand{\ID}{\operatorname{id}}
\newcommand{\dist}{\operatorname{dist}}
\newcommand{\indexI}{\mathcal{I}_\e}

\newcommand{\ddt}{\frac{\operatorname{d}}{\operatorname{d}t}}

\newcommand{\leslam}{\lesssim_{\lambda}}

\newcommand{\cext}{0}
\newcommand{\cequ}{c^{\mathrm{eq}}}
\newcommand{\dmin}{{\underline d}}
\newcommand{\dmax}{{\overline d}}

\newcommand{\LtLx}[3]{%
  L_t^{#1}L_x^{#2}(#3)%
}
\renewcommand{\LtLx}[3]{%
  L^{#1}(S;L^{#2}(#3))%
}

\newcommand{\dd}{\, \mathrm{d}}
\renewcommand{\d}{\mathrm{d}}
\newcommand{\Id}{\mathrm{Id}}
\newcommand{\1}{\mathbf1}
\DeclareMathOperator{\dv}{div}
\DeclareMathOperator{\Bog}{Bog}
\DeclareMathOperator{\supp}{supp}
\DeclareMathOperator{\curl}{curl}

\newtheorem{theorem}{Theorem}[section]
\newtheorem{lemma}[theorem]{Lemma}
\newtheorem{corollary}[theorem]{Corollary}
\newtheorem{proposition}[theorem]{Proposition}

\theoremstyle{definition}
\newtheorem{definition}[theorem]{Definition}
\newtheorem{example}[theorem]{Example}

\theoremstyle{remark}
\newtheorem{remark}[theorem]{Remark}

\definecolor{darkblue}{rgb}{0,0,0.7} 
\definecolor{darkred}{rgb}{0.9,0.1,0.1}
\definecolor{darkgreen}{rgb}{0,0.5,0}

\makeatletter
\renewenvironment{proof}[1][\proofname]{%
  \par
  \pushQED{\qed}%
  \normalfont
  \topsep6\p@\@plus6\p@\relax
  \trivlist
  \item[\hskip\labelsep
        \bfseries\itshape #1\@addpunct{.}]
  \ignorespaces
}{%
  \popQED
  \endtrivlist
  \@endpefalse
}
\makeatother

\title{Reactive Flow around Spherically Evolving Particles in Critical and Supercritical Dilute Regimes}
\author{Michael Eden\thanks{michael.eden@ur.de} }
\author{Richard M.~Höfer\thanks{richard.hoefer@ur.de}}

\affil{Department of Mathematics, University of Regensburg, Germany}

\begin{document}
\maketitle

\begin{abstract}    
    We study a coupled Stokes-reaction-diffusion-advection system in a three dimensional domain perforated by a large number of evolving spherical inclusions with radii of order $\e^\alpha$ for $\alpha\in(1,3]$.
    Their centers are separated on the order of $\e$ but are not assumed to form a periodic lattice.
    The inclusions grow or shrink through an interfacial adsorption-desorption mechanism, so that the evolution of the geometry is coupled to the Stokes-reaction-diffusion-advection system.
    
    We first establish well-posedness on arbitrary large time intervals for sufficiently small $\e$.
    We then derive quantitative homogenization limits in terms of the limiting empirical measure describing the spatial distribution and size of the inclusions.
    
    Its evolution is governed by a continuity equation in the radius variable, and the convergence of the empirical measures is controlled in the $2$-Wasserstein distance.
    In the critical case, $\alpha=3$, the Stokes system converges to a Brinkman system, while non-vanishing concentration boundary layers modify the microscopic exchange law for the reaction-diffusion-advection equation.
    For $\alpha\in(1,3)$, the limit is of Darcy type and the original exchange law is retained.
    The homogenization results are quantitative and provide explicit error estimates.
\end{abstract}

\medskip
\noindent\textbf{Keywords.}
Homogenization, evolving microstructures, dilute perforated domains, reactive transport, Stokes-Brinkman-Darcy limits, interfacial exchange.

\medskip
\noindent\textbf{2020 MSC.}
Primary 76M50; Secondary 35B27, 35R37, 76S05, 35K57.

\tableofcontents

\section{Introduction}
Reactive transport and flows in complex geometries is often accompanied by dynamic changes of the underlying geometry.
Such changes may be caused by processes such as adsorption, precipitation and dissolution, colloidal depositions, swelling, or carbonation \cite{noorden_crystal_2009,eden_multiscale_2022,gahn_homogenization_2023,BizmarkEtAl2020,BaqerChen2022}.
Similar setups are also found in the context of phase transitions or biofilm growth \cite{eden_thermo-elasticity_2026,SchulzKnabner2017}.
In such scenarios, the interaction between transport processes and the changing geometry naturally leads to strongly coupled multiscale problems with moving boundaries.

Apart from porous media, dilute particle suspensions are of particular interest. Here, the characteristic size of individual precipitates or deposits is small when compared with their mutual distance \cite{niethammer_local_2020}.
Such scenarios may arise, for instance, during nucleation and the early stages of precipitation \cite{jaho_experimental_2016,nooraiepour_probabilistic_2021}.
Although their total volume fraction is small, a large number of such inclusions may still have a substantial cumulative effect on both the fluid flow and interfacial mass exchange \cite{DesvillettesGolseRicci08,HillairetMoussaSueur19,jager_homogenization_2011}.

In the present paper, we study such a coupled system in a three-dimensional domain containing a large number of small spherical obstacles.
These obstacles are allowed to evolve radially due to adsorption and desorption processes.
More precisely, we consider $N_\e=\e^{-3}$ balls with fixed  centers $z_{\e,i}\in\Omega$ and time-dependent radii $\e^\alpha r_{\e,i}(t)$, where $\alpha\in(1,3]$.
The corresponding time-dependent fluid domain is
\[
    \Omega_\e(t)
    :=
    \Omega\setminus
    \bigcup_{i=1}^{N_\e}
    B_{\e^\alpha r_{\e,i}(t)}(z_{\e,i}),
    \qquad
    N_\e=\e^{-3},
    \qquad
    \alpha\in(1,3]
\]
for a bounded domain $\Omega\subset\mathbb R^3$.
The centers $(z_{\e,i})$ are not required to form a periodic lattice, but are assumed to be uniformly separated from each other and $\partial\Omega$ on the $\e$-scale.
The inclusions have radii of order $\e^\alpha$, where $\alpha\in(1,3]$, and are therefore asymptotically smaller than their mutual distances.
This is the characteristic geometry of a dilute perforated medium.

The model we are considering here couples the fluid flow, the transport of a dissolved species, and the evolution of the obstacles due to adsorption and desorption.
More precisely, the fluid velocity $u_\e$ and pressure $p_\e$ are governed by a quasi-steady incompressible Stokes system,
\[
    -\e^{3-\alpha}\Delta u_\e+\nabla p_\e=h_\e,
    \qquad
    \dive u_\e=0
    \qquad\text{in }\Omega_\e(t),
\]
where $h_\e$ is a given source term, together with a no-slip condition on the moving obstacle boundaries $\Gamma_{\e,i}(t) := \partial B_{\e^\alpha r_{\e,i}(t)}(z_{\e,i})$.
The concentration $c_\e$ satisfies an advection-diffusion-reaction equation,
\[
    \partial_t c_\e-\dive(\nabla c_\e-u_\e c_\e)=F(c_\e)
    \qquad\text{in }\Omega_\e(t).
\]
At the obstacle surfaces $\Gamma_{\e,i}(t)$, the dissolved species is exchanged with the solid phase through the adsorption--desorption law
\[
    -\nabla c_\e\cdot\nu_{\e,i}
    =\e^{3-2\alpha}G(c_\e,r_{\e,i})\qquad\text{on }\Gamma_{\e,i}(t).
\]
Conservation of mass at each obstacle couples this interfacial exchange back to the geometry via the radius evolution
\[
    \partial_t r_{\e,i}
    =
    -\fint_{\Gamma_{\e,i}(t)}
    G(c_\e,r_{\e,i})\,\di{\gamma} .
\]
Hence, the concentration drives the evolution of the obstacles, while the changing geometry in turn affects both the transport process and the fluid flow.
Our main objective is to determine the effective behavior of the solution $(u_\e,c_\e, (r_{\e,i})_{i =1}^{N_\e})$ to this fully coupled system as $\e\to0$ and, in particular, to understand how the macroscopic dynamics depend on the scaling exponent $\alpha$.
As we shall see in our analysis, the same exponent $\alpha=3$ is critical for two distinct mechanisms: the hydrodynamic resistance of the obstacles and the local concentration perturbation generated by the interfacial exchange.

To describe the homogenization limit, we encode the spatial distribution
of the obstacles and their radii by the empirical measure
\[
    f_\e(t)
    :=
    \e^3\sum_{i=1}^{N_\e}
    \delta_{z_{\e,i}}\otimes\delta_{r_{\e,i}(t)},
\]
which is, for every time $t$, a probability measure on $\Omega \times [0,\infty)$.
Its limit measure $f(t,x,\d\sigma)$ describes the local distribution of obstacle radii.
In the critical case $\alpha=3$, the collective hydrodynamic resistance of the obstacles remains of order one which leads to a Brinkman-type equation,
\[
    -\Delta u+\nabla p
    +6\pi u\int_0^\infty \sigma f(\cdot,\d\sigma)
    =h,
    \qquad
    \dive u=0.
\]
At the same time, $\alpha=3$ is also critical for the interfacial exchange: 
Substantial concentration boundary layers form around the obstacles and modify the microscopic exchange law $G$ into an effective law $H$, characterized implicitly by the algebraic equation
\begin{equation}\label{int:H}
    H(c,\sigma)=\sigma G(c+H(c,\sigma),\sigma).
\end{equation}
Consequently, the corresponding macroscopic reaction diffusion problem takes the form
\[
\partial_tc-\dive(\nabla c-uc)=F(c)+4 \pi\int_0^\infty \sigma H(c,\sigma) f(\cdot,\d\sigma).
\]
while the particle-size distribution evolves according to the continuity equation
\[
    \partial_t f
    -
    \partial_\sigma
    \left(
        \frac{H(c,\sigma)}{\sigma}f
    \right)
    =0.
\]
For $\alpha\in(1,3)$, the collective hydrodynamic resistance of the obstacles diverges such that the fluid equation converges to the Darcy-type law
\[
    \nabla p
    +6\pi u\int_0^\infty \sigma f(\cdot,\d\sigma)
    =h,
    \qquad
    \dive u=0.
\]
In contrast to the critical case, the concentration boundary layers vanish
in this regime, so that the original exchange law $G$ enters the
macroscopic model directly. Accordingly, the effective exchange and radius
evolution are governed by
\[
    \partial_t c-\dive(\nabla c-u c)
    =F(c)+
    \int_0^\infty
        4\pi\sigma^2 G(c,\sigma)\,
        f(\cdot,\cdot,\d\sigma),
    \qquad
    \partial_t f
    -
    \partial_\sigma
    \left(
        G(c,\sigma)f
    \right)
    =0.
\]

\subsection{Previous results}

The present problem is at the intersection of two well-developed directions in homogenization theory.
The first concerns dilute perforated domains, where the size of the inclusions determines whether and how their collective effect survives in the homogenization limit.
The second concerns transport and flow in evolving porous media, where chemical or interfacial processes alter the microscopic geometry.
We briefly discuss the relevant literature in these two directions and place the present work in this context.

\paragraph{Homogenization in perforated domains with fixed microstructure.}
When the perforated domain is not evolving but fixed,  critical-size effects are classical, both for scalar problems and for viscous flow, with a vast body of literature.
For the Poisson problem, Cioranescu and Murat \cite{CioranescuMurat82,CioranescuMurat83,CioranescuMurat97} showed that the capacity of many small holes may survive in the homogenization limit in the form of an additional zero-order term, the celebrated ``strange term coming from nowhere''.
For the Stokes problem, three regimes have been identified depending on the parameter $\alpha$ in the seminal works by Tartar and Allaire  \cite{Tartar, allaire_homogenization_1991,Allaire1991-fp}:
\footnote{We state the regimes for the 3D case. 
More generally, for $d\ge3$, the critical hole size established in those works is of order $\e^{d/(d-2)}$.}
\begin{itemize}
    \item In the subcritical regime, $\alpha>3$, the homogenization limit is the unchanged Stokes equation.
    \item In the critical regime, $\alpha=3$, the homogenization limit is governed by the Brinkman equation.
    \item In the supercritical case, $\alpha \in [1,3)$, the homogenization limit is given by Darcy's law.
\end{itemize}
These results have been extended in various directions over the last decades, including non-homogeneous Dirichlet boundary conditions \cite{DesvillettesGolseRicci08,MecherbetHillairet20, HillairetMoussaSueur19}, non-periodic (random) polydispersed configurations of particles \cite{GiuntiHoefer19,  CarrapatosoHillairet20, HoferJansen24, Giunti21}, convergence rates \cite{JingLuPrange} and (compressible) Navier-Stokes equations \cite{Feireisl2016,  Masmoudi02,  Hoefer23, HoferNecasovaOschmann}, which is by no means a complete list of these results. 

A related class of critical-size effects occurs for diffusion problems with flux conditions on the boundaries of small inclusions.
Early works by Kaizu \cite{Kaizu1985Robin,Kaizu1991,Kaizu1989} showed that boundary interactions on many vanishing holes may survive in the homogenization limit as an effective bulk contribution.
In this setting, the relevant balance generally depends on both the size of the perforations and the scaling of the boundary condition.
For nonlinear flux conditions on dilute perforations, it is known that the interplay between the geometric and boundary scales may affect not only the strength but also the form of the effective reaction term \cite{Jager_neuss_shaposhnikova2011,Zubova_shaposhnikova13,PerezZubovaShaposhnikova2014}.
This phenomenon is closely related to the replacement of the microscopic exchange law $G$ by the effective law $H$ in our critical regime, see \cref{int:H}.

\paragraph{Homogenization in evolving perforated domains.}
The second direction relevant to the present work concerns homogenization problems in which the microscopic geometry itself evolves.
These works typically consider periodic pore-scale microstructures, corresponding to the non-dilute case $\alpha=1$ and thus to a non-vanishing solid volume fraction, rather than the dilute regime studied in the present work.
Early mathematical contributions by van Noorden and coauthors considered crystal dissolution and precipitation models and derived effective models by formal asymptotic expansions \cite{noorden_crystal_2009,van_noorden_crystal_2009}.
A related line of work concerns homogenization in evolving microstructures whose motion is prescribed rather than coupled to the transported quantities \cite{Eden_Muntean2017,eden_homogenization_2019,GahnNeuss-RaduPop21,peter_homogenisation_2007}.
Rigorous homogenization results for reaction--diffusion and reaction--diffusion--advection problems coupled to an evolving microstructure were subsequently obtained in \cite{wiedemann_homogenisation_2023,gahn_homogenization_2023}.
Similar to these results,  a model for evolving microstructures inside thermo-elastic materials was considered in\cite{eden_thermo-elasticity_2026}.
The general strategy in these papers consists of the following steps: transform the evolving domains to a fixed reference geometry, establish well-posedness via fixed-point arguments, derive uniform estimates, and pass to the limit by two-scale convergence.
Additionally, a hard stopping criterion on the growth  of the inclusions is encoded to prevent collapse and touching of the balls.
An alternative modeling approach is provided by diffuse-interface or phase-field models, where the evolving fluid--solid interface is represented by an additional phase variable rather than tracked explicitly; see \cite{Eck2004,vanNoordenEck2011,BringedalVonWolffPop2020}.

The homogenization of Stokes flow in prescribed evolving pore geometries has been studied in \cite{wiedemann_homogenisation_2024,WiedemannPeter25}, leading to Darcy-type effective models with geometry-dependent coefficients.
Most closely related to the present work is \cite{gahn_rigorous_2024}, where the authors rigorously homogenize the coupled dynamics of a Stokes flow and a reaction-diffusion-advection equation in a domain perforated by evolving spheres.
The main difference to the present setting is that as in the aforementioned papers, the spheres are arranged periodically and have radii of order $\e$ (corresponding to $\alpha = 1$).
The resulting macroscopic model is therefore described through effective coefficients determined by local cell problems, whereas the dilute microstructure considered here is encoded by the limiting distribution of particle radii.

\paragraph{Main novelties of the present paper.}
Against this background, the present work extends the analysis of evolving microstructures to the dilute regime.
More precisely, we consider inclusions of characteristic size $\e^\alpha$, $\alpha\in(1,3]$, whose mutual distance remains of order $\e$.
In contrast to the periodic pore-scale settings discussed above, the solid volume fraction therefore vanishes as $\e\to0$, and the inclusions are not required to form a periodic lattice.
As a consequence and in contrast to models described by a single macroscopic radius function $R(t,x)$, the limiting measure $f(t,x,\d\sigma) $ may encode genuinely polydisperse configurations, including continuous distributions of particle radii.
The classical single-radius description is recovered as the monodisperse special case in which
\[
f(t,x,\d\sigma)=\delta_{R(t,x)}(\d\sigma).
\]
The absence of a periodic lattice also has an important methodological consequence: standard periodic two-scale convergence and cell problems are no longer directly applicable, and the identification of the limit instead relies on particle-wise local correctors.

The dilute scaling has several important consequences for both the microscopic problem and its homogenization.
The separation between the size of the obstacles and their mutual distances makes it possible to establish a global-in-time existence result in the following asymptotic sense: for every $T>0$, there exists $\e_T>0$ such that for all $\e\le\e_T$ the microscopic problem admits a unique solution over $(0,T)$.
Crucially, this is achieved without a hard stopping criterion as is required in several earlier works.
To prevent unctrolled growth and collapse of individual inclusions (i.e., their radii reaching zero) we only require some milder structural assumptions.
Moreover,  since the solid volume fraction vanishes as $\e\to0$, the evolution of the inclusions does not generate an additional volumetric source term in the effective fluid equation, and the limiting velocity remains divergence free.
Similarly, the diffusion coefficient of the concentration equation remains effectively unchanged.

\paragraph{Difficulties in the critical case $\alpha = 3$ and some ingredients of the proof.}
The cumulative hydrodynamic resistance and interfacial exchange are of order one in the critical regime $\alpha = 3$ and therefore survive in the homogenization limit.
The criticality with respect to the  interfacial exchange term is reflected as follows:
For the supercritical\footnote{We use the terminology \enquote{supercritical} because $\alpha\in(1,3)$ is supercritical for the Stokes equation, even though it is actually \emph{subcritical} for the exchange term.} case $\alpha\in(1,3)$, this term can effectively treated as a compact perturbation of the advection–
diffusion–reaction equation.
Indeed, through a trace estimate in the perforated domain (see \cref{lemma:trace_estimates}), we can essentially control the exchange term in terms of the $L^2$-norm of the concentration in the perforated domain.
For $\alpha = 3$, however, the interfacial term can only be controlled in by the full $H^1$-norm of the concentration in the perforated domain. 
In other words,  for $\alpha = 3$, the interfacial mass flux is concentrated at such small scales that the diffusion is not strong enough to prevent the formation of boundary layers in a small neighborhood of the obstacles.
This critical scaling also makes structural assumptions on the exchange law necessary for uniform control of the microscopic problem: without a dissipative mechanism, the interfacial flux may induce uncontrolled growth of the concentration and hence of the particles.

These local perturbations generate the modified effective law $H$ in the critical regime (\cref{int:H}), whereas for $\alpha\in(1,3)$ the microscopic law $G$ passes directly to the limit.
They also lead to a loss of strong $H^1$-compactness for the uncorrected concentration ($\|\nabla c_\e-\nabla c\|_{L^2(\Omega_\e(t))}\nrightarrow0$ in general) and make a direct passage to the limit in the weak formulation considerably more delicate.
We therefore use a relative-energy argument that combines appropriately constructed local correctors with quantitative control of the evolving empirical measures in the 2-Wasserstein distance.
We emphasize that this approach also yields quantitative convergence rates.
In the critical Brinkman regime, we obtain the optimal rate $\mathcal O(\e)$, whereas in the Darcy regime the rate deteriorates due to three effects:
First, if $\alpha $ is close to $3$, the viscous term vanishes only very slowly. Secondly, if $\alpha$ is close to $1$, the solid volume fraction vanishes only very slowly. 
Finally, in all cases, an additional  boundary-layer at $\partial \Omega$  appears that account for the loss of the microscopic no-slip condition in the Darcy limit.

\paragraph{Limitations and possible generalizations.}
As in \cite{gahn_rigorous_2024}, we restrict ourselves to spherical inclusions undergoing radial evolutions.
The present approach can be quite easily extended to more general fixed reference shapes as long as the evolution of each inclusion is still described by a scalar rescaling as, e.g., in \cite{eden_thermo-elasticity_2026}.
A natural next step would be to allow additional finite-dimensional shape parameters, for instance the three semi-axes of ellipsoidal inclusions.
In such settings, the limiting empirical measure would have to evolve also in the corresponding shape variables.
Beyond finite-dimensional shape parametrizations, one could also incorporate regularizing shape dynamics, for example through a volume-preserving curvature term.
Much more challenging, however, would be a fully coupled free-boundary evolution in which the local normal velocity is determined directly by the interfacial reaction (without averaging).
This general problem seems currently out of reach.

In view of applications to colloids, one is interested in allowing the particle centers to move
One would then aim to derive a Vlasov equation for the effective particle evolution. 
However, even in the absence of growth and coupling to concentration dynamics, there are only preliminary results regarding the derivation of the Vlasov-Stokes equation from such microscopic descriptions, see \cite{HoferSchubert26}.

Two further natural extensions concern changes in the particle population and the presence of multiple particle-size scales.
Allowing inclusions to dissolve completely would require treating the loss of particles through the boundary $\sigma=0$ in the limiting size-distribution equation, while nucleation could be incorporated through additional source terms for $f$.
However, this is extremely delicate for the analysis of the microscopic system, where vanishing or newly created inclusions lead to local singularities because of topological changes of the fluid domain.
Moreover, one could consider particle populations with several characteristic sizes $\e^{\alpha_i}$, potentially leading to the simultaneous appearance of different effective hydrodynamic regimes, such as Darcy- and Brinkman-type contributions.

\subsection{Outline of the paper}

In \cref{sec:setting_main_results}, we introduce the microscopic model and its weak formulation, state the assumptions used throughout the paper, and formulate the main results.
These are:
\begin{itemize}
	\item \Cref{theorem:well_posedness}: Well-posedness of the microscopic problem on every fixed time interval for sufficiently small \(\varepsilon\). 
	\item \Cref{th:hom.critical}:  a quantitative homogenization result in the critical regime \(\alpha=3\), leading to a Brinkman-type limit system with a modified effective exchange law.  
	\item \Cref{thm:hom.supercritical}: a quantitative homogenization result for \(1<\alpha<3\), leading to a Darcy-type limit system with the original exchange law.
\end{itemize}

In \cref{ssec:maximum} we prove an a priori estimate of maximum principle type which is crucial for the long-time well-posedness of the microscopic system. This is easiest proven in the original moving geometry.

\Cref{sec:analysis} is devoted to the proof of \Cref{theorem:well_posedness}. 
We transform the moving geometry back to a fixed reference geometry, and apply the Banach fixed point theorem combined wih suitable a priori estimates for the coupled system for a \emph{prescribed} evolution of the geometry.

\Cref{th:hom.critical} is proved in \cref{sec:hom_crit} via a relative energy argument. The relative energy consists of two parts. The first part corresponds to the squared $L^2$ difference of the chemical concentration for which we need to introduce suitable correctors. The second part controls the squared $2$-Wasserstein distance between the empirical density of the obstacles and the limit density.
The fluid velocity does not enter the relative energy because it satisfies a quasistatic equation. To close a Gronwall argument on the relative energy we therefore show that the squared $H^1$ difference of suitably corrected fluid velocities is controlled by the squared Wasserstein distance.

 \Cref{thm:hom.supercritical} is proved in \cref{sec:hom_supercrit}. 
 The proof is largely analogous to he proof of \Cref{th:hom.critical}. 
 It is simpler in the sense that the interfacial mass flux is subcritical. 
 However,  it is complicated by the loss of the viscous term in the fluid velocity, which leads to the appeareance of boundary layers at $\partial \Omega$. 
 This requires the addition of a suitable boundary layer corrector. 

The appendices collect the technical results concerning the moving-domain transformation, uniform inequalities on the perforated domains, and the local Stokes correctors.

\section{Setting and main results}
\label{sec:setting_main_results}
Let $T>0$ and $S= (0,T)$ a fixed time horizon, $\Omega\subset\mathbb{R}^3$ be a bounded Lipschitz domain, $\alpha\in(1,3]$, and assume $0<\e\ll1$ with
$\e^{-1}\in\mathbb{N}$.\footnote{We just take $\e=\e_n:=\frac{1}{n}$ and suppress $n$. This is mainly done to simplify the notation in the limit procedure. More generally, we can have $\e^3N_\e\to\sigma$ with $\sigma>0$ which would lead to additional scaling factors in the limit problem.} We set
\[
    N_\e := \e^{-3},
    \qquad
    \mathcal{I}_\e := \{1,\ldots,N_\e\}.
\]
Furthermore, let $(z_{\e,i})_{\mathcal{I}_\e}\subset\Omega$ be a point cloud and, for all $t\in\overline{S}=[0,T]$, the radii $(r_{\e,i}(t))_{\mathcal{I}_\e}\subset(0,\infty)$ such that the closed balls $\overline{B}_{\e^\alpha r_{\e,i}(t)}(z_{\e,i})$ are pairwise disjoint and contained in $\Omega$ (the precise assumptions are given in \cref{ssec:assumpstions}).
Then, the fluid domain $$\Omega_\e(t)=\Omega\setminus\bigcup_{i\in\mathcal{I}_\e}\overline B_{\e^\alpha r_{\e,i}(t)}(z_{\e,i})$$ with the convention $\Omega_\e:=\Omega_\e(0)$.
We also set $$\Gamma_{\e,i}(t)=\partial B_{\e^\alpha r_{\e,i}(t)}(z_{\e,i}), \qquad \Gamma_\e(t)=\bigcup_{\mathcal{I}_\e}\Gamma_{\e,i}(t)$$ and introduce the unit normal vector $\nu_{\e,i}(\gamma)=\frac{\gamma-z_{\e,i}}{|\gamma-z_{\e,i}|}$ for $\gamma\in\Gamma_{\e,i}$.
Finally, let \(\Sigma_1,\Sigma_2\subset\partial\Omega\) be disjoint, relatively open subsets with positive surface measure such that
\[
\partial\Omega=\overline{\Sigma_1}\cup\overline{\Sigma_2}\quad\text{and}\ |\overline{\Sigma_1}\cap\overline{\Sigma_2}|=0
\]
where $|\overline{\Sigma_1}\cap\overline{\Sigma_2}|$ denotes the surface measure.
We denote by \(n\) the outer unit normal to \(\Omega\).

\begin{figure}[ht] 
\centering
        \includegraphics[angle=90,width=.75\linewidth]{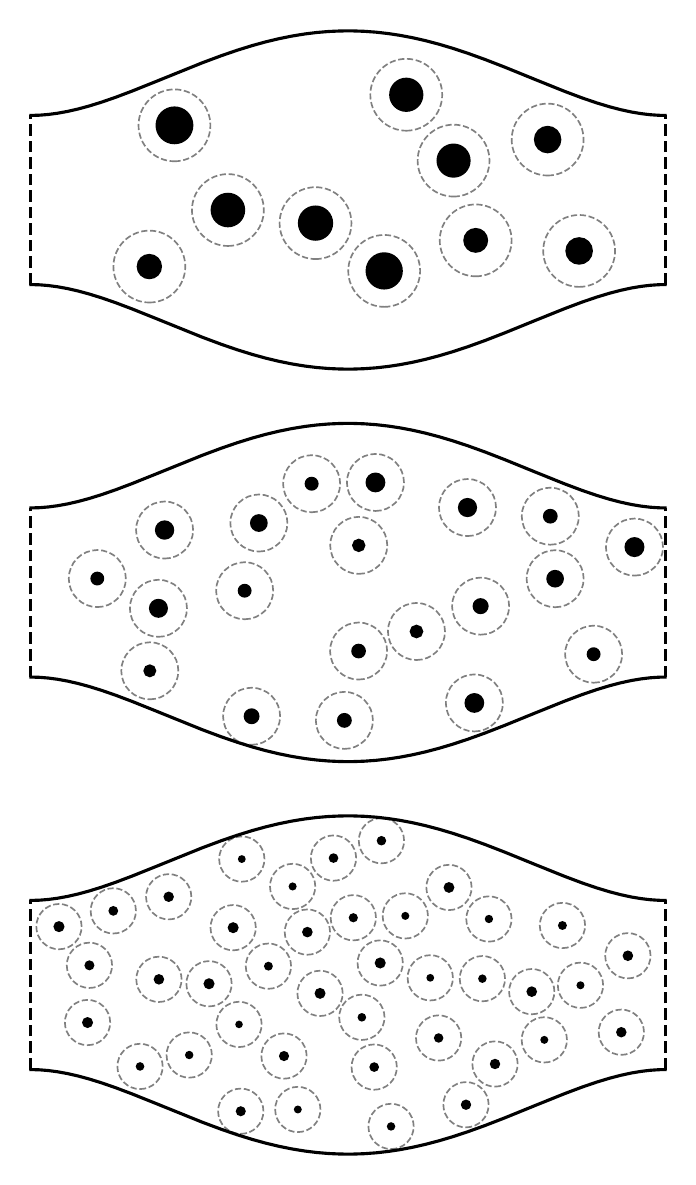}
 \caption{Representation of the geometry with randomly distributed, small holes of different sizes to the order of $\e^3$ for three choices of the parameter $\e$.
 Note that these images only show a cross-section of the three-dimensional setup.
 The dashed circles refer to the $\e$-sized cells separating the balls.
 For the Stokes system, the dashed part of the outer boundary is where inflow/outflow of fluid is possible with no-slip at the rest of the boundary.
 }
 \label{geometry}
\end{figure}

We model the fluid dynamics inside  $\Omega_\e(t)$ via the incompressible, quasi-steady Stokes system.
To that end, let $u_\e$ denote the fluid velocity and $p_\e$ the fluid pressure.

\begin{subequations}\label{system:stokes_eps}
\begin{empheq}[left=\empheqlbrace]{alignat=2}
    -\e^{3-\alpha}\Delta u_{\e}+\nabla p_{\e}&=h_\e&\quad&\text{in}\ \Omega_\e(t),\ t\in S,\label{eq:Stokes_equation}\\
    \dive u_{\e}&=0&\quad&\text{in}\ \Omega_\e(t),\ t\in S,\\
    u_{\e}& =\e^\alpha\partial_t r_{\e,i}\nu_{\e,i}&\quad&\text{on}\ \Gamma_{\e,i}(t),\ t\in S,\ i\in\mathcal{I}_\e, \label{inh.Dirichlet}\\
    \sigma_\e[u_\e,p_\e]n&=0&\quad&\text{on}\  \Sigma_1,\ t\in S,\\
    u_\e&=0&\quad&\text{on}\  \Sigma_2,\ t\in S.
\end{empheq}
\end{subequations}
Here, $h_\e$ is a given source term,  $\sigma_\e[u_\e,p_\e]=2\e^{3-\alpha}e(u_\e)-p_\e I$ is the Cauchy stress tensor and $e(u)=\frac12(\nabla u+\nabla u^T)$ the symmetric gradient.
Note that, using $\dive u_\e=0$, we can write \cref{eq:Stokes_equation} as
\[
-\dive\sigma_\e[u_\e,p_\e]=h_\e.
\]
The kinematic conditions $u_{\e} =\e^\alpha\partial_t r_{\e,i}\nu_{\e,i}$ model the no-slip condition where we point out that the unit normal vector $\nu_\e$ points outwards of the ball into the domain $\Omega_\e(t)$. 
Due to the incompressibility of the fluid, and the (inhomogeneous) no-slip condition at the interfaces, it is not possible to impose (homogeneous) no-slip boundary conditions on all of $\partial \Omega$. This is the reason why we split the boundary $\partial \Omega$ into two parts $\Sigma_1$ and $\Sigma_2$ and require a no-slip boundary condition on $\Sigma_2$ and a do-nothing condition on $\Sigma_1$, which allows for fluid in- and outflow through $\Sigma_1$.  

\medskip

Let $c_\e$ denote the bulk  concentration inside the fluid domain.
We describe its dynamics via the following advection--diffusion--reaction equation with interfacial adsorption kinetics on the evolving inclusions:
\begin{subequations}\label{system:diffusion_eps}
\begin{empheq}[left=\empheqlbrace]{alignat=2}
    \partial_tc_\e-\dive(\nabla c_\e-u_\e c_\e)&=F(c_\e)&\quad&\text{in}\ \Omega_\e(t),\ t\in S,\\
    -(\nabla c_\e-u_\e c_\e)\cdot n&=0&\quad&\text{on}\ \Sigma_2,\ t\in S,\\
    c_\e&=\cext&\quad&\text{on}\ \Sigma_1,\ t\in S,\\
    -(\nabla c_\e-u_\e c_\e)\cdot\nu_{\e,i}&=\e^\alpha\partial_tr_{\e,i} c_\e +\e^{3-2\alpha} G(c_\e,r_{\e,i})&\quad&\text{on}\ \Gamma_{\e,i}(t),\ t\in S,\ i\in\mathcal{I}_\e\\
    c_\e(0)&=c_{\e,0}&\quad&\text{on}\ \Omega_\e.
\end{empheq}
\end{subequations}
Here, $F$ and $G$ are given functions.
The former is a reaction term and the term $G(c_\e,r_{\e,i})$ describes the mass flux due to adsorption/desorption of substance at the balls where $G<0$ corresponds to adsorption (loss of bulk substance) and $G>0$ to desorption (gain of bulk substance).
For simplicity, we have set the diffusivity to 1.
The outer boundary conditions describe that  there is no mass flux through $\Sigma_2$ and that the concentration vanishes at the interface $\Sigma_1$ through which fluid can enter and exit.
We introduce the overall surface mass $M_\e(t)=\int_{\Omega_\e(t)}c_\e(t,x)\di{x}$.
Using Reynold's transport theorem and the model equations, we (formally) obtain the mass balance
\[
\ddt M_\e(t)=\int_{\Omega_\e(t)}F(c_\e)\di{x}+\int_{\Sigma_1}\nabla c_\e\cdot n+\e^{3-2\alpha}\sum_{\mathcal{I}_\e}\int_{\Gamma_{\e,i}(t)}G(c_\e(t,\gamma),r_{\e,i}(t))\di{\gamma}.
\]
Note that the scaling $e^{3-2\alpha}$ is chosen in such a way that the last term is of order one. 
If we assume that the total mass is conserved in the adsorption/desorption process, the stored surface mass for any individual ball $M_{\e,i}^\Gamma(t)$ should change according to
\begin{equation}\label{eq:mass_depos}
\ddt M_{\e,i}^\Gamma(t)=-\e^{3-2\alpha}\int_{\Gamma_{\e,i}(t)}G(c_\e(t,\gamma),r_{\e,i}(t))\di{\gamma}.
\end{equation}
Taken together, we have
\[
\ddt\left(M_\e(t)+\sum_{\mathcal{I_\e}} M_{\e,i}^\Gamma(t)\right)
=\int_{\Omega_\e(t)}F(c_\e)\di{x}+\int_{\Sigma_1}\nabla c_\e\cdot n.
\]%
Moreover, if the deposited material fills a shell with constant density $\rho_\e$, the overall stored mass is proportional to the shell volume:
\[
M_{\e,i}^\Gamma(t)=\frac{4\pi}3 \rho_\e\e^{3\alpha} \left(r_{\e,i}(t)^3-r_{c}^3\right) 
\]
where $r_c\geq0$ is the solid core of the ball ($r_c=0$ corresponds to the case where the whole ball is made up by the chemical substance).
We assume the scaling $\rho_\e=\e^{3-3\alpha}$ in order to obtain that the rate of  change of $r_{\e,i}$ is of order one.
In this case, \cref{eq:mass_depos} can be written as an ODE for the radial evolution:
\begin{equation}
 \ddt r_{\e,i}(t)=-\frac{1}{|\Gamma_{\e,i}(t)|}\int_{\Gamma_{\e,i}(t)}G(c_\e(t,\gamma),r_{\e,i}(t))\di{\gamma}.
\end{equation}
We get the radius evolution problem:
\begin{subequations}\label{system:radius_eps}
\begin{empheq}[left=\empheqlbrace]{alignat=2}
    \partial_tr_{\e,i}&=-\fint_{\Gamma_{\e,i}(t)}G\left(c_\e,r_{\e,i}\right)\di\gamma&\quad&\text{for}\ \ t\in S,\ i\in\mathcal{I}_\e,\label{evol.surface.concentration}\\
    r_{\e,i}(0)&=r_{\e,i,0},&\quad&\text{for}\ i\in\mathcal{I}_\e.
    \end{empheq}
\end{subequations}
A particularly  simple model for the exchange function $G\colon\R\times\R\to\R$ is given via
\[
G(c,r)=\tilde g_{1}(r)-cg(r)
\]
Here, the desorption rate $\tilde g_1(r)$ only depends on the stored mass (modeling \textit{spontaneous detachment} of substance) and the adsorption rate $cg(r)$ is modeled as a chemical reaction (with a simple mass-action principle).
This can also be written as an exchange law via
\[
G(c,r)=g(r)\left(\cequ(r)-c\right)
\]
where $\cequ(r)=\frac{\tilde g(r)}{g(r)}$ is the equilibrium bulk concentration corresponding to radius $r$.
This model corresponds to the Langmuir adsorption kinetics, see \cite{augner_analysis_2024,noorden_crystal_2009}.
For simplicity of the presentation, we will restrict our model to this exchange law in the following.

\subsection{Assumptions and notation}\label{ssec:assumpstions}
We start with a few short comments regarding our notation: In the following, we use $C>0$ to denote generic constants, whose specific value might change even from line to line, and which (if not explicitly indicated otherwise) are independent of the parameter $\e$ and time $t\in S$.
For two nonnegative quantities \(A,B\), we write $A\lesssim B$ if there exists such a constant \(C>0\) with \(A\le CB\). Likewise,
\[
A\lesssim_{\lambda} B
\]
means that \(A\le C B\) with a constant \(C=C(\lambda)>0\) which may not decrease with \(\lambda\) (but still is independent of \(\e\) and \(t\)).
When we write $A\lesssim_t t B$ as for example in \cref{lemma:apriori_prescribed_radius}, we emphasize that $A\le C(t)B$ with $\lim_{t\to0} C(t)=0$.
In general, we only indicate the constants which are necessary for the analysis; in particular, we assume all our data in the assumptions below to be fixed and we do not track their influence.
For the index set $\mathcal{I}_\e=\{1,...,N_\e\}$, we introduce the finite Banach spaces $\ell^p(\mathcal{I}_\e) := \ell^p(\R^{N_\e}) $ and note that since $N_\e=\e^{-3}$
\[
\|r\|_{\ell^\infty(\mathcal{I}_\e)}\leq \|r\|_{\ell^p(\mathcal{I}_\e)}\lesssim \e^{-\frac3p}\|r\|_{\ell^\infty(\mathcal{I}_\e)}
\]
for all $p\in[1,\infty)$ and $r\in \ell^p(\mathcal{I}_\e)$.
To unburden the notation slightly, we drop the $\e$-index whenever we chose a  fixed function; for example, when we consider a prescribed radial evolution, we write $r$ instead of $r_\e$ even though it technically depends on $\e$ via $\indexI$ and the initial value.


We make the following assumptions on our geometry and our data:
\begin{enumerate}
    \item[\textbf{(A1)}]\label[assumption]{A1} Assumptions on the microstructures:
        \begin{itemize}
            \item The center points of the balls are given by $z_{\e,i}\in\Omega$ for $i\in\mathcal{I}_\e=\{1,...,N_\e\}$ where $N_\e=\e^{-3}$.
            Both the minimal distance between two center points and the distance of the balls to the outer boundary is of order $\e$, i.e., there is $\dmin>0$ such that
            \begin{align} \label{ass:strong.separation}
            |z_{\e,i}-z_{\e,j}|\geq \dmin\e,\quad \dist(z_{\e,i},\partial\Omega)\geq \dmin\e\quad(i,j\in\mathcal{I}_\e,\ i\neq j).
            \end{align}
            \item The initial radii $r_{\e,i,0}$ are uniformly bounded from below and above via
            \[
            0<\frac{1}{\overline r}\le\inf_{\e>0}\min_{i\in\mathcal{I}_\e}r_{\e,i,0}\le\sup_{\e>0}\max_{i\in\mathcal{I}_\e}r_{\e,i,0}\le\overline{r}<\infty.
            \]
            for some $\overline r>1$.
            \item If $\alpha < 3$, we additionally assume that there exists $\dmax>0$ such that
            \begin{align}\label{ass:uniform.covering}
                \Omega \subset \bigcup_{i \in \mathcal I_\e} B_{\dmax \e}(z_{\e,i}).
            \end{align}
        \end{itemize}
    \item[\textbf{(A2)}]\label[assumption]{A2} Regularity and structure of the right hand sides and coefficients:
    \begin{itemize}
        \item Let $F\in C^{0,1}_{\mathrm{loc}}(\R)$ with $F=0$ on $(-\infty,0]$ and such that there is a constant $C_F>0$ with 
        \[
        -C_F|b|\le F(b)\le C_F(1+|b|)\quad (b\in\R).
        \]
        \item There are functions $g,c^{\mathrm{eq}}\in C_{\mathrm{loc}}^{0,1}(\R;\R_{\geq0})$ such that $G(b,\sigma)=g(\sigma)\left(c^{\mathrm{eq}}(\sigma)-b\right)$.
        In addition, $c^{\mathrm{eq}}$ is nondecreasing, $\cequ(0)=0$, and
        \begin{equation}\label{eq:non_blowup}
        \int_r^{1}\frac{1}{g(z)\cequ(z)}\di{z}\stackrel{r\to 0+}{\to}\infty,\quad \int_{1}^r\frac{1}{g(z)}\di{z}\stackrel{r\to\infty}{\to}\infty.
        \end{equation}
        \item $h_\e\in L^2(S;H^1(\Omega)^3)\cap L^\infty(S;L^2(\Omega)^3)$ with 
        \[
            \sup_{\e>0}
            \left(
            \|h_\e\|_{L^2(S;H^1(\Omega))}
            +
            \|h_\e\|_{L^\infty(S;L^2(\Omega))}
            \right)<\infty.
        \]
    \end{itemize}
    \item[\textbf{(A3)}]\label[assumption]{A3} Initial condition:
    $c_{\e,0}\in L^\infty(\Omega_\e)$ with $0\le c_{\e,0}\le \overline{c}_0$ for some $\overline{c}_0\ge0$ almost everywhere in $\Omega_\e$.

\end{enumerate}
\begin{remark}
\begin{enumerate}
    \item Assumption (A1) ensures that the balls are initially well-separated. In particular, we have (for sufficiently small $\e$)
    \[
    \dist(B_{\e,i},B_{\e,j})
    =|z_{\e,i}-z_{\e,j}|-\e^\alpha\left(r_{\e,i,0}+r_{\e,j,0}\right)
    \geq |z_{\e,i}-z_{\e,j}|-2\e^{\alpha}\overline{r}
    \geq \frac23|z_{\e,i}-z_{\e,j}|
    \]
    since $\alpha-1>0$ and $|z_{\e,i}-z_{\e,j}|\geq \dmin\e$.
    For $\alpha<3$, the uniform covering assumption \cref{ass:uniform.covering} is made to to control $u_\e$ through the scaling in the Poincaré inequality for $\Omega_\e$ (see \cref{lemma:poincare}).
    \item 
    The blow-up conditions \eqref{eq:non_blowup} are necessary for long time existence (in particular they prevent the balls to collapse to a point in finite time).
    This prohibits the physically reasonable choice $g(r)\cequ(r)=const.$ which models a constant desorption rate.
    In classical precipitation--dissolution models, such a constant rate is typically switched off once the deposited  layer is exhausted, for instance by means of a Heaviside-type law $g(r) \cequ(r) = a \1_{r> r_c} $ where $r_c\ge0$ denotes the radius of a non-dissolvable core, see e.g.~\cite{vanDuijnPop2004,KumarNeussRaduPop2016}.
    A smooth and therefore admissible realization of this mechanism is the saturation law given by $g(r)\cequ(r)=a\frac{(r-r_c)_+}{(r-r_c)_++\gamma}$ for some $a,\gamma>0$.
    Other admissible choices include $g(r)\cequ(r)=a (r-r_c)_+^\gamma$ for some $a\ge0$ and $\gamma\ge1$.
    \item We emphasize that the dissipative structure of the interfacial exchange, i.e., 
    \[
        G(b,\sigma)(\cequ(\sigma)-b)\ge0,
    \]
    is important for obtaining estimates that are uniform in $\e$ in the critical case $\alpha=3$.
    While Assumption~\textup{(A2)} is not sharp and can be relaxed, general non-dissipative linear interface production may lead to growth rates that become unbounded as \(\e\to0\); see the discussion at the beginning of \cref{ssec:maximum}.
    \end{enumerate}
\end{remark}

For $S=(0,T)$, we introduce the space of admissible radial evolutions
\begin{align} \label{mathcalR}
\mathcal R_\e(S):=\{r\in W^{1,\infty}(S)^\indexI\ : \ r(0)=r_{\e,0},\ \inf_{t\in S}\min_{i_\in\indexI}r_i(t)>0\}.
\end{align}
The minimal distance of any two different balls can be estimated by
\[
\dist(B_{\e,i}(t),B_{\e,j}(t))\le|z_{\e,i}-z_{\e,j}|-\e^\alpha\left(r_{\e,i}(t)+r_{\e,j}(t)\right)\ge\e\left(\dmin-2\e^{\alpha-1}\|r\|_{L^\infty(S;\ell^\infty(\indexI))}\right)\quad (i\neq j).
\]
This ensures $\dist(B_{\e,i}(t),B_{\e,j}(t))\geq \e\frac{\dmin}2$ as long as 
\begin{equation}\label{eq:eps_condition}
\e^{\alpha-1}\le \frac{\dmin}{4\|r\|_{L^\infty(S;\ell^\infty(\indexI))}}.
\end{equation}
In this case, we define the moving domains
\begin{equation}\label{eq:changing_domains}
\Omega_\e(t)
=
\Omega
\setminus
\bigcup_{i\in\mathcal I_\e}
B_{\e^\alpha r_{\e,i}(t)}(z_{\e,i}),
\end{equation}
where the perforations do not collapse to points and remain uniformly separated from each other and from $\partial\Omega$.
Then, the family $(\Omega_\e(t))_{t\in S}$ is related, for all $t\in\overline S$, by smooth diffeomorphisms
\[
\Psi_\e(t)\colon \overline{\Omega_\e}\to\overline{\Omega_\e(t)},
\]
where $\Omega_\e=\Omega_\e(0)$
(see \cref{ssec:reference_config,appen:trafo} for details on these transformations).
For a family of Banach spaces $X(t)$ of functions defined on $\Omega_\e(t)$
(e.g.,
$X(t)=L^q(\Omega_\e(t))$
or
$X(t)=H^1(\Omega_\e(t))$),
we use the shorthand
\[
L^p(S;X(t))
\]
for the set of functions $u$ such that the pullback
\[
\hat u(t,\cdot)
:=
u(t,\Psi_\e(t,\cdot))
\]
belongs to the classical Bochner space
\(
L^p(S;X(0)).
\)
The corresponding dual spaces (e.g., $L^2(S;H^1(\Omega_\e(t))^*)$) are understood via the induced dual pullback operators (we point to \cite{alphonse_abstract_2015} for details).
We also introduce the Hilbert space
\[
W(S;\Omega_\e(t))=\{u\in L^2(S;H^1(\Omega_\e(t)))\ : \ \partial_tu\in L^2(S;H^1(\Omega_\e(t))^*)\}
\]
and, for a domain $A$ and $\Sigma\subset\partial A$ with positive surface measure, we write
\[
H^1_\Sigma(A)=\{u\in H^1(A)\ : \ u=0 \ \text{on $\Sigma$}\}.
\]

\begin{definition}[Weak form (moving geometry)]
    \label{definition:weak_solution_moving}
    Let $T>0$ and $S=(0,T)$.
    We say that 
    \[
    (c_\e,u_\e,p_\e,r_\e)\in W(S;\Omega_\e(t))\times  L^\infty(S;H^1(\Omega_\e(t)))^3\times \LtLx{\infty}{2}{\Omega_\e(t)}\times W^{1,\infty}(S)^{\indexI}
    \]
    is a local-in-time, weak solution to the moving boundary problem (given by Systems~\eqref{system:stokes_eps} and~\eqref{system:diffusion_eps} as well as ODE~\eqref{system:radius_eps}) over $S$ if the following conditions are satisfied:
    \begin{itemize}
    \item[$(i)$] The radius function satisfies $r_\e(0)=r_{\e,0}$, $r_{\e,i}(t)>0$ for all $t\in \overline S$, and the ODEs
    \begin{subequations}\label{eq:micro_system}
    \begin{equation}\label{eq:radius_evolution_ODE}
    \partial_t r_{\e,i}=-\fint_{\Gamma_{\e,i}(t)}G(c_\e,r_{\e,i})\di\gamma \quad (i\in\mathcal{I}_\e).
    \end{equation}
    The radius function characterizes the changing geometry via \cref{eq:changing_domains} such that the balls remain separated.
    \item[$(ii)$] For almost all $t\in S$, it is $u_\e=\e^\alpha\partial_tr_\e\nu_\e$ on $\Gamma_\e(t)$, $u_\e=0$ on $\Sigma_2$, and
    \begin{align}
        2\e^{3-\alpha}(e (u_\e),e(\psi))_{L^2(\Omega_\e(t))}-(p_\e,\dive\psi)_{L^2(\Omega_\e(t))}
        &=(h_\e,\psi)_{L^2(\Omega_{\e}(t))},\label{eq:micro_system:b}\\
        (\dive u_\e,\phi)_{L^2(\Omega_\e(t))}&=0\label{eq:micro_system:c}
    \end{align}
    holds for all $(\psi,\phi)\in H^1_{\Sigma_2\cup\Gamma_\e}(\Omega_\e(t))^3\times L^2(\Omega_\e(t))$.
    \item[$(iii)$] It is $c_\e=0$ on $\Sigma_1$ and, for all $\varphi\in C^\infty([0,T]\times\overline\Omega)$ with $\varphi=0$ on $\Sigma_1$, it holds
    \begin{multline}\label{eq:micro_system_moving}
    (c_{\e}(T),\varphi(T))_{L^2(\Omega_{\e}(T))}
    +\int_S-(c_\e,\partial_t\varphi)_{L^2(\Omega_{\e}(t))}
    +(\nabla c_\e-c_\e u_\e,\nabla\varphi)_{L^2(\Omega_{\e}(t))}\di{t}\\
    -\e^{3-2\alpha}\int_S(G(c_\e,r_{\e}),\varphi)_{L^2(\Gamma_{\e}(t))}\di{t}
    =\int_{S}(F(c_\e),\varphi)_{L^2(\Omega_{\e}(t))}\di{t}+(c_{\e,0},\varphi(0))_{L^2(\Omega_{\e})}.
    \end{multline}
    \end{subequations}
    \end{itemize}
\end{definition}

\begin{remark}
    \begin{itemize}
        \item 
        We can naturally extend \cref{eq:micro_system_moving} by density to functions $\varphi\in W(S;\Omega_\e(t))$ with $\varphi=0$ on $\Sigma_1$ via the form 
        \begin{multline}\label{eq:micro_system_moving_weak}
        (c_{\e}(T),\varphi(T))_{L^2(\Omega_{\e}(T))}+\int_S-\langle\partial_t\varphi,c_\e\rangle_{H^1(\Omega_{\e}(t))^*}+(\nabla c_\e-c_\e u_\e,\nabla\varphi)_{L^2(\Omega_{\e}(t))}\di{t}\\
        -\e^{3-2\alpha}\int_S(G(c_\e,r_{\e}),\varphi)_{L^2(\Gamma_{\e}(t))}\di{t}
        =\int_{S}(F(c_\e),\varphi)_{L^2(\Omega_{\e}(t))}\di{t}+(c_{\e,0},\varphi(0))_{L^2(\Omega_{\e})}.
        \end{multline}
        This allows us to chose $\varphi=v_\e$.
        \item We can also combine the ODE for the radial evolution (\cref{eq:radius_evolution_ODE}) and the diffusion equation (\cref{eq:micro_system_moving}) into the coupled problem formulation
        \begin{multline}\label{eq:weak_form_alternative_derivative}
        (c_{\e}(T),\varphi(T))_{L^2(\Omega_{\e}(T))}+
        \int_S-\langle\partial_t\varphi,c_\e\rangle_{H^1(\Omega_{\e}(t))^*}+(\nabla c_\e-c_\e u_\e,\nabla\varphi)_{L^2(\Omega_{\e}(t))}\di{t}\\
        \frac{4\pi\e^3}{3}(r_{\e}^3(T),\eta(T))_{\ell^2(\indexI)}-\frac{4\pi\e^3}{3}\int_S(r_{\e}^3,\partial_t\eta)_{\ell^2(\indexI)}\di{t}
        +\e^{3-2\alpha}\int_S(G(c_\e,r_{\e}),\eta-\varphi)_{L^2(\Gamma_{\e}(t))}\di{t}\\
        =\int_{S}(F(c_\e),\varphi)_{L^2(\Omega_{\e}(t))}\di{t}
        +(c_{\e,0},\varphi(0))_{L^2(\Omega_{\e})}
        +\frac{4\pi\e^3}{3}
        (r_{\e,0}^3,\eta(0))_{\ell^2(\indexI)}
        \end{multline}
        for all test functions $(\varphi,\eta)\in W(S;\Omega_\e(t))\times W^{1,\infty}(S)^\indexI$ with $\varphi=0$ on $\Sigma_1$.
        Here, we identify a vector $\eta\in \ell^p(\mathcal{I}_\e)$ with the locally constant $\eta\in L^p(\Gamma_\e)$ via $\eta=\eta_i$ on $\Gamma_{\e,i}(t)$.
        This version is particularly convenient for estimates as it allows us to directly exploit the dissipative structure of $G$ via
        \[
        G(b,\sigma)(\cequ(\sigma)-b)\ge0
        \]
        for all $b,\sigma\ge0$ (see Assumption (A2)).
        In \cref{ssec:maximum}, we will mainly work with \cref{eq:weak_form_alternative_derivative}. 
    \end{itemize}
\end{remark}



\subsection{Well-posedness of the microscopic problem}
\label{sssec:well_posedness}

\begin{theorem}[Well-posedness of the microscopic problem]\label{theorem:well_posedness}
Let Assumptions~\textup{(A1)--(A3)} hold.
For every \(T>0\), there is \(\varepsilon_T>0\) such that, for every
\(0<\varepsilon\leq\varepsilon_T\), the microscopic problem
\eqref{system:stokes_eps}--\eqref{system:radius_eps} admits a unique weak solution
\[
    (c_\varepsilon,u_\varepsilon,p_\varepsilon,r_\varepsilon)
\]
on \([0,T]\) in the sense of \cref{definition:weak_solution_moving}.

In addition, it holds $0\le c_\e(t,x)\le\max\{\overline c_0,\cequ(\overline r)\}e^{2C_ft}$ and there are $\e$-independent functions $\underline{\xi},\overline\xi\colon[0,\infty)\to(0,\infty)$ such that
\[
0<\underline{\xi}(t)\le r_{\e,i}(t)\le\overline\xi(t)<\infty\quad (t\in S,\ i\in\indexI).
\]
\end{theorem}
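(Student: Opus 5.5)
The proof combines a local-in-time fixed-point argument in the reference configuration with $\e$-uniform a priori bounds of maximum-principle type, which let us continue the solution up to the prescribed time $T$.

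\textbf{Step 1: prescribed geometry.} Fix $\e$ and an admissible radial evolution $r\in\mathcal R_\e(S')$ on a short interval $S'=(0,T')$, with $r$ bounded above and below. Take $\e$ so small that \eqref{eq:eps_condition} holds, so the balls stay separated by $\e\dmin/2$ and away from $\partial\Omega$. Pull the problem back by the diffeomorphisms $\Psi_\e(t)$ to the fixed domain $\Omega_\e$. For each $t$, solve the Stokes system \eqref{system:stokes_eps} with the inhomogeneous Dirichlet data $\e^\alpha\partial_t r_i\nu_i$, which is compatible because $\Sigma_1$ allows outflow. To do so, lift the boundary data by a local Bogovskii-type extension supported in $\e$-neighbourhoods of each ball, and apply Lax--Milgram with the Korn and Poincaré inequalities on $\Omega_\e(t)$. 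This yields $u\in L^\infty(S';H^1)$ with bounds depending on $\|\partial_t r\|_{\ell^\infty}$ and $h_\e$. With this velocity, the transformed linear parabolic problem for $c$ with Robin-type interface term is solved by Galerkin/J.-L.~Lions theory. The trace estimates of \cref{lemma:trace_estimates} control the interface terms, with constants that may depend on $\e$ since $\e$ is fixed here. Denote the resulting concentration by $c=\mathcal S(r)$.

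\textbf{Step 2: contraction.} Define $\mathcal T(r)_i(t)=r_{\e,0,i}-\int_0^t\fint_{\Gamma_i(s)}G(c,r_i)\,\d\gamma\,\d s$. For fixed $\e$, $\mathcal T$ maps a ball of $W^{1,\infty}(S')^{\indexI}$ into itself and is a contraction for $T'$ small. The Lipschitz dependence of $(u,c)$ on $r$ comes from differentiating the transformation $\Psi_\e$ with respect to $r$ (see \cref{appen:trafo}). One detail needs care: the surface average of $c$ must be controlled in $L^\infty(S')$ to bound $\partial_t r$ in $L^\infty$. I would obtain this from the $L^\infty$ bound on $c$ of Step 3 (after truncating $G$ if necessary) together with Lipschitz continuity of $g$ and $\cequ$. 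Banach's fixed point theorem then gives a unique local solution. Uniqueness on the whole interval follows from the same contraction estimate applied on consecutive short intervals.

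\textbf{Step 3: uniform a priori bounds and continuation.} This is the key step and the main obstacle. Working in the moving geometry with \eqref{eq:weak_form_alternative_derivative}, test with $c_-$ to obtain $c\ge0$. Here one uses $F=0$ on $(-\infty,0]$ and the fact that $G(b,\sigma)=g(\cequ-b)\ge0$ for $b\le0$, so the interface flux only adds mass. For the upper bound, test with $(c-K(t))_+$, where $K(t)=\max\{\overline c_0,\cequ(\overline r)\}e^{2C_F t}$. The interfacial term has a good sign as long as $\cequ(r_i)\le K$. This requires that the radii do not exceed the level at which $\cequ(r_i)\le K(t)$. I would handle this jointly: by the ODE, $\partial_t r_i=g(r_i)(\fint c-\cequ(r_i))$, so if $c\le K$ then $r_i$ can only grow while $\cequ(r_i)<K$. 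Because $\cequ$ is nondecreasing, this keeps $\cequ(r_i)\le K(t)$ by a comparison/continuity argument, since $K$ is increasing. The advection term is handled via $\dive u=0$ and the boundary conditions; the contributions from the moving boundary cancel against the Reynolds transport term $\e^\alpha\partial_t r\,c$.

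The radius bounds come from the scalar ODE comparisons. First, $\partial_t r_i\le g(r_i)K(t)$, and the divergence of $\int_1^\infty 1/g$ in \eqref{eq:non_blowup} prevents blow-up, giving $\overline\xi(t)$. Second, $\partial_t r_i\ge -g(r_i)\cequ(r_i)$ since $c\ge0$, and the divergence of $\int_0^1 1/(g\cequ)$ prevents collapse, giving $\underline\xi(t)$. Both bounds are $\e$-independent. Finally, choose $\e_T$ so that \eqref{eq:eps_condition} holds with $\overline\xi(T)$. Since the local existence time in Step 2 depends only on these bounds for fixed $\e$, the local solution can be continued to all of $[0,T]$.
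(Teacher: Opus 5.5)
Your overall architecture matches the paper's: pull back to $\Omega_\e$, solve Stokes and the concentration equation for a prescribed admissible radius evolution, close a fixed point on the radii for short times, and continue using the $\e$-independent maximum principle and the Osgood comparisons, with $\e_T$ chosen so that \eqref{eq:eps_condition} holds with $\overline\xi(T)$. Two steps, however, do not close as written.

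\textbf{The contraction metric.} The contraction cannot be taken in the $W^{1,\infty}$ norm. The derivative $\partial_t\mathcal T(r)_i(t)$ contains the instantaneous average $\fint_{\Gamma_{\e,i}}\delta c(t)$. A $W^{1,\infty}$ contraction would therefore need $\operatorname{ess\,sup}_t|\fint_{\Gamma_{\e,i}}\delta c(t)|\le q\|\delta\partial_t r\|_{L^\infty}$. The energy estimate for $\delta c$ only gives $L^\infty(L^2)\cap L^2(H^1)$, i.e.\ traces that are $L^2$ in time. You also cannot fall back on a $C^0$ metric for $r$, because $\hat u_\e$ and $\partial_t\Psi_\e$ depend on $\partial_t r$ pointwise in time.

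The paper keeps the $W^{1,\infty}$ information only as constraints ($\frac1\lambda\le r_i\le\lambda$, $|\partial_t r_i|\le M_\lambda$). These define a set $A_\lambda$ that is closed in $W^{1,2}(0,t;\ell^\infty(\indexI))$, and the paper contracts in $d(r^{(1)},r^{(2)})=\|\delta\partial_t r\|_{L^2(0,t;\ell^\infty(\indexI))}$. This works for two reasons. The radius ODE gives $\|\delta\partial_t R\|_{L^2(0,t)}\lesssim\|\delta c\|_{L^2(0,t;H^1)}$ via the trace estimate (\cref{lemma:concentration_to_radius}). The Stokes Lipschitz bound \eqref{eq:lipschitz_stokes} holds pointwise in time, so it integrates into the $L^2$-in-time estimate of \cref{lemma:solution_lipschitz_estimates}.

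Relatedly, the $L^\infty$ bound on $c$ needed for the self-map should come from the prescribed-radius problem, at level $\max\{\overline c_0,\cequ(\lambda)\}e^{2C_Ft}$ (\cref{lemma:apriori_prescribed_radius}). Your Step 3 concerns the coupled system, not an arbitrary admissible $r$.

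\textbf{The maximum principle in Step 3.} Your ``comparison/continuity argument'' is circular as stated. The bound $c_\e\le K$ on $[0,t]$ needs $\cequ(r_{\e,i})\le K$ on $[0,t]$, and conversely. At the first time where $\cequ(r_{\e,i})$ would cross $K$, you know nothing about $c_\e$ immediately afterwards. Since $\fint_{\Gamma_{\e,i}}c_\e$ is only $L^2$ in time, you cannot read off a sign of $\partial_t r_{\e,i}$ there. Moreover, for $C_F=0$ the level $K$ is constant, so there is no strict margin either. You need one estimate that controls both excesses at once.

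The paper tests \eqref{eq:weak_form_alternative_derivative} simultaneously with $\varphi=(c_\e-a)_+$ and $\eta=(\cequ(r_\e)-a)_+$. The interface contribution then becomes $\e^{3-2\alpha}\int g\,(\cequ-c_\e)\bigl((\cequ-a)_+-(c_\e-a)_+\bigr)\ge0$, by monotonicity of $z\mapsto(z-a)_+$. The radius part is absorbed by the nonnegative primitive $\Psi(M,a)=\int_0^M(\cequ(z^{1/3})-a)_+\,\mathrm{d}z$. Together these give the single Gr\"onwall inequality \eqref{eq:max_princ_gronwall}.

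A coupled Gr\"onwall for $\|(c_\e-K)_+\|_{L^2}^2$ and $\e^3\sum_i(\cequ(r_{\e,i})-K)_+^2$ would also work. It uses $(\cequ-c)(c-K)_+\le(\cequ-K)_+(c-K)_+$, the bound $\partial_t(\cequ(r_{\e,i})-K)_+\lesssim g(r_{\e,i})\fint_{\Gamma_{\e,i}}(c_\e-K)_+$, and \cref{lemma:trace_estimates}(ii).

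\textbf{A minor point in Step 1.} Each $\Gamma_{\e,i}$ carries net flux $4\pi\e^{3\alpha}r_i^2\partial_t r_i$, so no divergence-free lift can be supported near the balls. The divergence has to be corrected globally through $\Sigma_1$. This is why the paper solves a saddle-point problem with an inf-sup constant from the Bogovskii operator of \cref{pro:Bog}, rather than using Lax--Milgram alone.
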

The a priori estimates are shown in \cref{ssec:maximum} and the existence in \cref{sec:analysis}.
\begin{remark}
    \begin{enumerate}
        \item While this result is local-in-time for every fixed $\e$ (because the separation condition for the moving particles may be violated after finite time), it is asymptotically global in time in that we reach any arbitrary large time horizon for all sufficiently small $\e$.
        In addition, no collapse of particles in finite time is possible because $\underline\xi$ is positive for all times.
        \item The theorem only asserts non-negativity of \(c_\e\).
        Because of the homogeneous Dirichlet condition on \(\Sigma_1\), no strictly positive lower bound up to the outer boundary can hold.
        If the homogeneous boundary condition is replaced by
        \[
            c_\e=c_{\rm ext}>0
            \qquad\text{on }\Sigma_1,
        \]
        strict positivity in the interior for positive times follows via the corresponding assumptions by the parabolic strong maximum principle.
    \end{enumerate}
\end{remark}

\subsection{Convergence result in the critical case \texorpdfstring{$\alpha = 3$}{alpha equals 3}.}
\label{sssec:critical_case}

In order to describe the asymptotic behavior of the microscopic solutions as \(\e\to0\),  we introduce the empirical measure encoding the spatial distribution of the particles and their radii,
\begin{equation}\label{eq:f_vareps}
    f_\varepsilon(t)
    :=
    \frac{1}{N_\varepsilon}
    \sum_{i\in\mathcal I_\varepsilon}
    \delta_{z_{\varepsilon,i}}
    \otimes
    \delta_{r_{\varepsilon,i}(t)}
    \in
    \mathcal P\bigl(\Omega\times[0,\infty)\bigr),
\end{equation}
and the spatially regularized empirical measure,
\begin{equation}\label{f^app}
    f_\varepsilon^{\rm app}(t)
    :=
    \frac{1}{N_\varepsilon}
    \sum_{i\in\mathcal I_\varepsilon}
    \delta_{B_{\frac{\dmin \varepsilon}{4}}(z_{\varepsilon,i})}
    \otimes
    \delta_{r_{\varepsilon,i}(t)},
\end{equation}
where
\(
    \delta_{B_{\frac{\dmin\varepsilon}{4}}(z_{\varepsilon,i})}
\)
denotes the uniform probability measure on
\(B_{\frac{\dmin\varepsilon}{4}}(z_{\varepsilon,i})\).
We measure the convergence of the empirical measures with respect to the
\(2\)-Wasserstein distance \(\mathcal W_2\) on
\(\mathcal P_2(\Omega\times[0,\infty))\).
For the spatially regularized measures, we additionally use the dual norm associated with
\[
    H_x^1W_\sigma^{1,\infty}
    :=
    H^1\bigl(\mathbb R^3;W^{1,\infty}([0,\infty))\bigr).
\]

\medskip

For the critical case, we introduce the function 
\begin{align}\label{H}
 H\colon\R_{\ge0}^2\to\R \quad H(b,\sigma)=\frac{\sigma g(\sigma)}{1+\sigma g(\sigma)}\left(\cequ(\sigma)-b\right),
\end{align}
which describes the correction of the macroscopic concentration at the particle surface.
We note that $H$ satisfies the algebraic equation
\begin{align*}
    H(b,\sigma) =\sigma G(b+ H(b,\sigma),\sigma).
\end{align*}
When $G$ is nonlinear in $c$, the effective reaction term is in general characterized only implicitly through this equation, we refer to \cite{Jager_neuss_shaposhnikova2011}, where such an implicit nonlinear boundary-layer correction appears in the homogenized reaction term.

The macroscopic limit is described by a coupled system for the bulk
concentration \(c\), the  particle-size distribution \(f\), the fluid velocity \(u\), and the pressure \(p\).
We consider the limit concentration that solves
\begin{subequations}\label{limit.chemisty}
\begin{empheq}[left=\empheqlbrace]{alignat=2}
   \partial_tc-\dive(\nabla c-uc)&=F(c)+\int_0^\infty 4 \pi \sigma H(c,\sigma) f(\cdot,\d\sigma)&\quad&\text{in}\ S \times \Omega,\\
    -(\nabla c-u c)\cdot n&=0&\quad&\text{on}\ S \times \Sigma_2,\\
    c&=\cext&\quad&\text{on}\ S \times \Sigma_1,\\
    c(0)&=c_{0}&\quad&\text{in}\ \Omega.
\end{empheq}
\end{subequations}
The evolution of the particle-size distribution is governed by the continuity equation
\begin{subequations}\label{evolution.f}
\begin{empheq}[left=\empheqlbrace]{alignat=2} 
    \partial_t f - \partial_\sigma\left( \frac{H(c,\sigma)}{\sigma} f \right) &= 0 & \quad \text{in} \ S \times \Omega \times [0,\infty), \\
    f(0) &= f_0 & \text{in} \ \Omega \times [0,\infty),
\end{empheq}
\end{subequations}
Please note that $x\in\Omega$ is just a parameter entering via the concentration $c=c(t,x)$ and the initial value $f_0$.
Finally, the fluid velocity and pressure solve the Brinkman system
\begin{subequations}\label{Brinkman}
\begin{empheq}[left=\empheqlbrace]{alignat=2}
    - \Delta u + \nabla p + u \int_0^\infty 6 \pi \sigma f(\cdot,\d\sigma) &= h&\quad&\text{in}\ S \times \Omega\\
    \dive u&=0&\quad&\text{in}\ S \times \Omega,\\
\sigma[u,p]n&=0&\quad&\text{on}\  S \times \Sigma_1,\\
    u&=0&\quad&\text{on}\  S\times \Sigma_2.
\end{empheq}
\end{subequations}

Here, the diffusion-reaction system \eqref{limit.chemisty} and the Brinkmann problem \eqref{Brinkman} are understood in the usual weak sense.
The continuity equation \eqref{evolution.f} is understood in the distributional sense.
We require \(f\) to be absolutely continuous with respect to the spatial variable, i.e.,
\[
    f(t,\d x,\d\sigma)=f(t,x,\d\sigma)\di x,
\]
where \(f(t,x,\cdot)\in\mathcal M_+([0,\infty))\), with uniformly bounded mass and uniformly compact support in \(\sigma\).
This allows \(f\) to be atomic in the radius variable, while also guaranteeing that the expressions
\[
    \int_0^\infty 4\pi\sigma H(c,\sigma)\,f(\cdot,\cdot,\d\sigma)
    \quad\text{and}\quad
    \int_0^\infty 6\pi\sigma\,f(\cdot,\cdot,\d\sigma)
\]
are well defined.
Moreover, we denote by $C_w(S;\mathcal P(\Omega \times [0,\infty))$ the space of weakly-$\ast$ continuous maps, i.e.  $S \ni t \mapsto f(t,\cdot) \in \mathcal P(\Omega \times [0,\infty))$ is an element of $
C_w(S;\mathcal P(\Omega \times [0,\infty))$ if for all $t \in S$
$$
\lim_{s\to t} \int_{\Omega \times [0,\infty)} \varphi \dd f(s,\cdot) = \int_{\Omega \times [0,\infty)} \varphi \dd f(t,\cdot)\qquad (\varphi\in C_b(\Omega\times[0,\infty)).
$$

\begin{theorem}\label{th:hom.critical}
Let
\[
(c,u,p,f) \in W^{1,\infty}(S\times\Omega) \times L^{\infty}(S;W^{2,\infty}(\Omega)^3) \times L^{\infty}(S;W^{1,\infty}(\Omega)) \times C_w(S;\mathcal P(\Omega \times [0,\infty)))
\]
be a solution to \eqref{limit.chemisty}--\eqref{Brinkman} such that $\int_0^\infty f(\cdot,\cdot,\d \sigma)  \in L^\infty(S\times \Omega)$ and $\supp f$ is compact in $[0,T] \times \overline \Omega \times [0,\infty)$.
Then, there exists $\e_0$ such that the unique weak solution to \eqref{system:stokes_eps}--\eqref{system:radius_eps}  satisfies, for $\e\le\e_0$,
\begin{multline} \label{quantitative.est}
    \|u_\e - u\|_{\LtLx{2}{2}{\Omega_\e(t)}} + \|c_\e - c\|_{\LtLx{\infty}{2}{\Omega_\e(t)}}+ \sup_{S} \mathcal W_2(f_\e,f) \\
    \lesssim \e + \|f_\e^{app}(0) - f_0\|_{(H^1_xW_\sigma^{1,\infty})^\ast}+  \|h_\e - h\|_{L^2(S;(H^{1}(\Omega))^*)} + \|c_{\e,0} - c_0\|_{L^2(\Omega_\e)} + \mathcal W_2(f_\e(0),f_0).
\end{multline}
The constants \(\e_0>0\) and the implicit constant in
\eqref{quantitative.est} depend only on $T$, the constants in Assumptions~\textup{(A1)--(A3)}, and the indicated bounds on \((c,u,p,f)\).
\end{theorem}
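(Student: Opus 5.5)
The plan is a relative-energy/Gronwall argument, as announced in the outline. The energy is
\[
E_\e(t):=\tfrac12\|c_\e(t)-c(t)-\phi_\e(t)\|_{L^2(\Omega_\e(t))}^2+\mathcal W_2^2(f_\e(t),f(t)),
\]
where $\phi_\e$ is a local concentration corrector capturing the boundary layers.

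\textbf{Step 1: the corrector.} For $\alpha=3$, near each particle $i$ I take
\[
\phi_\e(t,x)=\sum_{i}\frac{\e^3 r_{\e,i}(t)}{|x-z_{\e,i}|}\,H\bigl(c(t,z_{\e,i}),r_{\e,i}(t)\bigr)\,\chi\Bigl(\frac{x-z_{\e,i}}{\e}\Bigr),
\]
with $\chi$ a cutoff supported in $B_{\dmin/4}$. This is harmonic near the ball and equals $H$ on $\Gamma_{\e,i}$. By the algebraic identity $H=\sigma G(c+H,\sigma)$, its normal flux $\e^{-3}r_{\e,i}^{-1}H$ matches $\e^{-3}G(c+\phi_\e,r_{\e,i})$ on the surface, to leading order.

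The cutoff region generates a source $-\Delta\phi_\e$, of size $\e^3\cdot\e^{-3}$ per cell. This source is spread over the annulus, so summed over cells it approximates $4\pi\sigma H f$. The mismatch with the macroscopic term $\int 4\pi\sigma H(c,\sigma)f\,\d\sigma$ is exactly where the Wasserstein distance and the $(H^1_xW^{1,\infty}_\sigma)^*$ norm of $f^{app}_\e-f$ enter. I would use $f^{app}_\e$ for the spatially smeared sources and $\mathcal W_2$ for the Lipschitz dependence on $(x,\sigma)$. By construction $\|\phi_\e\|_{L^2}+\|\partial_t\phi_\e\|_{L^2}\lesssim \e$, while $\|\nabla\phi_\e\|_{L^2}=O(1)$. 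This is why the gradient cannot be compared directly.

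\textbf{Step 2: the fluid.} I correct $u$ with the local Stokes correctors from the appendix, i.e.\ Stokeslet-type $w_{\e,i}$ cut off at scale $\e$ with velocity $u(z_{\e,i})$ on the particle. The goal is
\[
\|\nabla(u_\e-u-\textstyle\sum_i w_{\e,i})\|_{L^2}\lesssim \e+\mathcal W_2(f_\e,f)+\|f^{app}_\e-f\|_{(H^1_xW^{1,\infty}_\sigma)^*}+\|h_\e-h\|_{L^2(S;(H^1)^*)}.
\]
I test the difference of the weak formulations with this corrected difference. The Brinkman term $6\pi\sigma f u$ arises as the drag of the correctors, in the same way as the reaction term above. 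The inhomogeneous Dirichlet data $\e^3\partial_t r$ contributes $O(\e)$. The uniform Korn and Poincaré inequalities from the appendix then close the estimate, and it yields the $L^2$ velocity bound in \eqref{quantitative.est}.

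\textbf{Step 3: Gronwall.} First, differentiate the concentration part of $E_\e$ using \eqref{eq:weak_form_alternative_derivative}, testing with $c_\e-c-\phi_\e$, and subtract the limit equation. Diffusion yields $-\|\nabla(c_\e-c-\phi_\e)\|^2$. The interface term splits into two pieces: the dissipative part $G(b,\sigma)$, which is monotone in $b$, and a remainder controlled via the trace lemma \cref{lemma:trace_estimates} by $\delta\|\nabla\cdot\|^2+C\|\cdot\|^2$ plus $O(\e)$ errors. Advection differences are bounded using Step 2, the uniform $L^\infty$ bound on $c_\e$ from \cref{theorem:well_posedness}, and $c\in W^{1,\infty}$.

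Second, for $\mathcal W_2$, couple the particle $(z_{\e,i},r_{\e,i}(t))$ with the characteristic of \eqref{evolution.f}. This requires the particle velocity $-\fint G(c_\e,r_{\e,i})$ to be compared with $-H(c(z_{\e,i}),r_{\e,i})/r_{\e,i}$. By the corrector construction the difference is $\fint_{\Gamma_{\e,i}}|c_\e-c-\phi_\e|$. Summed over $i$ with weight $\e^3$, this is controlled by the trace lemma at the critical scaling in terms of $\|c_\e-c-\phi_\e\|_{H^1}$. That $H^1$ term is absorbed into the diffusion dissipation, which is exactly why the energies must be combined. The Lipschitz property of $H/\sigma$ on the compact support, using the lower bound $\underline\xi$, handles the rest.

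Gronwall then yields the estimate, and $\e_0$ is chosen so that \cref{theorem:well_posedness} applies on $[0,T]$.

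The main obstacle is Step 1 together with the particle-velocity coupling. The exact cancellation between the corrector flux, the nonlinearity $G(c+\phi_\e)$ (here affine in $b$, which helps), and the interfacial trace at critical scaling has to leave only absorbable errors. I would first try to verify this explicitly for the affine $G$, using an exact harmonic profile on $B_{\e\dmin/4}(z_{\e,i})\setminus B_{\e^3r}$.
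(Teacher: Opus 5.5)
Your architecture matches the paper's: a harmonic boundary-layer corrector $\e^3 r_{\e,i}H_{\e,i}/|x-z_{\e,i}|$ cut off at scale $\e$, Allaire-type Stokes correctors, a coupling of particles with the limit characteristics, and a combined Gronwall argument. The genuine gap is how you control the discrepancy between the smeared particle measure and $f$ at positive times. Two terms pair the difference $f^{app}_\e(t)-f(t)$ with test functions that are only $H^1$ in $x$: the homogenized reaction term, with test function $\sigma H(c,\sigma)\,\mathcal E_\e(c_\e-c-\phi_\e)$, and the Brinkman drag term, with test function $\sigma\psi$ for $\psi\in H^1$. $\mathcal W_2$ cannot handle such pairings. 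Kantorovich duality needs test functions Lipschitz in $x$. The $H^{-1}$--$\mathcal W_2$ inequality needs bounded densities, which fails because $f^{app}_\e$ (and possibly $f$) is atomic in $\sigma$. So Steps 1 and 2 leave you with $\|f^{app}_\e(t)-f(t)\|_{(H^1_xW^{1,\infty}_\sigma)^\ast}$ at time $t$. This quantity is neither in your energy nor on the right-hand side of \eqref{quantitative.est}, which contains only its value at $t=0$. Hence your Gronwall argument does not close as written.

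The missing idea, which is the paper's Lemma~\ref{f^app,f}, is that particle positions are frozen and only radii are transported.
\begin{itemize}
\item Glue the coupling that collapses each ball $B_{\dmin\e/4}(z_{\e,i})$ onto its center (which couples $f^{app}_\e(0)$ with $f_\e(0)$ at cost $\lesssim\e^2$) to the optimal initial plan.
\item Push the glued plan forward by the radius flows $\Sigma_\e(t)$, chosen constant in $x$ on each such ball, and $\Sigma(t)$.
\item Split $\varphi(x_1,\Sigma_\e)-\varphi(x_2,\Sigma)$ into two parts. The first is $\langle\varphi\circ(P_x,\Sigma(t)),f^{app}_\e(0)-f_0\rangle$, which the initial dual norm controls because $\Sigma(t)$ is Lipschitz (this uses $c\in W^{1,\infty}$). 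The second is a pure radius discrepancy, bounded by $\|\partial_\sigma\varphi\|_{L^2_xL^\infty_\sigma}$ times a bound on the spatial density of $f^{app}_\e$ times $(\eta+\e^2)^{1/2}$.
\end{itemize}
This gives $\|f^{app}_\e(t)-f(t)\|_{(H^1_xW^{1,\infty}_\sigma)^\ast}\lesssim\|f^{app}_\e(0)-f_0\|_{(H^1_xW^{1,\infty}_\sigma)^\ast}+\sqrt{\eta(t)}+\e$.

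Relatedly, your coupling argument differentiates the cost of the \emph{propagated} plan, which only dominates $\mathcal W_2^2(f_\e,f)$. That cost (the paper's $\eta$ in \eqref{eq:def_eta}), not $\mathcal W_2^2$, must therefore be the quantity in your energy.

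A smaller point for Step 2: a corrector carrying the constant velocity $u(z_{\e,i})$ on each particle does not match the boundary data exactly, and it does not give a divergence-free test function. You need $w_\e u$, a Bogovskii correction of $w_\e:\nabla u$, and a separate divergence-free lift of the data $\e^3\partial_t r_{\e,i}\nu_{\e,i}$.
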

\begin{remark}\label{remark:critical_theorem}
    \begin{enumerate}
        \item Such regularity of $(c,u,p)$ can be obtained for sufficiently smooth data and geometries for which the mixed boundary-value problems do not generate singularities at the interface between $\Sigma_1$ and $\Sigma_2$.
        This may be the case, for instance, when \(\Sigma_1\) and \(\Sigma_2\) belong to different connected components of \(\partial\Omega\), or, under suitable compatibility conditions, in particular configurations in which they meet orthogonally as in Figure \ref{geometry}; see, e.g., \cite{BenesKucera2016,Grisvard1985,MazyaRossmann2007}. 
        Alternatively, we could assume $\Sigma_2 = 0$ to avoid any difficulties due to the mixed type of the boundary conditions. 
        In this case, there is no longer a Poincar\'e inequality on $H^1_{\Sigma_2}(\Omega)$ but one might assume \eqref{ass:uniform.covering} also in the critical case $\alpha = 3$ in order to apply the Poincar\'e inequality in $H^1_{\Gamma_\e}(\Omega_\e)$ instead, provided by  Lemma \ref{lemma:poincare}.

        The compact-support assumption on $f$ is equivalent to assuming that the initial datum $f_0$ is compactly supported in the radius variable, i.e., that the maximal radius in the support of $f_0$ is bounded.
        Indeed, the continuity equation for $f$ propagates such a uniform bound thanks to the boundedness of $c$.
        
        \item The regularity assumptions on the limit problem can probably be weakened.
        They are chosen to avoid further technicalities in the proof.  As usual in such homogenization results, a certain amount of regularity assumptions is probably unavoidable to obtain quantitative results.
        \item The constant $\e$ in \eqref{quantitative.est}  can be omitted, because $\mathcal W_2(f_\e(0),f_0) \gtrsim \e$. This estimate holds because the spatial marginal $ \int_0^\infty f_\e(0,\cdot,\d\sigma)$ is discrete (the empirical measure of $\e^{-3}$ points)  but the spatial marginal $\int_0^\infty f_0(\cdot,\d\sigma)$ is continuous. See e.g. \cite[Eq. (1.14)]{HoferSchubert25}.

       We emphasize that the error $\e$ is sharp for the homogenization of the Stokes equation in a fixed perforated domain, see \cite{allaire_homogenization_1990, JingLuPrange} (for the periodic setting).
        \item We get the following additional corrector estimates, see \eqref{corr.est.c} and \eqref{corr.est.u}
        \begin{multline*}
            \|\nabla (c_{\e} - c_{\e}^{app})\|_{L^2(0,t;L^2(\Omega_\e(t)))} + \|\nabla (u_\e - w_\e u)\|_{\LtLx{2}{2}{\Omega_\e(t)}} \\\leq C (\e + \|f_\e^{app}(0) - f_0\|_{(H^1_xW_\sigma^{1,\infty})^\ast}+\mathcal W_2(f_\e(0),f_0)\\
            +  \|h_\e - h\|_{L^2(S;(H^{1}(\Omega))^*)} + \|c_{\e,0} - c_0\|_{L^2(\Omega_\e)}).
        \end{multline*}
        where $c_\e^{app}$ and $w_\e$ are defined in \eqref{c^app} and \eqref{w^eps}, respectively. In particular, one can show that strong convergence $u_\e \to u$ and $c_\e \to c$ in $L^2(S;H^{1}(\Omega))$ and $L^2(S ; H^1(\Omega))$, respectively, cannot hold (for any extension of $c_\e$ and $u_\e$ to $\Omega$). This failure of strong convergence is typical for such homogenization problems and reflects the singular nature of the limit process.
        \item The norm $ \|f_\e^{app}(0) - f_0\|_{(H^1_xW_\sigma^{1,\infty})^\ast}$ can be made quite small. 
       Indeed, since we do not require any regularity of $f$ with respect to the second variable, we can take $f_0$ to have \enquote{the same} radii as $f_\e^{app}(0)$ (in the sense that the optimal transport plan for $(f_\e^{app}(0),f_0)$ is supported on the diagonal with respect to the radii). Then, $ \|f_\e^{app}(0) - f_0\|_{(H^1_xW_\sigma^{1,\infty})^\ast}$  essentially becomes $\|\rho^{app}_\e -  \rho\|_{H^{-1}(\R^3)}$, where $\rho^{app}_\e$,$\rho$ are the spatial marginals of $f_\e^{app}(0),f_0$. It is well known (see e.g. \cite[Lemma 5.33]{Santambrogio}), that 
       \begin{align*}\|(\rho^{app}_\e -  \rho)\|_{H^{-1}(\R^3)} &\leq \mathcal W_2(\rho^{app}_\e , \rho) \max\{\|\rho^{app}_\e\|_{L^\infty(\R^3)},  \|\rho\|_{L^\infty(\R^3)} \} \\&=   W_2(f_\e(0), f_0) \max\{\|\rho^{app}_\e\|_{L^\infty(\R^3)},  \|\rho\|_{L^\infty(\R^3)} \}.
       \end{align*}
    \end{enumerate}
\end{remark}

\subsection{Convergence result in the supercritical case \texorpdfstring{$\alpha \in(1,3)$}{alpha less than 3}.}\label{sssec:supercritical_case}
In the supercritical case, the abovementioned local concentration correction at the particle surface is of order \(\e^{3-\alpha}\) and therefore vanishes as \(\e\to0\).
Consequently, the microscopic exchange law \(G\) enters the macroscopic equations directly, without the correction \(H\).

As in the critical case, the macroscopic limit is described by a coupled
system for the bulk concentration \(c\), the particle-size distribution
\(f\), the fluid velocity \(u\), and the pressure \(p\).
The concentration satisfies
\begin{subequations}\label{limit.chemisty.supercritical}
\begin{empheq}[left=\empheqlbrace]{alignat=2}
    \partial_t c-\dive(\nabla c-u c)
    &=
    F(c)
    +
    \int_0^\infty
        4\pi\sigma^2 G(c,\sigma)\,
        f(\cdot,\cdot,\d\sigma)
    &\quad&\text{in }S\times\Omega,
    \\
    -(\nabla c-u c)\cdot n
    &=0
    &&\text{on }S\times\Sigma_2,
    \\
    c
    &=\cext
    &&\text{on }S\times\Sigma_1,
    \\
    c(0)
    &=c_0
    &&\text{in }\Omega.
\end{empheq}
\end{subequations}
The evolution of the particle-size distribution is governed by
\begin{subequations}\label{evolution.f.supercritical}
\begin{empheq}[left=\empheqlbrace]{alignat=2}
    \partial_t f
    -
    \partial_\sigma\left(G(c,\sigma)f\right)
    &=0
    &\quad&\text{in }S\times\Omega\times(0,\infty),
    \\
    f(0)
    &=f_0
    &&\text{in }\Omega\times[0,\infty).
\end{empheq}
\end{subequations}
Finally, the fluid velocity and pressure solve the Darcy-type system
\begin{subequations}\label{Darcy}
\begin{empheq}[left=\empheqlbrace]{alignat=2}
    \nabla p
    +
    u
    \int_0^\infty
        6\pi\sigma\,
        f(\cdot,\cdot,\d\sigma)
    &=h
    &\quad&\text{in }S\times\Omega,
    \\
    \dive u
    &=0
    &&\text{in }S\times\Omega,
    \\
    p
    &=0
    &&\text{on }S\times\Sigma_1,
    \\
    u\cdot n
    &=0
    &&\text{on }S\times\Sigma_2.
\end{empheq}
\end{subequations}

\begin{theorem}[Homogenization result for $\alpha\in(1,3)$]
\label{thm:hom.supercritical}
Let $T>0$, $h \in L^\infty(S \times \Omega)$, $c_0 \in W^{1,\infty}(\Omega)$ and $f_0 \in \mathcal P(\Omega \times [0,\infty))$ such that $f_0 $ is compactly supported in $\overline \Omega \times (0,\infty)$ and $\int_0^\infty f_0(\cdot,\d \sigma) \in L^\infty(\Omega)$ is uniformly bounded from below.
Let 
\[
(c,u,p,f) \in W^{1,\infty}(S\times\Omega) \times L^{\infty}(S;W^{2,\infty}(\Omega)^3) \times L^{\infty}(S;W^{1,\infty}(\Omega)) \times C_w(S;\mathcal P(\Omega \times [0,\infty)))
\]
be a solution to \eqref{limit.chemisty.supercritical}--\eqref{Darcy} such that $\int_0^\infty f(\cdot,\cdot,\d \sigma)  \in L^\infty(S\times \Omega)$ and $\supp f$ is compact in $[0,T] \times \overline \Omega \times [0,\infty)$.
In addition, assume that there exists $\mu > 0$ and a $C^{3,\mu}$-domain $U\supset\Omega$  with $\Sigma_2\subset\partial U$ and such that $u$ admits an extension $\bar u\in L^\infty(S;C^{1,\mu}(\overline U)^3)$ satisfying $\bar u\cdot n=0$ on $S\times\partial U$.

Then, there exists $\e_0$ such that the unique weak solution to \eqref{system:stokes_eps}--\eqref{system:radius_eps} satisfies, for $\e\le\e_0$,
\begin{multline} \label{quantitative.est.supercritical}
    \|u_\e - u\|_{\LtLx{2}{2}{\Omega_\e(t)}} + \|c_\e - c\|_{\LtLx{\infty}{2}{\Omega_\e(t)}}+ \|\nabla (c_{\e} - c)\|_{L^2(S;L^2(\Omega_\e(t)))} + \sup_{S} \mathcal W_2(f_\e,f) \\
    \lesssim\e^{\frac{3-\alpha}{4}} +  \e^{\frac{\alpha - 1}{2}}    + \mathcal W_2(f_\e(0), f_0) +    \e^{\frac{\alpha-3}2}\|f_\e^{app}(0) - f_0\|_{(H^1_xW_\sigma^{1,\infty})^\ast}\\
    +  \|c_{\e,0} - c_0\|_{L^2(\Omega_\e)}  + \|h_\e - h\|_{L^2(S\times\Omega)}.
\end{multline}
The constants \(\e_0>0\) and the implicit constant in
\eqref{quantitative.est.supercritical} depend only on $T$, the constants in Assumptions~(A1)--(A3), and the indicated bounds on \((c,u,p,f)\).
\end{theorem}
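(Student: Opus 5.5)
The proof will follow the relative-energy strategy used for \Cref{th:hom.critical}, adapted to the Darcy scaling. We will use the uniform bounds of \Cref{theorem:well_posedness}: $0\le c_\e\lesssim 1$ and $\underline\xi\le r_{\e,i}\le\overline\xi$. We then introduce the relative energy
\[
E_\e(t):=\tfrac12\|c_\e(t)-c(t)\|_{L^2(\Omega_\e(t))}^2+\mathcal W_2^2\bigl(f_\e(t),f(t)\bigr).
\]
For the Wasserstein part we fix, for each $t$, an optimal plan between $f_\e(t)$ and $f(t)$, or more conveniently a transport map pushing $f_\e(0)$ to $f_0$ composed with the characteristic flows. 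The flows are $\dot r_{\e,i}=-\fint_{\Gamma_{\e,i}}G(c_\e,r_{\e,i})$ on the micro side and $\dot\sigma=-G(c(t,x),\sigma)$ on the macro side. Differentiating the squared distance along these flows and using the Lipschitz continuity of $G$ on the bounded range gives
\[
\ddt \mathcal W_2^2\lesssim \mathcal W_2^2+\e^3\sum_i\Bigl|\fint_{\Gamma_{\e,i}}c_\e-c(z_{\e,i})\Bigr|^2 .
\]
By the trace estimate of \cref{lemma:trace_estimates} in the supercritical scaling, together with $\|\nabla c\|_\infty\lesssim1$, the last term is bounded by
\[
\|c_\e-c\|^2_{L^2}+\e^{\alpha-1}\|\nabla(c_\e-c)\|^2_{L^2}+\e^2,
\]
up to constants.

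For the concentration we test \eqref{eq:micro_system_moving_weak} with $\varphi=c_\e-c$, restricted to $\Omega_\e(t)$, which is admissible since $c=0$ on $\Sigma_1$. We subtract the weak form of \eqref{limit.chemisty.supercritical} tested with the same function. Here no corrector is needed: the boundary layer around each ball is of size $\e^{3-\alpha}\to0$, so $\nabla(c_\e-c)$ should converge strongly, and this is why the gradient appears in \eqref{quantitative.est.supercritical}. The diffusion gives $\|\nabla(c_\e-c)\|^2$. The reaction term is Lipschitz. The advection produces
\[
((u_\e-u)c,\nabla(c_\e-c)),
\]
which is absorbed after Young's inequality and requires $\|u_\e-u\|_{L^2}^2$. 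The key term is the consistency error between the surface sum $\e^{3-2\alpha}\sum_i\int_{\Gamma_{\e,i}}G(c_\e,r_{\e,i})\varphi$ and the bulk term $\int\!\!\int 4\pi\sigma^2G(c,\sigma)f\,\varphi$. We split it into three parts. The first replaces $c_\e$ by $c$ on $\Gamma_{\e,i}$ using the trace lemma with the small factor $\e^{(\alpha-1)/2}$. The second replaces surface averages by point values at $z_{\e,i}$, at cost $\e$. The third compares $f_\e$ with $f$. For this last part we pass to the regularized measures: we compare $f_\e$ with $f_\e^{app}$ at cost $\e$, then compare $f_\e^{app}(t)$ with $f(t)$. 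This comparison is done in the $(H^1_xW^{1,\infty}_\sigma)^*$ duality against $4\pi\sigma^2G(c,\sigma)\varphi$, which lies in $H^1_xW^{1,\infty}_\sigma$. The dual distance is propagated in time from $t=0$ plus $\mathcal W_2$ terms, using that both measures are transported by velocity fields that differ by controlled amounts. Also, $\partial_t$ of the moving domain contributes only volume-fraction terms of order $\e^{3\alpha-3}$.

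The fluid is the main obstacle. We compare $u_\e$ with $w_\e\bar u$, where $w_\e$ is built from the local Stokes correctors of the appendix, i.e.\ Stokeslets of radius $\e^\alpha r_{\e,i}$ cut off on $B_{\dmin\e/4}(z_{\e,i})$. We add a local divergence correction and the inhomogeneous Dirichlet data $\e^\alpha\partial_tr_{\e,i}\nu$, which costs $\e^{\alpha}$ times capacity-type norms. Then we test the Stokes system with $u_\e-w_\e\bar u$. Using the Poincaré inequality of \cref{lemma:poincare} with constant $\e^{(3-\alpha)/2}$ from \eqref{ass:uniform.covering}, we obtain
\[
\e^{3-\alpha}\|\nabla(u_\e-w_\e\bar u)\|^2+\|u_\e-w_\e\bar u\|^2\lesssim \text{consistency errors}.
\]
The Darcy pressure $p$ enters through $(\nabla p,\cdot)$ and cancels with $h$. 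What remains are the cell errors of the correctors, which after scaling give the term $\e^{(\alpha-1)/2}$ (the slow decay of the volume fraction), and the residual viscous term $\e^{3-\alpha}\Delta\bar u$. In addition, there is the boundary mismatch: $\bar u$ does not satisfy the no-slip condition on $\Sigma_2$ nor the stress condition on $\Sigma_1$. We handle it with a boundary-layer corrector of width $\delta$, namely a cutoff of $\bar u$ near $\partial\Omega$ fixed via the $C^{3,\mu}$ domain $U$ and $\bar u\cdot n=0$ so that a Bogovskii correction stays small. This costs $\e^{3-\alpha}\delta^{-1}+\delta$. Optimizing $\delta=\e^{(3-\alpha)/2}$ gives $\e^{(3-\alpha)/2}$ in squared norm, hence the rate $\e^{(3-\alpha)/4}$. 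The dependence of the Brinkman/Darcy coefficient on the radii is compared in the same dual norm as above. This is where the factor $\e^{(\alpha-3)/2}$ in front of $\|f^{app}_\e(0)-f_0\|$ arises, because the friction term is only controlled against the $\e^{3-\alpha}$-weighted gradient. Finally, we insert the resulting bound on $\|u_\e-u\|^2$ into the concentration estimate, absorb the gradient terms, and apply Gronwall to $E_\e$ on $S$. We choose $\e_0$ so that \Cref{theorem:well_posedness} applies on $[0,T]$, and we read off \eqref{quantitative.est.supercritical}.
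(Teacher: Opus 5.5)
Your overall plan is the paper's. It has the same ingredients:
\begin{itemize}
\item relative energy $\|c_\e-c\|^2_{L^2(\Omega_\e(t))}$ plus the cost of an initially optimal coupling transported along the micro- and macro-characteristics;
\item no concentration corrector;
\item an Allaire-type velocity approximation with inhomogeneous-data corrector and a boundary layer of width $\e^{(3-\alpha)/2}$ at $\Sigma_2$;
\item the friction identified in the dual norm, which produces $\e^{(\alpha-3)/2}\|f_\e^{app}(0)-f_0\|$;
\item Gronwall.
\end{itemize}
The coupling must be a plan rather than a map, since $f_\e(0)$ is atomic in $x$. What you differentiate is the cost of the propagated plan, not $\mathcal W_2^2(f_\e(t),f(t))$ itself; that cost dominates $\mathcal W_2^2$ and equals it at $t=0$.

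\textbf{One step is wrong as written.} In the concentration consistency error you route the surface term $\e^{3-2\alpha}\sum_i\int_{\Gamma_{\e,i}}G(c,r_{\e,i})(c_\e-c)$ through point values at $z_{\e,i}$. You then compare $f_\e$ with $f_\e^{app}$ ``at cost $\e$''. The test function $c_\e-c$ is only bounded in $H^1$, so point values are meaningless. The $O(\e)$ comparison between $f_\e$ and $f_\e^{app}$ holds only against Lipschitz functions.

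The paper (term $\bar I_{5,2}$) instead compares the uniform measure on $\Gamma_{\e,i}$ directly with the uniform measure on $B_{\dmin\e/4}(z_{\e,i})$, acting on $\mathcal E_\e(c_\e-c)$. It uses \eqref{poincare.type.averages} with $s\sim\e^\alpha$:
\[
\e^3\sum_i\Bigl|\fint_{\Gamma_{\e,i}}h_i-\fint_{B_{\dmin\e/4}(z_{\e,i})}h_i\Bigr|\lesssim\e^{\frac{3-\alpha}{2}}\|c_\e-c\|_{H^1(\Omega_\e)},\qquad h_i=G(c,r_{\e,i})\,\mathcal E_\e(c_\e-c).
\]
After Young this costs $\e^{3-\alpha}$, not $\e^2$. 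This is exactly the term that becomes of order one at $\alpha=3$, which is why the critical case needs the corrector $c_\e^{app}$. Since $\e^{3-\alpha}\le\e^{(3-\alpha)/2}$, your final rate survives.

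Similarly, \cref{lemma:trace_estimates}~(ii) gives the factor $\e^{3-\alpha}$, not $\e^{\alpha-1}$, in front of $\|\nabla(c_\e-c)\|^2$ in your $\eta$-estimate. The same factor is the one in the first part of your split. Your $\e^{\alpha-1}$ is too strong when $\alpha>2$: test with a capacitary profile around one ball. Any positive power suffices for absorption, so nothing breaks.

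\textbf{Your boundary-layer corrector differs from the paper's.} The paper takes a vector potential $A\in C^{2,\mu}$ with $\curl A=\bar u$ and $A=0$ on $\partial U$; this is what the $C^{3,\mu}$ regularity of $U$ is for. It then works with $u-u_\varpi$, where $u_\varpi=\curl(\theta_\varpi A)$. This field is exactly divergence-free, Lipschitz with constant $\lesssim\varpi^{-1}$, and vanishes near $\Sigma_2$. Hence the local Bogovskii operator $\mathcal B_\e$ applies unchanged; it needs $(w_\e-{\rm I}_3):\nabla v$ to have zero mean on each cell.

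A plain cutoff $(1-\theta_\delta)\bar u$ can also work, since $\bar u\cdot n=0$ makes its divergence $O(1)$ on the layer. However, you must remove that divergence by a separate, non-perforated Bogovskii step (on $U$, vanishing on $\partial U$) before multiplying by $w_\e$. Otherwise, correcting with the perforated operator of \cref{pro:Bog} costs $\e^{3-\alpha}\|\nabla\Bog_\e f\|^2\lesssim\|f\|^2$. The part $(w_\e-{\rm I}_3):\nabla v$ has $L^2$-size $\delta^{-1/2}\e^{\alpha-1}$, so this cost is $\e^{2\alpha-2}\delta^{-1}=\e^{(5\alpha-7)/2}$ for $\delta=\e^{(3-\alpha)/2}$. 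That does not even vanish for $\alpha\le 7/5$. With the divergence removed first, your cost $\e^{3-\alpha}\delta^{-1}+\delta$ and the choice $\delta=\e^{(3-\alpha)/2}$ agree with the paper.
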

\begin{remark}
    \begin{enumerate}
    \item 
    Note that in contrast to the critical regime, we assume through the assumption on the support that the distribution of radii according to $f_0$ is uniformly bounded from below. One can show that this is propagated by the equations similarly as for the microscopic dynamics. This assumption together with the uniform bound from below on $\int f(t,x,\d \sigma) =\int f_0(x,\d \sigma) $ guarantees that 
       $$\inf_{t \leq T} \inf_{x \in \Omega} \int_0^\infty 6 \pi \sigma f(t,x,\d \sigma) >0.$$
    Hence, equation \eqref{Darcy} is well-posed.

    These assumptions are actually not very restrictive since they are satisfied by any weak accumulation point of $f_\e(0)$. This holds firstly thanks to the uniform bound from below on the microscopic radii, and secondly thanks to the assumption \eqref{ass:uniform.covering} which guarantees that for any cube $Q \subset \Omega$
    $$ \exists \e_Q > 0 \quad \forall \e< \e_Q \qquad f_\e(0,Q \times \R_+) \gtrsim |Q|.$$

    \item The additional regularity on the outer boundary is only needed for the construction of the boundary-layer corrector for the fluid flow to account for the loss of the no-slip condition on $\Sigma_2$ in the limit.
    Specifically, we require a correction near $\partial\Omega$ and the assumed extension property allows us to represent an extension of $u$ by a sufficiently regular vector potential, see~\cite[Lemma 4.10]{BalaziAllaireOmnes25}. Note that under the regularity assumptions of $u$, such $U$ and $\bar u$ exist in particular for domains as in Figure \ref{geometry}.
    
    \item The comments on the regularity assumptions in \cref{remark:critical_theorem} apply analogously here.
    The main difference is that the fluid variables are now governed by the Darcy system rather than the Brinkman system.
    Accordingly, the required regularity of $(u,p)$ has to be understood in terms of the corresponding mixed elliptic problem for the pressure, subject to the same caveats concerning the junction of $\Sigma_1$ and $\Sigma_2$.
    We again impose the stated regularity directly and do not pursue optimal sufficient conditions.

    \item The error terms $\e^{\frac{3-\alpha}{4}} +  \e^{\frac{\alpha - 1}{2}}$ are worse than in the critical regime. These errors are precisely the sharp errors  for the homogenization of the Stokes equations in a fixed perforated domain, see \cite{JingLuPrange, HoferLuOschmann26} (for the periodic setting).\footnote{Note that the correct exponents for the quantitative convergence result are written in the updated arxiv version of \cite{JingLuPrange}.}
    \end{enumerate}
\end{remark}

\subsection{Interpretation and examples}
\Cref{th:hom.critical,thm:hom.supercritical} ensure convergence whenever data and the initial geometries converge in the sense
\[
\mathcal W_2(f_\e(0), f_0) +    \e^{\frac{\alpha-3}2}\|f_\e^{app}(0) - f_0\|_{(H^1_xW_\sigma^{1,\infty})^\ast}+  \|c_{\e,0} - c_0\|_{L^2(\Omega_\e)}  + \|h_\e - h\|_{L^2(S\times\Omega)}\to 0.
\]
In the critical case \(\alpha=3\), convergence rates of order \(\e\) are obtained, provided that the errors in the  data and  initial geometry converge at least at the same rate.
In the supercritical case \(1<\alpha<3\), the shown convergence rate is
\[
        \e^{\beta_\alpha},\qquad \beta_\alpha:=\min\left\{
            \frac{3-\alpha}{4},
            \frac{\alpha-1}{2}
        \right\},
\]
again provided that the preparation errors are of at least this order.
The exponent is maximal for $\alpha=\frac 53$ with $\beta_\alpha=\frac 13$ and it degenerates with $\beta_\alpha\to0$ for $\alpha\to 1$ and $\alpha\to3$.
Now, such convergence rates can usually be imposed directly on the initial concentration and the source term, but it is less clear for the initial geometry with the two separate errors
\[
\mathcal W_2(f_\e(0), f_0) +    \e^{\frac{\alpha-3}2}\|f_\e^{app}(0) - f_0\|_{(H^1_xW_\sigma^{1,\infty})^\ast}.
\]
These two errors are used at different steps in the proof.
The Wasserstein distance compares the initial discrete particle configuration with the limiting measure which is then propagated through the radius evolution (\cref{evolution.f,evolution.f.supercritical}).
Now, since the discrete measure \(f_\e\) is atomic in the spatial
variable and since point evaluation is not continuous on \(H^1(\Omega)\) in three dimensions, it cannot be used directly in the estimates involving spatial \(H^1\)-test functions.
To overcome this, we regularize via \(f_\e^{app}\) by spreading each point mass uniformly over a ball of radius $\sim \e$.
The mixed dual norm then measures the difference between this spatially regularized measure and \(f_0\).

We close this section with two examples of geometries that are admissible in our setup. 

\begin{example}[Uniform, periodic configuration]
Let $\Omega=(0,1)^3$ (such that $|\Omega|=1$ and the standard partition into $\e$-cubes (with $\e^{-1}\in\N$) consists of exactly $N_\e=\e^{-3}$ cells). 
Let \(z_{\e,i}\) be the centers of the \(\e\)-cubes of the standard cubic partition, and let
\[
    r_{\e,i,0}=r_*
\]
for all \(i\in\mathcal I_\e\) and some $r_*>0$.
Then
\[
    f_{\e,0}
    =
    \e^3\sum_{i\in\mathcal I_\e}
        \delta_{z_{\e,i}}\otimes\delta_{r_*},\qquad
    f_{\e,0}^{app}
    =
    \e^3\sum_{i\in\mathcal I_\e}
        \delta_{B_{\frac{\dmin\e}{4}}(z_{\e,i})}
        \otimes\delta_{r_*}.
\]
Both measures converge to
\[
    f_0(\d x,\d\sigma)
    =
    \d x\,\delta_{r_*}(\d\sigma).
\]
Moreover, for this configuration we have the natural approximation rates
\[
    W_2(f_{\e,0},f_0)\lesssim\e,
    \qquad
    \|f_{\e,0}^{app}-f_0\|_
        {(H_x^1W_\sigma^{1,\infty})^*}
    \lesssim\e
\]
which is enough to ensure convergence of 
\[
\mathcal W_2(f_\e(0), f_0) +    \e^{\frac{\alpha-3}2}\|f_\e^{app}(0) - f_0\|_{(H^1_xW_\sigma^{1,\infty})^\ast}
\]
for all $\alpha\in(1,3]$.
In this case, the solution $f$ of the continuity equation remains mono-disperse in the radius variable. 
More precisely, $f(t,x,\sigma) = \delta_{r(t,x)}(\sigma)$, where $r$ solves (in the critical case)
\begin{subequations}
\begin{empheq}[left=\empheqlbrace]{alignat=2} 
    \partial_t r &= - \frac{H(c, r)}{ r} & \quad \text{in} \ S \times \Omega, \\
    r(0,x) &= r_\ast & \text{in} \ \Omega.
\end{empheq}
\end{subequations}
In particular, the coupling terms reduce to (in the critical case)
\[
    \int_0^\infty
        4\pi\sigma H(c,\sigma)\,
        f(t,x,\d\sigma)
    =    4\pi r(t,x)H(c(t,x),r(t,x)),\qquad
    \int_0^\infty
        6\pi\sigma\,
        f(t,x,\d\sigma)
    =
    6\pi r(t,x).
\]
\end{example}

\begin{example}[General radius distributions]
We note that, with our assumptions, the limit measure $f_0$ need not be atomic:
Take $\Omega=(0,1)^3$ and $f_0=\d x\otimes p(\d\sigma)$ a target limit measure with $p\in\mathcal{P}([\nicefrac1{\overline r},\overline r])$.
We let again \(z_{\e,i}\) be the centers of the \(\e\)-cubes of the standard cubic partition and choose the initial radii \(r_{\e,i,0}\) independently according to the probability measure \(p\).
Then, almost surely, both the empirical measure and its spatially regularized counterpart converge to
\[
    f_0(\d x,\d\sigma)
    =   \d x\otimes p(\d\sigma)
\]
where $p$ can, e.g., be atomic or absolutely continuous.

This construction can be extended to spatially non-homogeneous measures of the form
\[
f_0(\d x,\d\sigma)=\rho(x)\d x\, p_x(\d\sigma)
\]
for suitable spatial distributions $\rho$ and spatially dependent probability measures $p_x$, by combining non-periodic particle configurations approximating $\rho(x)\d x$ with radii sampled by $p_{z_{\e,i}}$.
\end{example}

\section{A-priori estimates and maximum principle}\label{ssec:maximum}
The analysis of the moving boundary problem is done in \cref{sec:analysis} via a contraction mapping argument which relies on transforming the problem to a fixed reference geometry.

There are, however, some very useful a-priori properties that can already be established directly on the moving geometry.
In particular, the model admits a maximum principle guaranteeing non-negativity and at most exponential growth of the bulk concentration.
More crucially, the evolution law for the radii yields uniform non-degeneracy bounds, ensuring that the perforations neither collapse nor grow unboundedly in finite time (\cref{theorem:maximum_principle}).
These estimates play a central role in \cref{sec:analysis}, specifically when showing long-time existence by preventing blow-up.
Before deriving these estimates, we briefly discuss the role of the dissipative structure of the interfacial exchange, which becomes particularly important in the critical regime.

\paragraph{The role of dissipativity in the critical regime.}

The dissipative structure of the interfacial exchange in Assumption~\textup{(A2)} is important in obtaining uniform $\e$ estimates.
Although the condition is not sharp and can be relaxed, there is a genuine obstruction in the critical regime $\alpha=3$ where non dissipative interface production may lead to growth rates that become unbounded as $\e\to0$.

This can already be seen via the following simplified problem in the fixed initial configuration for some $k>0$
    \begin{equation}\label{eq:simplified-problem}
    \left\{
    \begin{aligned}
    \partial_t c_\e-\Delta c_\e&=0
    &&\quad \text{in } S\times\Omega_\e,\\
    -\nabla c_\e\cdot \nu_\e&=k\e^{3-2\alpha}c_\e
    &&\quad \text{on } S\times\Gamma_\e,\\
    -\nabla c_\e\cdot \nu_\e&=0
    &&\quad \text{on } S\times\partial\Omega,\\
    c_\e(0)&=c_{\e,0}
    &&\quad \text{in } \Omega_\e.
    \end{aligned}
    \right.
    \end{equation}
In this problem, we have the linear and non-dissipative interface rate $k\e^{3-2\alpha}c_\e$.
For every fixed $\e>0$, this is a standard linear parabolic problem which admits a unique weak solution. 
Moroever, since $|\Gamma_\e|\sim\e^{2\alpha-3}$, the factor $\e^{3-2\alpha}$ precisely compensates for the total surface measure.

Now, testing with $c_\e$ gives
    \[
    \frac12\ddt\|c_\e\|^2_{L^2(\Omega_\e)}
    =k\e^{3-2\alpha}\|c_\e\|_{L^2(\Gamma_\e)}^2-\|\nabla c_\e\|^2_{L^2(\Omega_\e)}
    \]
and the trace estimate in \cref{lemma:trace_estimates} yields
\[
k\e^{3-2\alpha}\|c_\e\|_{L^2(\Gamma_\e)}^2-\|\nabla c_\e\|_{L^2(\Omega_\e)}^2
\le Ck\|c_\e\|_{L^2(\Omega_\e)}^2
-\bigl(1-Ck\e^{3-\alpha}\bigr)\|\nabla c_\e\|_{L^2(\Omega_\e)}^2.
\]
Hence, for every fixed \(k\ge0\), $\e$-uniform a priori estimates are readily available for sufficiently small $\e$ for all $\alpha\in(1,3)$.
In the critical case $\alpha=3$, however, the smallness of the factor \(\e^{3-\alpha}\) is lost, and the trace estimate provides uniform control only for sufficiently small $k$. 

We note that this is not merely a limitation of the trace estimate but a genuine critical instability:
The maximal exponential growth rate of the $L^2$-norm of solutions to \cref{eq:simplified-problem} is characterized by (cf.~\cite{Freitas2015,KrejcirikLotoreichik2020})
    \[
    \sigma_\e=\sup_{v\in H^1(\Omega_\e)\setminus\{0\}}\frac{k\e^{3-2\alpha}\|v\|_{L^2(\Gamma_\e)}^2-\|\nabla v\|^2_{L^2(\Omega_\e)}}{\|v\|^2_{L^2(\Omega_\e)}}.
    \]

Consider a particle of physical radius \(\e^\alpha r_{\e,i}\) and choose a localized radial test function.
To be more precise, let
\[
a_{\e}=\e^\alpha r_{\e,i},\qquad \lambda_\e=\e^{-\alpha}\left(\e^{3-\alpha}k-\frac{1}{r_{\e,i}}\right)
\]
and set
\[
v_\e=\frac{a_\e}{|x-z_{\e,i}|}\exp\left(-\lambda_\e(|x-z_{\e,i}|-a_\e)\right).
\]
After smoothly truncating $v_\e$ before reaching the neighboring particles, which only produces a lower-order error due to the separation of the scales, we can calculate
\[
\sigma_\e\gtrsim\e^{-2\alpha}\left(k\e^{3-\alpha}-c\frac1{r_{\e,i}}\right)
\]
for some universal $c$ (for sufficiently separated particles).
By the spectral theory for the Robin Laplacian (see, e.g., \cite[Section~2]{Freitas2015}), \(\sigma_\e\) is the largest eigenvalue of the self-adjoint generator \(A_\e\) associated with \cref{eq:simplified-problem}.
Now, let $\varphi_\e$ be a normalized eigenfunction associated with $\sigma_\e$ and chose $c_{\e,0}=\varphi_\e$.
The solution is then given by $c_\e(t)=e^{t\sigma_\e}\varphi_\e$.
Consequently, for $\alpha=3$ and $k$ sufficiently large
\[
\|c_\e(t)\|_{L^2(\Omega_\e)}=e^{t\sigma_\e}\to\infty
\]
for all $t>0$.
Thus, no $\e$-uniform $L^2$-estimate can hold for general non-dissipative interface production in the critical regime. 
Note that strictly sublinear production terms can still be controlled by the energy method, even in the critical case.

\paragraph{A priori estimates.}

\begin{theorem}\label{theorem:maximum_principle}
    Let $(c_\e,u_\e,p_\e,r_\e)$ be a solution in the sense of \cref{definition:weak_solution_moving} over the time interval $S=(0,T)$.
    Then, for almost all $t\in S$ and $x\in\Omega_\e(t)$,
    \[
    0\le c_\e(t,x)\le \max\left\{\overline c_0,c^{\mathrm{eq}}(\overline r)\right\}e^{2C_F t}.
    \]
    In addition, there are $\e$-independent functions $\underline\xi(t)$ and $\overline\xi(t)$ such that
    \[
    0<\underline\xi(t)\le r_{\e,i}(t)\le\overline\xi(t)<\infty
    \]
    for all $t\in S$ and $i\in\mathcal{I}_\e$.
    Also,
    \[
    -\sup_{s\in[\underline{\xi}(t),\overline{\xi}(t)]}g(s)\cequ(s)\le \partial_tr_{\e,i}(t)\le ke^{\lambda t}\sup_{s\in[\underline{\xi}(t),\overline{\xi}(t)]}g(s).
    \]
\end{theorem}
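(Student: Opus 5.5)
We prove the three assertions in order: non-negativity, the exponential upper bound for $c_\e$, and then the radius bounds, which use the bound on $c_\e$.

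\emph{Non-negativity.} We work in the coupled weak form \cref{eq:weak_form_alternative_derivative} on the moving domain. We take $\varphi=c_\e^-=\min\{c_\e,0\}$, which vanishes on $\Sigma_1$, and $\eta=0$. Reynolds' transport theorem with normal boundary velocity $\e^\alpha\partial_t r_{\e,i}$ handles the time derivative. The boundary term $\e^\alpha\partial_t r_{\e,i}c_\e$ in the flux law exactly cancels the transport-theorem boundary contribution, and we obtain $\tfrac12\ddt\|c_\e^-\|^2_{L^2(\Omega_\e(t))}$. The advection term is handled through $\dive u_\e=0$ and integration by parts. On $\Gamma_\e$ the resulting boundary integral $\tfrac12\int u_\e\cdot\nu\,(c_\e^-)^2$ is again compensated by the transport term, and on $\Sigma_1$ it vanishes. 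On $\Sigma_2$ the product $u_\e\cdot n$ is zero. The reaction term vanishes on $\{c_\e<0\}$ since $F=0$ on $(-\infty,0]$. The interfacial term becomes $\e^{3-2\alpha}\int_{\Gamma_\e} g(r)(\cequ(r)-c_\e)c_\e^-$. Both pieces have a good sign: $g\cequ\ge0$ multiplied by $c_\e^-\le0$, and $-g\,c_\e c_\e^-\le0$. Gronwall's lemma with $c_{\e,0}\ge0$ then gives $c_\e^-=0$.

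\emph{Upper bound.} Set $K(t)=Me^{2C_Ft}$ with $M=\max\{\overline c_0,\cequ(\overline r)\}$, and test with $(c_\e-K)^+$. The time derivative of $K$ produces $-\int 2C_F K(c_\e-K)^+$. Using $F(c)\le C_F(1+c)$, this absorbs the reaction term once $K\ge1$, after normalising if needed, and otherwise the rate adjustment is routine. The interfacial term contributes $\e^{3-2\alpha}\int g(r)(\cequ(r)-c_\e)(c_\e-K)^+\le0$, but only as long as $\cequ(r_{\e,i}(t))\le K(t)$.

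Verifying this last inequality is the main obstacle, because the radius bound itself depends on $c_\e$. I would close it with a continuity (bootstrap) argument on the largest time $t^*$ up to which $c_\e\le K$ holds. On $[0,t^*]$, wherever $\cequ(r_{\e,i})\ge K$ we have $c_\e\le K\le\cequ(r_{\e,i})$, so $G\ge0$ and $\partial_t r_{\e,i}\le0$. Hence $\cequ(r_{\e,i}(t))\le\max\{\cequ(\overline r),K\}=K$ by monotonicity of $\cequ$. This keeps the interfacial sign correct, so $c_\e\le K$ extends past $t^*$ by the Gronwall estimate, and $t^*=T$.

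\emph{Radius bounds.} Inserting $0\le c_\e\le K(t)$ into the radius ODE gives
\[
-g(r)\cequ(r)\le\partial_t r_{\e,i}=-\fint g(r)(\cequ(r)-c_\e)\le g(r)K(t).
\]
Comparison with the scalar ODEs $\dot{\overline\xi}=K(t)g(\overline\xi)$ and $\dot{\underline\xi}=-g(\underline\xi)\cequ(\underline\xi)$ gives the bounds. The divergence conditions \eqref{eq:non_blowup} show that $\overline\xi$ stays finite and $\underline\xi$ stays positive for all times; for example, $\int_{\underline\xi(t)}^{1/\overline r}\frac{\d z}{g\cequ}=t$ cannot be satisfied by $\underline\xi=0$. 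Finally, bounding $g$ and $g\cequ$ on $[\underline\xi(t),\overline\xi(t)]$ yields the stated estimate for $\partial_t r_{\e,i}$, with $k=M$ and $\lambda=2C_F$.
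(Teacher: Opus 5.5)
Your non-negativity step (testing with $c_\e^-$ and $\eta=0$) is fine, and so is the comparison argument for the radii. The gap is in the continuity argument for the upper bound.

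On $[0,t^*]$ you only obtain the closed condition $\cequ(r_{\e,i}(t))\le K(t)$. To push $c_\e\le K$ past $t^*$, however, the Gronwall step needs the interfacial sign, i.e.\ $\cequ(r_{\e,i})\le K$, on some interval $[t^*,t^*+\tau]$. If $\cequ(r_{\e,i}(t^*))=K(t^*)$ for some $i$, nothing supplies this. For $t>t^*$ you know neither $c_\e\le K$ (needed for $\partial_t r_{\e,i}\le0$) nor $\cequ(r_{\e,i})\le K$ (needed for the sign), and each is being used to prove the other. This already happens at $t^*=0$ whenever $\cequ(\overline r)\ge\overline c_0$ and some particle starts at $r_{\e,i,0}=\overline r$, so the bootstrap cannot even be started. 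A continuity argument only closes if the condition at $t^*$ is strict.

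There are two ways to repair it.

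\emph{Make the condition open.} Use $K_\delta=(M+\delta)e^{2C_Ft}$ and set $\psi_i:=K_\delta-\cequ(r_{\e,i})$. On $[0,t^*]$ the bound $c_\e\le K_\delta$ gives $\partial_tr_{\e,i}\le g(r_{\e,i})\psi_i$. Since $\cequ$ is nondecreasing and locally Lipschitz, $\dot\psi_i\ge -L\bar g\,\psi_i$ as long as $\psi_i>0$, where $L,\bar g$ bound $\mathrm{Lip}(\cequ)$ and $g$ on the range of the given radii. Hence $\psi_i\ge\delta e^{-L\bar g T}>0$ on $[0,t^*]$. Continuity of the finitely many $r_{\e,i}$ then gives the sign on $[t^*,t^*+\tau]$, Gronwall extends $c_\e\le K_\delta$, and you let $\delta\to0$.

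\emph{Avoid the bootstrap, as the paper does.} Test the coupled form \eqref{eq:weak_form_alternative_derivative} simultaneously with $\varphi=(c_\e-a)_+$ and $\eta=(\cequ(r_\e)-a)_+$. By monotonicity of $z\mapsto(z-a)_+$, the interfacial integrand $g(r_{\e,i})(\cequ(r_{\e,i})-c_\e)\bigl((\cequ(r_{\e,i})-a)_+-(c_\e-a)_+\bigr)$ is nonnegative. The radius terms yield the nonnegative primitive $\Psi(r_{\e,i}^3,a)=\int_0^{r_{\e,i}^3}(\cequ(\sqrt[3]{z})-a)_+\,\d z$. A single Gronwall argument then gives $c_\e\le a$ and $\cequ(r_{\e,i})\le a$ at once. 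This is exactly the coupling your continuity argument tries to handle by hand.

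A smaller point: absorbing $C_F(1+c_\e)$ by $\dot K$ requires $K\ge1$. This cannot be arranged by normalising, because $F(b)\le C_F(1+|b|)$ is not scale invariant. You need $M\ge1$ (the paper takes $k\in\N$), or a larger rate $C_F(1+1/M)$.
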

\begin{proof}
The proof is done in two steps.
First, we establish a maximum principle via an truncation argument with which we establish the $L^\infty$-bounds for $c_\e$.
In a second step, the estimates for the radial evolution are derived by means of a comparison principle. 

\textit{Step 1: Maximum principle for the concentration.}
Let $\gamma>0$ and chose $k\in \N$ such that
\[
k\geq\max\left\{\overline c_0,c^{\mathrm{eq}}(\overline r)\right\}.
\]
To use the dissipative structure of the interfacial exchange, we test the concentration and radius equations simultaneously with the corresponding truncations.
We set
\[
    a(t)=ke^{\gamma t},\qquad
    c_k=(c_\e-a(t))_+,\qquad
    \cequ_k=(\cequ(r_\e)-a(t))_+,
\]
and note that $\varphi=c_k$ and $\eta=\cequ_k$ are valid test functions for the the weak form \eqref{eq:weak_form_alternative_derivative}.
With $c_k(0)=0$ and $\cequ(0)=0$, this means
\begin{multline*}
    (c_{\e}(t),c_k(t))_{L^2(\Omega_\e(t))}
    +\frac{4\pi\e^3}{3}\sum_{\mathcal{I}_\e}r_{\e,i}^3(t)c^{\mathrm{eq}}_k(t)\\
    -\int_0^t\langle \partial_tc_k,c_\e\rangle_{H^1(\Omega_{\e}(s))^*}
    -\frac{4\pi\e^3}{3}\sum_{\mathcal{I}_\e}r_{\e,i}^3\partial_tc^{\mathrm{eq}}_k
    +(\nabla c_\e-c_\e u_\e,\nabla c_k)_{L^2(\Omega_{\e}(s))}\di{s}\\
    +\e^{3-2\alpha}\sum_{\mathcal{I}_\e}\int_S(G(c_\e,r_{\e,i}),c^{\mathrm{eq}}_k-c_k)_{L^2(\Gamma_{\e,i}(s))}\di{s}
    =\int_0^t(F(c_\e),c_k)_{L^2(\Omega_{\e}(s))}\di{s}.
\end{multline*}
With \cref{lemma:times_derivative} and $c_\e c_k=c_k^2+ac_k$, we get
\begin{multline*}
(c_{\e}(t),c_k(t))_{L^2(\Omega_\e(t))}
-\int_0^t\langle \partial_tc_k,c_\e\rangle_{H^1(\Omega_{\e}(s))^*}\\
=\frac{1}{2}\|c_k(t)\|_{L^2(\Omega_\e(t))}^2
-\e^\alpha\int_0^t\int_{\Gamma_\e(s)}\left(\frac12c_k^2+a(s)c_k\right)\partial_tr_\e\di\gamma\di{s}
+\gamma\int_0^t\int_{\Omega_\e(s)}a(s)c_k\di{x}\di{s}
\end{multline*}
For the advection term, we find that
\[
\int_0^t(c_\e u_\e,\nabla c_k)_{L^2(\Omega_{\e}(s))}\di{s}
=\int_0^t\int_{\Omega_\e(s)}(c_k+a) u_\e\cdot \nabla c_k\di{x}\di{s}
=\int_0^t\int_{\Omega_\e(s)}\frac12u_\e\cdot \nabla|c_k|^2+au_\e\cdot\nabla c_k\di{x}\di{s}.
\]
Since $u_\e$ is divergence free, $u_\e=0$ on $\Sigma_2$, $c_\e=0$ on $\Sigma_1$, and $a$ is constant in space, 
\[
\int_0^t\int_{\Omega_\e(s)}\frac12u_\e\cdot \nabla|c_k|^2+au_\e\cdot\nabla c_k\di{x}\di{s}
=\int_0^t\int_{\Gamma_\e(s)}\left(\frac12c_k^2+a(s)c_k\right)u_\e\cdot\nu_\e\di\gamma\di{s}
\]
As a consequence (with $u_\e\cdot\nu_\e= \e^\alpha\partial_tr_\e$ on $\Gamma_\e(s)$),
\begin{multline*}
(c_{\e}(t),c_k(t))_{L^2(\Omega_\e(t))}
-\int_0^t\langle \partial_tc_k,c_\e\rangle_{H^1(\Omega_{\e}(s))^*}
-\int_0^t(c_\e u_\e,\nabla c_k)_{L^2(\Omega_{\e}(s))}\di{s}\\
=
\frac{1}{2}\|c_k(t)\|_{L^2(\Omega_\e(t))}^2
+\lambda\int_0^t\int_{\Omega_\e(s)}a(s)c_k\di{x}\di{s}.
\end{multline*}
For the diffusion term, we simply estimate
\[
\int_0^t(\nabla c_\e,\nabla c_k)_{L^2(\Omega_{\e}(s))}\di{s}
=\int_0^t(\nabla c_k,\nabla c_k)_{L^2(\Omega_{\e}(s))}\di{s}
\geq c_D\int_0^t\|\nabla c_k\|_{L^2(\Omega_{\e}(s))}^2\di{s}.
\]
For the interfacial exchange term, we use non-negativity of $g$ and monotonicity ot $T_a(z)=(z-a)_+$ to
estimate for each $i\in\mathcal{I}_\e$ (with Assumption $(A2)$):
\begin{align}\label{eq:max_interface}
    \int_0^t(G(c_\e,r_{\e,i}),c^{\mathrm{eq}}_k-c_k)_{L^2(\Gamma_{\e,i}(s))}\di{s}
    &=\int_0^t\int_{\Gamma_{\e,i}(s)}g(r_{\e,i})\left[c^{\mathrm{eq}}(r_{\e,i})-c_\e\right]\left(c^{\mathrm{eq}}_k-c_k\right)\di\gamma\di{s}\ge0.
\end{align}
The right hand side grows at most linearly (note that $c_\e(t)=c_k(t)+a(t)>0$ on the support of $c_k$):
\begin{align*}
\int_0^t(F(c_\e),c_k)_{L^2(\Omega_{\e}(s))}\di{s}
&\leq C_F\int_0^t\int_{\Omega_\e(s)}|c_k|^2+(1+a(s))c_k\di{x}\di{s}.
\end{align*}
Putting everything so far together, we have
\begin{multline}\label{eq:maximum_middle}
    \frac{1}{2}\|c_k(t)\|_{L^2(\Omega_\e(t))}^2
    +\frac{4\pi\e^3}{3}\sum_{\mathcal{I}_\e}(r_{\e,i}^3(t)-r_{c}^3)c^{\mathrm{eq}}_k(t)
    -\int_0^t\frac{4\pi\e^3}{3}\sum_{\mathcal{I}_\e}(r_{\e,i}^3-r_{c}^3)\partial_tc^{\mathrm{eq}}_k\di{s}\\
    +\int_0^t\left((\gamma-C_F)a(s)-C_F\right)\int_{\Omega_\e(s)}c_k\di{x}\di{s}
    +c_D\int_0^t\|\nabla c_k\|_{L^2(\Omega_{\e}(s))}^2\di{s}
    \le C_F \int_0^t\int_{\Omega_\e(s)}|c_k|^2\di{x}\di{s}.
\end{multline}
Finally, we have to deal with the the dynamics at the interface. 
We have, for any $i\in\mathcal{I}_\e$,
\[
r_{\e,i}^3(t)c^{\mathrm{eq}}_k(t)
-\int_0^tr_{\e,i}^3(s)\partial_sc^{\mathrm{eq}}_k(s)\di{s}
=\int_0^t\partial_s(r_{\e,i}^3(s))c^{\mathrm{eq}}_k(s)\di{s}
\]
We take $M\colon S\to\R_{\ge0}$ via $M(t)=r^3_{\e,i}(t)$ and $Q\colon\R_{\geq0}\to\R_{\geq0}$ via $Q(m)=c^{\mathrm{eq}}(\sqrt[3]{m})$:
\[
\int_0^t\partial_s(r_{\e,i}^3(s))c^{\mathrm{eq}}_k(s)\di{s}
=
\int_0^t\partial_sM(s)(Q(M(s))-a(s))_+\di{s}.
\]
We introduce the primitive $\Psi\colon\R_{\ge0}\times\R_{\ge0}\to\R_{\ge0}$ via
\[
\Psi(M,a)=\int_0^M(Q(z)-a)_+\di{z}
\]
Now, since $Q$ is continuous and $M$ is absolutely continuous (because $r_{\e,i}$ is absolutely continuous), $t\mapsto \Psi(M(t),a(t))$ is absolutely continuous and it holds
\[
\ddt\left(\Psi(M(t),a(t))\right)=\partial_tM(t)\left(Q(M(t))-a(t)\right)_+ 
- \gamma a(t)\mathcal L^1\left(\{z\in[0,M(t)]\ : \ Q(z)>a(t)\}\right)
\]
for almost all $t\in S$.
Consequently,
\begin{align*}
    \int_0^t\partial_s(r_{\e,i}^3(s))c^{\mathrm{eq}}_k(s)\di{s}
    &=\int_0^t\ddt\left(\Psi(M(s),a(s))\right)\di{s}+\gamma\int_0^t\mathcal L^1\left(\{z\in[0,M(s)]\ : \ Q(z)>a(s)\}\right)a(s)\di{s}\\
    &\ge\Psi(M(t),a(t))-\Psi(M(0),a(0))=\Psi(M(t),a(t)).
\end{align*}
Here, $\Psi(M(0),a(0))=0$ since $Q(M(0))=\cequ(r_{\e,i,0})\le a(0)=k$.
Now, choosing $\gamma=2C_F$ we get from \cref{eq:maximum_middle}:
\begin{equation}\label{eq:max_princ_gronwall}
    \frac{1}{2}\|c_k(t)\|_{L^2(\Omega_\e(t))}^2
    +c_D\int_0^t\|\nabla c_k\|_{L^2(\Omega_{\e}(s))}^2\di{s}
    +\frac{4\pi\e^3}{3}\sum_{\mathcal{I}_\e}\Psi(M_i(t),a(t))
    \le C_F\int_0^t\|c_k\|_{L^2(\Omega_{\e}(s))}^2\di{s}
\end{equation}
with $M_i(t)=r^3_{\e,i}(t)$. 
From here, $c_k(t)=0$  and therefore $c_\e(t,x)\le ke^{\lambda t}$ follows by Grönwall since $\Psi$ is non-negative.
This also implies $\Psi(M_i(t),a(t))=0$ for all $i\in\mathcal{I}_\e$, i.e., $Q(M_i(t))=c^{\mathrm{eq}}(r_{\e,i}(t))\le a(t)=ke^{\lambda t}$.
Next, we test with $\varphi=\min\{0,c_\e\}$ and $\eta=c^{\mathrm{eq}}(r_\e)$.
With the same arguments as before, the corresponding lower bound $c_\e(t,x)\ge 0$ follows.

\textit{Step 2: Comparison principle for the radius dynamics.}
The radii evolve via the dynamics
\[
\partial_tr_{\e,i}(t)=g(r_{\e,i}(t))\left(\fint_{\Gamma_{\e,i}(t)}c_\e(t,\gamma)\di\gamma-\cequ(r_{\e,i}(t))\right),\quad r_{\e,i}(0)=r_{\e,i,0}
\]
Now, let $\underline\xi,\overline\xi\in W^{1,\infty}(S)$ solve the ODEs (which are independent of $\e$ and $i\in\mathcal{I}_\e$)
\begin{alignat}{2}
\partial_t\underline\xi&=-g(\underline\xi)\cequ(\underline\xi),&\qquad \underline{\xi}(0)&=\frac1{\overline r},\label{eq:subsolution}\\
\partial_t\overline\xi&=g(\overline\xi)ke^{\gamma t},&\quad\overline\xi(0)&=\overline r\label{eq:supersolution}.
\end{alignat}
Clearly, $\underline\xi(t)\le r_{\e,i}(t)\le \overline\xi(t)$ for all $t\in S$ since they are sub/super solutions.
In the trivial case $g(\overline{r})=0$, we have $r_{\e,i}(t)\le\overline\xi(t)\equiv\overline{r}$.
Else, $g>0$ on $[\overline{r},\infty)$ and the solution $\overline\xi$ is global if and only if (Osgood criterion)
\[
\overline{\Xi}(r):=\int_{\overline{r}}^r\frac{1}{g(z)}\di{z}\stackrel{r\to\infty}{\to}\infty
\]
which is guaranteed via Assumption (A2).
The solution is (note that $\Xi$ is strictly increasing due to $g>0$).
\[
\overline\xi(t)=\overline{\Xi}^{-1}\left(\frac k{\gamma} (e^{\gamma t}-1)\right)\quad(t\in[0,\infty)].
\]
A similar argument is possible for the subsolution:
$\underline \xi(t)$ stays positive for all time if and only if
\[
\underline{\Xi}(r):=\int_r^{\frac1{\overline r}}\frac{1}{g(z)\cequ(z)}\di{z}\stackrel{r\to 0_+}{\to}\infty.
\]
which is again guaranteed by Assumption (A2).
The estimates for the time derivatives follow.
\end{proof}

\begin{remark}
    \begin{itemize}
        \item We actually have a stronger monotonicity estimate for the interface exchange term \cref{eq:max_interface},
        \begin{align*}
        \int_0^t(G(c_\e,r_{\e,i}),c^{\mathrm{eq}}_k-c_k)_{L^2(\Gamma_{\e,i}(s))}\di{s}
        &=\int_0^t\int_{\Gamma_{\e,i}(s)}g(r_{\e,i})\left[c^{\mathrm{eq}}(r_{\e,i})-c_\e\right]\left(c^{\mathrm{eq}}_k-c_k\right)\di{\gamma}\di{s}\\
        &\ge\int_0^t\int_{\Gamma_{\e,i}(s)}g(r_{\e,i})|c^{\mathrm{eq}}_k-c_k|^2\di{\gamma}\di{s}.
        \end{align*}
        which becomes useful when $g$ is uniformly positive.
        It follows because of 
        \[
        (x-y)(T_a(x)-T_a(y))\ge |T_a(x)-T_a(y)|^2
        \]
        where $x,y\in\R$ and $T_a(x)=(x-a)_+$.
        However, since we only assume $g\ge0$, this estimate is not helpful in our case.
        \item If there is $0\le r_c\leq\frac1{\overline r}$ such that $\cequ(r_c)=0$, the above argument also directly shows $r_{\e,i}(t)\ge r_c$ via
        \[
        \underline{\Xi}(r):=\int_r^{\frac1{\overline r}}\frac{1}{g(z)\cequ(z)}\di{z}\stackrel{r\to r_*}{\to}\infty
        \]
        for all $r_*<r$.
        This is relevant when the balls are considered to have a solid core which cannot be dissolved.
        \item Via the local Lipschitz estimate away from 0,
        \[
        |\cequ(r)-\cequ(q)|\le L(\min\{r,q\})|r^3-q^3|\qquad (r,q>0),
        \]
        we also have the coercivity for the primitve $\Psi$ in the form of
        \[
        \Psi(M_i(t),a(t))\ge \frac{1}{2L(\underline\xi(t))}(\cequ(r_{\e,i}(t))-a(t))_+^2.
        \]
    \end{itemize}
\end{remark}

\begin{theorem}[Energy estimates]\label{theorem:energy estimates}
    Let $(c_\e,u_\e,p_\e,r_\e)$ be a solution in the sense of \cref{definition:weak_solution_moving} over the time interval $S=(0,T)$.
    Then, for all $t\in S$,
    \[
    \|c_\e\|_{L^\infty(0,t;L^2(\Omega_\e(s)))}^2+\|\nabla c_\e\|^2_{L^2(0,t;L^2(\Omega_\e(s)))}\lesssim_t1+\|c_{\e,0}\|_{L^2(\Omega_\e)}^2+\cequ(\overline r)\overline r^3.
    \]
\end{theorem}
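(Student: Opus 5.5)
We test the combined weak formulation \eqref{eq:weak_form_alternative_derivative} with $\varphi=c_\e$ and the radius test function $\eta=\cequ(r_\e)$, exactly as in Step~1 of the proof of \cref{theorem:maximum_principle} but without truncation (formally $a\equiv0$). This simultaneous choice lets us exploit the dissipative structure of the interfacial exchange. The exchange term becomes
\[
\e^{3-2\alpha}\int_0^t\int_{\Gamma_\e(s)} g(r_\e)\bigl(\cequ(r_\e)-c_\e\bigr)^2\di\gamma\di s\ \ge0,
\]
so it can be dropped. This is crucial in the critical case $\alpha=3$, where the trace estimate of \cref{lemma:trace_estimates} would not give uniform control.

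The time derivative and advection terms are treated as in the maximum principle. By \cref{lemma:times_derivative}, the time derivative term gives
\[
\tfrac12\|c_\e(t)\|^2_{L^2(\Omega_\e(t))}-\tfrac12\|c_{\e,0}\|^2_{L^2(\Omega_\e)}
\]
plus a boundary term $-\frac{\e^\alpha}{2}\int\!\int_{\Gamma_\e}c_\e^2\partial_t r_\e$. Since $\dive u_\e=0$, $u_\e=0$ on $\Sigma_2$, $c_\e=0$ on $\Sigma_1$ and $u_\e\cdot\nu_\e=\e^\alpha\partial_tr_\e$, the advection term produces exactly the opposite boundary term, and the two cancel.

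For the radius part, we write
\[
\frac{4\pi\e^3}{3}\Bigl(r_\e^3(t)\cequ(r_\e(t))-\int_0^t r_\e^3\,\partial_s\cequ(r_\e)\Bigr)=\frac{4\pi\e^3}{3}\int_0^t\partial_s(r_\e^3)\,\cequ(r_\e)\di s .
\]
This equals $\frac{4\pi\e^3}{3}\sum_i\bigl(\Psi(M_i(t))-\Psi(M_i(0))\bigr)$ with $\Psi(M)=\int_0^M\cequ(z^{1/3})\di z\ge0$. Dropping the nonnegative term at time $t$ and using $\cequ$ nondecreasing, $r_{\e,i,0}\le\overline r$ and $N_\e\e^3=1$, the initial contribution is bounded by $\frac{4\pi}{3}\overline r^3\cequ(\overline r)$. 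This is the origin of the term $\cequ(\overline r)\overline r^3$ in the statement. If a Lipschitz issue of $\cequ\circ r_\e$ arises, the identity holds by absolute continuity as in \cref{theorem:maximum_principle}.

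The reaction term is bounded using $F\le C_F(1+|b|)$ and $c_\e\ge0$ from \cref{theorem:maximum_principle}:
\[
\int_0^t (F(c_\e),c_\e)\le C_F\int_0^t\bigl(\|c_\e\|^2_{L^2}+\|c_\e\|_{L^1}\bigr)\lesssim t\,|\Omega|+\int_0^t\|c_\e\|_{L^2}^2 .
\]
Collecting terms,
\[
\tfrac12\|c_\e(t)\|^2+\int_0^t\|\nabla c_\e\|^2\le \tfrac12\|c_{\e,0}\|^2+\tfrac{4\pi}{3}\overline r^3\cequ(\overline r)+Ct+C\int_0^t\|c_\e\|^2,
\]
and Grönwall yields the claim with a $t$-dependent constant.

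The main obstacle is rigor in the choice of test function: $\eta=\cequ(r_\e)$ is only Lipschitz in time because $\cequ\in C^{0,1}_{\mathrm{loc}}$ and $r_\e\in W^{1,\infty}$ with $r_\e\ge\underline\xi>0$, and $\varphi=c_\e$ requires the density extension \eqref{eq:micro_system_moving_weak}. Both points are handled exactly as in the proof of \cref{theorem:maximum_principle}.
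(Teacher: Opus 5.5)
Your proposal is correct and matches the paper's proof: test \eqref{eq:weak_form_alternative_derivative} with $(c_\e,\cequ(r_\e))$, drop the nonnegative exchange term, cancel the interface terms from the time derivative and the advection as in \cref{theorem:maximum_principle}, bound the radius contribution from below by $-\cequ(\overline r)\overline r^3$ via the primitive $\Psi(M)=\int_0^M\cequ(z^{1/3})\di{z}$, and close with Grönwall. One small bookkeeping slip: your displayed integration-by-parts identity omits the term $r_{\e,0}^3\cequ(r_{\e,0})$, which is supplied by the initial-data term on the right-hand side of \eqref{eq:weak_form_alternative_derivative}; your final inequality is nevertheless correct, and even counting that term twice would only change the constant.
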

\begin{proof}
Testing \cref{eq:weak_form_alternative_derivative} with $(c_\e,\cequ(r_\e))$ and arguing as in the proof of \cref{theorem:maximum_principle}, we obtain
\begin{multline*}
    \frac{1}{2}\|c_\e(t)\|_{L^2(\Omega_\e(t))}^2
    +\frac{4\pi\e^3}{3}\sum_{i\in\indexI}
    \int_0^t\partial_s(r_{\e,i}^3(s))\cequ(r_{\e,i}(s))\di{s}
    +c_D\int_0^t\|\nabla c_\e\|_{L^2(\Omega_{\e}(s))}^2\di{s}\\
    \le C_F \int_0^t\int_{\Omega_\e(s)}(|c_\e|^2+|c_\e|)\di{x}\di{s}
    + \frac{1}{2}\|c_{\e,0}\|_{L^2(\Omega_\e)}^2.
\end{multline*}
Define
\[
    \Psi(M):=\int_0^M\cequ(\sqrt[3]{z})\di{z}.
\]
Then, by the chain rule,
\[
    \int_0^t\partial_s(r_{\e,i}^3(s))\cequ(r_{\e,i}(s))\di{s}
    =
    \Psi(r_{\e,i}^3(t))-\Psi(r_{\e,i}^3(0))
    \ge -\cequ(\overline r)\overline r^3,
\]
where we used that $\Psi\ge0$, $r_{\e,i}(0)\le\overline r$, and $\cequ$ is nondecreasing.
Hence, using $|\indexI|=\e^{-3}$,
\[
    \|c_\e(t)\|_{L^2(\Omega_\e(t))}^2
    +\int_0^t\|\nabla c_\e\|_{L^2(\Omega_{\e}(s))}^2\di{s}
    \lesssim
    \int_0^t\int_{\Omega_\e(s)}(|c_\e|^2+|c_\e|)\di{x}\di{s}
    +\|c_{\e,0}\|_{L^2(\Omega_\e)}^2
    +\cequ(\overline r)\overline r^3.
\]
The stated estimate follows from Grönwall's inequality.
\end{proof}

With the same strategy \cref{theorem:maximum_principle,theorem:energy estimates}, we can also establish estimates for the concentration for prescribed radial evolutions.

\begin{lemma}[A priori estimates for prescribed radial evolution]
\label{lemma:apriori_prescribed_radius}
    Let $r\in \mathcal{R}_\e(S)$ and $\e$ such that \cref{eq:eps_condition} holds and $\frac1{\lambda}\le r_{i}(t)\le \lambda$ for some $\lambda>\overline r$.
    Further, let $(c_\e,u_\e,p_\e)$ be any solution in the sense of \cref{definition:weak_solution_moving} with the radial evolution prescribed by $r$.    
    Then,
    \begin{align*}
    \|c_\e\|_{L^\infty(0,t;L^2(\Omega_\e(s)))}^2
    +\|\nabla c_\e\|_{L^2(0,t;L^2(\Omega_{\e}(s)))}^2
    \lesssim_t 
    1+\|c_{\e,0}\|_{L^2(\Omega_\e)}^2+t\sup_{\gamma\in[\nicefrac1\lambda,\lambda]}g(\gamma)\left[\gamma\cequ(\gamma)\right]^2.
    \end{align*}
    for all $t\in S$.
    Moreover,
    \[
    0\le c_\e(t,x)\le \max\{\overline c_0,\cequ(\lambda)\}e^{2C_F t}.
    \]
\end{lemma}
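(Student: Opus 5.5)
The argument mirrors the proofs of \cref{theorem:maximum_principle} and \cref{theorem:energy estimates}, except that the radius $r$ is now prescribed and not coupled to $c_\e$. Consequently we cannot use the combined weak form \eqref{eq:weak_form_alternative_derivative} with a radius test function $\eta$ to exploit dissipativity. Instead we work directly with \eqref{eq:micro_system_moving_weak} and use that $r_i(t)\in[\nicefrac1\lambda,\lambda]$ is a known bounded quantity. The separation condition \cref{eq:eps_condition} ensures the geometry is admissible, so \cref{lemma:times_derivative}, the transport identities, and the trace estimate are available on $\Omega_\e(t)$.

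\emph{Maximum principle.} Set $a(t)=ke^{\gamma t}$ with $k\ge\max\{\overline c_0,\cequ(\lambda)\}$ and test \eqref{eq:micro_system_moving_weak} with $\varphi=c_k=(c_\e-a)_+$. The time derivative, advection and diffusion terms are handled exactly as in Step 1 of the proof of \cref{theorem:maximum_principle}. There the boundary contributions on $\Gamma_\e(s)$ from Reynolds' theorem and from $u_\e\cdot\nu_\e=\e^\alpha\partial_t r$ cancel; this used only $\dive u_\e=0$ and the kinematic condition, not the radius ODE. The interface term now reads
\[
-\e^{3-2\alpha}\int_0^t\int_{\Gamma_\e}g(r)\bigl(\cequ(r)-c_\e\bigr)c_k .
\]
On $\{c_k>0\}$ we have $c_\e>a\ge k\ge\cequ(\lambda)\ge\cequ(r_i)$, using monotonicity of $\cequ$. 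Since $g\ge0$, this term is therefore nonnegative on the left-hand side and can be dropped. Choosing $\gamma=2C_F$ and estimating $F$ as before gives $\|c_k(t)\|^2\le C\int_0^t\|c_k\|^2$, so Grönwall yields $c_k\equiv0$. For the lower bound we test with $\min\{0,c_\e\}$: on $\{c_\e<0\}$ we have $\cequ(r)-c_\e\ge0$ and $F(c_\e)=0$, so the corresponding terms have the right sign and $c_\e\ge0$ follows.

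\emph{Energy estimate.} We test with $\varphi=c_\e$. The transport and time-derivative terms again produce $\frac12\|c_\e(t)\|^2$ plus boundary terms that cancel, and the diffusion term gives $\int_0^t\|\nabla c_\e\|^2$. The interface term splits as
\[
\e^{3-2\alpha}\int g(r)\cequ(r)c_\e-\e^{3-2\alpha}\int g(r)c_\e^2 .
\]
The second part has a good sign. The first is the main obstacle, because in the critical case $\alpha=3$ the trace estimate of \cref{lemma:trace_estimates} controls $\e^{3-2\alpha}\|c_\e\|_{L^1(\Gamma_\e)}$ only by the full $H^1$ norm. Rather than using the trace estimate, we absorb this term into the good-sign part by Young's inequality on each sphere:
\[
g\,\cequ\, c_\e\le\tfrac12 g c_\e^2+\tfrac12 g(\cequ)^2 .
\]
The remainder is then
\[
\tfrac12\e^{3-2\alpha}\sum_i|\Gamma_{\e,i}|\,g(r_i)\cequ(r_i)^2\lesssim \e^{3-2\alpha}\e^{-3}\e^{2\alpha}\sup_{\gamma\in[\nicefrac1\lambda,\lambda]}g(\gamma)\bigl[\gamma\cequ(\gamma)\bigr]^2 ,
\]
which after integration in time is $\lesssim t\sup_{\gamma}g(\gamma)[\gamma\cequ(\gamma)]^2$. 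This is exactly the extra term in the statement, and its $\e$-scaling is uniform for every $\alpha$. The $F$ term is bounded by $C_F\int(|c_\e|^2+|c_\e|)$, and Grönwall's inequality concludes the proof. The only delicate point is justifying $c_\e$ and $c_k$ as test functions in the moving geometry, which is covered by the density remark following \cref{definition:weak_solution_moving}.
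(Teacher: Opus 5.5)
Your proposal is correct and follows essentially the same route as the paper: test with $c_\e$, bound the exchange term by $\tfrac12 g(\cequ)^2$ on each sphere (your Young's inequality is equivalent to the paper's polarization identity \eqref{eq:split_polarization}), apply Grönwall, and obtain the $L^\infty$ bound by the truncation $(c_\e-a)_+$. In the truncation step, your sign argument for the interface term, via $c_\e>a\ge\cequ(\lambda)\ge\cequ(r_i)$ and the monotonicity of $\cequ$, is exactly the reason that term can be dropped.
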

\begin{proof}
    Let $r\in \mathcal{R}_\e(S)$ and $\e$ such that \cref{eq:eps_condition} holds.
    We test with $c_\e$ in \cref{eq:micro_system_moving} and estimate
    \begin{multline*}
    \frac{1}{2}\|c_\e(t)\|_{L^2(\Omega_\e(t))}^2
    +c_D\int_0^t\|\nabla c_\e\|_{L^2(\Omega_{\e}(s))}^2\di{s}\\
    \le C_F \int_0^t\int_{\Omega_\e(s)}|c_\e|^2+|c_\e|\di{x}\di{s}+ \frac{1}{2}\|c_{\e,0}\|_{L^2(\Omega_\e)}^2
    +\e^{3-2\alpha}\sum_{\mathcal{I}_\e}\int_0^t(g(r_{i})(\cequ(r_i)-c_\e),c_\e)_{L^2(\Gamma_{\e,i}(s))}\di{s}.
    \end{multline*}
    Using the identity
    \begin{equation}\label{eq:split_polarization}
    (\cequ-c)c=\frac{1}{2}(-c^2-(c-\cequ)^2+(\cequ)^2),
    \end{equation}
    we control the exchange term via
    \begin{align*}
    \e^{3-2\alpha}\sum_{\mathcal{I}_\e}\int_0^t(g(r_{i})(\cequ(r_i)-c_\e),c_\e)_{L^2(\Gamma_{\e,i}(s))}\di{s}
    &\le\frac{\e^{3-2\alpha}}2\sum_{\mathcal{I}_\e}\int_0^t|\Gamma_{\e,i}(s)|g(r_i)\left[\cequ(r_i(s))\right]^2\di{s}\\
    &=2\pi\e^{3}\sum_{\mathcal{I}_\e}\int_0^tg(r_i(s))\left[r_i(s)\cequ(r_i(s))\right]^2\di{s}\\
    &\le 2\pi t\sup_{\gamma\in[\nicefrac1\lambda,\lambda]}g(\gamma)\left[\gamma\cequ(\gamma)\right]^2    
    \end{align*}
    Hence, Grönwall's inequality gives the desired energy estimate.

    The $L^\infty$-estimates follow essentially along the same line as in \cref{theorem:maximum_principle} when testing with $c_k=\max\{c_\e-a(t),0\}$ for $a(t)=\max\{\overline c_0,\cequ(\gamma)\}e^{2C_Ft}$.
    The only difference is in the surface exchange term, where we again use \cref{eq:split_polarization} to conclude that
    \[
    \e^{3-2\alpha}\sum_{\mathcal{I}_\e}\int_0^t(g(r_{i})(\cequ(r_i)-c_\e),c_k)_{L^2(\Gamma_{\e,i}(s))}\di{s}\le0.
    \]
\end{proof}

We recall the super/subsolutions $\underline\xi,\overline\xi\in W^{1,\infty}(S)$ from the maximum principle,
\begin{alignat*}{2}
\underline \xi'&=-g(\underline \xi)\cequ(\underline \xi),&\quad \underline \xi(0)&=\frac1{\overline r},\\
\overline\xi_k'&=kg(\overline\xi)e^{2C_Ft},&\quad\overline\xi(0)&=\overline r.
\end{alignat*}
where $\overline\xi_{k_0}=\overline{\xi}$ with $k_0=\max\{\overline c_0,\cequ(\overline r)\}$ from the proof of \cref{theorem:maximum_principle}.
For the fixed-point argument in the next section, we additionally need the dependence of the radius evolution on the prescribed concentration.

%
\begin{lemma}\label{lemma:concentration_to_radius}
    For every $c\in L^2(S;H^1(\Omega))$ with $0\le c(t,x)\le ke^{2C_Ft}$ for some $k>0$, there exists a unique solution $r_\e\in\mathcal R_\e(S)$ to the ODEs
    \[
    r'_{\e,i}=-\fint_{\Gamma_{\e,i}}G(c,r_{\e,i})\di{\gamma},\quad r_{\e,i}(0)=r_{\e,i,0} \qquad(i\in\indexI,\, t\in S).
    \]
    Then,
    \[
    0<\max\left\{\underline\xi(t),\frac1{\overline r}- t\sup_{\gamma\in[\underline \xi_k(t),\overline\xi_k(t)]}g(\gamma)\cequ(\gamma)\right\} \le r_{\e,i}(t)\le\min\left\{\overline\xi_k(t),\overline{r}+tke^{2C_Fr}\sup_{\gamma\in[\underline \xi_k(t),\overline\xi_k(t)]}g(\gamma)\right\}.
    \]
    For $c^{(1)},c^{(2)}\in L^2(S;H^1(\Omega))$ with $0\le c^{(j)}(t,x)\le ke^{2C_Ft}$, let $r_\e^{(j)}\in \mathcal R_\e(S)$ be the corresponding radial evolutions.
    For all $t>0$, it holds
    \[
    \|\delta r_\e\|_{L^\infty(0,t)}\lesssim_{\e^{-1},t}\sqrt{t}\|\delta c\|_{L^2(0,t;H^1(\Omega))},\quad \|\delta\partial_t r_\e\|_{L^2(0,t)}\lesssim_{\e^{-1},t}\|\delta c\|_{L^2(0,t;H^1(\Omega))}
    \]
    where we use $\delta$ to denote the differences, e.g., $\delta c=c^{(1)}-c^{(2)}$.
\end{lemma}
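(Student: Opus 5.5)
The ODE system decouples: for each $i\in\indexI$ the equation reads
\[
r_{\e,i}'=g(r_{\e,i})\bigl(\bar c_i(t,r_{\e,i})-\cequ(r_{\e,i})\bigr),\qquad \bar c_i(t,\rho):=\fint_{\partial B_{\e^\alpha\rho}(z_{\e,i})}c(t,\gamma)\di\gamma .
\]
This is a scalar Carathéodory ODE. The plan is therefore to prove existence and uniqueness for each $i$ separately, then establish the bounds by comparison, and finally derive the stability estimate by a Grönwall argument on $\delta r_{\e,i}$.

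\textbf{Existence and uniqueness.} For a.e.\ $t$ we have $c(t)\in H^1(\Omega)$. Since the ball $B_{\e^\alpha\rho}(z_{\e,i})$ stays in $\Omega$ for $\rho$ in a compact subset of $(0,\infty)$ (by \eqref{eq:eps_condition}), $\rho\mapsto\bar c_i(t,\rho)$ is Lipschitz on such a compact set. Writing $\bar c_i(t,\rho)=\fint_{\partial B_1}c(t,z_{\e,i}+\e^\alpha\rho\omega)\di\omega$, we get
\[
|\partial_\rho\bar c_i(t,\rho)|\le\e^\alpha\fint_{\partial B_1}|\nabla c(t,z_{\e,i}+\e^\alpha\rho\omega)|\di\omega .
\]
Integrating over a radial interval and using the trace estimate (\cref{lemma:trace_estimates}, or the plain trace inequality on a fixed annulus), this is bounded by $C(\e,\rho)\|c(t)\|_{H^1(\Omega)}$ in an integrated sense. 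Here I expect the main obstacle: a pointwise-in-$\rho$ Lipschitz bound via $\sup_\rho\fint|\nabla c|$ needs more than $H^1$. I would handle this by noting that $\bar c_i(t,\cdot)$ belongs to $H^1$ in $\rho$ on the annulus, hence is absolutely continuous, and that uniqueness only needs a Lipschitz constant in $L^1(0,T)$. Alternatively one can use the one-dimensional Sobolev embedding of $\rho\mapsto\bar c_i$ combined with the monotone structure $-g(\rho)\cequ(\rho)$, which is locally Lipschitz by (A2). Since $g,\cequ\in C^{0,1}_{\mathrm{loc}}$ and $0\le c\le ke^{2C_Ft}$, the right-hand side is bounded on compacts. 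Carathéodory's theorem then gives a local solution, and the a priori bounds below extend it to all of $S$.

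\textbf{Bounds.} Since $0\le\bar c_i\le ke^{2C_Ft}$, we have
\[
-g(r)\cequ(r)\le r_{\e,i}'\le g(r)\,ke^{2C_Ft}.
\]
Comparison with $\underline\xi$ and $\overline\xi_k$, exactly as in Step 2 of \cref{theorem:maximum_principle}, gives $\underline\xi\le r_{\e,i}\le\overline\xi_k$. Integrating the derivative bounds over $(0,t)$, with $r$ confined to $[\underline\xi,\overline\xi_k]$, gives the linear-in-$t$ bounds in the statement. Positivity of $\underline\xi$ comes from \eqref{eq:non_blowup}, so $r_\e\in\mathcal R_\e(S)$.

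\textbf{Stability.} Subtracting the two ODEs and setting $\delta c:=c^{(1)}-c^{(2)}$, we split
\[
\delta r_i'=\bigl[G(\cdot,r^{(1)}_i)-G(\cdot,r^{(2)}_i)\bigr]\text{-terms}+g(r_i^{(1)})\,\overline{\delta c}_i .
\]
The first group is bounded by $L_\lambda|\delta r_i|(1+|\bar c^{(2)}_i|)$ plus the radial variation $|\bar c^{(2)}_i(t,r^{(1)}_i)-\bar c^{(2)}_i(t,r^{(2)}_i)|$. The latter is at most $C_\e\,\|c^{(2)}(t)\|_{H^1}\,|\delta r_i|$, interpreted as in the existence step. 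For the source term, the trace inequality on a sphere of radius $\sim\e^\alpha$ gives
\[
|\overline{\delta c}_i|\lesssim_{\e^{-1}}\|\delta c(t)\|_{H^1(\Omega)} .
\]
Grönwall's lemma then yields
\[
\|\delta r\|_{L^\infty(0,t)}\lesssim\int_0^t\|\delta c\|_{H^1}\le\sqrt t\,\|\delta c\|_{L^2(0,t;H^1)} .
\]
Reinserting this into the equation for $\delta r_i'$ and taking $L^2(0,t)$ gives the derivative estimate.
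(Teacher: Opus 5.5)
There is a genuine gap, and it is exactly the obstacle you flagged: you let the averaging sphere move with the unknown radius, $\bar c_i(t,\rho)=\fint_{\partial B_{\e^\alpha\rho}(z_{\e,i})}c(t)\di\gamma$. For $c(t)\in H^1$ only, $\rho\mapsto\bar c_i(t,\rho)$ is $H^1$ in $\rho$, hence merely $C^{0,1/2}$.

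None of your remedies supplies a Lipschitz constant:
\begin{itemize}
\item Absolute continuity in $\rho$ gives none.
\item $\sup_\rho|\partial_\rho\bar c_i(t,\rho)|$ is not controlled by $\|c(t)\|_{H^1}$, so there is no Lipschitz constant in $L^1(0,T)$ either.
\item The monotonicity of $-g\cequ$ is irrelevant, because the offending term is $g(r)\bar c_i(t,r)$.
\end{itemize}
In particular, your stability bound $|\bar c^{(2)}_i(t,r^{(1)}_i)-\bar c^{(2)}_i(t,r^{(2)}_i)|\le C_\e\|c^{(2)}(t)\|_{H^1}|\delta r_i|$ is false. Cauchy--Schwarz in $\rho$ only gives $|\delta r_i|^{1/2}$, and Grönwall does not close.

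Under your reading even uniqueness fails. Take $g\equiv1$ and $\cequ>0$ near $\rho_0:=r_{\e,i,0}$. Let $c$ be radial about $z_{\e,i}$ with profile $\phi$ such that $\phi(\rho)-\cequ(\rho)=\operatorname{sgn}(\rho-\rho_0)|\rho-\rho_0|^{3/5}$ near $\rho_0$; this is $H^1$, since $\phi'\sim|\rho-\rho_0|^{-2/5}$. Then both $r_{\e,i}\equiv\rho_0$ and $r_{\e,i}(t)=\rho_0+(2t/5)^{5/2}$ solve the ODE.

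The missing point is that $\Gamma_{\e,i}$ without a time argument is the fixed reference sphere $\partial B_{\e^\alpha r_{\e,i,0}}(z_{\e,i})$. The lemma is applied to the pulled-back concentration $\hat c_\e$ in the fixed-point map $\mathcal L_\e$, cf.\ \cref{definition:weak_solution_fixed}\,(i). Since $\Psi_\e$ maps $\Gamma_{\e,i}$ onto $\Gamma_{\e,i}(t)$ by a homothety, this is the same radius law as in the moving frame.

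With this reading, $\bar c_i(t):=\fint_{\Gamma_{\e,i}}c(t)\di\gamma\in[0,ke^{2C_Ft}]$ is independent of $r$. So $g(r_i)(\bar c_i(t)-\cequ(r_i))$ is measurable in $t$ and locally Lipschitz in $r_i$, uniformly in $t$. Carathéodory together with local Lipschitz continuity gives a unique local solution, and your comparison argument extends it to $S$ with the stated bounds.

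The difference estimate then becomes $|\delta r_{\e,i}'|\le L_{t,k}\bigl(|\delta r_{\e,i}|+\fint_{\Gamma_{\e,i}}|\delta c|\di\gamma\bigr)$. Grönwall, Cauchy--Schwarz in time and the ($\e$-dependent) trace estimate give both Lipschitz bounds. This is the paper's argument. Your treatment of the bounds and of the source term carries over unchanged, and the radial-variation term simply does not occur.
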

\begin{proof}
   $(i)$. \textit{Well-posedness and boundedness}:
    Since $G$ is locally Lipschitz, there is a unique solution $r_{\e,i}\in W^{1,\infty}(0,t_\e)$ over some potentially small time interval $(0,t_\e)$.
    In the exact same way as in \cref{theorem:maximum_principle}, we get
    \begin{equation}\label{eq:estimate_max}
    0< \underline \xi(t)\le r_{\e,i}(t)\le \overline{\xi}(t)
    \end{equation}
    on the existence interval for monotone functions $\underline \xi,\overline{\xi}\colon[0,\infty)\to(0,\infty)$.\footnote{The only difference to \cref{theorem:maximum_principle} is that $\overline\xi$ now takes the form $\overline\xi(t)=\overline{\Xi}^{-1}(tK(t))$.}
    Therefore, the solution $r_\e\in\mathcal R_\e(S)$ exists globally over $S$.
    The estimates for $r_{\e,i}(t)$ follow via direct calculation via the ODEs.

    $(ii)$. \textit{Lipschitz estimates}: Let $L_{t,k}$ be the Lipschitz constant of $G$ on $[0,ke^{2C_Ft}]\times[0,\overline\xi_k(t)]$.
    We estimate via
    \[
    |\delta r_{\e,i}(t)|\le\frac{L_{t,k}}{\rho_0}\int_0^t\left(
    |\delta r_{\e,i}(\tau)|+\fint_{\Gamma_{\e,i}}|\delta c(x,\tau)|\di{\gamma}\right)\di{\tau}
    \]
    and use Grönwall to get
    \[
    \|\delta r_{\e,i}\|_{L^\infty(0,t)}\le L_te^{tL_t}\left(\frac{t}{|\Gamma_{\e,i}|}\right)^{\frac12}\|\delta c\|_{L^2((0,t)\times\Gamma_{\e,i})}.
    \]
    Applying the trace estimate \cref{lemma:trace_estimates}, we get the desired estimate.
    The Lipschitz estimate for the time derivatives follows easily via direct calculation.
\end{proof}

\section{Well-posedness of the microscopic problem}\label{sec:analysis}
In this section, we establish existence and uniqueness of solutions to the $\e$-problem in the sense of \cref{definition:weak_solution_moving} given by Systems~\eqref{system:stokes_eps}, \eqref{system:diffusion_eps}, and~\eqref{system:radius_eps}.
The general strategy follows similar lines as in \cite{gahn_rigorous_2024}:
We start by transforming the moving-domain problem to the fixed reference configuration \(\Omega_\e=\Omega_\e(0)\). 
For a prescribed admissible radius evolution, we then solve the transformed Stokes and diffusion--reaction problems and derive Lipschitz estimates with respect to the prescribed radius evolution.
The resulting solution operators are combined with the radius ODE to obtain a local solution by a contraction argument.
Finally, we use the a priori bounds from \cref{ssec:maximum} to show that no degeneration of the radii can occur as long as $\e$ is sufficiently small.
This allows us to extend the solution to any prescribed time interval, which proves \cref{theorem:well_posedness})


\subsection{Problem in reference configuration}\label{ssec:reference_config}
We start by specifying our problem for our moving boundary problem with respect to the initial configuration.
We have an initial separation of $\dmin\e$ between any two balls (and between balls and $\partial\Omega$) and the size of these balls scale like $\e^\alpha$ with $\alpha>1$.
Let $\lambda>\overline r$. 
Then,
\[
|z_{\e,i}-z_{\e,j}|-2\e^\alpha\lambda\ge\e\frac{\dmin}{2}
\]
for all $i\neq j$ and all $0<\e\le\e_\lambda$ where
\begin{align}\label{eq:eps_lambda}
\e_\lambda:=\left(\frac{\dmin}{4\lambda}\right)^{\frac1{\alpha-1}}
\end{align}
In the following, let $T>0$ be arbitrary but fix and set $S=(0,T)$.
Also, let $r\in\mathcal R_\e(S)$ (see \eqref{mathcalR} for the definition) with $r_{i}(t)\le\lambda$ and $\e\le\e_\lambda$.
As a consequence, the balls stay separated by $\e\frac{\dmin}{2}$ and we define
\begin{equation}\label{eq:omega_time}
\Omega_{\e}(t):=\Omega\setminus \bigcup_{i\in\indexI}B_{\e^\alpha r_i(t)}(z_{\e,i}),\quad \Omega_\e:=\Omega_\e(0).
\end{equation}
Now, the idea is to describe the evolution of each hole via their own transformation and to then patch them together.
We introduce the small annuli
\[
 \mathcal U_{\e,i}^\lambda:=B_{2\e^{\alpha}\lambda}(z_{\e,i})\setminus B_{\e^\alpha r_{\e,i,0}}(z_{\e,i}),\qquad \mathcal U_\e^\lambda:=\bigcup_{i\in\indexI}\mathcal U_{\e,i}^\lambda,
\]
where $(\mathcal U_{\e,i}^\lambda)_{i\in\indexI}$ are pairwise disjoint for all $\e\le\e_\lambda$.
We then take a smooth cut-off function $\chi\colon\R\to[0,1]$ with 
\[
\chi(r)=1\  \text{in}\ (-\infty,1],\qquad\chi(r)=0\ \text{in} \ [2,\infty),\qquad -2\leq\chi'\leq0.
\]
We also set
\[
\rho_{\e,i}(x)=|x-z_{\e,i}|,\quad R_{\e,i}^\lambda(t,\rho)=\rho+\e^{\alpha}(r_i(t)-r_{\e,i,0})\chi\left(1+\frac{\rho-\e^\alpha r_{\e,i,0}}{\e^\alpha(2\lambda-r_{\e,i,0})}\right)
\]
and define, for $x\in \mathcal U_{\e,i}^\lambda$, the local transformation
\[
\Psi_{\e,i}^\lambda(t,x)=z_{\e,i}+R_{\e,i}^\lambda(t,\rho_{\e,i}(x))\frac{x-z_{\e,i}}{|x-z_{\e,i}|}
\]
By construction, $\Psi_{\e,i}^\lambda(t,\cdot)$ acts only in the radial
direction. On the initial boundary, the cut-off argument equals 1, and hence $\chi=1$.
Hence, for $x\in\Gamma_{\e,i}$,
\[
\left|\Psi_{\e,i}^\lambda(t,x)-z_{\e,i}\right|
=
\e^\alpha r_{\e,i,0}
+
\e^\alpha\bigl(r_i(t)-r_{\e,i,0}\bigr)
=
\e^\alpha r_i(t),
\]
and therefore
\[
\Psi_{\e,i}^\lambda(t,\Gamma_{\e,i})
=
\Gamma_{\e,i}(t).
\]
On the other hand, the cut-off function vanishes at the outer boundary so
\[
\Psi_{\e,i}^\lambda(t,x)=x
\]
outside $B_{2\e^\alpha\lambda}(z_{\e,i})$.
Since the annuli $\bigl(\mathcal U_{\e,i}^\lambda\bigr)_{i\in\mathcal I_\e}$ are pairwise disjoint, and since the local transformations agree with the identity near their outer boundaries, they can be patched together by setting
\begin{equation}
\label{eq:transform_psi}
\Psi_\e(t,x):=
\begin{cases}
\Psi_{\e,i}^\lambda(t,x),
&
x\in\mathcal U_{\e,i}^\lambda,\quad i\in\mathcal I_\e,
\\
x,& x\in\Omega_\e\setminus\mathcal U_\e^\lambda.
\end{cases}
\end{equation}

In the following, we suppress the $\lambda$-superscript for the transformation, i.e., we write $\Psi_\e$ instead of $\Psi_\e^\lambda$, to not overload the notation.
Note that we can chose $\lambda$ aribtrarily large as long as we constrain ourselves to $\e\le\e_\lambda$.
For $\e\le\e_\lambda$ and $r\in\mathcal R_\e(S)$ with $r_{i}(t)\le\lambda$, the map $\Psi_\e(t,\cdot)$ is a smooth diffeomorphism from $\Omega_\e$ onto $\Omega_\e(t)$ which satisfies $\Psi_\e\in W^{1,\infty}(S;C^\infty(\overline{\Omega_\e}))^3$.
We introduce the corresponding kinematic quantities
\[
F_\e=\nabla\Psi_\e^T,\quad J_\e=\det F_\e,\quad A_\e=J_\e F_\e^{-1},\quad J_\e^\Gamma=J_\e|F_\e^{-T}\nu_\e|,\quad D_\e=J_\e F_\e^{-1}F_\e^{-T}.
\]
In addition, we denote the transformed linearized symmetric gradient $e_\e\colon H^1(\Omega_\e)^3\to L^2(\Omega_\e)^{3\times3}$ defined by
\begin{equation}\label{symmetric_gradient_reference}
e_\e(\phi)=\frac12\left(F_\e^{-T}\nabla\phi+(F_\e^{-T}\nabla\phi)^T\right).
\end{equation}
Finally, we introduce the pull backs to the reference geometry $(t\in S,\ x\in\Omega_\e)$
\[
\hat{c}(t,x)=c(t,\Psi_\e(t;x)),\quad \hat{u}(t,x)=u(t,\Psi_\e(t;x)),\quad \hat{p}(t,x)=p(t,\Psi_\e(t;x)).
\]
and the relative velocity $\hat{v}_\e=\hat{u}_\e-\partial_t\Psi_\e$.
Note that $\hat{v}_\e=\hat{u}_\e$ in $\Omega_\e\setminus \mathcal U_\e(t)$ and $\hat v_\e=0$ on $\Sigma_2\cup\Gamma_\e$.
Some important estimates and properties of this transformation which are used in the analysis but are mostly technical are presented in \cref{appen:trafo}.

We introduce the space
\[
W(S;\Omega)=\{u\in L^2(S;H^1(\Omega_\e)\ : \ \partial_tu\in L^2(S;H^1(\Omega_\e))^*\}.
\]
With this we are ready to formulate the concept of a weak solution for the moving boundary problem.
\begin{definition}[Weak solutions (reference geometry)]
\label{definition:weak_solution_fixed}
Let $T>0$ and $S=(0,T)$. We say that
\[
(\hat{c}_\e,\hat{u}_\e,\hat p_\e,r_\e)\in W(S;\Omega)\times L^\infty(S;H^1_{\Sigma_2}(\Omega_\e))^3\times \LtLx{2}{2}{\Omega_\e}\times W^{1,\infty}(S)^\indexI
\]
is a local-in-time, weak solution to the moving boundary problem over $S$ (with respect to the reference configuration) if the following conditions are satisfied:

\begin{itemize}
    \item[$(i)$] The size of the balls are characterized by the radii function $r_\e$ via $\e^\alpha r_{\e,i}$ described via the corresponding diffeomorphism $\Psi_\e$ as defined in \cref{eq:diffeomorphism_psieps}.
    Moreover, they satisfy the ODEs 
    \[
    \partial_tr_{\e,i}=-\fint_{\Gamma_{\e,i}}G(\hat c_\e,r_{\e,i})\di{\gamma},\quad r_{\e,i}(0)=r_{\e,i,0}
    \]
    for almost all $t\in S$ and all $i\in\mathcal{I}_\e$.
    \item[$(ii)$] It is $\hat{u}_\e=\e^\alpha\partial_tr_{\e,i}\nu_\e$ on $\Gamma_\e$ and it holds
    \begin{subequations}\label{eq:micro_system_fixed}
    \begin{align}
        2\e^{3-\alpha}(J_\ezr e_{\e}(\hat{u}_\e),e_{\e}(\psi))_{L^2(\Omega_\e)}-(\hat{p}_\e,\dive(A_{\e}\psi))_{L^2(\Omega_\e)}
        &=(J_{\e}\hat{h},\psi)_{L^2(\Omega_{\e})},\label{eq:micro_system:b_fixed}\\
        (\dive(A_\e\hat{u}_\e),\phi)_{L^2(\Omega_\e)}&=0\label{eq:micro_system:c_fixed}
    \end{align}
    for all $(\psi,\phi)\in H^1_{\Sigma_2\cup\Gamma_\e}(\Omega_\e)^3\times L^2(\Omega_\e)$ and for almost all $t\in S$.    
    \item[$(iii)$] It is $\hat c_\e=0$ on $\Sigma_1$ and for all $\varphi\in C^\infty(\overline{S\times\Omega})$ with $\phi(T)=0$ and $\phi=0$ on $\Sigma_1$, it holds
    \begin{multline}\label{eq:micro_system:a_fixed}
    \int_S-(J_{\e}\hat{c}_\e,\partial_t\varphi)_{L^2(\Omega_\e)}+(D_{\e}\nabla\hat{c}_\e-A_{\e}\hat{c}_\e(\hat{u}_\e-\partial_t\Psi_\e),\nabla\varphi)_{L^2(\Omega_{\e})}\di{t}\\
     +\e^{3-2\alpha}\int_S(J_\e^\Gamma G(\hat{c}_\e,r_{\e}),\varphi)_{L^2(\Gamma_{\e})}\di{t}
    =\int_S(J_{\e}F(\hat{c}_\e),\varphi)_{L^2(\Omega_{\e})}\di{t}+(c_{\e,0},\varphi(0))_{L^2(\Omega_{\e})}.
    \end{multline}
    \end{subequations}
\end{itemize}
\end{definition}

\begin{remark}
\begin{itemize}
\item The tuples
\(
    (u_\e,p_\e,c_\e,r_\e)
\)
and \(
    (\hat u_\e,\hat p_\e,\hat c_\e,r_\e)
\)
solve the moving and reference formulations in the sense of \cref{definition:weak_solution_moving} and \cref{definition:weak_solution_fixed}, respectively, if and only if they are related by the pullback induced by $\Psi_\e$.
This follows from the standard transformation rules together with the mapping properties collected in \cref{lemma:pushforward_estimates}.
    \item
Note that $J_\e^{\Gamma}(t)=J_\e(t)|F_\e^{-T}(t)\nu_\e|=\left(\frac{r_{\e,i}(t)}{r_{\e,i,0}}\right)^2$ is constant on every $\Gamma_{\e,i}$.
With 
\[
\partial_tr_{\e,i}=-\frac{1}{|\Gamma_{\e,i}(t)|}\int_{\Gamma_{\e,i}}J_\e^{\Gamma}G(\hat c_\e,r_{\e,i})\di{\gamma},
\]
we get the coupled formulation
\begin{multline*}
    \langle\partial_t(J_{\e}\hat{c}_\e),\varphi\rangle_{H^1(\Omega_\e)}
    +\frac{4\pi\e^3}{3}(\partial_t(r_{\e}^3),\eta)_{\ell^2(\indexI)}
    +(D_{\e}\nabla\hat{c}_\e-A_{\e}\hat{c}_\e\hat{v}_\e,\nabla\varphi)_{L^2(\Omega_{\e})}\\
     +\e^{3-2\alpha}(r_\e^2 G(\hat{c}_\e,r_{\e}),\varphi-\eta)_{L^2(\Gamma_{\e})}
    =(J_{\e}F(\hat{c}_\e),\varphi)_{L^2(\Omega_{\e})}.
\end{multline*}
which has to hold for all $\varphi\in H_{\Sigma_1}^1(\Omega)$ all $\eta\in\R^\indexI$ and for almost all $t\in S$.
Here, we identify a vector $\eta\in\R^\indexI$ as a function $\eta\in L^2(\Gamma_\e)$ via $\eta=\eta_i$ on every component $\Gamma_{\e,i}$.
\item We may also solve for the relative velocity $\hat v_\e=\hat u_\e-\partial_t\Psi_\e\in H^1_{\Sigma_2\cup\Gamma_\e}(\Omega_\e(t))$ via
    \begin{align*}
        2\e^{3-\alpha}(J_\ezr e_{\e}(v_\e),e_{\e}(\psi))_{L^2(\Omega_\e)}-(\hat{p}_\e,\dive(A_\e\psi&))_{L^2(\Omega_\e)}\\
        &=(J_{\e}\hat{h},\psi)_{L^2(\Omega_{\e})}
        -2\e^{3-\alpha}(J_{\e}e_\e(\partial_t\Psi_\e),e_\e(\psi))_{L^2(\Omega_{\e})},\\
        (\dive(A_\e v_\e),\phi)_{L^2(\Omega_\e)}&=-(\dive(A_\e\partial_t\Psi_\e),\phi)_{L^2(\Omega_\e)},
    \end{align*}
    \begin{multline*}
    \int_S-(J_{\e}\hat{c}_\e,\partial_t\varphi)_{L^2(\Omega_\e)}+(D_{\e}\nabla\hat{c}_\e-A_{\e}\hat{c}_\e\hat{v}_\e,\nabla\varphi)_{L^2(\Omega_{\e})}\di{t}\\
     +\e^{3-2\alpha}\int_S(J_\e^\Gamma G(\hat{c}_\e,r_{\e}),\varphi)_{L^2(\Gamma_{\e})}\di{t}
    =\int_S(J_{\e}F(\hat{c}_\e),\varphi)_{L^2(\Omega_{\e})}\di{t}+(c_{\e,0},\varphi(0))_{L^2(\Omega{\e})}.
    \end{multline*}
\end{itemize}
\end{remark}

\subsection{Analysis of the Stokes system}\label{ssec:stokes_radius_fixed}
In order to set up a fixed point argument, we first consider a prescribed evolution of the domain through a given  $r\in\mathcal{R}_{\e}(S)$ with $\frac1\lambda\le r_i(t)\le\lambda$ and $|\partial_t r_{\e,i}| \leq M$. In particular, this fixes the transformation $\Psi_\e$.
In the following, let $M>0$ and $\lambda>\overline r$ be fixed but arbitrary and $\e\le\e_\lambda$.

In this section, we analyze the quasi-steady Stokes system for a prescribed radius evolution function.
In this setting, time enters only as a parameter through the time-dependent radii and the corresponding transformation.
Related problems for evolving balls have been analyzed and discussed in detail before in \cite{gahn_rigorous_2024,wiedemann_homogenisation_2024} for the scaling $\alpha=1$ in periodic configurations.
However, since our domain is not periodic and since we consider the different scaling $\alpha>1$, there are some subtle differences.
We can leverage this scale separation into better $\e$-estimates which are very helpful for the contraction mapping argument later.
In particular, on sufficiently small time intervals, the Lipschitz estimates in \cref{lemma:stokes_for_fixed_radius} contain an explicit \(\e\)-control which is not available in \cite{gahn_rigorous_2024}.

\begin{lemma}\label{lemma:stokes_for_fixed_radius}
Let $M>0$, $\lambda\ge\overline r$, and $\e\le\e_\lambda$.
Let $r^{(1)},r^{(2)}\in \mathcal{R}_{\e}(S)$ with $\nicefrac1\lambda\le r_i^{(j)}(t)\le\lambda$ for all $t\in S$ and $\|\partial_tr^{(j)}\|_{L^\infty(S;\ell^\infty(\indexI))}\le M$.
\begin{itemize}
    \item[$(i)$] There is a unique weak solution $(\hat u_\e,\hat p_\e)\in H^1_{\Sigma_2}(\Omega_\e)^3\times L^2(\Omega_\e)$ to the Stokes problem \eqref{eq:micro_system:b_fixed}-\eqref{eq:micro_system:c_fixed} with the radial evolution $r=r^{(1)}$.
    This solution satisfies the energy estimate
    \begin{align}\label{eq:apriori_stokes_reference}
        \|\hat u_\e\|_{{L^2(\Omega_\e)}}+\e^{\frac{3-\alpha}{2}}\|\nabla\hat u_\e\|_{L^2(\Omega_\e)}&\lesssim_\lambda \|\hat h_\e\|_{L^2(\Omega_\e)}+\e^{\frac{3-\alpha}{2}}\|\partial_tr\|_{\ell^\infty(\mathcal{I}_\e)}
    \end{align}
    for almost all $t\in S$.
    \item[$(ii)$] Let $(\hat{u}_\e^{(j)},\hat p_\e^{(j)})$ be the solution to Stokes associated to radii functions $r^{(j)}$.
    Then,
    \begin{align}\label{eq:lipschitz_stokes}
    \e^{\frac{3-\alpha}{2}}\|\delta \nabla \hat v_\e\|_{L^2(\Omega_\e)}
    +\|\delta\hat  p_\e\|_{L^2(\Omega_\e)}
    \lesssim_{\lambda,M}\left(\e^{\frac{\alpha-3}{2}}\|\delta r\|_{\ell^\infty(\indexI)}
        +\e^{\frac32(\alpha-1)}\|\delta\partial_tr\|_{\ell^\infty(\mathcal{I}_\e)}\right)
    \end{align}
    where $\delta\hat  v_\e=\delta \hat u_\e-\delta\partial_t\Psi_\e$.
\end{itemize}
\end{lemma}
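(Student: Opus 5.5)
We work entirely in the reference configuration and treat time as a parameter, fixing $t\in S$. For $(i)$ we reduce to homogeneous boundary data on $\Gamma_\e$ by writing $\hat u_\e=\hat v_\e+\partial_t\Psi_\e$. Here $\partial_t\Psi_\e$ is supported in the annuli $\mathcal U_\e^\lambda$, vanishes on $\Sigma_2$, and equals $\e^\alpha\partial_t r_i\nu_\e$ on $\Gamma_{\e,i}$. The relative velocity $\hat v_\e\in H^1_{\Sigma_2\cup\Gamma_\e}(\Omega_\e)^3$ then solves the transformed Stokes system stated in the last item of the preceding remark. This system has right-hand side $J_\e\hat h$, a viscous forcing term $-2\e^{3-\alpha}(J_\e e_\e(\partial_t\Psi_\e),e_\e(\psi))$, and the divergence constraint $\dive(A_\e\hat v_\e)=-\dive(A_\e\partial_t\Psi_\e)$.

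The coefficients $F_\e,J_\e,A_\e,D_\e$ are uniformly close to the identity, with bounds depending only on $\lambda$, by the appendix estimates (\cref{lemma:pushforward_estimates}). This holds because the radial stretching has slope of order $|r_i-r_{\e,i,0}|/(2\lambda-r_{\e,i,0})$, which is bounded uniformly. Consequently the bilinear form is coercive, via a Korn inequality on $H^1_{\Sigma_2}$ applied to extensions by zero into the holes. The inf-sup condition for $\dive(A_\e\cdot)$ follows from the uniform Bogovskii/restriction operator on perforated domains collected in the appendix. Existence and uniqueness then come from Babu\v{s}ka--Brezzi.

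For the energy estimate, we first remove the nonzero divergence with a Bogovskii-type correction. Then we test with $\hat v_\e$ and use the following ingredients.
\begin{itemize}
\item We compute $\|\nabla\partial_t\Psi_\e\|_{L^2}^2\lesssim \sum_i |\partial_t r_i|^2\e^{3\alpha}\e^{-2\alpha}\cdot\e^{\alpha}\cdot\e^{-2\alpha}$, which gives $\|\nabla\partial_t\Psi_\e\|_{L^2}\lesssim \e^{\frac{\alpha-3}{2}}\e^{\alpha}\|\partial_tr\|_{\ell^\infty}$ after counting $N_\e=\e^{-3}$ annuli of volume $\e^{3\alpha}$ and gradient size $\e^\alpha\cdot\e^{-\alpha}$. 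In fact $\|\nabla\partial_t\Psi_\e\|_{L^2}\lesssim \e^{\frac{3\alpha-3}{2}}\|\partial_tr\|$, and $\|\partial_t\Psi_\e\|_{L^2}$ is smaller still.
\item For $\alpha<3$ we apply the Poincar\'e inequality with $\e$-scaling, $\|\hat v_\e\|_{L^2}\lesssim \e^{\frac{3-\alpha}{2}}\|\nabla \hat v_\e\|_{L^2}$ (\cref{lemma:poincare}). For $\alpha=3$ we use the usual Poincar\'e inequality on $H^1_{\Sigma_2}$.
\end{itemize}
Combining these gives $\e^{3-\alpha}\|\nabla \hat v_\e\|^2\lesssim \|\hat h\|\|\hat v_\e\|+\dots$. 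Absorbing terms and adding back $\partial_t\Psi_\e$ yields \eqref{eq:apriori_stokes_reference}, with the crude bound $\e^{\frac{3-\alpha}2}\|\partial_t r\|$ on the boundary-data contribution.

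For $(ii)$ we subtract the two transformed systems. Both are posed on the same reference domain $\Omega_\e$, but with coefficients $F_\e^{(j)}$ etc.\ and lifts $\partial_t\Psi_\e^{(j)}$. The difference $(\delta\hat v_\e,\delta\hat p_\e)$ solves a Stokes-type problem with coefficients of the first configuration. Its right-hand side has two parts. The first consists of coefficient differences multiplied by the second solution: $\delta J_\e,\delta A_\e,\delta D_\e$ are supported in $\mathcal U^\lambda_\e$ and are pointwise $\lesssim_\lambda \|\delta r\|_{\ell^\infty}$, since $\nabla\Psi_\e$ depends on $r$ only through $(r_i-r_{\e,i,0})/(2\lambda-r_{\e,i,0})$. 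The second part consists of the differences of the lifts, with $\|\nabla\delta\partial_t\Psi_\e\|_{L^2}\lesssim\e^{\frac32(\alpha-1)}\|\delta\partial_tr\|$, and of the divergence residual. We estimate the coefficient-difference terms by $\|\delta r\|(\e^{3-\alpha}\|\nabla\hat u^{(2)}\|+\|\hat p^{(2)}\|+\|\hat h\|)$. Each is bounded through part $(i)$ and the pressure bound from inf-sup, namely $\|\hat p\|\lesssim \|\hat h\|+\e^{\frac{3-\alpha}2}\cdots$; this is where the dependence on $M$ enters.

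Testing with $\delta\hat v_\e$ after a Bogovskii correction, and then using inf-sup to get the pressure, gives
\[
\e^{3-\alpha}\|\nabla\delta\hat v_\e\|\lesssim_{\lambda,M}\|\delta r\|+\e^{3-\alpha}\e^{\frac32(\alpha-1)}\|\delta\partial_tr\|
\]
and $\|\delta\hat p_\e\|$ of the same order. Multiplying by $\e^{\frac{\alpha-3}2}$ yields \eqref{eq:lipschitz_stokes}.

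The main obstacle is keeping the inf-sup and Bogovskii constants \emph{$\e$-explicit}. The pressure on perforated domains has an inf-sup constant degenerating like $\e^{\frac{3-\alpha}{2}}$ when the velocity is measured in the $\e$-weighted norm. This exact power must be tracked to land on the exponents $\e^{\frac{\alpha-3}2}$ and $\e^{\frac32(\alpha-1)}$. Our first approach would be to use the local Bogovskii operator on each annulus $B_{\e\dmin/2}(z_{\e,i})\setminus B_{\e^\alpha r_i}$ from the appendix, since the coefficient differences are localized there. We would then rescale each annulus to unit size to read off the powers of $\e$.
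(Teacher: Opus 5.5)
Your skeleton is the paper's. You use the relative velocity $\hat v_\e=\hat u_\e-\partial_t\Psi_\e\in H^1_{\Sigma_2\cup\Gamma_\e}(\Omega_\e)^3$, coercivity of $a_\e$ with constant $\e^{3-\alpha}$, an inf-sup constant of order $\e^{\frac{3-\alpha}{2}}$, Brezzi-type bounds, and for $(ii)$ the difference system with first-configuration operators and coefficient differences acting on the second solution. Two of your justifications fail, however.

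First, the coefficients are not close to the identity. \cref{lemma:transform_estimates} only gives $\|F_\e-{\rm I}_3\|_\infty\lesssim_\lambda\|r-r_{\e,0}\|_{\ell^\infty}$, which is of order one because radii may move by $O(1)$ inside $[\nicefrac1\lambda,\lambda]$. The coefficients are merely bounded with bounded inverses. So Korn's inequality for the zero extension of $\hat v_\e$ in the reference configuration does not control $e_\e(\hat v_\e)=\frac12\bigl(F_\e^{-T}\nabla\hat v_\e+(F_\e^{-T}\nabla\hat v_\e)^T\bigr)$ by perturbation. You can use the transformed Korn inequality \cref{lemma:Korn}, as the paper does. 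Alternatively, push forward: $a_\e(t;v,v)=2\e^{3-\alpha}\|e(\check v)\|^2_{L^2(\Omega_\e(t))}$ with $\check v=(\Psi_t)_*v$; apply Korn to the zero extension of $\check v$ and pull back with \cref{lemma:pushforward_estimates}.

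Second, the local Bogovskii operators on annuli in your last paragraph cannot produce the lift, and the paper contains no such operator. The datum $\dive(A_\e\partial_t\Psi_\e)=\partial_tJ_\e$ has integral $\frac{\d}{\d t}|\Psi_\e(t,\mathcal U^\lambda_{\e,i})|=-4\pi\e^{3\alpha}r_i^2\partial_tr_i\neq0$ over each annulus. The volume displaced by a growing inclusion must leave through $\Sigma_1$. The same holds for $\delta\partial_tJ_\e$ in $(ii)$, and the inf-sup condition is needed for every $q\in L^2(\Omega_\e)$. The paper instead pulls back the global operator of \cref{pro:Bog}, built from Diening--Feireisl--Lu on an enlarged domain $U$ with a compensating set $A\subset U\setminus\Omega$. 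Its bound $\e^{\frac{\alpha-3}{2}}$ is exactly the inverse inf-sup constant, so no rescaling of annuli is needed.

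Moreover, the ``crude bound'' closing $(i)$ is asserted, not derived, and this is exactly where the exponents are delicate. The divergence lift contributes $\e^{\frac{3-\alpha}{2}}\|\nabla\hat v_\e\|\lesssim\|\partial_tJ_\e\|_{L^2(\Omega_\e)}$. This is of order $\e^{\frac32(\alpha-1)}\|\partial_tr\|_{\ell^\infty}$ (\cref{cor:transform_lp_estimates}), since $|\partial_tJ_\e|$ is comparable to $|\partial_tr_i|$ near each $\Gamma_{\e,i}$. That is dominated by $\e^{\frac{3-\alpha}{2}}\|\partial_tr\|_{\ell^\infty}$ only for $\alpha\ge\frac32$. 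The paper's proof takes the same step when it writes $\|\partial_tJ_\e\|_{L^2}\lesssim\e^{\frac{3-\alpha}{2}}\|\partial_tr\|$.

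For $\alpha<\frac97$ the stated bound is actually false. Test $\dive u_\e=0$ with $\phi\in C^1(\overline\Omega)$, $0\le\phi\le1$, $\phi=1$ on $\partial\Omega$, $\phi=0$ on a fixed interior set. This gives $\int_{\Omega_\e(t)}\nabla\phi\cdot u_\e=\e^\alpha\sum_i\partial_tr_i\int_{\Gamma_{\e,i}(t)}(1-\phi)$. Take $h=0$ and all radii growing at rate $M$, and use \eqref{ass:uniform.covering} to place $\gtrsim\e^{-3}$ centres where $\phi=0$. Then $\|\hat u_\e\|_{L^2}\gtrsim_\lambda\e^{3\alpha-3}M\gg\e^{\frac{3-\alpha}{2}}M$. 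What your argument (and the paper's) proves is $(i)$ with $\e^{\frac32(\alpha-1)}\|\partial_tr\|$ on the right. That suffices for $(ii)$ and for the fixed-point argument, where constants may depend on $\e$.

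In $(ii)$ there are two smaller issues. The residual $\delta\partial_tJ_\e$ enters through the inf-sup constant. It gives $\e^{3-\alpha}\|\nabla\delta\hat v_\e\|\lesssim\dots+\e^{\alpha}\|\delta\partial_tr\|$, not $\e^{3-\alpha}\e^{\frac32(\alpha-1)}\|\delta\partial_tr\|$. After multiplying by $\e^{\frac{\alpha-3}{2}}$ this is precisely the $\e^{\frac32(\alpha-1)}$ term in \eqref{eq:lipschitz_stokes}. You also omit the difference $\hat h^{(1)}-\hat h^{(2)}$ of the pulled-back forces. It is Lipschitz in $\delta r$ only because $h_\e\in H^1$, by \cref{lemma:pushforward_estimates}$(ii)$.
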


\begin{proof}
   $(i)$.
   For any given $r_\e\in\mathcal R_\e(S)$, this is a quasi-steady Stokes system with regular data, so the existence of unique solutions $(\hat u_\e(t),\hat p_\e(t))\in H^1_{\Sigma_2}(\Omega_\e)^3\times L^2(\Omega_\e)$ is easy to see for almost every $t\in S$.
   In the following, we suppress the time variable to lighten the notation; with the tacit understanding that all involved functions are time dependent.
   For convenience, we introduce the spaces $V=H^1_{\Sigma_2\cup\Gamma_\e}(\Omega_\e)^3$ and $Q=L^2(\Omega_\e)$ and furnish $V$ with the norm $\|v\|_V=\|\nabla v\|_{L^2(\Omega)}$ and identify $Q$ with its dual $Q^*$.   
   
   The interesting part of this statement are the $\e$-controlled estimates.
    We introduce the forms\footnote{Here, we write $a_\e(t;\partial_t\Psi_\e,w)$ by a slight abuse of notation since $\partial_t\Psi_\e\notin V$. Note that $a_\e$ is clearly also well-defined over $H^1(\Omega)$ so the meaning is clear.}
    \begin{alignat*}{2}
    a_\e(t;\cdot,\cdot)&\colon V\times V\to\R,&\quad a_\e(t;v,w)&:=2\e^{3-\alpha} (J_\e e_\e(v),e_\e(w))_{L^2(\Omega_\e)},\\
    b_\e(t;\cdot,\cdot)&\colon Q\times V\to\R,&\quad b_\e(t;q,w)&:=-(q,J_\e F_\e^{-1}:\nabla w))_{L^2(\Omega_\e)},\\
     H_\e(t;\cdot)&\colon V\to\R,&\quad  H_\e(t;w)&:=(J_\e \hat h_\e,w)_{L^2(\Omega_\e)}-a_\e(t;\partial_t\Psi_\e,w),\\
    G_\e(t;\cdot)&\colon Q\to\R,&\quad G_\e(t;q)&:=(q,\partial_tJ_\e)_{L^2(\Omega_\e)}
    \end{alignat*}
    and note that $(\hat v_\e:=\hat u_\e-\partial_t\Psi_\e,\hat p_\e)\in V\times Q$ solves 
    \begin{align*}
    a_\e(t;v_\e,\cdot)+b_\e(t;p_\e,\cdot)&= H_\e(t;\cdot)\quad \text{in}\ V^*,\\
    b_\e(t;\cdot,v_\e)&= G_\e(t;\cdot)\quad \text{in}\ Q.
    \end{align*}
   The bilinear form $a_\e$ is coercive and continuous via \cref{lemma:Korn,lemma:transform_estimates}:
    \begin{equation}\label{eq:coercivity_stokes}
    \e^{3-\alpha}\|v\|^2_{V}\leslam a_\e(t;v,v),\quad |a_\e(t;v,w)|\leslam\e^{3-\alpha}\|v\|_V\|w\|_V\qquad (v,w\in V).
    \end{equation}
    Now, let $q\in Q$ and $v\in V$ and set $\check q=(\Psi_\e)_*q$ and $\check v=(\Psi_\e)_*v$ (see \cref{lemma:pushforward_estimates}).
    We have
    \[
    b_\e(t;q,v)=\int_{\Omega_\e(t)}\check q\dive\check v\di{x}
    \]
    We take the Bogovskiǐ operator $\Bog_\e\colon L^2(\Omega_\e(t))\to H^1_{\Sigma_2}(\Omega_\e(t))$ from \cref{pro:Bog} and use $\Bog_\e f=0$ on $\Gamma_\e$ for all $f\in L^2(\Omega_\e(t))$ to identify $\Bog_\e f=\Bog_\e f_{|_{\Omega_\e(t)}}\in H_{\Sigma_2\cup\Gamma_{\e}(t)}$.  
    Now, for any $q\in Q$, we use $\Psi_\e^*\Bog_\e\check q\in V$ (with $\check q=(\Psi_\e)_*q\in L^2(\Omega_\e(t))$) to estimate
    \[
    \sup_{v\in V}\frac{b_\e(t;q,v)}{\|v\|_V\|q\|_Q}\ge\frac{\|\check q\|_{L^2(\Omega_\e(t))}^2}{\|\Psi_\e^*\Bog_\e\check q\|_V\|q\|_Q}.
    \]
    We estimate the Bogovskiǐ term with \cref{lemma:pushforward_estimates,pro:Bog}
    \[
    \|\Psi_\e^*\Bog_\e\check q\|_V\lesssim_{\lambda}\|\nabla\Bog_\e\check q\|_{L^2(\Omega_\e^{(1)}(t))}\lesssim_{\lambda}\e^{\frac{\alpha-3}{2}}\|\check q\|_{L^2(\Omega_\e^{(1)}(t))}\lesssim_{\lambda}\e^{\frac{\alpha-3}{2}}\| q\|_{L^2(\Omega_\e)},
    \]
    With $\|q\|^2_{L^2(\Omega)}\lesssim_\lambda\|\check q\|_{L^2(\Omega_\e^{(1)}(t))}^2$, we thus deduce the inf-sup condition
    \begin{equation}\label{eq:infsup_condition}
         \e^{\frac{3-\alpha}{2}}\lesssim_\lambda\inf_{q\in Q}\sup_{v\in V}\frac{b_\e^{(1)}(t;q,v)}{\|v\|_V\|q\|_Q}.
    \end{equation}
    Applying standard results on saddle point problems (see, e.g., \cite[Theorems 3.4.1 and 4.2.3]{boffi_mixed_2013}) together with \cref{eq:coercivity_stokes,eq:infsup_condition}, we find that
    \begin{align}
        \|v_\e\|_V&\leslam\e^{\alpha-3}\|H_\e\|_{V^*}
        +\e^{\frac{\alpha-3}{2}}\|\mathcal{G}_\e\|_Q,\label{eq:saddle_estimate}\\
        \|q_\e\|_Q&\leslam\e^{\frac{\alpha-3}{2}}\|H_\e\|_{V^*}
        +\|\mathcal{G}_\e\|_Q.
    \end{align}
    and, together with $u_\e=v_\e+\partial_t\Psi_\e$ and \cref{lemma:transform_estimates},
    \[
    \e^{\frac{3-\alpha}{2}}\|\nabla u_\e\|_{L^2(\Omega_\e)}+\|q_\e\|_Q\leslam\e^{\frac{\alpha-3}{2}}\|H_\e\|_{V^*}
        +\|G_\e\|_Q+\e^{\frac{3-\alpha}{2}}\|\nabla\partial_t\Psi_\e\|_{L^2(\Omega_\e)}.
    \]
    Now, the claimed estimates follow (after application of Poincaré \cref{lemma:poincare}) when the right hand side is bounded uniformly in $\e$.
   This is easy to see: With \cref{cor:transform_lp_estimates} (used pointwise in time), we have
   \begin{align*}
     \|G_\e\|_Q=\|\partial_tJ_\e\|_{L^2(\Omega_\e)}\lesssim_{\lambda}\e^{\frac{3-\alpha}{2}}\|\partial_tr\|_{\ell^\infty(\indexI)}
   \end{align*}
    and (using again \cref{lemma:transform_estimates,cor:transform_lp_estimates,lemma:poincare})
    \begin{align*}
        \e^{\frac{\alpha-3}{2}}|H_\e(t;w)|
        &\leslam\e^{\frac{\alpha-3}{2}}\|h_\e\|_{L^2(\Omega)}\|w\|_{L^2(\Omega)}
        +\e^{\frac{3-\alpha}2}\|\nabla\partial_t\Psi_\e\|_{L^2(\Omega_\e)}\|w\|_V\\
        &\lesssim_{\lambda}\left(\|h_\e\|_{L^2(\Omega)}+\e^{\alpha}\|\partial_tr\|_{\ell^\infty(\indexI)}\right)\|w\|_{V}.
    \end{align*}
   
   $(ii)$. Similar Lipschitz estimates to \cref{eq:lipschitz_stokes} are established in \cite[Lemma 4.4]{gahn_rigorous_2024}, where the same Stokes problems (with scaling $\alpha=1$ inside a periodic configuration) in reference coordinates is considered, but without any explicit control on the parameter $\e$.
    They establish 
    \[
    \|\delta \hat{v}_\e\|_{L^2(S;H^1(\Omega_\e))}
    \leq L_1(\lambda,M,\e)\|\delta r\|_{L^\infty(S;\ell^2(\mathcal{I}_\e))}+L_2(\lambda,M,\e)\|\delta \partial_tr\|_{L^2(S;\ell^2(\mathcal{I}_\e))}.
    \]
    Since we want a better control on $\e$ in these estimates with respect to $\delta\partial_tr$, we outline the main parts of the argument.\footnote{Specifically, we show that $L_2(\lambda,\Lambda,M,\e)\to0$ for $\e\to0$, which is later used to show that the fixed-point operator is contractive.}
    However, the general structure is largely similar to \cite[Lemma 4.4]{gahn_rigorous_2024}.

    First of all, both solutions satisfy the a priori estimates in $(i)$ and the relative velocities $\hat v_\e^{(j)}=\hat u_\e^{(j)}-\partial_t\Psi_\e^{(j)}\in V$ satisfy \cref{eq:saddle_estimate}.
    In the following, we let $\hat h_\e^{(j)}=(\Psi_\e^*)^{(j)}h_\e$ denote the pullback of the volume forces to the fixed geometry from the corresponding domain $\Omega_\e^{(j)}(t)=(\Psi_\e^*)^{(j)}(t,\Omega_\e)$ for $j=1,2$.
    We take the same operators as above (with superscript $j=1,2$ for the corresponding radius function $r^{(j)}$).
    Adding the weak forms for $j=1,2$ and rearranging, we see that $(\delta v_\e,\delta q_\e)\in V\times Q$ solves
    \begin{align*}
    a_\e^{(1)}(t;\delta v_\e,\cdot)+b_\e^{(1)}(t;\delta p_\e,\cdot)&=\delta\mathcal{H}_\e(t)\quad \text{in}\ V^*,\\
    b_\e^{(1)}(t;\cdot,\delta v_\e)&=\delta\mathcal{G}_\e(t)\quad \text{in}\ Q.
    \end{align*}
    where $\delta\mathcal{H}_\e(t)\in V^*$ and $\delta\mathcal{G}_\e(t)\in Q$ are defined as
    \[
    \delta\mathcal{H}_\e(t):=\delta H_\e(t;\cdot)-\delta a_\e(t;v_\e^{(2)},\cdot)-\delta b_\e(t;p_\e^{(2)},\cdot),\qquad\delta\mathcal{G}_\e(t):=\delta G_\e(t;\cdot)-\delta b_\e(t;\cdot, v_\e^{(2)}).
    \]
    From $(i)$, we have
    \[
    \e^{\frac{3-\alpha}{2}}\|\delta v_\e\|_V+\|\delta q_\e\|_Q\leslam\e^{\frac{\alpha-3}{2}}\|\delta \mathcal{H}_\e\|_{V^*}
        +\|\delta \mathcal{G}_\e\|_Q.
    \]
    The only thing left is to estimate the source terms $\delta\mathcal{H}_\e$ and $\delta\mathcal{G}_\e$.
    To that end, we note that (using the Lipschitz estimates collected in \cref{lemma:transform_lipschitz})
    \begin{align*}
    |\delta a_\e(t;v,w)|&\leslam \e^{3-\alpha}\|\delta r \|_{\ell^\infty(\mathcal{I}_\e)}\|v\|_{V}\|w\|_{V},  \\
    |\delta b_\e(t;q,w)|&\leslam \|\delta r \|_{\ell^\infty(\mathcal{I}_\e)}\|q\|_Q\|w\|_V
    \end{align*}
    for all $v,w\in V$ and $q\in Q$.
    Moreover,
    \begin{multline*}
    |\delta\mathcal H_\e(t;w)|\le\left(\|\delta J_\e\|_{L^\infty(\Omega_\e)}\|\hat h_\e^{(2)}\|_{L^2(\Omega_\e)}+\|J_\e^{(1)}\|_{L^\infty(\Omega_\e)}\|\delta\hat h_\e\|_{L^2(\Omega_\e)}\right) \|w\|_{L^2(\Omega_\e)}\\
    +|\delta a_\e(t;\partial_t\Psi_\e^{(1)},w)|+|a_\e^{(2)}(t;\partial_t\delta\Psi_\e,w)|.
    \end{multline*}
    This can be estimated with \cref{lemma:transform_estimates,lemma:transform_lipschitz,lemma:pushforward_estimates}:
    \begin{multline*}
    |\delta H_\e(t;w)|\leslam\|\delta r\|_{\ell^\infty(\indexI)}\left(1+\|h_\e\|_{L^2(\Omega_\e)}+\e^\alpha\|\nabla h_\e\|_{L^2(\Omega_\e)}\right)\|w\|_{L^2(\Omega_\e)}\\
    +\e^{3-\alpha}\left(\|\nabla\partial_t\Psi_\e^{(1)}\|_{L^2(\Omega)}\|\delta r \|_{\ell^\infty(\mathcal{I}_\e)}
    +\|\delta\nabla\partial_t\Psi_\e\|_{L^2(\Omega)}\right)\|w\|_{V}
    \end{multline*}
    Since the transformation is localized on $\mathcal U_\e^\lambda$ (with $|\mathcal U_\e^\lambda|\lesssim_{\lambda}\e^{3(\alpha-1)}$), we can simplify
    \[
    \|\nabla\partial_t\Psi_\e^{(1)}\|_{L^2(\Omega_\e)}\lesssim_{\lambda} \e^{\frac32(\alpha-1)},\qquad  \|\delta\partial_t\Psi_\e\|_V\lesssim_{\lambda} \e^{\frac32(\alpha-1)}\|\delta\partial_tr \|_{\ell^\infty(\mathcal{I}_\e)}.
    \]
    Since $\sup_\e\|h_\e\|_{H^1(\Omega)}$ is finite, we can further estimate with \cref{lemma:poincare,lemma:pushforward_estimates}
    \begin{equation}\label{eq:estimate_H}
    \|\delta H_\e(t)\|_{V^*}\leslam\e^{\frac{3-\alpha}{2}}\|\delta r\|_{\ell^\infty(\indexI)}
    +\e^{\frac12(3+\alpha)}\left(\|\delta r \|_{\ell^\infty(\mathcal{I}_\e)}
    +\|\delta\partial_tr \|_{\ell^\infty(\mathcal{I}_\e)}\right).
    \end{equation}
    In addition, we find that (using the a priori estimates \cref{eq:apriori_stokes_reference})
    \begin{align}
       |\delta a_\e(t;v_\e^{(2)},w)|&\leslam\e^{\frac{3-\alpha}{2}}\|\delta r\|_{\ell^\infty(\mathcal{I}_\e)}\|w\|_{V},\label{eq:estimate_deltaa}\\
       |\delta b_\e(t;p_\e^{(2)},w)|&\leslam \|\delta r\|_{\ell^\infty(\mathcal{I}_\e)}\|w\|_V\label{eq:estimate_deltab}
    \end{align}
    Collecting the estimates \cref{eq:estimate_H,eq:estimate_deltaa,eq:estimate_deltab}, we are led to
    \begin{align*}
    \e^{\frac{\alpha-3}{2}}|\delta \mathcal{H}_\e(t,w)|
    &\lesssim_{\lambda,M}\|\delta r\|_{\ell^\infty(\indexI)}
    +\e^{\frac32(\alpha-1)}\left(\|\delta r\|_{\ell^\infty(\mathcal{I}_\e)}
    +\|\delta\partial_tr\|_{\ell^\infty(\mathcal{I}_\e)}\right).
    \end{align*}
    Finally, we estimate $\delta\mathcal{G}_\e$:
    \[
    \|\delta\mathcal{G}_\e\|_Q\le\|\delta \partial_tJ_\e\|_{L^2(\Omega_\e)}+\|\delta (J_\e F_\e^{-1})\|_{L^\infty(\Omega_\e)}\|v_\e^{(2)}\|_{V}
    \]
    which, via the a priori estimates for $v_\e^{(2)}$ and the available Lipschitz estimates for $J_\e$ and $F_\e$, leads to
    \begin{align*}
    \|\delta\mathcal{G}_\e\|_Q\lesssim_{\lambda,M} \e^{\frac32(\alpha-1)}\|\delta\partial_tr\|_{L^\infty(\Omega_\e)}+(\e^{\frac{\alpha-3}{2}}+\e^{2\alpha-3})\|\delta r\|_{L^\infty(\Omega_\e)}.
    \end{align*}
    In summary, we arrive at the claimed estimates
    \end{proof}

\begin{remark}\label{remark:lipschitz_stokes_refined}
    Note that this Lipschitz estimate blows up when $\e\to0$ for all $\alpha<3$ and stays uniformly bounded only in the critical case $\alpha=3$.
    Crucially, however, for all $\alpha>1$, the component with the difference of the time derivatives of the evolution vanishes as $\e\to0$.
    Since $\delta r(0)=0$, it holds
    \[
    \e^{\frac{3-\alpha}{2}}\|\delta \nabla v_\e\|_{L^2(\Omega_\e)}
    +\|\delta p_\e\|_{L^2(\Omega_\e)}
    \lesssim_{\lambda, M}\left(\e^{\frac{\alpha-3}{2}}t+\e^{\frac32(\alpha-1)}\right)\|\delta \partial_tr\|_{\ell^\infty(\indexI)},
    \]
    for almost all $t\in S$.
    Hence, the Lipschitz constant can be made arbitrarily small for small times: for all $q>0$ there are $\e>0$ and $t_\e>0$ such that
    \begin{align}\label{eq:Lipschitz_explicit_stokes}
    \e^{\frac{\alpha-3}{2}}\|\delta \nabla v_\e\|_{L^2(\Omega_\e)}
    +\|\delta p_\e\|_{L^2(\Omega_\e)}
    \le q\|\delta \partial_tr\|_{\ell^\infty(\indexI)}
    \end{align}
    for almost all $t\in(0,t_\e)$ and for all $r^{(1)},r^{(2)}\in\mathcal{R}_{\e}(S)$ with $\nicefrac1\lambda\le r^{(j)}\le\lambda$ and $|\partial_tr^{(j)}|\le M$.
    The time interval $(0,t_\e)$ is quite small (with $t_\e\to0$ as $\e\to0)$ for all $\alpha<3$), but this is enough to establish a contraction for the coupled problem; long-time existence then follows via a blow-up argument.
\end{remark}

\subsection{Analysis of the diffusion-reaction system}\label{ssec:stokes_chemistry_fixed}
In this section, we investigate the chemistry part of the problem for a given radius evolution given by \cref{eq:micro_system:a_fixed}.
Again, we chose $\lambda\ge\overline r$, $M>0$, and $S=(0,T)$ for $T>0$ and only take $\e\le\e_\lambda$.

\begin{lemma}
\label{lemma:existence_given_r}
    For any prescribed $r\in\mathcal{R}_{\e}(S)$ with $\nicefrac1\lambda\le r_i\le\lambda$, there is a global, unique solution $(\hat{c}_\e,\hat{u}_\e,\hat p_\e)$ for the coupled system \ref{eq:micro_system_fixed} in the sense of \cref{definition:weak_solution_fixed} with the radii evolution given via $(\e^\alpha r_i(t))_{i\in\indexI}$.
    Moreover, the solution satisfies the energy estimate
    \begin{equation}\label{eq:apriori_chemistry}
    \|\hat{c}_\e\|^2_{L^\infty(0,t;L^2(\Omega_{\e}))}
        +\|\nabla\hat{c}_\e\|_{L^2(0,t\times\Omega_\e)}^2
    \lesssim_{t,\lambda} 
    1+\|c_{\e,0}\|_{L^2(\Omega_\e)}^2+t\sup_{\gamma\in[\nicefrac1\lambda,\lambda]}\left(g(\gamma)\left[\gamma\cequ(\gamma)\right]^2\right).
    \end{equation}
    for almost all $t\in S$
    and it is bounded via
    \[
    0\le \hat c_\e(t,x)\le \max\{\overline c_0,\cequ(\lambda)\}e^{2C_F t}.
    \]
\end{lemma}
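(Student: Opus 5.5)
The radius function $r$ is prescribed, so the transformation $\Psi_\e$ and all kinematic quantities $F_\e,J_\e,A_\e,D_\e,J_\e^\Gamma$ are fixed. By \cref{lemma:stokes_for_fixed_radius}$(i)$, the Stokes part decouples completely. For almost every $t$ there is a unique $(\hat u_\e(t),\hat p_\e(t))$, and the estimate \eqref{eq:apriori_stokes_reference} holds with $\|\partial_t r\|_{\ell^\infty}\le\|r\|_{W^{1,\infty}}$. Measurability in time follows because the coefficients depend continuously on $t$. Hence $\hat v_\e=\hat u_\e-\partial_t\Psi_\e\in L^\infty(S;H^1_{\Sigma_2\cup\Gamma_\e}(\Omega_\e))$ is a given advection field, with $\hat v_\e=0$ on $\Gamma_\e\cup\Sigma_2$. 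What remains is a linear-in-the-principal-part parabolic problem on the fixed domain $\Omega_\e$ for $\hat c_\e$. It has coefficients $J_\e,D_\e$, which are uniformly positive and bounded and Lipschitz in time by \cref{lemma:transform_estimates}, the drift $A_\e\hat v_\e$, the locally Lipschitz reaction $F$, and the locally Lipschitz boundary term $G(\cdot,r_i)$ on $\Gamma_\e$.

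For existence I would first truncate the nonlinearities: replace $F$ and $G(\cdot,r_i)$ by globally Lipschitz versions $F_K,G_K$, equal to the originals on $[-1,K]$ with $K>\max\{\overline c_0,\cequ(\lambda)\}e^{2C_FT}$. For fixed $\e$ the truncated problem is a standard nonautonomous parabolic problem in the Gelfand triple $H^1_{\Sigma_1}(\Omega_\e)\subset L^2\subset (H^1_{\Sigma_1})^*$ with weight $J_\e$. I would solve it by Galerkin approximation, or by Banach's fixed point on short intervals iterated. The needed coercivity comes from the trace estimate \cref{lemma:trace_estimates} for the boundary term, which for fixed $\e$ is a lower-order perturbation. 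The drift is handled by Hölder and Sobolev, since $\hat v_\e\in L^\infty(S;L^6)$ and $\hat c_\e\nabla\varphi$ requires only $\hat c_\e\in L^3$, which is interpolated between $L^2$ and $H^1$. Uniqueness follows by testing the difference equation with the difference and applying Grönwall, using the Lipschitz bounds and the same trace and drift estimates.

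Next I would remove the truncation. Pushing forward to the moving domain, the solution is a solution in the sense of \cref{definition:weak_solution_moving} with prescribed $r$, so \cref{lemma:apriori_prescribed_radius} applies. Its $L^\infty$ part gives $0\le c_\e\le\max\{\overline c_0,\cequ(\lambda)\}e^{2C_Ft}<K$, so the truncation is inactive and the solution of the truncated problem solves the original one. One point needs checking: the truncation argument in \cref{lemma:apriori_prescribed_radius} only uses the sign structure of $F$ and $G$ on the relevant range. This is why $F_K,G_K$ must be chosen to preserve these signs, which I would do explicitly, for instance $G_K(b,\sigma)=g(\sigma)(\cequ(\sigma)-\min\{b,K\})$. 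The energy estimate \eqref{eq:apriori_chemistry} is the energy part of \cref{lemma:apriori_prescribed_radius}, transferred back to the reference configuration using the $\lambda$-dependent equivalence of norms under $\Psi_\e$ from \cref{lemma:pushforward_estimates}. Global existence on all of $S$ is automatic, since everything is linear-growth after truncation.

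I expect the main obstacle to be justifying the maximum-principle test functions $(\hat c_\e-a(t))_+$ rigorously in the time-dependent weak form. This requires the transport identity of \cref{lemma:times_derivative} and the boundary cancellation $\hat u_\e\cdot\nu=\e^\alpha\partial_tr$ on $\Gamma_\e(t)$. I would take these as established in \cref{ssec:maximum} and simply verify that the constructed solution has the regularity $W(S;\Omega_\e(t))$ that they require.
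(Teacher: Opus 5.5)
Your proposal is correct and follows the paper's route. The paper also solves the Stokes problem first via \cref{lemma:stokes_for_fixed_radius}, treats the remaining semilinear parabolic problem on the fixed reference domain by standard theory (citing Showalter together with Lipschitz continuity of the data), and takes both the energy and the $L^\infty$ bounds from \cref{lemma:apriori_prescribed_radius}, transferred back with \cref{lemma:pushforward_estimates}. Your truncation of $F$ and its removal via the maximum principle adds rigor that the paper skips, since (A2) only makes $F$ locally Lipschitz. Truncating $G$ is unnecessary, because $G(\cdot,\sigma)$ is affine in $b$ with slope $-g(\sigma)$, which is bounded for $\sigma\in[\nicefrac1\lambda,\lambda]$.
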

\begin{proof}
    $(i).$ \textit{Existence and uniqueness of solutions}.
    By first solving the linear Stokes system via \cref{lemma:stokes_for_fixed_radius} to get $(\hat{u}_\e,\hat p_\e)$ and then the semi-linear diffusion problem using $\hat{u}_\e$ in the advection term, we find that there is a unique solution as long as the balls stay well separated and are not collapsing.\footnote{We can use \cite[Chapter III, Propositions 3.2 $\&$ 3.3]{showalter_monotone_2014} together with the Lipschitz continuity of the right hand sides.}
    However, the balls cannot collapse due to $0<\frac1\lambda\le r_i(t)$ and they are kept separated via $r_i\le\lambda$ as long as $\e\le\e_\lambda$ (see \cref{eq:eps_lambda}).

    $(ii).$ \textit{Estimates}.
    We already have the following energy estimates with respect to the moving geometry via \cref{lemma:apriori_prescribed_radius}:
    \begin{align*}
    \|c_\e\|_{L^\infty(0,t;L^2(\Omega_\e(t)))}^2
    +\|\nabla c_\e\|_{L^2(0,t;L^2(\Omega_{\e}(t)))}^2
    \lesssim_t 
    1+\|c_{\e,0}\|_{L^2(\Omega_\e)}^2+t\sup_{\gamma\in[\nicefrac1\lambda,\lambda]}\left(g(\gamma)\left[\gamma\cequ(\gamma)\right]^2\right)
    \end{align*}
    which imply the claimed estimate via \cref{lemma:pushforward_estimates}.
    Finally, the $L^\infty$-bounds are also shown in \cref{lemma:apriori_prescribed_radius}.

\end{proof}

\begin{lemma}\label{lemma:time_derivative_estimate}
    Let $r\in\mathcal{R}_{\e}(S)$ with $\nicefrac1\lambda\le r_i(t)\le\lambda$.
    The corresponding solution $\hat{c}_\e$ due to \cref{lemma:existence_given_r} satisfies
    \[
    \|\partial_t(J_\e\hat{c}_\e)\|_{L^2(S;H^1(\Omega_\e))^*}
    +\e^\alpha\|\partial_t\hat{c}_\e\|_{L^2(S;H^1(\Omega_\e))^*}\leslam1.
    \]
\end{lemma}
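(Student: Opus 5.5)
We bound the first term by duality from the transformed weak formulation \eqref{eq:micro_system:a_fixed}. We then recover the second term by writing $\partial_t\hat c_\e = J_\e^{-1}\partial_t(J_\e\hat c_\e) - J_\e^{-1}\partial_tJ_\e\,\hat c_\e$.

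\emph{Step 1: the term $\partial_t(J_\e\hat c_\e)$.} For $\varphi\in L^2(S;H^1_{\Sigma_1}(\Omega_\e))$ the weak form gives
\[
\langle\partial_t(J_\e\hat c_\e),\varphi\rangle=(J_\e F(\hat c_\e),\varphi)-(D_\e\nabla\hat c_\e-A_\e\hat c_\e\hat v_\e,\nabla\varphi)-\e^{3-2\alpha}(J^\Gamma_\e G(\hat c_\e,r),\varphi)_{L^2(\Gamma_\e)}.
\]
We estimate each term separately.
\begin{itemize}
\item The reaction term is bounded by the $L^\infty$ bound of \cref{lemma:existence_given_r}.
\item The diffusion term is bounded by the energy estimate \eqref{eq:apriori_chemistry}, using $\|D_\e\|_{L^\infty}\lesssim_\lambda 1$ from \cref{lemma:transform_estimates}.
\item For the advection term we use $\|\hat c_\e\|_{L^\infty}\lesssim_\lambda1$ and $\|\hat v_\e\|_{L^2(\Omega_\e)}\le\|\hat u_\e\|_{L^2}+\|\partial_t\Psi_\e\|_{L^2}$. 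The first part is controlled by \eqref{eq:apriori_stokes_reference}. The second part is $\lesssim\e^\alpha M\,|\mathcal U^\lambda_\e|^{1/2}$, which is small because $\partial_t\Psi_\e$ is supported on the annuli.
\item For the boundary term, $G(\hat c_\e,r)$ is bounded, so it is at most $\e^{3-2\alpha}|\Gamma_\e|^{1/2}\|\varphi\|_{L^2(\Gamma_\e)}\lesssim \e^{\frac{3-2\alpha}{2}}\|\varphi\|_{L^2(\Gamma_\e)}$. The trace estimate \cref{lemma:trace_estimates} gives $\e^{3-2\alpha}\|\varphi\|^2_{L^2(\Gamma_\e)}\lesssim\|\varphi\|^2_{L^2}+\e^{3-\alpha}\|\nabla\varphi\|^2_{L^2}$, so this term is $\lesssim\|\varphi\|_{H^1}$ uniformly for $\alpha\le3$.
\end{itemize}
Taking the supremum over $\varphi$ yields the first bound.

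\emph{Step 2: the term $\e^\alpha\partial_t\hat c_\e$.} We test with $\varphi$ and write $\langle\partial_t\hat c_\e,\varphi\rangle=\langle\partial_t(J_\e\hat c_\e),J_\e^{-1}\varphi\rangle-(\partial_tJ_\e\,\hat c_\e,J_\e^{-1}\varphi)$. For the first piece we need $\|J_\e^{-1}\varphi\|_{H^1}$. Here $\nabla J_\e^{-1}$ is localized on $\mathcal U_\e^\lambda$ with size $\lesssim_\lambda\e^{-\alpha}$, since $\Psi_\e$ varies on the scale $\e^\alpha$. This gives $\|J_\e^{-1}\varphi\|_{H^1}\lesssim_\lambda\e^{-\alpha}\|\varphi\|_{H^1}$, and this loss is exactly compensated by the prefactor $\e^\alpha$. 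The second piece is bounded by $\|\partial_tJ_\e\|_{L^2}\|\hat c_\e\|_{L^\infty}\|\varphi\|_{L^2}\lesssim M$, via \cref{cor:transform_lp_estimates}.

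\emph{Main obstacle.} The hardest point is keeping the $\e$-uniformity in the interface term and in the $H^1$ multiplier bound for $J_\e^{-1}$. The first hinges on the exact scaling in the trace lemma. For the second, the gradient of the transformation is of size $\e^{-\alpha}$; this is why only $\e^\alpha\partial_t\hat c_\e$, and not $\partial_t\hat c_\e$ itself, is uniformly bounded.
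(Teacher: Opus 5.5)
Your proposal is correct and follows essentially the same route as the paper: duality against the transformed weak form \eqref{eq:micro_system:a_fixed} bounds $\partial_t(J_\e\hat c_\e)$, and then the product-rule identity with the test function $J_\e^{-1}\varphi$ gives the second bound, since $\|\nabla J_\e^{-1}\|_{\infty}\lesssim_\lambda \e^{-\alpha}$ costs exactly the factor compensated by $\e^\alpha$. Your handling of the interface term, via $\e^{3-2\alpha}|\Gamma_\e|^{1/2}\sim\e^{\frac{3-2\alpha}{2}}$ and \cref{lemma:trace_estimates}~(ii), is more explicit than the paper's write-up, as is your sign in $\partial_t\hat c_\e=J_\e^{-1}\partial_t(J_\e\hat c_\e)-J_\e^{-1}\hat c_\e\,\partial_tJ_\e$ (the paper has a harmless $+$); your constant depends on the bound $M$ for $\partial_t r$, which is also implicit in the paper.
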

\begin{proof}
    We chose as test function $\varphi\in H^1(\Omega_\e)$ with $\|\varphi\|_{H^1(\Omega_\e)}\leq1$ for \eqref{eq:micro_system:a_fixed}:
    \begin{equation*}
    \langle\partial_t(J_{\e}\hat{c}_\e),\varphi\rangle_{H^1(\Omega_\e)}=-(D_{\e}\nabla\hat{c}_\e-A_{\e}\hat{c}_\e\hat{v}_\e,\nabla\varphi)_{L^2(\Omega_{\e})}
    -\e^{3-2\alpha}\left(J_\e^\Gamma G\left(c_\e,r\right),\varphi\right)_{L^2(\Gamma_{\e})}
    +(J_{\e}F(\hat{c}_\e),\varphi)_{L^2(\Omega_{\e})}
    \end{equation*}
    We estimate the terms on the right hand side (using the trace estimate for the interface term)
    \begin{align*}
       -(D_{\e}\nabla\hat{c}_\e-A_{\e}\hat{c}_\e\hat{v}_\e,\nabla\varphi)_{L^2(\Omega_{\e})}
       &\leq \|D_{\e}\|_{L^\infty(S)}\|\nabla\hat{c}_\e\|_{L^2(\Omega_{\e})}+\|A_\e\|_{L^\infty(S)}\|\hat{c}_\e\|_{L^\infty(\Omega_\e)}\|v_\e\|_{L^2(\Omega_{\e})},\\
      -\e^{3-2\alpha}\left(J_\e^\Gamma G\left(c_\e,r\right),\varphi\right)_{L^2(\Gamma_{\e})}
      &\le C\|J_\e^\Gamma\|_{L^\infty(S)}\|G\left(c_\e,r\right)\|_{L^2(\Omega_\e)},\\
      (J_{\e}F(\hat{c}_\e),\varphi)_{L^2(\Omega_{\e})}
      &\le \|J_{\e}\|_{L^\infty(S)}\|F(\hat{c})\|_{L^2(\Omega_{\e})}.
    \end{align*}
    With the estimates for $D_\e, J_\e, A_\e$ as well as $v_\e$ and $\|\hat c_\e\|_\infty$ in \cref{lemma:existence_given_r}, we get the desired result for $\partial_t(J_\e \hat c_\e)$.

    With the product rule, we can characterize $\partial_t\hat c_\e$ via
    \[
    \langle\partial_t\hat c_\e,\phi\rangle_{H^1(\Omega_\e)}
    =\langle\partial_t(J_\e\hat c_\e),J_\e^{-1}\phi\rangle_{H^1(\Omega_\e)}
    +(\hat c_\e\partial_tJ_\e,J_\e^{-1}\phi)_{L^2(\Omega_\e)}
    \]
    which we can estimate by
    \begin{multline*}
    \left| \langle\partial_t\hat c_\e,\phi\rangle_{H^1(\Omega_\e)}\right|
    \\
    \leq \|J_\e^{-1}\|_{W^{1,\infty}(\Omega_\e)}\|\partial_t(J_\e\hat c_\e)\|_{H^1(\Omega_\e)^*}\|\phi\|_{H^1(\Omega_\e)}+\|\hat c_\e\|_{L^\infty(\Omega_\e)}\|J_\e^{-1}\|_{L^2(\Omega_\e)}\|\partial_tJ_\e\|_{L^\infty(\Omega_\e)}\|\phi\|_{L^2(\Omega_\e)}.
    \end{multline*}
    Utilizing the estimates in \cref{lemma:transform_estimates}, the estimate for $\partial_t\hat c_\e$ follows.
\end{proof}

\begin{lemma}[Lipschitz estimates]
\label{lemma:solution_lipschitz_estimates}
For $j=1,2$, let $r^{(j)}\in\mathcal{R}_{\e}(S)$ with $\nicefrac1\lambda\le r_i^{(j)}\le\lambda$ and $|\partial_tr^{(j)}|\le M$.
Let $(\hat{c}^{(j)}_\e,\hat{v}^{(j)}_\e,\hat{p}^{(j)}_\e)$ denote the corresponding solutions due to \cref{lemma:stokes_for_fixed_radius,lemma:existence_given_r}.
For every $q>0$ there is a time horizon $S_\e=(0,t_\e)\subset S$ such that 
    \begin{equation*}
    \|\delta \hat{c}_\e\|_{L^\infty(S_\e;L^2(\Omega_\e))}^2+\|\delta\nabla \hat{c}_\e\|_{L^2(S_\e\times\Omega_\e)}^2
    \leq q\|\delta \partial_tr\|^2_{L^2(S_\e;\ell^\infty(\mathcal{I}_\e))}.
    \end{equation*}
\end{lemma}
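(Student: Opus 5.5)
We subtract the two reference-configuration weak forms \eqref{eq:micro_system:a_fixed} for $j=1,2$ and test the resulting equation for $\delta\hat c_\e=\hat c_\e^{(1)}-\hat c_\e^{(2)}$ with $\varphi=\delta\hat c_\e$ on $(0,t)$. This is admissible by density, since $\delta\hat c_\e\in W(S;\Omega)$ vanishes on $\Sigma_1$. We write every coefficient difference as, e.g., $D_\e^{(1)}\nabla\hat c^{(1)}_\e-D_\e^{(2)}\nabla\hat c^{(2)}_\e=D_\e^{(1)}\nabla\delta\hat c_\e+\delta D_\e\nabla\hat c^{(2)}_\e$, and similarly for $J_\e$, $A_\e$, $J_\e^\Gamma$ and $F$, $G$. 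The principal part $(D_\e^{(1)}\nabla\delta\hat c_\e,\nabla\delta\hat c_\e)$ is coercive, with constant depending on $\lambda$, by \cref{lemma:transform_estimates}. For the time derivative we use $\langle\partial_t(J_\e^{(1)}\delta\hat c_\e),\delta\hat c_\e\rangle=\tfrac12\ddt(J^{(1)}_\e\delta\hat c_\e,\delta\hat c_\e)+\tfrac12(\partial_tJ^{(1)}_\e\delta\hat c_\e,\delta\hat c_\e)$, and we treat the remainder $\langle\partial_t(\delta J_\e\hat c^{(2)}_\e),\delta\hat c_\e\rangle$ as a source term. That remainder splits into $\partial_t\delta J_\e$ terms (pointwise bounded by $\|\delta\partial_t r\|_{\ell^\infty}+\|\delta r\|_{\ell^\infty}$ on $\mathcal U^\lambda_\e$) and $\delta J_\e\,\partial_t\hat c^{(2)}_\e$ terms, which we control via \cref{lemma:time_derivative_estimate}.

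The source terms are then estimated as follows. The diffusion term $(\delta D_\e\nabla\hat c^{(2)}_\e,\nabla\delta\hat c_\e)$ is bounded by $\|\delta r\|_{\ell^\infty}\|\nabla\hat c^{(2)}_\e\|\|\nabla\delta\hat c_\e\|$, using \cref{lemma:transform_lipschitz} and \eqref{eq:apriori_chemistry}. The reaction and exchange terms use the $L^\infty$ bounds of \cref{lemma:existence_given_r}, which give Lipschitz constants for $F$ and $G$, together with the trace estimate \cref{lemma:trace_estimates}. This yields $\e^{3-2\alpha}\|\delta\hat c_\e\|_{L^2(\Gamma_\e)}^2\lesssim\|\delta\hat c_\e\|^2_{L^2}+\e^{3-\alpha}\|\nabla\delta\hat c_\e\|^2$. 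For $\e$ small, or $\alpha=3$, we absorb this by exploiting the sign of $-g(\sigma)b$ in $G$: the $c$-dependent part of $\delta G$ is monotone, so it contributes $-\e^{3-2\alpha}(J^\Gamma g\,\delta\hat c,\delta\hat c)\le0$. Only the $r$-dependent part remains, and it is bounded by $\e^{3-2\alpha}|\Gamma_\e|^{1/2}\|\delta r\|_{\ell^\infty}\|\delta\hat c_\e\|_{L^2(\Gamma_\e)}$. The advection term splits into $(A^{(1)}_\e\delta\hat c_\e\hat v^{(1)}_\e,\nabla\delta\hat c_\e)$ and $(\hat c^{(2)}_\e\,\delta(A_\e\hat v_\e),\nabla\delta\hat c_\e)$. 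The first is handled by $\|\hat v^{(1)}_\e\|_{L^4}$, a Gagliardo--Nirenberg interpolation and Young's inequality, using the bounds of \cref{lemma:stokes_for_fixed_radius}(i). The second uses $\|\hat c^{(2)}_\e\|_\infty$ and the Stokes Lipschitz estimate in its small-time form \eqref{eq:Lipschitz_explicit_stokes} (after Poincaré, \cref{lemma:poincare}). This gives $\|\delta\hat v_\e\|_{L^2}\le q'\|\delta\partial_t r\|_{\ell^\infty}$ for $t<t_\e$, with $q'$ as small as we like.

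Collecting the terms and absorbing the gradient contributions via Young's inequality, we obtain
\[
\|\delta\hat c_\e(t)\|^2_{L^2}+\int_0^t\|\nabla\delta\hat c_\e\|^2\lesssim_{\lambda,M,\e}\int_0^t\|\delta\hat c_\e\|^2+\int_0^t\big(\|\delta r\|^2_{\ell^\infty}+q'^2\|\delta\partial_t r\|^2_{\ell^\infty}\big)+\text{(time-derivative remainder)}.
\]
Since $\delta r(0)=0$, we have $\|\delta r(s)\|_{\ell^\infty}\le\sqrt{s}\,\|\delta\partial_t r\|_{L^2(0,s;\ell^\infty)}$, so the $\delta r$ contributions carry a factor $t$ (or $\sqrt t$). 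Gronwall's inequality then gives a constant $C(\lambda,M,\e)(t+q'^2)$ in front of $\|\delta\partial_t r\|^2_{L^2(0,t;\ell^\infty)}$. Choosing first $q'$ and then $t_\e$ small makes this constant at most $q$. The $\e$-dependence of the constant is harmless because $t_\e$ may depend on $\e$.

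The main obstacle is the time-derivative remainder $\langle\partial_t(\delta J_\e\,\hat c^{(2)}_\e),\delta\hat c_\e\rangle$. The regularity $\partial_t\hat c^{(2)}_\e\in L^2(H^1)^*$ from \cref{lemma:time_derivative_estimate} only allows pairing with an $H^1$ function. We would therefore bound $\|\delta J_\e\,\delta\hat c_\e\|_{H^1}\lesssim\|\delta r\|_{\ell^\infty}\|\delta\hat c_\e\|_{H^1}$, using that $\nabla\delta J_\e$ is bounded by $\e^{-\alpha}\|\delta r\|_{\ell^\infty}$ on the annuli. The factor $\e^{-\alpha}$ from the annulus scale is acceptable since the constants may depend on $\e$. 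Young's inequality then absorbs $\|\nabla\delta\hat c_\e\|^2$, leaving $C_\e\|\delta r\|^2_{L^\infty(0,t)}\|\partial_t\hat c^{(2)}_\e\|^2_{L^2(H^1)^*}$, which again carries the small factor $t$. If this pairing turns out to be too rough, the fallback is to work with $J$-weighted variables $J^{(j)}_\e\hat c^{(j)}_\e$ directly, so that only $\partial_t\delta J_\e$ terms appear.
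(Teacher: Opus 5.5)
Your proposal is correct and follows the paper's proof essentially step for step. The one real deviation is the $\hat v^{(1)}_\e$-advection term: the paper does not estimate it but cancels it exactly through $\langle\partial_t(J^{(1)}_\e\delta\hat c_\e),\delta\hat c_\e\rangle-(A^{(1)}_\e\delta\hat c_\e\,\hat v^{(1)}_\e,\nabla\delta\hat c_\e)=\tfrac12\ddt(J^{(1)}_\e\delta\hat c_\e,\delta\hat c_\e)$, which follows from $\dive(A_\e\hat v_\e)=-\partial_tJ_\e$ and $\hat v_\e=0$ on $\Gamma_\e\cup\Sigma_2$. This makes your Gagliardo--Nirenberg step and the separate $\partial_tJ^{(1)}_\e$ term unnecessary. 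Your handling of $\langle\partial_t(\delta J_\e\hat c^{(2)}_\e),\delta\hat c_\e\rangle$ (pairing with $\delta J_\e\,\delta\hat c_\e\in H^1$ and paying $\e^{-\alpha}$ for $\nabla\delta J_\e$) is actually more careful than the paper's.

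One point that you, like the paper, pass over quickly: by \eqref{eq:lipschitz_stokes}, the $\delta\partial_t r$-part of $\delta\hat v_\e$ is small through the factor $\e^{\frac32(\alpha-1)}$, not through small $t$. So ``first $q'$, then $t_\e$'' really means ``first $\e$ small, then $t_\e$''. This is legitimate only because the constant in front of $\int_0^t\|\delta\hat v_\e\|^2_{L^2(\Omega_\e)}$ involves just $\|\hat c^{(2)}_\e\|_\infty$, $\|A^{(2)}_\e\|_\infty$ and the coercivity of $D^{(1)}_\e$. That holds once $t_\e$ is chosen so small that the $\e$-dependent Gronwall factor is at most $2$.
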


\begin{proof}
$(i)$ \textit{Lipschitz energy estimates}.
We look at the difference of the weak forms (we use $J^{(j)}, D^{(j)}$ and $\delta{J},\delta{D}$ etc.~to denote the coefficients and their differences)\footnote{For the ease of notation, we drop the $\e$-subscripts in the proof.} and test with $\varphi=\delta\hat{c}$:
    \begin{multline*}
    \langle\partial_t(J^{(1)}\delta\hat{c}),\delta\hat{c}\rangle_{H^1(\Omega_\e)}
    +\langle\partial_t(\delta{J}\hat{c}^{(2)}),\delta\hat{c}\rangle_{H^1(\Omega_\e)}
   \\
    +(D^{(1)}\delta\nabla\hat{c}-A^{(1)}\delta\hat{c}\hat{v}^{(1)},\delta\nabla\hat{c})_{L^2(\Omega_{\e})}
    +(\delta{D}\nabla\hat{c}^{(2)}-\delta{A}\hat{c}^{(2)}\hat{v}^{(2)}-A^{(2)}\hat{c}^{(2)}\delta\hat{v},\delta\nabla\hat{c})_{L^2(\Omega_{\e})}\\
    =\e^{3-2\alpha}\left(J^{\Gamma,(1)} \left[G\left(\hat c^{(1)},r^{(1)}\right)-G\left(\hat c^{(2)},r^{(2)}\right)\right]+\delta J^{\Gamma} G\left(\hat c^{(2)},r^{(2)}\right),\delta\hat{c}\right)_{L^2(\Gamma_{\e})}\\
    +(\delta{J}F(\hat{c}^{(1)})+J^{(2)}\left(F(\hat{c}^{(1)})-F(\hat{c}^{(2)})\right),\delta\hat{c})_{L^2(\Omega_{\e})}.
    \end{multline*}
With the same techniques as in \cref{lemma:existence_given_r}, we find that
\[
\langle\partial_t(J^{(1)}\delta\hat{c}),\delta\hat{c}\rangle_{H^1(\Omega_\e)}
-(A^{(1)}\delta\hat{c}\hat{v}^{(1)},\delta\nabla\hat{c})_{L^2(\Omega_{\e})}
=\frac{1}{2}\ddt\left(J^{(1)}\delta\hat{c},\delta\hat{c}\right)_{L^2(\Omega_\e)}
\]
which leads to
\begin{multline*}
    \frac12\ddt\left(J^{(1)}\delta\hat{c},\delta\hat{c}\right)_{L^2(\Omega_\e)}
    +(D^{(1)}\delta\nabla\hat{c},\delta\nabla\hat{c})^2_{L^2(\Omega_{\e})}\\
    \le \underbrace{\e^{3-2\alpha}\left(J^{\Gamma,(1)} \left[G\left(\hat c^{(1)},r^{(1)}\right)-G\left(\hat c^{(2)},r^{(2)}\right)\right]+\delta J^{\Gamma} G\left(\hat c^{(2)},r^{(1)}\right),\delta\hat{c}\right)_{L^2(\Gamma_{\e})}}_{I_1}\\
    +\underbrace{(\delta{J}F(\hat{c}^{(1)})+J^{(2)}\left(F(\hat{c}^{(1)})-F(\hat{c}^{(2)})\right),\delta\hat{c})_{L^2(\Omega_{\e})}}_{I_3}
    \underbrace{-\langle\partial_t(\delta{J}\hat{c}^{(2)}),\delta\hat{c}\rangle_{H^1(\Omega_\e)}}_{I_4}\\
    \underbrace{-(\delta D_{\e}\nabla\hat{c}^{(2)}-\delta{A}\hat{c}^{(2)}\hat{v}^{(2)}-A^{(2)}\hat{c}^{(2)}\delta\hat{v},\delta\nabla\hat{c})_{L^2(\Omega_{\e})}}_{I_5}.
\end{multline*}
We go on by looking at the individual terms $I_1,\dots,I_5$ and estimate them separately:
Starting with $I_1$, we recall that $G(c,r)=g(r)(\cequ(r)-c)$ and
\begin{multline*}
G\left(\hat c^{(1)},r^{(1)}\right)-G\left(\hat c^{(2)},r^{(2)}\right)\\
=
g(r^{(1)})\left(\cequ(r^{(1)})-\cequ(r^{(2)})-\delta\hat c)\right)
+\left(g(r^{(1)})-g(r^{(2)})\right)(\cequ(r^{(1)})-\hat c^{(1)}).
\end{multline*}
With $g$, $J^\Gamma$, and $\cequ$ locally Lipschitz and $r^{(j)}\le \lambda$ and $0\le c^{(j)}\le \{\overline c_0,\cequ(\lambda)\}e^{2C_F t}$ (\cref{lemma:existence_given_r}), we estimate using Cauchy-Schwarz
\begin{align*}
I_1&\lesssim_{\lambda,T}\e^{\frac{3-2\alpha}2}\|\delta r\|_{\ell^\infty(\indexI)}\|\delta\hat{c}\|_{L^2(\Gamma_\e)}
\end{align*}
The volume source terms in $I_3$ can be estimated directly using \cref{lemma:transform_lipschitz} and the local Lipschitz regularity of $F$:
\[
I_3\lesssim_{\lambda,T}\e^{\frac{3(\alpha-1)}{2}}\|\delta r\|_{\ell^\infty(\indexI)}\|\delta\hat{c}\|_{L^2(\Omega_\e)}+\|\delta\hat{c}\|_{L^2(\Omega_\e)}^2
\]
For the time derivative term $I_4$, we write
\[
I_4\leq \|\delta\partial_t J\|_{L^2(\Omega_\e)}\|\hat{c}^{(2)}\|_{L^\infty(\Omega_\e)}\|\delta\hat{c}\|_{L^2(\Omega_\e)}+\|\delta J\|_{L^\infty(\Omega_\e)}\|\partial_t\hat c^{(2)}\|_{H^1(\Omega_\e)^*}\|\delta \hat c\|_{H^1(\Omega_\e)}
\]
This we can estimate using \cref{lemma:transform_lipschitz,lemma:existence_given_r,lemma:time_derivative_estimate,cor:transform_lp_estimates} with
\[
I_4\lesssim_{\lambda,T,M}\e^{\frac{3(\alpha-1)}2}\|\delta\partial_t r\|_{\ell^\infty(\indexI)}\|\delta\hat{c}\|_{L^2(\Omega_\e)}+\e^{-1}\|\delta r\|_{\ell^\infty(\indexI)} \|\delta\hat c\|_{H^1(\Omega_\e)}
\]
where the $\e$-dependency comes into play via $\|\partial_t\hat c^{(2)}\|_{H^1(\Omega_\e)^*}$.
For the last term, $I_5$, we have (using the a priori estimates available for $\hat{c}^{(2)}$ and $\hat{v}^{(2)}$)
\[
I_5\lesssim_{\lambda,T}\|\delta r\|_{\ell^\infty(\indexI)}\|\delta\nabla\hat{c}\|_{L^2(\Omega_{\e})} \left(1+\e^{\frac{3}{2}(\alpha-1)}\right)
+\|\delta\hat{v}\|_{L^2(\Omega_{\e})}\|\delta\nabla\hat{c}\|_{L^2(\Omega_{\e})}.
\]
Summarizing everything we have so far while also
\begin{itemize}
    \item using the trace inequality on $\|\delta\hat{c}\|^2_{L^2(\Gamma_\e)}$ and generalized Young's inequality to subsume the gradient terms on the left hand side,
    \item estimating $\|\delta r\|_{\ell^\infty(\indexI)}$ against $t\|\delta \partial_tr\|_{\ell^\infty(\indexI)}$,
    \item using the Lipschitz estimate for $\|\delta\hat v\|_{L^2(\Omega_\e)}$ from \cref{lemma:stokes_for_fixed_radius,remark:lipschitz_stokes_refined},
\end{itemize}
we are led to
\begin{equation*}
     \ddt\left(J^{(1)}\delta\hat{c},\delta\hat{c}\right)_{L^2(\Omega_\e)}
    +\|\delta\nabla\hat{c}\|^2_{L^2(\Omega_{\e})}\\
    \lesssim_{\lambda,T,M,\e^{-1}}\|\delta\hat{c}\|^2_{L^2(\Omega_\e)}
    +t\|\delta\partial_tr\|_{\ell^\infty(\mathcal{I}_\e)}^2.
\end{equation*}
With Grönwall's inequality, the statement follows for sufficiently small time intervals.
\end{proof}

\subsection{Fixed-point argument}\label{ssec:fixed_point_argument}
In this section, we finally put together the fixed-point argument to show that there is a unique solution in the sense of \cref{definition:weak_solution_moving}.
Given a prescribed radius evolution $r$, we first solve the Stokes–diffusion system to obtain $\hat c_\e$, and then solve the radius ODEs driven by this concentration.
For $\lambda>\overline r$, we set $k_\lambda=\max\{\overline c_0,\cequ(\lambda)\}$ and $g_\lambda=\sup_{\gamma\in[\nicefrac1\lambda,\lambda]}g(\gamma)$.
We recall the following results:
\begin{itemize}
    \item \Cref{lemma:concentration_to_radius}: For every \(k>0\) and
    \(c\in \mathcal W(S;\Omega_\e)\) with
    \(0\le c(t,x)\le k e^{2C_Ft}\), there exists a unique
    \(r_\e\in\mathcal R_\e(S)\) solving the ODEs for the
    radii evolution. Moreover,
    \[
    0<\underline\xi(t)\le r_{\e,i}(t)\le \overline\xi_k(t),
    \]
    where
    \[
    \partial_t\underline\xi
    =
    -g(\underline\xi)c_{\mathrm{eq}}(\underline\xi),
    \qquad
    \partial_t\overline\xi_k
    =g(\overline\xi_k)e^{2C_Ft}.
    \]
    In particular, if \(r_{\e,i}(t)\in[1/\lambda,\lambda]\), then
    \[
    \|\partial_t r_\e(t)\|_{\ell^\infty(I_\e)}
    \le g_\lambda
    \left(
    k e^{2C_Ft}+c_{\mathrm{eq}}(\lambda)
    \right).
    \]
    \item\Cref{lemma:existence_given_r}: For every prescribed radial evolution $r\in\mathcal{R}_\e(S)$ with $\nicefrac1\lambda\le r_i(t)\le\lambda$, there is a unique solution $(\hat c_\e,\hat u_\e, p_\e)$ with $0\le\hat c_\e\le \max\{\overline c_0,\cequ(\lambda)\}e^{2C_ft}$.
\end{itemize}
Since \(\lambda>\overline r\) and \(\underline\xi(0)=1/\overline r>1/\lambda\), we may choose
    \[
    T_\lambda:=\min\{T_\lambda^-,T_\lambda^+\}>0,
    \]
    where
    \[
    T_\lambda^-:=
    \sup\left\{
    t>0:\ \underline\xi(s)\ge \frac1\lambda
    \text{ for all }s\in[0,t]
    \right\},
    \]
    and, if \(C_F>0\) and \(g_\lambda>0\),
    \[
    T_\lambda^+:=
    \frac{1}{2C_F}
    \log\left(
    1+\frac{2C_F(\lambda-\overline r)}
    {k_\lambda g_\lambda}
    \right).
    \]
    If \(C_F=0\), we instead set
    \[
    T_\lambda^+:=
    \frac{\lambda-\overline r}
    {k_\lambda g_\lambda}.
    \]
    If \(g_\lambda=0\), then the radius equation is stationary as long as
    \(r_i(t)\in[1/\lambda,\lambda]\), and we may take \(T_\lambda^+=\infty\).
    Next choose \(M_\lambda>0\) such that
    \[
    M_\lambda>g_\lambda
    \left(
    k_\lambda e^{2C_FT_\lambda}+\cequ(\lambda)
    \right)
    \]
    and define
    \[
    A_{\lambda}(T_\lambda)
    :=
    \left\{
    r\in\mathcal R_\e(0,T_\lambda):
    \frac1\lambda\le r_i(t)\le\lambda,\ 
    \|\partial_t r\|_{L^\infty(0,T_\lambda;\ell^\infty(\indexI))}
    \le M_\lambda
    \right\}.
    \]
    For \(r\in A_\lambda(T_\lambda)\), \cref{lemma:existence_given_r} gives a unique solution \((\hat c_\e,\hat u_\e,\hat p_\e)\) with
    \[
    0\le \hat c_\e(t,x)\le k_\lambda e^{2C_Ft}.
    \]
    Applying \cref{lemma:concentration_to_radius} with \(k=k_\lambda\), we obtain a unique radius evolution \(R=\mathcal L_\e(r)\) satisfying
    \[
    \underline\xi(t)\le R_i(t)\le \overline\xi_{k_\lambda}(t).
    \]
    By the choice of \(T_\lambda^-\) and \(T_\lambda^+\), it follows that
    \[
    \frac1\lambda\le R_i(t)\le \lambda
    \qquad\text{for all }t\in[0,T_\lambda].
    \]
    Moreover,
    \[
    \|\partial_t R(t)\|_{\ell^\infty(\indexI)}
    \le g_\lambda
    \left(
    k_\lambda e^{2C_Ft}+\cequ(\lambda)
    \right)
    \le M_\lambda .
    \]
    Hence
    \[
    \mathcal L_\e\colon A_\lambda(T_\lambda)\to A_\lambda(T_\lambda)
    \]
    is well-defined.
    Note that $A_\lambda(T_\lambda)$ is a closed subset of 
\[
W^{1,2}(0,T_\lambda;\ell^\infty(\indexI))
\]
and therefore a complete metric space with the metric (note that $r^{(1)}(0)=r^{(2)}(0)=r_{\e,0}$ since $\mathcal{R}_\e(S)$ enforces the initial conditions)
\[
d(r^{(1)},r^{(2)})=\|\delta\partial_tr\|_{L^2(0,T_\lambda;\ell^\infty(\indexI))}.
\]
%

\begin{theorem}[Fixed-point] \label{thm:well-posed}
    For every $T>0$, there is $\e_T$ such that there is a unique fixed-point $r_\e\in\mathcal{R}_\e(S)$ with $\mathcal{L}_\e(r_\e)=r_\e$ for all $\e\le\e_T$.
\end{theorem}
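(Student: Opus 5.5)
Our plan is a local contraction for $\mathcal L_\e$ on $A_\lambda(T_\lambda)$, followed by a continuation argument that uses the $\e$-independent a priori bounds of \cref{theorem:maximum_principle} to reach the prescribed horizon $T$.

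\emph{Step 1 (local contraction).} Fix $\lambda>\overline r$ and $\e\le\e_\lambda$. For $r^{(1)},r^{(2)}\in A_\lambda(T_\lambda)$ we combine the Lipschitz estimate of \cref{lemma:concentration_to_radius} with \cref{lemma:solution_lipschitz_estimates}. The first gives, in the pulled-back form with $J^\Gamma_\e$ and the trace estimate,
\[
\|\delta\partial_t R\|_{L^2(0,t;\ell^\infty(\indexI))}\lesssim_{\e^{-1},t}\|\delta\hat c_\e\|_{L^2(0,t;H^1(\Omega_\e))}+\|\delta r\|_{L^\infty(0,t;\ell^\infty(\indexI))},
\]
where the $\delta r$ term comes from the dependence of $G$ on the radius. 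The second gives, for any $q>0$, a time $t_\e>0$ with
\[
\|\delta\hat c_\e\|^2_{L^\infty L^2}+\|\nabla\delta\hat c_\e\|^2_{L^2L^2}\le q\,\|\delta\partial_t r\|^2_{L^2(0,t_\e;\ell^\infty(\indexI))}.
\]
We use $\|\delta r(t)\|\le\sqrt t\,\|\delta\partial_t r\|_{L^2(0,t;\ell^\infty)}$, which holds since $\delta r(0)=0$, together with \cref{remark:lipschitz_stokes_refined}. Then, for $t_\e$ small (depending on $\e$, which is allowed), $d(\mathcal L_\e r^{(1)},\mathcal L_\e r^{(2)})\le\tfrac12 d(r^{(1)},r^{(2)})$ on $(0,t_\e)$. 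Invariance $\mathcal L_\e(A_\lambda)\subset A_\lambda$ was shown above, and restricting to a shorter interval preserves it. Banach's fixed point theorem therefore gives a unique fixed point on $(0,\min\{t_\e,T_\lambda\})$. The corresponding $(c_\e,u_\e,p_\e,r_\e)$ is then a weak solution in the sense of \cref{definition:weak_solution_moving}, via the equivalence remark for the reference formulation.

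\emph{Step 2 (continuation with $\e$-uniform bounds).} This is the key step. Given $T$, we set
\[
\lambda:=2\max\Big\{\sup_{[0,T]}\overline\xi,\ \Big(\inf_{[0,T]}\underline\xi\Big)^{-1},\ \overline r\Big\},
\]
which is finite and $\e$-independent by \cref{theorem:maximum_principle}, and we take $\e_T:=\e_\lambda$. Let $T^*$ be the supremum of times up to which a solution exists. On $[0,T^*)$, \cref{theorem:maximum_principle} yields $\underline\xi\le r_{\e,i}\le\overline\xi$, $0\le c_\e\le k_0e^{2C_Ft}$ and $|\partial_t r_{\e,i}|\le M$, all uniform in $\e$. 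Hence the radii stay in $[2/\lambda,\lambda/2]$, and by \eqref{eq:eps_condition} the balls remain $\e\dmin/2$-separated.

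We then restart Step 1 at any time $t_0<T^*$, with initial data $c_\e(t_0)$ and $r_\e(t_0)$; the same arguments apply to the shifted problem. The existence time from Step 1 depends only on $\e$, $\lambda$, $M$ and the bounds on the data, and these are uniform on $[0,T]$. The local existence time $\tau_\e>0$ is therefore uniform in $t_0$. Hence $T^*\ge T$: otherwise we could restart at $t_0=T^*-\tau_\e/2$ and extend past $T^*$.

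\emph{Step 3 (uniqueness).} Two fixed points coincide on short intervals by the contraction. A standard continuation argument (the first time they differ leads to a contradiction) gives uniqueness on all of $S$.

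The main obstacle is Step 1. The Lipschitz constants blow up as $\e\to0$ (factors $\e^{(\alpha-3)/2}$ and $\e^{-1}$), so smallness must come entirely from the time length. This needs the $\e^{\frac32(\alpha-1)}$ smallness of the $\delta\partial_t r$ coefficient in \eqref{eq:Lipschitz_explicit_stokes}. It also needs care that the $\delta r$ terms enter only through $\sqrt t$, so that the contraction holds for small time at fixed $\e$.
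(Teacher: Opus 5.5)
Your proposal is correct and follows the paper's proof: a local Banach contraction for $\mathcal L_\e$ built from \cref{lemma:concentration_to_radius} and \cref{lemma:solution_lipschitz_estimates}, then continuation to $T$ using the $\e$-independent bounds of \cref{theorem:maximum_principle}, with $\e_T=\e_\lambda$ for a $\lambda$ fixed by those bounds. The differences are only organizational: you choose $\lambda$ once from $\underline\xi,\overline\xi$ with a factor-$2$ margin and use a uniform restart time, merging the paper's Steps~2 and~3, and you spell out uniqueness. The extra $\delta r$ term you carry in the radius Lipschitz bound is in fact absent ($G$ is evaluated at the output radius, which Gr\"onwall absorbs, and $J^\Gamma_\e$ cancels against $|\Gamma_{\e,i}(t)|$), but it is harmless anyway.
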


\begin{proof}
\textit{Step 1: Small-time existence.}
Let \(0<t\le T_\lambda\) and take \(r^{(j)}\in A_\lambda(t)\), \(j=1,2\).
We set
\[
R_\e^{(j)}:=\mathcal L_\e(r^{(j)}),
\]
and let $\hat c^{(j)}_\e$ be the concentration solution to input $r^{(j)}$ via \cref{lemma:existence_given_r}.
By \cref{lemma:concentration_to_radius}, we have
\[
\|\delta\partial_t R\|_{L^2(0,t;\ell^\infty(\indexI))}
\lesssim_{\e^{-1},t}
\|\delta\hat c_\e\|_{L^2(0,t;H^1(\Omega_\e))}.
\]
By \cref{lemma:solution_lipschitz_estimates}, for every $q>0$, there exists
\(t_\e\in(0,T_\lambda]\) such that, for all \(0<t\le t_\e\),
\[
\|\delta\hat c_\e\|_{L^2(0,t;H^1(\Omega_\e))}
\le
q\|\delta\partial_t r\|_{L^2(0,t;\ell^\infty(\indexI))}.
\]
Choosing \(q>0\) sufficiently small and then \(t_\e>0\) accordingly, we obtain
\[
\|\delta\partial_t R\|_{L^2(0,t_\e;\ell^\infty(\indexI))}
\le
\frac12
\|\delta\partial_t r\|_{L^2(0,t_\e;\ell^\infty(\indexI))}.
\]
Hence \(\mathcal L_\e\) is a contraction on \(A_\lambda(t_\e)\) with respect to the metric $d(r^{(1)},r^{(2)})$.
Since \(A_\lambda(t_\e)\) is complete, Banach's fixed-point theorem yields a unique fixed point
\(r_\e\in A_\lambda(t_\e)\).

\textit{Step 2: Continuation up to \(T_\lambda\).}
Let \(t_\e^*\le T_\lambda\) be the maximal existence time of the fixed point
constructed in Step 1. Suppose, for contradiction, that \(t_\e^*<T_\lambda\).
By the self-mapping bounds from above, the corresponding solution satisfies
\[
\frac1\lambda\le r_{\e,i}(t)\le \lambda
\qquad\text{for all }t\in[0,t_\e^*),\ i\in\indexI,
\]
and
\[
\|\partial_t r_\e(t)\|_{\ell^\infty(\indexI)}
\le M_\lambda
\qquad\text{for a.e. }t\in(0,t_\e^*).
\]
Hence \(r_\e\) is uniformly Lipschitz in time with values in
\(\ell^\infty(\indexI)\). Therefore the limit
\[
r_\e(t_\e^*)
:=
\lim_{t\uparrow t_\e^*} r_\e(t)
\]
exists in \(\ell^\infty(\indexI)\), and
\(
\frac1\lambda\le r_{\e,i}(t_\e^*)\le \lambda.
\)
Moreover, since \(t_\e^*<T_\lambda\), there exists \(\eta>0\) such that
\[
\underline\xi(t)\ge \frac1\lambda,
\qquad
\overline\xi_{k_\lambda}(t)\le \lambda
\qquad\text{for all }t\in[0,t_\e^*+\eta]\subset[0,T_\lambda].
\]
Using \(r_\e(t_\e^*)\) as new initial data, the small-time fixed-point
argument from Step 1 can be restarted on \((t_\e^*,t_\e^*+\tau)\)
for some \(0<\tau\le\eta\).\footnote{Of course, \(\mathcal R_\e\) has to be shifted to the new initial conditions. This causes no difficulty, since \(r_\e(t_\e^*)\in[1/\lambda,\lambda]^{I_\e}\), and the separation condition remains valid by the choice \(\e\le\e_\lambda\).}
This extends the fixed point beyond \(t_\e^*\),
contradicting the maximality of \(t_\e^*\). Hence \(t_\e^*=T_\lambda\).

\textit{Step 3: Extension to an arbitrary finite time horizon.}
Let now \(T>0\) be arbitrary. By \cref{theorem:maximum_principle}, the a priori
bounds for the radii and the concentration hold on every finite time interval.
In particular, there exists \(\lambda_T>1\) such that
\(
\frac1{\lambda_T}\le r_{\e,i}(t)\le \lambda_T
\)
and there is \(C_T>0\) such that
\(
0\le c_\e(t,x)\le C_T.
\)
Consequently, with
\[
g_{\lambda_T}:=\sup_{\gamma\in[1/\lambda_T,\lambda_T]}g(\gamma),
\]
the radius equation yields
\[
\|\partial_t r_\e(t)\|_{\ell^\infty(\indexI)}
\le
g_{\lambda_T}
\left(C_T+\cequ(\lambda_T)\right)
=:M_T.
\]
Thus, at every time \(s<T\) reached by the solution, the admissibility constants
\(\lambda_T\) and \(M_T\) remain finite. Repeating the continuation argument from
Step 2, with \(\lambda_T\) and \(M_T\) in place of \(\lambda\) and \(M_\lambda\),
shows that the solution cannot have a maximal existence time strictly smaller
than \(T\). Hence the fixed point exists on the whole interval \((0,T)\) as long as $\e\le\e_{\lambda_T}$.
\end{proof}

\begin{remark}
    \begin{itemize}
    \item This fixed point indeed gives a solution of the
    fully coupled problem in the sense of \cref{definition:weak_solution_fixed}.
    Let \(r_\e\) be the fixed point constructed above, and let
    \(
    (\hat u_\e,\hat p_\e,\hat c_\e)
    \)
    be the solution of the prescribed-radius problem corresponding to
    \(r_\e\).
    By definition of the map \(\mathcal L_\varepsilon\), the
    radius \(R_\varepsilon:=\mathcal L_\varepsilon(r_\varepsilon)\) is the unique solution of the radius equation driven by \(\hat c_\varepsilon\). Since \(r_\varepsilon\) is a fixed point, we have \(
    R_\varepsilon=r_\varepsilon.
    \)
    Consequently, the radii determining the domain, the velocity field and the
    concentration equation coincide with the radii generated by the growth law.
    \item There can be no other solutions in the sense of  \cref{definition:weak_solution_fixed}. If \(
    (\hat u_\e,\hat p_\e,\hat c_\e, r_\e)
    \) is a solution, then the triple $ (\hat u_\e,\hat p_\e,\hat c_\e)$ uniquely solve the prescribed radius evolution for $r_\e$.
    Moreover, \(r_\e\) uniquely solves the radius equation driven by \(\hat c_\e\), with the prescribed initial data. It follows that
    \[
    \mathcal L_\e(r_\e)=r_\e.
    \]
    \end{itemize}
    \end{remark}

\section{Passage to the limit in the critical case \texorpdfstring{$\alpha = 3$}{alpha equals 3}}
\label{sec:hom_crit}

In this section, we prove the quantitative homogenization result in the critical regime $\alpha=3$, stated in \cref{th:hom.critical}.
The main difficulty is that the interfacial exchange remains critical in this scaling and produces non-vanishing boundary layers around the particles.
We therefore do not pass to the limit directly in the weak formulation and, instead, show simultaneously the convergence of the solutions $c_\e$, $u_\e$, and the empirical density $f_\e$ (as defined in \eqref{eq:f_vareps}) to their limits provided that the initial concentration, the initial empirical particle distribution, and the forcing term are sufficiently well prepared with respect to their macroscopic counterparts.

Throughout this section, we fix $T>0$ and consider $\e < \e_T$ where $\e_T$ is sufficiently small such that \cref{theorem:well_posedness} guarantees the existence of a unique solution on $[0,T)$ to \eqref{system:stokes_eps}--\eqref{eq:mass_depos} which satisfies $B_{\e,i}(t) \subset  B_{\frac {\dmin\e}{4}}(z_{\e,i})$ on $[0,T)$.
Even though $\alpha =3$ is fixed in the whole section, we retain the parameter $\alpha$ in the equations and estimates whenever they will be reused in the next section.
On the other hand, in equations and estimates that are specific to the critical scaling, we directly replace $\alpha$ by $3$.
For most parts of the proof, we fix a time $t \in [0,T)$ and to lean the notation, we will often not make the time dependence explicit. For instance, we write $c_\e(x)$ and $\Omega_\e$ instead of $c_\e(t,x)$ and $\Omega_\e(t)$.

\subsection{The relative energy argument}

\subsubsection*{Approximation for the concentration}
The macroscopic concentration $c$ does not satisfy the microscopic exchange conditions on the particle boundaries.
In the critical scaling, the resulting discrepancy is of order one and therefore can not be neglected.
We thus introduce local correctors around each particle which approximate the corresponding boundary profile.
To that end, we consider the approximate density
\begin{align} \label{c^app}
    c_\e^{app} := c + \sum_{i\in\indexI} c_{\e,i}^{app},
\end{align}
where the correctors $c_{\e,i}^{app}$ are defined to approximately match the boundary conditions of $c_\e$. 
More precisely, we define (suppressing all time dependencies and introducing the short hand notation $H_{\e,i}:=H(c(z_{\e,i}),r_{\e,i})$)
\begin{align} \label{corrector.c}
    c_{\e,i}^{app} (x) := \begin{cases}H_{\e,i}\left(1 - \frac{4 \e^2 r_{\e,i}}{\dmin} \right) &\quad \text{in } B_{\e,i}\\
     H_{\e,i} r_{\e,i} \left(\frac{\e^3 }{|x- z_{\e,i}|} - \frac{4 \e^2}{\dmin} \right)&\quad \text{in } B_{\frac {\dmin\e}{4}}(z_{\e,i}) \setminus B_{\e,i} \\
    0 & \quad \text{in } \Omega \setminus B_{\frac {\dmin\e}{4}}(z_{\e,i})
    \end{cases}
\end{align}
Note that by \eqref{ass:strong.separation}, the supports of $c_{\e,i}^{app}$ are disjoint and contained in $\Omega$. In particular, we find by a direct calculation
\begin{align}
    \|\sum_i \nabla c_{\e,i}^{app}\|_{L^2(\Omega)}^2 = \sum_i \| \nabla c_{\e,i}^{app}\|_{L^2(\Omega)}^2 &\lesssim \e^3 \sum_i r_{i,\e}H_{\e,i}^2 \lesssim 1, \label{corrector.c.H^1}\\
     \|\sum_i  c_{\e,i}^{app}\|_{L^2(\Omega)}^2 &\lesssim \e^7 \sum_i r_{i,\e}^2  H_{\e,i}^2 \lesssim \e^4. \label{corrector.c.L^2}
\end{align}

\subsubsection*{Wasserstein distance for the empirical density}
For the empirical density $f_\e(t)$, we consider the $2$-Wasserstein distance.
We recall that the $2$-Wasserstein distance minimizes the quadratic cost among all transport plans $\pi \in \Pi(f_\e,f)$, where $\Pi(f_\e,f)$ denotes the set of probability measures $\pi \in \mathcal{P}((\Omega \times (0,\infty))^2)$ having $f_\e$ (resp. $f$) as a first (resp. second) marginal, which means that for all $\varphi \in C_c^\infty(\Omega \times [0,\infty))$
\begin{align*}
    \int_{(\Omega \times (0,\infty))^2} \varphi(x_1,\sigma_1) \pi (\d x_1, \d \sigma_1, \d x_2, \d \sigma_2) =  \int_{\Omega \times (0,\infty)} \varphi(x,\sigma) f_\e (\d x, \d \sigma), \\
     \int_{(\Omega \times (0,\infty))^2} \varphi(x_2,\sigma_2) \pi (\d x_1, \d \sigma_1, \d x_2, \d \sigma_2) =  \int_{\Omega \times (0,\infty)} \varphi(x,\sigma) f (\d x, \d \sigma).
\end{align*}
Then, 
$$
\mathcal{W}_2(f_\e,f)^2:=\underset{\pi \in \Pi(f_\e,f)}{\inf}\left\{ \int\left(|x_1-x_2|^2+|\sigma_1-\sigma_2|^2\right)   \pi(\d x_1, \d \sigma_1, \d x_2, \d \sigma_2) \right\}.
$$
Rather than choosing an optimal transport plan independently at each time, we propagate an optimal initial coupling along the corresponding characteristic flows.
This allows us to control the evolution of its transport cost.
To that end, let $H_\e \colon [0,T) \times \Omega \times \R_+ \to \R $ be a Lipschitz function such that\footnote{Note that this defines $H_\e$ only in the positions and radii of the actual particles. Outside, we can take an arbitrary Lipschitz function.}
\begin{align} \label{H_e}
    H_\e(t,z_{\e,i},r_{\e,i}(t)) = \fint_{\Gamma_{\e,i}(t)}G\left(c_\e(t,x),r_{\e,i}(t)\right) \dd\gamma.
\end{align}
Then, we define
\begin{equation}\label{eq:floweps}
\begin{array}{rcl}
\partial_t \Sigma_\e(t,x,\sigma)&=&-H_\e(t,x,\Sigma_\e(t,x,\sigma)),  \\
\Sigma_\e(0,x,\sigma)&=& \sigma
\end{array}
\end{equation}
such that 
\begin{equation}\label{eq:def_F^N}
r_{\e,i}(t)=\Sigma_\e(t,z_{\e,i},r_{\e,i,0}).
\end{equation}
We also define the characteristics associated to the the limit equation \eqref{evolution.f}
\begin{equation}\label{eq:flow}
\begin{array}{rcl}
\partial_t \Sigma(t,x,\sigma)&=& -\frac 1 {\Sigma(t,x,\sigma)} H(c(t,x),\Sigma(t,x,\sigma))\\
\Sigma(0,x,\sigma)&=& \sigma
\end{array}
\end{equation}
We can construct now the transport plan $\pi_t (\d x_1, \d \sigma_1, \d x_2, \d \sigma_2)$, having $f_\e(t)$ as a first marginal and $f(t)$ as second marginal as follows. Let $\pi_0(\d x_1, \d \sigma_1, \d x_2, \d \sigma_2)$ be an optimal transport plan for the $\mathcal{W}_2(f_{\e}(0),f_{0})$ and define
$$
\pi_t:= ( P^1_x, \Sigma_\e(t)\circ P^1, P^2_x,  \Sigma(t)\circ P^2)_\# \pi_0.
$$
where $P^1, P^1_x, P^2, P^2_x$  are the projections from $(\Omega \times [0,\infty))^2$ defined by
\begin{align*}
    P^i(x_1,\sigma_1,x_2,\sigma_2) = (x_i,\sigma_i), &&  P^i_x(x_1,\sigma_1,x_2,\sigma_2) = x_i.
\end{align*}
Equivalently, $\pi_t$ is defined as the measure that satisfies for all $\varphi \in C_c^\infty(\Omega \times [0,\infty))^2$
\begin{align*}
   & \int_{(\Omega \times (0,\infty))^2} \varphi(x_1,\sigma_1, x_2,\sigma_2) \pi_t (\d x_1, \d \sigma_1, \d x_2, \d \sigma_2) \\
   &  \qquad =   \int_{(\Omega \times (0,\infty))^2} \varphi(x_1,\Sigma_\e(t,x_1,\sigma_1), x_2,\Sigma(t,x_2,\sigma_2)) \pi_0 (\d x_1, \d \sigma_1, \d x_2, \d \sigma_2)
\end{align*} 
 We set $\eta$ the supremum of the squared  transport cost associated to  the transport plan $\pi_t$ 
 \begin{align}\label{eq:def_eta}
 \eta(t)&= \sup_{s \leq t}\int_{(\Omega \times (0,\infty))^2} (|x_1-x_2|^2+|\sigma_1-\sigma_2|^2) \pi_s (\d x_1, \d \sigma_1, \d x_2, \d \sigma_2) \\ \nonumber
 &= \sup_{s \leq t} \int_{(\Omega \times (0,\infty))^2} (|x_1-x_2|^2+|\Sigma_\e(s,x_1,\sigma_1)-\Sigma(s,x_2,\sigma_2)|^2)  \pi_0 (\d x_1, \d \sigma_1, \d x_2, \d \sigma_2).
 \end{align}


\subsubsection*{Relative energy inequality and proof of Theorem \ref{th:hom.critical}}

We consider the relative energy
\begin{align} \label{rel.energy}
    E_\e(t) := \|c_{\e} - c_{\e}^{app}\|_{L^2(\Omega_\e)}^2 + \eta(t) 
\end{align}

In the process of estimating $c_{\e,i} - c_{\e}^{app}$, we will need an estimate for the difference of the fluid velocities $u_\e - u$.
Since the equation for the fluid velocities are quasi stationary, we do not include the difference in the relative energy. Instead, we will estimate the difference by the relative energy.

We will prove the following three estimates which will directly imply Theorem \ref{th:hom.critical}.
The proof of these propositions is the content of \cref{ssec:estimate_ueps_u,ssec:estimate_ceps_c,ssec:estimate_eta} below.

\begin{proposition} \label{pro:fluid}
Under the assumptions of Theorem \ref{th:hom.critical}, for $\e < \e_0$
    \begin{align*}
    \| u_\e - u\|_{L^2(\Omega_\e)}^2 \lesssim \eta + \e^2 + \|f_\e^{app}(0) - f_0\|_{(H^1_xW_\sigma^{1,\infty})^\ast}^2 +  \|h_\e - h\|_{L^2(S;(H^{1}(\Omega))^*)}^2.
\end{align*}
\end{proposition}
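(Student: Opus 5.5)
I will argue at a fixed time $t$ by an energy estimate for a corrected velocity difference, following the classical oscillating test function method for Brinkman in the form of Allaire / Giunti--Höfer. The first step is to build a local Stokes corrector $w_\e$ (the matrix-valued object referred to in \eqref{w^eps}). It equals $0$ in the balls $B_{\e,i}(t)$ and $\mathrm{Id}$ outside $\bigcup_i B_{\frac{\dmin\e}{4}}(z_{\e,i})$. In between, it is given by the Stokeslet solution for a sphere of radius $\e^3 r_{\e,i}$, cut off at scale $\e$, together with an associated pressure $\pi_\e$.

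The comparison field is then
\[
\tilde u_\e:=w_\e u+\sum_i \e^3\partial_t r_{\e,i}\,\phi_{\e,i},
\]
with two additional pieces. The term $\phi_{\e,i}$ is a radial source corrector (a multiple of $\e^6 r_{\e,i}^2 (x-z_{\e,i})/|x-z_{\e,i}|^3$, cut off and made divergence free via Bogovskiǐ, \cref{pro:Bog}) matching the inhomogeneous Dirichlet datum \eqref{inh.Dirichlet}. Second, a Bogovskiǐ correction makes $\tilde u_\e$ exactly divergence free in $\Omega_\e(t)$. Since $|\partial_t r_{\e,i}|\lesssim 1$ by \cref{theorem:maximum_principle}, the source correctors cost only $O(\e^{2})$ in $H^1$. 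I would test \eqref{eq:micro_system:b} and the Brinkman system \eqref{Brinkman} with $\psi=u_\e-\tilde u_\e\in H^1_{\Sigma_2\cup\Gamma_\e}$ and use Korn and Poincaré (\cref{lemma:Korn,lemma:poincare}) to get
\[
\|\nabla(u_\e-\tilde u_\e)\|_{L^2}^2\lesssim |R_\e(\psi)|,
\]
where $R_\e$ collects the remainders of the corrected Brinkman equation. The $h_\e-h$ contribution gives $\|h_\e-h\|_{H^{-1}}\|\psi\|_{H^1}$. The main term is the usual one: $-\Delta(w_\e u)+\nabla(p+\pi_\e u)$ equals $-\Delta u+\nabla p$ plus a sum of single-layer measures $\mu_\e$ concentrated on the spheres $\partial B_{\frac{\dmin\e}{8}}(z_{\e,i})$, with mass $6\pi\e^3 r_{\e,i}\,u(z_{\e,i})$, plus commutators of order $\e$ in $H^{-1}$. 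The remaining step is therefore to estimate
\[
\Big\langle \mu_\e - u\int_0^\infty 6\pi\sigma f(\cdot,\d\sigma),\psi\Big\rangle .
\]

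I would treat this last pairing in three moves, and I expect it to be the main obstacle. First, replace the surface measures by the uniform measures on $B_{\frac{\dmin\e}{4}}(z_{\e,i})$. Since $\psi\in H^1$ and the separation \eqref{ass:strong.separation} holds, this costs $\e\|\psi\|_{H^1}$ by a local Poincaré/trace argument. The outcome is $\langle (f_\e^{app}(t)-f(t)), 6\pi\sigma\,u\cdot\psi\rangle$. The function $(x,\sigma)\mapsto 6\pi\sigma u\cdot\psi$ has $H^1_xW^{1,\infty}_\sigma$ norm $\lesssim\|\psi\|_{H^1}$ on the (compact) $\sigma$-support. Second, split $f_\e^{app}(t)-f(t)$ into $f_\e^{app}(0)-f_0$ plus the time evolution. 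For the evolution part, I would use the coupling $\pi_t$ built from the characteristics $\Sigma_\e,\Sigma$. Write the pairing as
\[
\int\big(\bar\Phi(x_1,\Sigma_\e(t,x_1,\sigma_1))-\bar\Phi(x_1,\sigma_1)\big)-\big(\bar\Phi(x_2,\Sigma(t,x_2,\sigma_2))-\bar\Phi(x_2,\sigma_2)\big)\,\d\pi_0 ,
\]
where $\bar\Phi$ is the $\e$-ball average of $\Phi=6\pi\sigma u\cdot\psi$, and take differences. This needs care because the test function is only $H^1$ in $x$, so no Lipschitz bound in $x$ is available. I would therefore exploit linearity in $\sigma$: $\Phi=\sigma\, u\cdot\psi$. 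The radii differences are then controlled by Cauchy--Schwarz, $\big(\int|\Sigma_\e-\Sigma|^2\d\pi_0\big)^{1/2}\le\eta^{1/2}$, paired with the $L^2$ norm of $\bar\Psi=\overline{u\cdot\psi}$ under the marginals. The spatial mismatch $x_1$ versus $x_2$ in $\Sigma(t,x_2,\sigma_2)-\sigma_2$ is handled using that $\Sigma$ is Lipschitz in $x$ (because $c\in W^{1,\infty}$), again giving $\eta^{1/2}$. The $L^2(f_\e^{app})$ norm of $\bar\Psi$ is bounded by $\|\psi\|_{L^2}$ since the spatial marginal of $f_\e^{app}$ has bounded density, and similarly for $f$ by $\int f\,\d\sigma\in L^\infty$. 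Only the initial part then remains, which is bounded by $\|f_\e^{app}(0)-f_0\|_{(H^1_xW^{1,\infty}_\sigma)^*}\|\psi\|_{H^1}$.

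Collecting everything gives $\|\nabla(u_\e-\tilde u_\e)\|_{L^2}\lesssim \e+\eta^{1/2}+\|f^{app}_\e(0)-f_0\|_{*}+\|h_\e-h\|_{H^{-1}}$. To conclude, I would bound $\|u_\e-u\|_{L^2}\le\|u_\e-\tilde u_\e\|_{L^2}+\|(1-w_\e)u\|_{L^2}+O(\e^2)$, using $\|1-w_\e\|_{L^2}\lesssim\e$ from the explicit form of the Stokeslet. Squaring and integrating in time then yields the claim. The delicate point throughout is checking that the Wasserstein coupling argument closes with only $H^1$ regularity of $\psi$ in $x$; that is where I would invest the most care.
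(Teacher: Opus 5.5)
Your proposal is correct and is essentially the paper's argument: the same corrected test function ($w_\e u$ plus Bogovski\u{\i} and inhomogeneous-Dirichlet correctors), with the key pairing handled as in Lemma~\ref{lem:M_eps} (single-layer measures replaced by ball averages at cost $\e\|\psi\|_{H^1}$, then the propagated coupling of Lemma~\ref{f^app,f}). One correction to your Wasserstein step: after Cauchy--Schwarz, the evolution part still leaves $\int\big(\bar\Psi(x_1)D(x_1,\sigma_1)-\bar\Psi(x_2)D(x_2,\sigma_2)\big)\,\d\pi_0$ with $D=\Sigma(t,x,\sigma)-\sigma$, which pairs $f_\e(0)-f_0$ against a function that is only $H^1$ in $x$ and so is not controlled by $\eta^{1/2}$; the initial term you must bound by $\|f_\e^{app}(0)-f_0\|_{(H^1_xW^{1,\infty}_\sigma)^\ast}$ is therefore $\langle f_\e^{app}(0)-f_0,\,6\pi(u\cdot\psi)\,\Sigma(t,x,\sigma)\rangle$, not the one with $6\pi\sigma\,u\cdot\psi$, which is exactly the paper's term $\langle\varphi\circ(P_x,\Sigma(t)),f_\e^{app}(0)-f_0\rangle$ and closes because $\Sigma(t)$ is Lipschitz in $(x,\sigma)$.
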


\begin{proposition} \label{pro:chemistry}
Under the assumptions of Theorem \ref{th:hom.critical}, for $\e < \e_0$
\begin{align*}
    &\frac{\d}{\d t } \frac 1 2 \|c_{\e} - c_{\e}^{app}\|_{L^2(\Omega_\e)}^2 + \| \nabla (c_{\e} - c_{\e}^{app})\|_{L^2(\Omega_\e)}^2 \\
    & \qquad \qquad \lesssim \e^2 + \eta + \|f_\e^{app}(0) - f_0\|_{(H^1_xW_\sigma^{1,\infty})^\ast}^2 +  \|c_{\e} - c_{\e}^{app}\|_{L^2(\Omega_\e)}^2 + \|u_\e - u\|_{L^2(\Omega_\e)}^2
\end{align*}
\end{proposition}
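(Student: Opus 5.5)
The plan is to write the equation for $w_\e := c_\e - c_\e^{app}$ in weak form on $\Omega_\e(t)$ and test it with $w_\e$ itself, which is admissible since it vanishes on $\Sigma_1$. The moving-domain time derivative is handled exactly as in the proof of \cref{theorem:maximum_principle}. Reynolds' theorem produces a boundary term $\e^\alpha\partial_t r_{\e,i}\,\tfrac12 w_\e^2$ on $\Gamma_{\e,i}$. Integrating the advection term $u_\e\cdot\nabla \tfrac12 w_\e^2$ by parts with $\dive u_\e=0$ and $u_\e\cdot\nu = \e^\alpha\partial_t r_{\e,i}$ gives the opposite term, so the two cancel. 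The outcome is $\tfrac12\ddt\|w_\e\|^2 + \|\nabla w_\e\|^2$ on the left-hand side, while the right-hand side contains:
\begin{itemize}
\item the residual of $c_\e^{app}$ in the bulk equation;
\item the mismatch of boundary fluxes on $\Gamma_\e$;
\item advection cross terms;
\item $F(c_\e)-F(c)$.
\end{itemize}
Because $0\le c_\e\le C_T$ and $c\in W^{1,\infty}$, the reaction term is bounded by $\|w_\e\|^2 + \|c_\e^{app}-c\|^2\lesssim \|w_\e\|^2+\e^4$, using \eqref{corrector.c.L^2}.

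For the corrector residual, the construction \eqref{corrector.c} is designed so that $c_{\e,i}^{app}$ is harmonic on the annulus. Its normal derivative on $\Gamma_{\e,i}$ equals $H_{\e,i}/(\e^3 r_{\e,i})$ times a sign. On the outer sphere $\partial B_{\dmin\e/4}$ its gradient jump gives a single-layer measure of total mass $4\pi\e^3 r_{\e,i}H_{\e,i}$, spread uniformly on a sphere of radius $\sim\e$.

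The boundary flux $\e^{-3}G(c_\e,r_{\e,i})$ combines with $\nabla c_{\e,i}^{app}\cdot\nu$ into $\e^{-3}[G(c_\e,r_{\e,i}) - H_{\e,i}/r_{\e,i}]$. On $\Gamma_{\e,i}$ we have $c_\e \approx c(z_{\e,i}) + H_{\e,i} + w_\e$, up to $O(\e)$ errors from $c$ being Lipschitz and from the constant $4\e^2r/\dmin$. Linearity of $G$ in $b$ together with the algebraic identity $H=\sigma G(c+H,\sigma)$ then turns this into $-\e^{-3}g(r_{\e,i})w_\e$ plus an error of order $\e^{-2}$. Tested against $w_\e$, this yields the good sign $-\e^{-3}g\|w_\e\|^2_{L^2(\Gamma_\e)}\le0$. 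The $\e^{-2}$ remainder is controlled by the trace estimate \cref{lemma:trace_estimates}, which gives $\e^{-3}\|w\|^2_{L^2(\Gamma_\e)}\lesssim \|w\|^2_{H^1}$ at $\alpha=3$. Young's inequality absorbs the gradient part and leaves $\e^2$.

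The outer single layers sum to the measure $\sum_i 4\pi\e^3 r_{\e,i}H(c(z_{\e,i}),r_{\e,i})\,\delta_{\partial B}$. In the weak form of the limit equation this is matched against the source $\int 4\pi\sigma H(c,\sigma)f\,\d\sigma$. The difference, tested with $w_\e$, splits into three pieces:
\begin{itemize}
\item the sphere-versus-ball uniform measure, with cost $\e$ times the $H^1$ norm;
\item $f_\e^{app}(t)-f(t)$ tested against $\phi(x,\sigma)=4\pi\sigma H(c(x),\sigma)w_\e(x)$, which lies in $H^1_xW^{1,\infty}_\sigma$ with norm $\lesssim\|w_\e\|_{H^1}$;
\item the Lipschitz error from $c(x)$ versus $c(z_{\e,i})$.
\end{itemize}
The time-$t$ quantity $\|f_\e^{app}(t)-f(t)\|_{(H^1_xW^{1,\infty}_\sigma)^*}$ has to be reduced to $\|f_\e^{app}(0)-f_0\|+\sqrt\eta$. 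I would do this using the coupling $\pi_t$: write both measures as pushforwards of $\pi_0$ under the flows, compare $\phi(x_1,\Sigma_\e)$ with $\phi(x_1,\Sigma)$ and $\phi(x_2,\Sigma)$, and use the Lipschitz bound on $\Sigma(t,\cdot,\cdot)$ (from $c\in W^{1,\infty}$) to bound the difference by $\sqrt\eta\,\|\phi\|$ plus the initial dual norm. This step, carefully handling the $H^1$-only spatial regularity of $w_\e$ with the spatial smearing, is the one I expect to be the main obstacle.

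The bulk residual also contains $\partial_t c_{\e,i}^{app}$, which is nonzero because $H_{\e,i}$ and $r_{\e,i}$ depend on $t$. Since $\partial_t r_{\e,i}$ and $\partial_t c$ are bounded, this is $O(1)$ pointwise on a support of measure $\e^3$ per particle. Its $L^2$ norm is therefore $O(\e)$, and it is absorbed via Young's inequality.

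The advection cross terms are $(u_\e c_\e - u c_\e^{app} - \dots,\nabla w_\e)$. Writing $c_\e u_\e - c u = (c_\e - c)u_\e + c(u_\e-u)$ and using $\|c_\e\|_\infty$, these are bounded by $\|u_\e-u\|_{L^2}\|\nabla w_\e\|$ and $\|w_\e\|+\e^2$. The remaining correctors are multiplied by $u$ and are $O(\e^2)$ in $L^2$.

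Collecting all terms and absorbing the gradient contributions into the left-hand side yields the stated differential inequality.
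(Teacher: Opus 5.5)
Your overall route is the paper's. It consists of the energy identity for $c_\e-c_\e^{app}$, the sign $-\e^{-3}g(r_{\e,i})|c_\e-c_\e^{app}|^2\le0$ obtained from linearity of $G$ and $H=\sigma G(c+H,\sigma)$, and the trace estimate at $\alpha=3$ for the remainder. It also splits the outer single layers against the limit source into a sphere/ball piece, a Lipschitz piece, and an $f_\e^{app}-f$ piece controlled as in Lemma~\ref{f^app,f}.

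However, your bound on $\partial_t c^{app}_{\e,i}$ fails as argued. The balls $B_{\dmin\e/4}(z_{\e,i})$ are $\e^{-3}$ disjoint sets of volume $\sim\e^3$, so their union has measure of order one. A pointwise $O(1)$ bound therefore only gives $\|\sum_i\partial_t c^{app}_{\e,i}\|_{L^2}=O(1)$, which would leave an $O(1)$ term on the right-hand side and ruin the inequality. The correct reason is the decay of the corrector. In $\Omega_\e(t)\cap B_{\dmin\e/4}(z_{\e,i})$ one has $\partial_t c^{app}_{\e,i}=\partial_t(r_{\e,i}H_{\e,i})\bigl(\e^3|x-z_{\e,i}|^{-1}-4\e^2/\dmin\bigr)$ with $\partial_t(r_{\e,i}H_{\e,i})$ bounded. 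The computation behind \eqref{corrector.c.L^2} then yields $\|\sum_i\partial_t c^{app}_{\e,i}\|_{L^2}\lesssim\e^2$, so this term contributes only $\|c_\e-c_\e^{app}\|^2_{L^2}+\e^4$.

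Three smaller points also need fixing.

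\textbf{Advection.} The decomposition $c_\e u_\e-cu=(c_\e-c)u_\e+c(u_\e-u)$ does not work: $u_\e$ has no uniform $L^\infty$ bound, so $(c_\e-c)u_\e$ is only $O(1)$ in $L^2$. Use $c_\e(u_\e-u)+u(c_\e-c)$ together with the maximum principle for $c_\e$, as the paper does. Alternatively, after your cancellation, split the leftover $u_\e(c_\e^{app}-c)$ into $(u_\e-u)(c_\e^{app}-c)+u(c_\e^{app}-c)$, using $\|c_\e^{app}-c\|_{L^\infty}\lesssim1$ and \eqref{corrector.c.L^2}.

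\textbf{Wasserstein step.} The coupling $\pi_t$ itself cannot be used. The Cauchy--Schwarz step producing $\sqrt{\eta}$ integrates $\|\partial_\sigma\phi(x,\cdot)\|^2_{L^\infty}$ against the first spatial marginal. For $\phi$ only $H^1$ in $x$, this is controlled only if that marginal is bounded in $L^\infty$, but the marginal of $\pi_t$ is atomic. The paper therefore takes $H_\e$ constant in $x$ on each $B_{\dmin\e/4}(z_{\e,i})$, so that $f_\e^{app}(t)$ is transported by $\Sigma_\e$. It then glues $\pi_0$ with the coupling of $f_\e^{app}(0)$ and $f_\e(0)$ induced by $x\mapsto z_{\e,i}$, which gives a coupling of $f_\e^{app}(0)$ and $f_0$ at the cost of an extra $\e$.

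\textbf{Extension into the holes.} Since $f_\e^{app}$ and the ball averages charge the holes, you must extend $c_\e-c_\e^{app}$ into them using Lemma~\ref{lemma:extension_operators}. This produces an additional, harmless term on $\Omega\setminus\Omega_\e$.
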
 

\begin{proposition} \label{pro:eta}
Under the assumptions of Theorem \ref{th:hom.critical}, for $\e < \e_0$
    \begin{align*}
    \frac{\d}{\d t } \eta  \leq \frac 1 2 \|\nabla (c_{\e} - c_{\e}^{app})\|_{L^2(\Omega_\e)}^2 + C(\eta + \e^2 +   \|c_{\e} - c_{\e}^{app}\|_{L^2(\Omega_\e)}^2 ) .
\end{align*}
\end{proposition}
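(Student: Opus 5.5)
The plan is to differentiate the transport cost along the coupled characteristics and then to compare, particle by particle, the microscopic radius velocity with the macroscopic one. Set
\[
Q(t):=\int \bigl(|x_1-x_2|^2+|\Sigma_\e(t,x_1,\sigma_1)-\Sigma(t,x_2,\sigma_2)|^2\bigr)\,\pi_0(\d x_1,\d\sigma_1,\d x_2,\d\sigma_2),
\]
so that $\eta(t)=\sup_{s\le t}Q(s)$. I will prove the stated inequality for $\tfrac{\d}{\d t}Q$ with right-hand side involving $\eta$. Since $Q\le\eta$ and the right-hand side is nonnegative, the bound transfers to the monotone envelope $\eta$ (a.e. in $t$, e.g.\ in integrated form). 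The spatial part of the cost is time independent. Hence
\[
\tfrac{\d}{\d t}Q=-2\int(\Sigma_\e-\Sigma)\Bigl(H_\e(t,x_1,\Sigma_\e)-\tfrac{H(c(t,x_2),\Sigma)}{\Sigma}\Bigr)\,\d\pi_0 .
\]
The first marginal of $\pi_0$ is $f_\e(0)$, so $\pi_0$-a.e.\ we have $x_1=z_{\e,i}$, $\Sigma_\e=r_{\e,i}(t)$, and $H_\e=\fint_{\Gamma_{\e,i}}G(c_\e,r_{\e,i})$ by \eqref{H_e}.

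The function $K(b,\sigma):=H(b,\sigma)/\sigma=\frac{g(\sigma)}{1+\sigma g(\sigma)}(\cequ(\sigma)-b)$ is locally Lipschitz without any lower bound on $\sigma$. Moreover, $c\in W^{1,\infty}$, $0\le c_\e\lesssim 1$ by \cref{theorem:maximum_principle}, and all radii stay in a compact set (by \cref{theorem:maximum_principle} and the compact support of $f$). I therefore split
\[
H_\e - K(c(x_2),\Sigma)=\bigl[H_\e-K(c(z_{\e,i}),r_{\e,i})\bigr]+\bigl[K(c(z_{\e,i}),r_{\e,i})-K(c(x_2),\Sigma)\bigr].
\]
The second bracket is bounded by $C(|x_1-x_2|+|\Sigma_\e-\Sigma|)$ and contributes $C\,Q\le C\eta$.

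For the first bracket I use the algebraic identity $H(b,\sigma)=\sigma G(b+H(b,\sigma),\sigma)$, which gives $K(c(z_{\e,i}),r_{\e,i})=G(c(z_{\e,i})+H_{\e,i},r_{\e,i})$. Since $G$ is affine in $b$, the bracket equals $-g(r_{\e,i})\fint_{\Gamma_{\e,i}}\bigl(c_\e-c(z_{\e,i})-H_{\e,i}\bigr)$. On $\Gamma_{\e,i}$ the corrector \eqref{corrector.c} equals $H_{\e,i}(1-4\e^2r_{\e,i}/\dmin)$, the other correctors vanish there, and $|c-c(z_{\e,i})|\lesssim\e^3$. Hence, with $w:=c_\e-c_\e^{app}$, we get $c_\e-c(z_{\e,i})-H_{\e,i}=w+O(\e^2)$ on $\Gamma_{\e,i}$. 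The $O(\e^2)$ part yields $C(\eta+\e^4)$ by Young. It remains to bound
\[
2\,\e^3\sum_i |r_{\e,i}-\Sigma|\,g(r_{\e,i})\,\Bigl|\fint_{\Gamma_{\e,i}}w\Bigr|,
\]
understood after disintegrating $\pi_0$.

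This critical surface-average term is the main obstacle. The naive trace estimate (\cref{lemma:trace_estimates}) at $\alpha=3$ gives $\e^3\sum_i|\fint_{\Gamma_{\e,i}}w|^2\lesssim\|w\|_{L^2}^2+\|\nabla w\|_{L^2}^2$ with a constant that is not small. I will instead use Young's inequality $2ab\le \kappa a^2+b^2/\kappa$ with $\kappa$ large, putting $\kappa\, C\,Q$ on the $\eta$ side, so that only a small multiple of $\e^3\sum_i|\fint_{\Gamma_{\e,i}}w|^2$ must be controlled. To bound that sum I use spherical means $m_i(s)=\fint_{\partial B_s(z_{\e,i})}w$. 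The radial estimate $|m_i(a)-m_i(b)|^2\le \frac{1}{4\pi}\bigl(\frac1a-\frac1b\bigr)\int_{B_b\setminus B_a}|\nabla w|^2$ with $a=\e^3r_{\e,i}$ gives
\[
\e^3|m_i(\e^3 r_{\e,i})-m_i(s)|^2\lesssim\|\nabla w\|^2_{L^2(B_{\dmin\e/4}(z_{\e,i})\setminus B_{\e,i})}
\quad\text{for } s\in\bigl(\tfrac{\dmin\e}{8},\tfrac{\dmin\e}{4}\bigr).
\]
Averaging $\e^3|m_i(s)|^2$ over such $s$ is bounded by the local $\|w\|_{L^2}^2$. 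Summing over the disjoint balls gives $\e^3\sum_i|\fint_{\Gamma_{\e,i}}w|^2\le C_*\bigl(\|\nabla w\|_{L^2(\Omega_\e)}^2+\|w\|_{L^2(\Omega_\e)}^2\bigr)$ with $C_*$ independent of $\e$. Choosing $\kappa\ge 2C_*\sup g$ makes the gradient coefficient at most $\tfrac12$, and the remaining terms are $C(\eta+\e^2+\|w\|^2_{L^2(\Omega_\e)})$. This proves \cref{pro:eta}.
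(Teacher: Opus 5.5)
Your proposal is correct and follows essentially the paper's argument: you differentiate the propagated transport cost, split off the Lipschitz part of $H(c,\sigma)/\sigma$, use $H=\sigma G(c+H,\sigma)$ together with the value of the corrector on $\Gamma_{\e,i}$, and absorb the critical surface term by Young's inequality with a large weight on $\eta$.

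The only real difference is that you re-derive the surface-average bound $\e^3\sum_i|\fint_{\Gamma_{\e,i}}w|^2\lesssim\|w\|^2_{L^2(\Omega_\e)}+\|\nabla w\|^2_{L^2(\Omega_\e)}$ via spherical means. The paper gets the same bound directly from Jensen and the trace estimate in \cref{lemma:trace_estimates}(ii) at $\alpha=3$. That estimate already has an $\e$-independent constant, so the "non-small constant" you flag is no obstacle, and the detour is self-contained but not needed.
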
 
\begin{proof}[Proof of Theorem \ref{th:hom.critical}]
    Combining the above propositions, we have
 \begin{align*}
     \frac{\d}{\d t } E_\e(t) + \frac 1 2 \|\nabla (c_{\e} - c_{\e}^{app})\|_{L^2(\Omega_\e)}^2 \lesssim E_\e(t) + \e^2 + \|f_\e^{app}(0) - f_0\|_{(H^1_xW_\sigma^{1,\infty})^\ast}^2 + \|h_\e - h\|_{(H^{1}(\Omega))^*}^2.
 \end{align*}
 Hence, by Gronwall's inequality
 \begin{multline*}
     E_\e(t) + \|\nabla (c_{\e} - c_{\e}^{app})\|_{L^2(0,t;L^2(\Omega_\e(t)))}^2 \\ \lesssim  E_\e(0) + \e^2  + \|f_\e^{app}(0) - f_0\|_{(H^1_xW_\sigma^{1,\infty})^\ast}^2 + \|h_\e - h\|_{L^2(S;(H^{1}(\Omega))^*)}^2 .
 \end{multline*}
 We have
 \begin{align*}
     E_\e(0) &\leq (\mathcal W_2(f_\e(0), f_0))^2 + 2\|c_{\e,0} - c_0\|_{L^2(\Omega_\e)}^2 + 2 \|c_0 - c_{\e}^{app}(0)\|_{L^2}^2 \\&\lesssim (\mathcal W_2(f_\e(0), f_0))^2 + \|c_{\e,0} - c_0\|_{L^2(\Omega_\e)}^2 + \e^4
 \end{align*}
 and
 \begin{align*}
      E_\e(t) &\geq \mathcal W_2(f^\e(t), f(t)))^2 + \frac 1 2 \|c_{\e}(t) - c(t)\|_{L^2(\Omega_\e(t))}^2  - 2 \|c(t) - c_{\e}^{app}(t)\|_{L^2(\Omega_\e(t))}^2  \\
      &\geq \mathcal W_2(f^\e(t), f(t)))^2 + \frac 1 2 \|c_{\e}(t) - c(t)\|_{L^2(\Omega_\e(t))}^2 - C \e^4.
 \end{align*}
 Hence, we deduce
 \begin{multline} \label{corr.est.c}
     \|c_{\e}(t) - c(t)\|_{L^2(\Omega_\e(t))}^2 + \mathcal W_2(f^\e(t), f(t)))^2 +  \|\nabla (c_{\e} - c_{\e}^{app})\|_{L^2(0,t;L^2(\Omega_\e(t)))}^2\\
     \lesssim   (\mathcal W_2(f_\e(0), f_0))^2 + \|c_{\e,0} - c_0\|_{L^2(\Omega_\e)}^2 + \|f_\e^{app}(0) - f_0\|_{(H^1_xW_\sigma^{1,\infty})^\ast}^2 + \e^2 + \|h_\e - h\|_{L^2(S;(H^{1}(\Omega))^*)}^2 .
 \end{multline}
 This concludes the proof. 
\end{proof}

\subsection{Proof of \texorpdfstring{\cref{pro:fluid}}{Proof of Proposition 5.1}: Estimate of \texorpdfstring{$u_\e - u$}{u epsilon minus u}}\label{ssec:estimate_ueps_u}

Obtaining quantitative estimates for $u_\e - u$ is a classical problem in (quantitative) homogenization. Specifically, in the ciritical scaling leading to the Brinkman equations, this has been addressed for instance in \cite{allaire_homogenization_1990, HoferJansen24, MecherbetHillairet20}. However, these results are not directly applicable in our case: First, the results in \cite{allaire_homogenization_1990} and many other papers are (mainly) concerned with periodic distributions of particles. Second,  the estimates in \cite{HoferJansen24} are restricted to a stochastic setting with i.i.d. particles. Finally, \cite{MecherbetHillairet20} consider particle distributions much as in our setting but they obtain $L^p$ estimates for $p<3/2$ only, which does not seem sufficient for our relative energy argument. Finally, the inhomogeneous boundary conditions \eqref{inh.Dirichlet} are absent in all previous results. Nevertheless, relying on the existing methods in the literature, the adaptations to the present setting are not very difficult. For the sake of completeness, we present the full argument. Our methods are based on classical corrector estimates from \cite{allaire_homogenization_1990} and further based on \cite{Hoefer23, HoeferHuebnerRosenau26} who prove quantitative convergence results for the homogenization of the Navier-Stokes equations with vanishing viscosity. 

\subsubsection*{Setup of the approximation}

Naively, one  wants to test the equations of $u$ and $u_\e$  with $u_\e - u$. The problem is that $u \neq  u_\e $ on $\Gamma_\e$ and hence $u_\e - u$ is not an admissible testfunction for the equation of $u_\e$. Therefore, one corrects the $u$ to a suitable function $u_\e^{app}$  in order to match the boundary conditions. In fact this is necessary because smallness of $u_\e - u$ in $H^1$ fails and it is classical that correctors are needed. In our current setting, we have to provide additional correctors due to the inhomogeneous bounday conditions.

We therefore define an approximation of the fluid velocity
\begin{align}\label{eq:ueps_app}
    u_{\e}^{app} := w_\e u - \mathcal B_\e(u) + u_{\e}^{inh},
\end{align}
Here  $w_\e \in W^{1,\infty}(\Omega)$ is the classical corrector adapted from Allaire (see e.g. \cite{allaire_homogenization_1990}) which is a matrix valued divergence free function such that $w_\e = 0$ in $\cup_i B_{\e,i}$. Moreover,  $\mathcal B_\e$ is a Bogovskiǐ type operator that satisfies 
\begin{align*}
    \mathcal B_\e (u) \in H^1_0(\Omega_\e) \qquad \dv \mathcal B_\e (u) = w_\e : \nabla u
\end{align*}

Finally,  $u_{\e}^{inh}$ is a corrector for the inhomogeneous boundary conditions \eqref{inh.Dirichlet}. This is defined as follows
$\theta \in C_c^\infty(B_{2}(0))$ with $\theta = 1$ in $B_{1}(0)$.
We then set
\begin{align*}
    v(x) &:= \eta(x) \frac{x}{|x|^3}, \\
    \bar u_{\e}^{inh} &:= \e^{\alpha} \sum_{i} \partial_t r_{\e,i} v\left(\frac{x - z_{\e,i}}{\e^\alpha r_{\e,i}}\right)
\end{align*}
Then, $\bar u_{\e}^{inh} = \e^\alpha \partial_t r_{\e,i} \nu_{\e,i}$ on $\Gamma_{\e,i}$ for all $i\in\indexI$.
Moreover,  $  \e^{-\alpha}\|\bar u_{\e}^{inh}\|_\infty + \|\nabla \bar u_{\e}^{inh}\|_\infty \lesssim 1$ and hence
\begin{align*}
    \|\nabla \bar u_{\e}^{inh}\|_{L^2(\Omega)} + \e^{-2\alpha} \|\bar u_{\e}^{inh}\|_{L^2(\Omega)} \lesssim \e^{\frac 3 2 (\alpha -1)}.
\end{align*}
To get a divergence free function $u_{\e}^{inh}$, we finally employ a Bogovskiǐ operator in $\Omega_\e$. 
\begin{proposition} \label{pro:Bog}
    There exists a continuous linear operator $\Bog_\e \colon L^2(\Omega_\e) \to H^1_{\Sigma_2}(\Omega)^3$
    such that for all $f \in L^2(\Omega_\e)$ the function $v= \Bog_\e f$ satisifies $ v  = 0$ in $\cup_i B_{\e,i}$, $\dv v = f $ in $\Omega_\e$ and 
    \begin{align*}
         \e^{\frac{3 - \alpha}2} \|\nabla v \|_{L^2(\Omega)} + \|v\|_{L^2(\Omega)} \lesssim  \|f \|_{L^2(\Omega_\e)}
    \end{align*}
\end{proposition}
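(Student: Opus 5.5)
We construct $\Bog_\e$ by combining a global Bogovskiĭ operator on $\Omega$ with local corrections near each hole, in the spirit of Allaire's restriction operator. Given $f\in L^2(\Omega_\e)$, extend it by zero to $\Omega$. First we use a Bogovskiĭ-type operator on $\Omega$ with vanishing trace on $\Sigma_2$. Because $\Sigma_1$ has positive measure, no compatibility condition is needed: one can first correct the mean by a fixed smooth field $\Phi\in H^1_{\Sigma_2}(\Omega)$ with $\int_\Omega\dv\Phi=1$, and then apply the classical Bogovskiĭ operator in $H^1_0(\Omega)$. This yields $v_0\in H^1_{\Sigma_2}(\Omega)^3$ with $\dv v_0=f$ in $\Omega$ and $\|v_0\|_{H^1(\Omega)}\lesssim\|f\|_{L^2(\Omega)}$. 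This field does not vanish on the balls $B_{\e,i}=B_{\e^\alpha r_{\e,i}}(z_{\e,i})$, so we modify it locally.

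For each $i$, let $B^i:=B_{\frac{\dmin\e}{4}}(z_{\e,i})$. By \eqref{ass:strong.separation} these balls are disjoint and contained in $\Omega$. We look for a correction $w_i\in H^1_0(B^i)$ with $w_i=-v_0$ on $B_{\e,i}$ and $\dv w_i=0$ in $B^i\setminus B_{\e,i}$. Then $v:=v_0+\sum_i w_i$ vanishes on the holes, has divergence $f$ in $\Omega_\e$, and inherits the boundary condition on $\Sigma_2$. We construct $w_i$ in two steps.

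\emph{Cut-off step.} Let $\theta_i$ be a cut-off equal to $1$ on $B_{\e,i}$ and supported in $B_{2\e^\alpha r_{\e,i}}(z_{\e,i})$, with $|\nabla\theta_i|\lesssim\e^{-\alpha}$. Set $\tilde w_i:=-\theta_i\,(v_0-\bar v_i)-\bar v_i\,\chi_i$, where $\bar v_i$ is the mean of $v_0$ over the small annulus and $\chi_i$ is a divergence-compatible cut-off.

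\emph{Divergence correction.} Correct the divergence of $\tilde w_i$ with local Bogovskiĭ operators on the annuli. For the scale-invariant piece $\theta_i(v_0-\bar v_i)$, use a Bogovskiĭ operator on $B_{2\e^\alpha r}\setminus B_{\e^\alpha r}$; its constant is independent of the scale, and Poincaré on the annulus gives an $L^2$-gradient bound $\lesssim\|\nabla v_0\|_{L^2(B_{2\e^\alpha r})}$. The constant-vector piece $\bar v_i$ is handled by transitioning between the scales $\e^\alpha$ and $\e$ in the annulus $B^i\setminus B_{\e,i}$. Here we use an explicit Stokes-capacity-type profile, i.e.\ the solution of the Stokes problem outside a sphere with constant velocity on the sphere, cut off at scale $\e$. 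Its gradient costs $\|\nabla\|_{L^2}^2\lesssim\e^\alpha|\bar v_i|^2$, which is the capacity of the ball. The divergence defect created by the outer cut-off at scale $\e$ lives on $B^i\setminus B_{\dmin\e/8}$, and is removed by a Bogovskiĭ operator on that annulus with $\e$-scaled constants. The resulting error there is $\lesssim\e^{\alpha-2}|\bar v_i|\,\e^{3/2}$, which is of lower order.

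\emph{Summing the estimates (main obstacle).} For $\nabla$ we get
\[
\sum_i\e^\alpha|\bar v_i|^2\lesssim \e^{\alpha-3}\sum_i\e^3|\bar v_i|^2 .
\]
We bound $\e^3|\bar v_i|^2$ by $\|v_0\|_{L^2(B^i)}^2$ plus $\e^2\|\nabla v_0\|_{L^2(B^i)}^2$. This uses the fact that the mean over the tiny annulus differs from the mean over $B^i$ by a term controlled via the standard estimate $|\bar v_i-\fint_{B^i}v_0|^2\lesssim\e^{-1}\|\nabla v_0\|^2_{L^2(B^i)}$ in 3D. That bound gives a contribution $\e^{\alpha-1}\cdot\e^{-3}\cdot\e^{3}\ldots$, which stays bounded since the exponent check works for $\alpha\le3$ after multiplying by $\e^{\alpha}$. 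Altogether,
\[
\|\nabla v\|_{L^2(\Omega)}^2\lesssim\|\nabla v_0\|^2+\e^{\alpha-3}\|v_0\|_{H^1}^2\lesssim\e^{\alpha-3}\|f\|^2_{L^2},
\]
which is the claimed gradient bound. For the $L^2$ bound, the corrections are supported in disjoint balls of radius $\sim\e$ and vanish on their boundary, so Poincaré on each $B^i$ gives $\|w_i\|_{L^2}\lesssim\e\|\nabla w_i\|_{L^2}$. Summing yields $\|v-v_0\|_{L^2}\lesssim\e\cdot\e^{\frac{\alpha-3}{2}}\|f\|\le\|f\|$, since $\alpha>1$. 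The delicate point I expect is exactly this bookkeeping of the averages $\bar v_i$ against $\|v_0\|_{H^1}$: one needs the trace-type inequality $\sum_i\e^\alpha|\bar v_i|^2\lesssim\e^{\alpha-3}\|v_0\|^2_{H^1(\Omega)}$, proved by rescaling each $B^i$ to unit size and applying a uniform inequality between the mean on a small inner sphere and the $H^1$ norm. Linearity and continuity are clear since every step is linear.
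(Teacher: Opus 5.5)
Your proof is correct in structure but takes a genuinely different route from the paper. The paper builds nothing by hand. It picks a Lipschitz domain $U\supset\Omega$ with $\Sigma_2\subset\partial U$ and a set $A\subset U\setminus\Omega$ of positive measure. It then applies the Diening--Feireisl--Lu Bogovskii operator on $U$ minus the holes (zero-mean data, $H^1_0$ values, gradient bound $\e^{\frac{\alpha-3}{2}}$) to $f-\frac{\1_A}{|A|}\int_{\Omega_\e}f$. Finally it restricts to $\Omega$ and takes the $L^2$ bound from \cref{lemma:poincare}.

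You instead essentially reprove the cited theorem. You build a global field $v_0$, removing the mean by a flux through $\Sigma_1$ rather than a source outside $\Omega$, and add Allaire-type local corrections. Each correction costs the capacity $\e^\alpha|\bar v_i|^2$, and averaging over cells of volume $\sim\e^3$ yields exactly the factor $\e^{\alpha-3}$. This is self-contained and explains the scaling. It also has a real advantage: $v-v_0$ is a sum of functions in $H^1_0(B^i)$, so your $L^2$ bound needs only Poincar\'e on the balls $B^i$ and $\|v_0\|_{L^2}\lesssim\|f\|_{L^2}$. It therefore avoids the covering assumption \eqref{ass:uniform.covering} behind \cref{lemma:poincare}. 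The exterior Stokes profile and the second cut-off at scale $\e$ are more than you need. The field $\curl\bigl(\theta_i\,\tfrac12\,\bar v_i\times(x-z_{\e,i})\bigr)$ is divergence free, equals $\bar v_i$ on $B_{\e,i}$, is supported in $B_{2\e^\alpha r_{\e,i}}(z_{\e,i})$, and costs $\lesssim\e^\alpha|\bar v_i|^2$.

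Two steps need repair.

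First, your averaging estimate $|\bar v_i-\fint_{B^i}v_0|^2\lesssim\e^{-1}\|\nabla v_0\|^2_{L^2(B^i)}$ is false when $\bar v_i$ is a mean over an annulus of radius $\sim\e^\alpha$. The correct and sharp bound has $\e^{-\alpha}$, the reciprocal of the small radius; see \eqref{Poincare.averages} with $s\sim\e^\alpha$. The test function $\min\{\e^{-\alpha},|x-z_{\e,i}|^{-1}\}$ on $B^i$ shows your version fails by a factor $\e^{1-\alpha}$. For the same reason, after rescaling $B^i$ to unit size the inner sphere has radius $\e^{\alpha-1}\to0$, so there is no uniform mean-versus-$H^1$ inequality; the constant degenerates like $\e^{(1-\alpha)/2}$. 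Fortunately the capacity factor compensates exactly:
\[
\e^\alpha|\bar v_i|^2\lesssim \e^{\alpha-3}\|v_0\|_{L^2(B^i)}^2+\|\nabla v_0\|_{L^2(B^i)}^2 ,
\]
which sums to $\sum_i\e^\alpha|\bar v_i|^2\lesssim\e^{\alpha-3}\|v_0\|^2_{L^2(\Omega)}+\|\nabla v_0\|^2_{L^2(\Omega)}$. So your final estimate stands. Your intermediate claim should read $\e^3|\bar v_i|^2\lesssim\|v_0\|^2_{L^2(B^i)}+\e^{3-\alpha}\|\nabla v_0\|^2_{L^2(B^i)}$, not $\e^2\|\nabla v_0\|^2_{L^2(B^i)}$.

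Second, state why the local Bogovskii problems on the annuli, with zero trace on both spheres, are solvable. The divergence defect must have zero mean, which amounts to $\int_{\partial B_{\e,i}}v_0\cdot\nu_{\e,i}=\int_{B_{\e,i}}\dv v_0=0$. This holds precisely because you extended $f$ by zero into the holes; the flux of the constant $\bar v_i$ through a sphere also vanishes. Without this, $w_i$ could not be divergence free in $B^i\setminus B_{\e,i}$ while equal to $-v_0$ on $B_{\e,i}$.
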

\begin{proof}
    Indeed, the result for $f \in L^2_0(\Omega_\e)$ is contained in \cite[Theorem 2.3]{DieningFeireislLu}, where  $L^2_0(\Omega_\e) := \{f \in L^2(\Omega_\e) : \int_{\Omega_\e)} f = 0\}$. This condition is necessary to get $\Bog_\e f \in H^1_0(\Omega_\e)^3$, but we do not impose zero boundary conditions on $\partial \Omega$. 

    To apply the result from \cite[Theorem 2.3]{DieningFeireislLu}, consider a Lipschitz domain $U \subset \R^3$ with  $\Omega \subset U$ and $\Sigma_2 \subset \partial U$ such that there exists $A \subset U \setminus \Omega$ with $|A| > 0$    and denote $U_\e := U \setminus \cup_i B_{\e,i}$.
    Let  $\Bog_{\e}^\circ$ be the operator from \cite[Theorem 2.3]{DieningFeireislLu} for the perforated domain $U_\e$. 
    Implicitly extending any $f \in L^2(\Omega_\e)$ by $0$ to a function in $L^2(U_\e)$ we define
    \begin{align*}
        v := \Bog_{\e}^\circ \Big(f - \frac{\1_A}{|A|} \int_{\Omega_\e} f\Big). 
    \end{align*}
    We then define $\Bog_\e f$ as the restriction of $v$ to $\Omega$.
    Then
    \begin{align*}
        \|\nabla \Bog_\e f\|_{L^2(\Omega)} \leq  \Big\|\nabla \Bog_{\e}^\circ \Big(f - \frac{\1_A}{|A|} \int_{\Omega_\e} f\Big)\Big\|_{L^2(U)} 
         \lesssim  \e^{\frac{\alpha - 3}2} \Big\|f - \frac{\1_A}{|A|}\int_{\Omega_\e} f\Big\|_{L^2(\Omega_\e)} 
        \lesssim   \e^{\frac{\alpha - 3}2} \|f \|_{L^2(\Omega_\e)}
    \end{align*}
    To get the estimate for the function, we apply the Poincar\'e inequality from Lemma \ref{lemma:poincare}. 
\end{proof}

By Proposition \ref{pro:Bog} there exists $\check u_{\e}^{inh} \in H^1(\Omega)$ such that $\check u_{\e}^{inh}  = 0$ in $\cup_i B_{\e,i}$, $\dv \check u_{\e}^{inh} = \dv \bar u_{\e}^{inh} $ and 
\begin{align*}
    \e^{\frac{3- \alpha}2} \|\nabla\check u_{\e}^{inh} \|_2 + \|\check u_{\e}^{inh} \|_2   \lesssim  \|\dv \check u_{\e}^{inh} \|_2 \lesssim  \e^{\frac 3 2 (\alpha -1)}.
\end{align*}
We then define 
\begin{align*}
    u_{\e}^{inh} = \bar u_{\e}^{inh} - \check u_{\e}^{inh},
\end{align*}
which satisfies $\dv u_{\e}^{inh} = 0$ in $\Omega$, $u_{\e}^{inh} = \e^\alpha \partial_t r_{\e,i} \nu_{\e,i}$ for all $i\in\indexI$ and 
\begin{align} \label{est.u^inh}
    \e^{\frac{3- \alpha}2}  \|\nabla u_{\e}^{inh} \|_2 +  \|u_{\e}^{inh} \|_2 \lesssim \e^{\frac 3 2 (\alpha -1)}.
\end{align}



\subsubsection*{Corrector estimates}

We now introduce $w_\e$. We define
\begin{align} \label{w_k}
\begin{array}{rcll}
    - \Delta w_k + \nabla q_k &=& 0 &\qquad \text{in } \R^3 \setminus B_1(0), \\
    \dv w_k &=& 0&\qquad \text{in } \R^3 \setminus B_1(0), \\
    w_k &=& e_k& \qquad \text{on } \partial B_1(0)
\end{array}
\end{align}
Moreover, we consider
\begin{align}
    C_i^\e &:= B_{\frac{\dmin \e}4}(z_{\e,i}) \setminus \mathcal B_{\e,i} \label{C_i}  \\
    D_i^\e &:= B_{\frac{\dmin \e}2}(z_{\e,i}) \setminus B_{\frac{\e}4}(z_{\e,i}), \label{D_i}
\end{align}
 Then,  we define $(w_{\e,k},q_{\e,k}) \in W^{1,\infty}(\Omega)^3 \times L^2(\Omega)$ such that for all $ i\in\indexI$
\begin{align} \label{w^eps}
 \begin{array}{rl}
 w_{\e,k}(x)=0,\quad q_{\e,k} = 0 &\qquad \text{in } B_{\e,i}, \\
    w_{\e,k}(x) = e_k - w_k\left(\frac{x-z_{\e,i}}{r_{\e,i} \e^\alpha}\right), \quad q_{\e,k}(x) = - \e^{-\alpha} r_{\e,i}^{-1} q_k\left(\frac{x-z_{\e,i}}{r_{\e,i} \e^\alpha}\right) &\qquad \text{in } C_i^\e, \\
    - \Delta w_{\e,k}+ \nabla q_{\e,k} = 0,  \quad \dv w_{\e,k} = 0 &\qquad \text{in } D_i^\e, \\
    w_{\e,k} = e_k, \quad  q_{\e,k} = 0 &\qquad \text{in } \Omega_\e \setminus \bigcup_i B_{\frac{\dmin\e}2}(z_{\e,i}),
\end{array}
\end{align}
Here, $e_k$ denotes the $k$-th unit vector of the standard basis of $\R^3$.
Note that the Stokes equations in $D_i^\e$ are complemented with inhomogeneous no slip boundary conditions due to the requirement $w_k^\e \in W^{1,\infty}(\Omega_\e)$.
We will write $w_\e$ for the matrix-valued function with columns $w_{\e,k}$, and $q_\e$ for the (row-)vector with entries $q_{\e,k}$.

Following \cite[Lemmas 2.1 and 2.3]{HoeferHuebnerRosenau26}   we have the following estimates 
\begin{lemma} \label{lem:correctors}
We have
\begin{align}
    \|\nabla w_\e\|_{L^2} &\lesssim \e^{\frac{\alpha-3}{2}}, \label{bounds.w}\\
    \|\mathcal B_\e(u)\|_{H^1(\Omega)} + \|w_\e - {\rm I}_3\|_{L^2(\Omega)} +    \|u_{\e}^{app} - u\|_{L^2(\Omega)} &\lesssim \e^{\alpha - 1} \label{bounds.w.etc}.
\end{align}
\end{lemma}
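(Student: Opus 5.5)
The proof is a direct scaling computation, carried out ball by ball, using the explicit decay of the exterior Stokes solution $(w_k,q_k)$ from \eqref{w_k}. This solution is the Stokeslet-type field of a translating unit sphere. It satisfies $|w_k(y)|\lesssim |y|^{-1}$, $|\nabla w_k(y)|\lesssim |y|^{-2}$ and $|q_k(y)|\lesssim |y|^{-2}$ for $|y|\ge1$. For the $\e$-rescaled version, set $a_i:=\e^\alpha r_{\e,i}$. In $C_i^\e$ the bounds become $|e_k-w_{\e,k}|\lesssim a_i/|x-z_{\e,i}|$ and $|\nabla w_{\e,k}|\lesssim a_i/|x-z_{\e,i}|^2$.

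\textbf{Step 1: gradient bound in $C_i^\e$.} Integrating over the annulus $a_i<|x-z_{\e,i}|<\dmin\e/4$ gives
\[
\|\nabla w_{\e,k}\|_{L^2(C_i^\e)}^2\lesssim a_i^2\int_{a_i}^\infty \rho^{-2}\,d\rho\lesssim a_i\lesssim \e^\alpha,
\]
using the uniform radius bounds from \cref{theorem:maximum_principle}.

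\textbf{Step 2: the transition layer $D_i^\e$.} In $D_i^\e$ the function $w_{\e,k}$ solves Stokes on an annulus of width of order $\e$. Its boundary data differ from the constant $e_k$ by $O(a_i/\e)$ in sup norm, and the tangential derivative of that difference is $O(a_i/\e^2)$. By standard Stokes estimates on the rescaled annulus of unit size, and rescaling back,
\[
\|\nabla w_{\e,k}\|_{L^2(D_i^\e)}^2\lesssim \e\,(a_i/\e)^2=a_i^2/\e\lesssim \e^{\alpha},
\]
since $\alpha>1$. The same holds for the $W^{1,\infty}$ bound.

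\textbf{Step 3: summation.} Summing Steps 1 and 2 over the $N_\e=\e^{-3}$ disjoint balls gives $\|\nabla w_\e\|_{L^2}^2\lesssim\e^{\alpha-3}$, which is \eqref{bounds.w}. For the $L^2$ bound on $w_\e-\mathrm I_3$, note that the contribution inside the balls is $\e^{-3}|B_{\e,i}|\lesssim\e^{3\alpha-3}$. On $C_i^\e$ the contribution is
\[
\int a_i^2|x-z_{\e,i}|^{-2}\,dx\lesssim a_i^2\e,
\]
and on $D_i^\e$ it is $\e^3(a_i/\e)^2$. Summing over the balls gives $\|w_\e-\mathrm I_3\|_{L^2}^2\lesssim \e^{2\alpha-2}$, i.e. $\|w_\e-\mathrm I_3\|_{L^2}\lesssim\e^{\alpha-1}$.

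\textbf{Step 4: the Bogovski\u{\i} term $\mathcal B_\e(u)$.} Since $\dv w_\e=0$, we have $w_\e:\nabla u=\dv(w_\e u)$. The function $(w_\e-\mathrm I_3):\nabla u$ is supported near the balls and has the same scaling as $w_\e-\mathrm I_3$. The naive bound from \cref{pro:Bog} loses a factor $\e^{(\alpha-3)/2}$, so it is not enough. Instead, I would use a local Bogovski\u{\i} operator on each annulus $B_{\dmin\e/2}(z_{\e,i})\setminus B_{\e,i}$, as in \cite[Lemma 2.3]{HoeferHuebnerRosenau26}. This requires the compatibility condition
\[
\int_{\text{annulus}} w_\e:\nabla u\,dx=\int \dv(w_\e u)\,dx=0,
\]
which holds because $w_\e u$ vanishes on $\Gamma_{\e,i}$ and equals $u$ on the outer sphere. (To make the condition exact, subtract $\nabla u$ on the outer part.) On the annulus, the local Bogovski\u{\i} constant is uniform at scale $\e$ for the outer layer and of order one for the inner layers after the standard dyadic decomposition. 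This yields
\[
\|\nabla\mathcal B_\e(u)\|_{L^2}\lesssim\|u\|_{W^{1,\infty}}\|w_\e-\mathrm I_3\|_{L^2}\lesssim\e^{\alpha-1}.
\]
This step is the main obstacle. The difficulty is controlling the Bogovski\u{\i} constant on annuli with a hole of size $\e^\alpha$ without losing powers of $\e$. I would split the annulus into $C_i^\e$, treating it with the scale-invariant Bogovski\u{\i} operator of the exterior of a ball in weighted spaces, and the layer $D_i^\e$, treating it with the standard operator at scale $\e$.

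\textbf{Step 5: the approximation error.} Finally,
\[
u_\e^{app}-u=(w_\e-\mathrm I_3)u-\mathcal B_\e(u)+u_\e^{inh}.
\]
The first two terms are $O(\e^{\alpha-1})$ in $L^2$ by Steps 3 and 4. The last term is $O(\e^{\frac32(\alpha-1)})$ by \eqref{est.u^inh}, which is smaller. This gives \eqref{bounds.w.etc}.
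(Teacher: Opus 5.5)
Your proposal is correct and takes essentially the paper's route. The paper invokes the per-particle estimates of \cite[Lemmas 2.1 and 2.3]{HoeferHuebnerRosenau26}, notes that they carry over to uniformly bounded variable radii, sums them to get $\|w_\e-{\rm I}_3\|_{L^2}^2\lesssim\sum_i\e^{2\alpha+1}=\e^{2(\alpha-1)}$ and $\|\mathcal B_\e(u)\|_{H^1}\lesssim\|w_\e-{\rm I}_3\|_{L^2}$, and concludes with \eqref{est.u^inh} as in your Step 5. Your Steps 1--3 make explicit the scaling behind those cited lemmas, and your Step 4 rests on the same local Bogovski\u{\i} construction on the annuli $B_{\dmin\e/2}(z_{\e,i})\setminus B_{\e,i}$, whose uniform constant is exactly what the citation supplies.
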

\begin{proof}
    In \cite{HoeferHuebnerRosenau26}, the particles are identical. However, since the estimates are obtained from estimates around each particle, they straightforwardly generalize to our setting as long as the radii are uniformly bounded. We then apply  \cite[Lemmas 2.1 and 2.3]{HoeferHuebnerRosenau26}  with $\eta_{\e,i} = \min\{\e \dmin,1\}$ for all $ i \in \mathcal I_\e$.
    This yields immediately \eqref{bounds.w}, 
    \begin{align*}
        \|w_\e - {\rm I}_3\|_{L^2(\Omega)}^2 \lesssim \sum_{i \in \mathcal I_\e} \e^{2\alpha +1} = \e^{2(\alpha -1)}, \\
        \|\mathcal B_\e(u)\|^2_{H^1(\Omega)} \lesssim \|w_\e - {\rm I}_3\|_{L^2(\Omega)}^2 \lesssim \e^{2(\alpha -1)}, \\
         \|u_{\e}^{app} - u\|^2_{L^2(\Omega)} \lesssim \|\mathcal B_\e(u)\|_{L^2(\Omega)}^2 + \|w_\e - {\rm I}_3\|_{L^2(\Omega)}^2 + \|u_\e^{inh}\|_{L^2(\Omega)}^2 \lesssim \e^{2(\alpha -1)},
    \end{align*}
    where we used \eqref{est.u^inh} in the last estimate. 
\end{proof}

Moreover, we adapt \cite[Lemma 2.2]{HoeferHuebnerRosenau26}. To this end, we abbreviate
\begin{align} \label{R}
    \mathcal R(t,x) := 6 \pi {\rm I}_3 \int \sigma f(t,x,\d\sigma),
\end{align}
and we recall the definition of $\eta$  from \eqref{eq:def_eta}.
The following decomposition is the central step in the classical Allaire corrector argument: it identifies the collective force exerted by the local Stokes correctors with the effective Brinkman resistance.
Here, we quantify this identification by showing that the corresponding distribution $M_\e$ is close to $\mathcal R$ in $H^{-1}(\R^3)$, with an error controlled by the discrepancy of the particle distributions.

\begin{lemma} \label{lem:M_eps} We can  write
\begin{align} \label{decomposition.M.gamma}
    (-\Delta w_\e +  \nabla q_\e) =  \e^{\alpha- 3} M_\e - \gamma_\e 
\end{align}
for some $M_\e,\gamma_\e \in W^{-1,\infty}(\R^3)$
where $\langle \gamma_\e, v \rangle = 0$ for all $v \in H^1_{\Gamma_\e}(\Omega_\e)$ and, for all $\psi \in H^1(\R^3)$,
\begin{equation} \label{M.W^-1,infty}
    \langle M_\e - \mathcal R, \psi \rangle
    \lesssim \left(\e^{\alpha - 1} + \eta^{\frac 1 2} \right) \|\psi\|_{L^2(\R^3)} + (\e + \|f_\e^{app}(0) - f_0\|_{(H^1_xW_\sigma^{1,\infty})^\ast}) \| \psi\|_{H^1(\R^3)}.
\end{equation}
\end{lemma}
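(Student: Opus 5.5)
The idea is to compute the distribution $-\Delta w_\e+\nabla q_\e$ explicitly from the piecewise construction \eqref{w^eps}, separate the singular surface parts on $\Gamma_\e$ from those on the spheres $\partial B_{\dmin\e/4}(z_{\e,i})$, and then compare the latter with $\mathcal R$ in three steps: against $f_\e$, then $f_\e^{app}$, then $f$. In the sense of distributions on $\R^3$, $w_{\e,k}$ is smooth in each region and solves Stokes in $C_i^\e$ and $D_i^\e$. It is constant $e_k$ outside the balls $B_{\dmin\e/2}$ and zero inside $B_{\e,i}$. Hence $-\Delta w_{\e,k}+\nabla q_{\e,k}$ is a sum of single layers: one on each $\Gamma_{\e,i}$, given by the jump of the normal stress, and one on each interface between $C_i^\e$, $D_i^\e$ and the exterior. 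Here I take $C_i^\e$ and $D_i^\e$ as adjacent annuli meeting on a sphere $\partial B_{\rho\e}(z_{\e,i})$, which may require reading the radii in \eqref{C_i}--\eqref{D_i} consistently. I define $\gamma_\e$ as minus the layer on $\Gamma_\e$; it annihilates every $v\in H^1_{\Gamma_\e}(\Omega_\e)$ (extended by zero in the balls). I then set $\e^{\alpha-3}M_\e$ to be the remaining layers on the outer spheres.

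Next I compute $M_\e$ to leading order. On $\partial B_{\rho\e}(z_{\e,i})$ the traction of the rescaled exterior Stokes solution $e_k-w_k(\cdot/(\e^\alpha r_{\e,i}))$ is explicit from the Stokeslet expansion. Its leading part integrates to the Stokes drag $6\pi\e^\alpha r_{\e,i}e_k$, and the remainder is smaller by a factor $\e^{\alpha-1}$. Cancelling $\e^{\alpha-3}$ and using $N_\e=\e^{-3}$, I expect
\[
\langle M_\e,\psi\rangle=\e^3\sum_i 6\pi r_{\e,i}\fint_{\partial B_{\rho\e}(z_{\e,i})}\psi\,\d\gamma\,e_k+\text{error},
\]
and I will bound the error by $\e^{\alpha-1}\|\psi\|_{L^2}$. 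This is obtained by replacing each spherical average with a ball average via Poincaré on the annulus, and by using the $D_i^\e$-Stokes energy bounds from \cite[Lemma 2.2]{HoeferHuebnerRosenau26}. The contributions of $D_i^\e$ that are not of drag type are controlled by the same lemma, at cost $\e\|\nabla\psi\|_{L^2}$. Replacing the sphere average by the average over $B_{\dmin\e/4}(z_{\e,i})$ also costs $\e\|\nabla\psi\|_{L^2}$. This gives
\[
\langle M_\e,\psi\rangle\approx\int 6\pi\sigma\,\psi(x)\,f_\e^{app}(t,\d x,\d\sigma).
\]

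It then remains to compare $f_\e^{app}(t)$ with $f(t)$ against the test function $\phi(x,\sigma)=6\pi\sigma\psi(x)$. By the compact support of $f$ and the bounds on the radii, I may truncate $\sigma$ so that $\phi\in H^1_xW^{1,\infty}_\sigma$ has norm $\lesssim\|\psi\|_{H^1}$. I split the comparison into two parts. The first is between $f_\e^{app}(t)$ and the measure obtained by pushing $f_\e^{app}(0)$ along the limit flow $\Sigma$ from \eqref{eq:flow}. The second is between that push-forward and $f(t)=\Sigma(t)_\#f_0$. For the second part, $\phi\circ\Sigma(t)$ lies in $H^1_xW^{1,\infty}_\sigma$ with norm $\lesssim\|\psi\|_{H^1}$ since $c\in W^{1,\infty}$, which yields $\|f_\e^{app}(0)-f_0\|_{(H^1_xW^{1,\infty}_\sigma)^*}\|\psi\|_{H^1}$. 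For the first part, the difference is $\e^3\sum_i 6\pi\big(r_{\e,i}(t)-\Sigma(t,\cdot,r_{\e,i,0})\big)$ averaged against $\psi$ over $B_{\dmin\e/4}(z_{\e,i})$. I bound it by Cauchy--Schwarz by
\[
\Big(\e^3\sum_i\fint_{B}|\Sigma_\e(t,z_{\e,i},r_{\e,i,0})-\Sigma(t,x,r_{\e,i,0})|^2\Big)^{1/2}\|\psi\|_{L^2},
\]
using $\fint_B|\psi|^2\lesssim\e^{-3}\int_B|\psi|^2$. Spatial Lipschitz continuity of $\Sigma$ lets me replace $x$ by $z_{\e,i}$ at cost $\e$. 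I then need to relate the result to $\eta$. The transport plan $\pi_0$ couples $(z_{\e,i},r_{\e,i,0})$ with $(x_2,\sigma_2)$, and by Lipschitz continuity of $\Sigma$ in both variables,
\[
|\Sigma_\e(z_{\e,i},r_{\e,i,0})-\Sigma(z_{\e,i},r_{\e,i,0})|\le|\Sigma_\e-\Sigma(x_2,\sigma_2)|+C(|x_1-x_2|+|\sigma_1-\sigma_2|).
\]
Integrating against $\pi_0$ bounds the square of the left-hand side by $\eta(t)+\eta(0)\lesssim\eta$, and this gives the $\eta^{1/2}\|\psi\|_{L^2}$ term.

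The main obstacle is the bookkeeping in the second step. The drag identification must be made with an error measured only in $L^2$ of $\psi$ at order $\e^{\alpha-1}$, and in $H^1$ at order $\e$, not in $H^1$ at order $1$. That requires the sharp decay of the exterior Stokes solution together with the mean-value replacement on annuli of size $\e$. I would handle it by adapting the proof of \cite[Lemma 2.2]{HoeferHuebnerRosenau26} particle by particle, with radius $r_{\e,i}$ in place of the common radius.
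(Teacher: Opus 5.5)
Your proposal is correct and follows essentially the paper's route: you split off the layer $\gamma_\e$ on $\Gamma_\e$, expand the traction on $\partial B_{\dmin\e/4}(z_{\e,i})$ via the Stokeslet, replace sphere averages by ball averages through Poincar\'e-type estimates, bound the $D_i^\e$ layers by $\e\|\nabla\psi\|_{L^2}$, and compare $f_\e^{app}(t)$ with $f(t)$ by splitting into the initial-data discrepancy and the flow discrepancy $\Sigma_\e-\Sigma$ controlled by $\eta$ (you do the latter directly through $\pi_0$ rather than via the gluing construction of Lemma \ref{f^app,f}, which works equally well here). One small bookkeeping slip is harmless: the leading traction is not uniform but equals $3\pi\e^\alpha r_{\e,i}(e_k+3n_kn)\,\delta_{\partial B_{\dmin\e/4}(z_{\e,i})}$, so replacing it by $6\pi\e^\alpha r_{\e,i}e_k$ times the uniform measure costs $\e\|\nabla\psi\|_{L^2}$ (the paper's \eqref{est:M.remainder.0}) rather than $\e^{\alpha-1}\|\psi\|_{L^2}$, which still fits within the claimed bound.
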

Compared to \cite[Lemma 2.2]{HoeferHuebnerRosenau26}, the radii are not constant which leads to the slightly more complex structure of $\mathcal R$. On the other hand, the separation assumption \eqref{ass:strong.separation} simplifies the argument. In particular, it is not necessary to introduce the enlarged cubes $\tilde Q_{i,\e}$ considered in \cite{HoeferHuebnerRosenau26}.
For the convenience of the reader, we include the proof of this lemma in Appendix \ref{app:Delta.w}.

\subsubsection*{Proof of Proposition \ref{pro:fluid}}
We write $\Delta (w_\e u) = \Delta w_\e u + 2 \sum_{j=1}^3 \partial_j (w_\e - {\rm I}_3) \partial_j u + w_\e \Delta u$ and use the PDEs of $u_\e$ and $u$ 
\begin{align*}
    \|\nabla( u_\e - u_\e^{app})\|_{L^2(\Omega_\e)}^2 = \sum_{j=1}^5 I_j,
\end{align*}
where
\begin{align*}
   I_1 &=   \int_{\Omega_\e} (h_\e - w_\e h) \cdot (u_\e - u_\e^{app}), \\
   I_2 &= -\int_{\Omega_\e} 2 (\nabla (w_\e - {\rm I}_3) \nabla u) \cdot (u_\e - u_\e^{app}), \\
   I_3 &= - \int_{\Omega_\e} \nabla \mathcal B_\e (u) :  \nabla( u_\e - u_\e^{app}), \\
      I_4 &= - \int_{\Omega_\e} \nabla u_\e^{inh}:  \nabla( u_\e - u_\e^{app}), \\
      I_5 &= -\langle \Delta w_\e u + 6 \pi \int_0^\infty \sigma u f(\cdot, \dd \sigma) , u_\e - u_\e^{app} \rangle,
\end{align*}
Clearly, using Poincar\'e and Lemma \ref{lem:correctors},
\begin{align*}
    |I_1| &\leq \|w_\e - {\rm I}_3\|_{L^2(\Omega_\e)} \|h\|_\infty  \|u_\e - u_\e^{app}\|_{L^2(\Omega_\e)} + \|h_\e - h\|_{(H^{1}(\Omega))^*} \|\nabla(u_\e - u_\e^{app})\|_{L^2(\Omega_\e)} \\
    & \leq C_\delta\left(\|w_\e - {\rm I}_3\|^2_{L^2(\Omega_\e)} \|h\|^2_{L^\infty(\Omega_\e)} + \|h_\e - h\|^2_{(H^{1}(\Omega))^*}\right) +  \delta \|\nabla(u_\e - u_\e^{app})\|^2_{L^\infty(\Omega_\e)} \\
    &\leq C_\delta \left(\e^{2 \alpha -2} + \|h_\e - h\|^2_{(H^{1}(\Omega))^*}\right) +  \delta \|\nabla(u_\e - u_\e^{app})\|^2_{L^\infty(\Omega_\e)}
\end{align*}
Next, for $I_2$, we integrate by parts and use the Poincar\'e inequality to get
\begin{align*}
    \frac 1 2 |I_2| &\leq  \|w_\e - {\rm I}_3\|_{L^2(\Omega_\e)} \| \Delta u\|_{L^\infty(\Omega_\e)} \|u_\e - u_\e^{app}\|_{L^2(\Omega_\e)} \\
    &\qquad + \|w_\e - {\rm I}_3\|_{L^2(\Omega_\e)}\| \nabla  u\|_{L^\infty(
    \Omega_\e)} \|\nabla (u_\e - u_\e^{app})\|_{L^2(\Omega_\e)} \\
    & \leq C_\delta \|u\|_{W^{2,\infty}(\Omega_\e)}^2 \|w_\e - {\rm I}_3\|_{L^2(\Omega_\e)}^2 + \delta  \|\nabla (u_\e - u_\e^{app})\|_{L^2(\Omega_\e)}^2 \\
    &\leq  C_\delta \e^{2 \alpha -2} +  \delta \|\nabla(u_\e - u_\e^{app})\|^2_{L^2(\Omega_\e)}
\end{align*}
Further,
\begin{align*}
    |I_3| \leq C_\delta  \| \nabla \mathcal B_\e (u)\|_{L^2(\Omega_\e)}^2  + \delta \|\nabla( u_\e - u_\e^{app})\|_{L^2(\Omega_\e)}^2 \lesssim  C_\delta \e^{2 \alpha -2} +  \delta \|\nabla(u_\e - u_\e^{app})\|^2_{L^2(\Omega_\e)}, \\
       |I_4| \leq  C_\delta  \| \nabla u_\e^{inh}\|_{L^2(\Omega_\e)}^2  + \delta \|\nabla( u_\e - u_\e^{app})\|_{L^2(\Omega_\e)}^2 \lesssim C_\delta\e^{3(\alpha-1)}  + \delta \|\nabla( u_\e - u_\e^{app})\|_{L^2(\Omega_\e)}^2,
\end{align*}
where we used \eqref{est.u^inh} in the last estimate.

Finally, we have by Lemma \ref{lem:M_eps} using that $u_\e - u_\e^{app} \in H^1_{\Gamma_\e}(\Omega_\e)$ is divergence free and using that $\alpha =3$ and the Poincar\'e inequality
\begin{align*}
    I_5 &= -\langle \Delta w_\e + 6 \pi {\rm I}_3 \int \sigma f(\cdot, \dd \sigma) , u_\e - u_\e^{app} \rangle \\
    &= \langle -\Delta w_\e + \nabla q_\e -  6 \pi {\rm I}_3 \int \sigma f(\cdot, \dd \sigma), u_\e - u_\e^{app} \rangle \\
    &=  \langle M_\e - \mathcal R, u_\e - u_\e^{app} \rangle \\
    & \leq C_\delta (\e^2 + \eta + \|f_\e^{app}(0) - f_0\|_{(H^1_xW_\sigma^{1,\infty})^\ast}^2) + \delta  \|\nabla(u_\e - u_\e^{app})\|^2_{L^2(\Omega_\e)}.
\end{align*}

In total, we deduce 
\begin{align*}
    \|\nabla( u_\e - u_\e^{app})\|_{L^2(\Omega_\e)}^2 \lesssim \eta + \e^2 + \|f_\e^{app}(0) - f_0\|_{(H^1_xW_\sigma^{1,\infty})^\ast}^2 +  \|h_\e - h\|_{(H^{1}(\Omega))^*}^2.
\end{align*}
Combining this with \eqref{est.u^inh} and \eqref{bounds.w}, we infer 
\begin{align} \label{corr.est.u}
    \|\nabla( u_\e - w_\e u)\|_{L^2(\Omega_\e)}^2 \lesssim \eta + \e^2 + \|f_\e^{app}(0) - f_0\|_{(H^1_xW_\sigma^{1,\infty})^\ast}^2+ \|h_\e - h\|_{(H^{1}(\Omega))^*}^2.
\end{align}
Together with the Poincar\'e inequality and \eqref{bounds.w}, we conclude
\begin{align*}
    \|u_\e - u\|_{L^2(\Omega_\e)}^2 \lesssim \eta + \e^2 + \|f_\e^{app}(0) - f_0\|_{(H^1_xW_\sigma^{1,\infty})^\ast}^2 + \|h_\e - h\|_{(H^{1}(\Omega))^*}^2.
\end{align*}
as claimed.

\subsection{Proof of Proposition \ref{pro:chemistry}: Estimate of \texorpdfstring{$\|c_{\e} - c_{\e}^{app}  \|_{L^2(\Omega_\e(t))}^2 $}{c epsilon minus c app}}\label{ssec:estimate_ceps_c}

We have by Lemma \ref{lemma:times_derivative}
\begin{align*}
   \frac{\d}{\d t} \frac 1 2  \|c_{\e}(t) - c_{\e}^{app}(t)\|_{L^2(\Omega_\e)}^2  &=    \langle \partial_t c_{\e} - \partial_t c_{\e}^{app}, c_{\e} - c_{\e}^{app} \rangle_{H^1(\Omega_\e)} - \frac 1 2 \e^{\alpha } \sum_i  \int_{\Gamma_{\e,i}} \partial_t r_{\e,i} |c_{\e} - c_{\e}^{app}|^2.
\end{align*}
We compute separately using the PDEs for $c_\e$ and $c$, respectively, for almost every $t \in S$
\begin{align*}
 \langle \partial_t c_{\e}, c_{\e} - c_{\e}^{app} \rangle_{H^1(\Omega_{\e}(t))^*}
    &=  \int_{\Omega_\e} ( u_\e c_\e - \nabla c_\e ) \cdot  \nabla (c_{\e} - c_{\e}^{app}) + F(c_\e) (c_{\e} - c_{\e}^{app}) \\
    &+\sum_i  \int_{\Gamma_{\e,i}} \left( \e^\alpha \partial_t r_{\e,i} c_\e +\e^{3-2\alpha} G(c_\e,r_{\e,i})\right) (c_{\e} - c_{\e}^{app}) \\
\end{align*}
and 
\begin{align*}
 \int_{\Omega_\e}  \langle \partial_t c, c_{\e} - c_{\e}^{app} \rangle &=  \int_{\Omega_\e} ( u c -  \nabla c )\cdot  \nabla (c_{\e} - c_{\e}^{app}) + F(c) (c_{\e} - c_{\e}^{app}) \\
    &+\int_{\Omega_\e} \int_0^\infty 4 \pi\sigma H(c,\sigma)f(\cdot,\d\sigma) (c_{\e} - c_{\e}^{app}) + \int_{ \Gamma_\e} ( u c -  \nabla c ) \cdot \nu_\e (c_{\e} - c_{\e}^{app}).
\end{align*}
We compute explicitly
\begin{align*}
    - \Delta c_{\e,i}^{app} =  4 \pi \e^3 r_{i,\e} H_{\e,i}( \delta_{\Gamma_{\e,i}}  - \delta_{\partial B_{\frac {\dmin \e}{4}}(z_{\e,i})}), 
\end{align*}
where $\delta_{\partial_{B_r(x)}}$ denotes the uniform probability measure on $\partial B_r(x)$. Hence,
\begin{align*}
    \sum_i \int_{\Omega_\e} \nabla c_{\e,i}^{app} \cdot \nabla (c_{\e} - c_{\e}^{app} ) =  4 \pi \e^3 \sum_i \langle  r_{i,\e} H_{\e,i}( \delta_{\Gamma_{\e,i}}  - \delta_{\partial B_{\frac {\dmin \e}{4}}(z_{\e,i})}), c_{\e} - c_{\e}^{app} \rangle
\end{align*}
In total, we find 
\begin{align} \label{Gronwall.c.0}
      \frac 1 2 \frac{\d}{\d t} \|c_{\e}(t) - c_{\e}^{app}(t)\|_{L^2(\Omega_\e)}^2  +   \|\nabla ( c_{\e}(t) - c_{\e}^{app}(t))\|_{L^2(\Omega_\e)}^2  =  \sum_{j=1}^6 I_j
\end{align}
where
\begin{align*}
    I_1 &= -\sum_i \int_{\Omega_\e} \partial_t c_{\e,i}^{app} ( c_{\e}- c_{\e}^{app}), \\
    I_2 &=  \int_{\Omega_\e}( u_\e c_\e - u c) \cdot  \nabla (c_{\e} - c_{\e}^{app} ) \\
    I_3& = \sum_i   \int_{\Gamma_{\e,i}} \e^\alpha \partial_t r_{\e,i} c_\e (c_{\e} - c_{\e}^{app}) - \frac{\e^\alpha} 2 \sum_i  \int_{\Gamma_{\e,i}} \partial_t r_{\e,i} |c_{\e} + c_{\e}^{app}|^2 - \int_{ \Gamma_\e} ( u c -  \nabla c ) \cdot \nu_\e (c_{\e} - c_{\e}^{app})\\
    I_4 &=  \int_{\Omega_\e} ( F( c_\e) - F(c))  (c_{\e} - c_{\e}^{app} ) \\
    I_5 &= \sum_i   \int_{\Gamma_{\e,i}}  \e^{3-2\alpha} G(c_\e,r_{\e,i}) (c_{\e} - c_{\e}^{app}) - 4 \pi \e^3 \sum_i \langle  r_{i,\e} H_{\e,i} \delta_{\Gamma_{\e,i}}  , c_{\e} - c_{\e}^{app} \rangle\\
    I_6 &=  4 \pi \e^3 \sum_i \langle  r_{i,\e} H_{\e,i}(\delta_{\partial B_{\frac {\dmin \e}{4}}(z_{\e,i})}), c_{\e} - c_{\e}^{app} \rangle -   \int_{\Omega_\e} \int_0^\infty 4 \pi\sigma H(c,\sigma) f(\cdot,\d\sigma) (c_{\e} - c_{\e}^{app}). 
\end{align*}
We estimate these terms separately. For $I_1$, we use
\[
|\partial_t c_{\e,i}^{app}|^2\lesssim 
\left[\dot r_i^2H_{\e,i}^2+r_i^2|\nabla H_{\e,i}|^2\left(|\partial_tc|^2+|\dot r_i|^2\right)\right]\left|\frac{\e^3}{|x-z_{\e,i}|}-\frac{4\e^2}{\dmin}\right|^2
\]
 and an estimate analogous to \eqref{corrector.c.L^2} to get
\begin{align*}
    |I_1| &\lesssim \|c_{\e} - c_{\e}^{app}\|^2_{L^2(\Omega_\e)} +  \sum_i\|\partial_t c_{\e,i}^{app}\|_{L^2(\Omega_\e)}^2  
     \lesssim  \|c_{\e} - c_{\e}^{app}\|^2_{L^2(\Omega_\e)} + \e^4.
\end{align*}

Regarding $I_2$, 
\begin{align*}
      |I_2| &\leq  \|u_\e - u\|_{L^2(\Omega_\e)} \|c_\e\|_{L^\infty(\Omega_\e)} \|\nabla ( c_{\e} - c_{\e}^{app})\|_{L^2(\Omega_\e)} +  \|u\|_{L^\infty(\Omega_\e)}  \|c_{\e} - c\|_{L^2(\Omega_\e)}\|\nabla ( c_{\e} - c_{\e}^{app})\|_{L^2(\Omega_\e)} \\
      &\leq  C_\delta \|u_\e - u\|^2_{L^2(\Omega_\e)} \|c_\e\|^2_{L^\infty(\Omega_\e)} +  C_\delta \|u\|_{L^\infty(\Omega_\e)}^2 \left(\|c_{\e} - c_\e^{app}\|_{L^2(\Omega_\e)}^2 + \|c - c_\e^{app}\|_{L^2(\Omega_\e)}^2 \right)  \\
      &\qquad+ \delta \|\nabla ( c_{\e} - c_{\e}^{app})\|^2_{L^2(\Omega_\e)} \\
      & \leq \delta \|\nabla ( c_{\e} - c_{\e}^{app})\|^2_{L^2(\Omega_\e)} + C_\delta(\|u_\e - u\|^2_{L^2(\Omega_\e)} + \|c_{\e} - c_\e^{app}\|_{L^2(\Omega_\e)}^2 +\e^4)
\end{align*}
where we used \eqref{corrector.c.L^2} in the last estimate.

For $I_3$, we  use the trace estimate from Lemma \ref{lemma:trace_estimates} (ii) to obtain
\begin{multline*}
       \sum_i   \int_{\Gamma_{\e,i}} \e^\alpha \partial_t r_{\e,i} c_\e (c_{\e} - c_{\e}^{app}) - \frac 1 2 \e^{\alpha} \sum_i  \int_{\Gamma_{\e,i}} \partial_t r_{\e,i} |c_{\e} - c_{\e}^{app}|^2 \\
       \lesssim \e^\alpha \|c_{\e} - c_{\e}^{app}\|_{L^2(\Gamma_\e)}^2 + \e^\alpha\|1\|_{L^2(\Gamma_\e)}^2 \\
      \lesssim \e^{3(\alpha-1)} \|c_{\e} - c_{\e}^{app}\|_{L^2(\Omega_\e)}^2 + \e^{2\alpha} \|\nabla(c_{\e} - c_{\e}^{app})\|_{L^2(\Omega_\e)}^2 + \e^{3(\alpha-1)}.
\end{multline*}
For the last term in $I_3$, we introduce the notation 
\begin{align} \label{[]_Gamma}
    [g]_{\Gamma_{\e,i}} := \fint_{\Gamma_{\e,i}} g(\gamma) \dd \gamma. 
\end{align}
Then, 
\begin{multline*}
\int_{ \Gamma_\e} ( u c -  \nabla c )\cdot \nu (c_{\e} - c_{\e}^{app}) \\
= \sum_i \int_{ \Gamma_{\e,i}} \left(( u c -  \nabla c ) \cdot \nu - [(u c -  \nabla c)\cdot \nu ]_{\Gamma_{\e,i}} \right) \left( (c_{\e} - c_{\e}^{app}) - [c_{\e} - c_{\e}^{app}]_{\Gamma_{\e,i}} \right) \\
+ \int_{ \Gamma_\e} [(u c -  \nabla c)\cdot \nu ]_{\Gamma_{\e,i}} (c_{\e} - c_{\e}^{app}).
\end{multline*}
For the first right-hand side term, we use Lemma \ref{lemma:trace_estimates} (iii) to get
\begin{multline*}
    -\sum_i \int_{ \Gamma_{\e,i}} \left(( u c -  \nabla c ) \cdot \nu - [(u c -  \nabla c)\cdot \nu ]_{\Gamma_{\e,i}} \right) \left( (c_{\e} - c_{\e}^{app}) - [c_{\e} - c_{\e}^{app}]_{\Gamma_{\e,i}} \right) \\
    \leq C \e^{\alpha/2} \|( u c -  \nabla c ) \cdot \nu - [(u c -  \nabla c)\cdot \nu]_{\Gamma_{\e,i}}\|_{L^2(\Gamma_\e)}  \|\nabla(c_{\e} - c_{\e}^{app})\|_{L^2(\Omega_\e)} \\
    \leq C_\delta \e^{3(\alpha-1)} + \delta \|\nabla(c_{\e} - c_{\e}^{app})\|_{L^2(\Omega_\e)}^2
\end{multline*}
where we used $|\Gamma_\e| \lesssim \e^{2\alpha -3}$ and $(uc-\nabla c)\in L^{\infty}(\Omega)^3\cap H^1(\Omega)^3$ in the last estimate.
For the second right-hand side term, we use that $[V \cdot \nu]_{\Gamma_{\e,i}} = 0 $ for every $V\in\R^3$ such that, by Lipschitz continuity of $ u c -  \nabla c$, 
\begin{align*}
    \int_{ \Gamma_\e} [(u c -  \nabla c)\cdot \nu ]_{\Gamma_{\e,i}} (c_{\e} - c_{\e}^{app})
    &=\sum_i\int_{\Gamma_{\e,i}} \left[(u c -  \nabla c)\cdot \nu-(u c -  \nabla c)(z_{\e,i})\cdot \nu \right]_{\Gamma_{\e,i}} (c_{\e} - c_{\e}^{app}) \\
    &\lesssim \e^{\alpha} \|c_{\e} - c_{\e}^{app}\|_{L^1(\Gamma_\e)} \\
    &\lesssim \e^{3(\alpha-1)} \|c_{\e} - c_{\e}^{app}\|_{L^2(\Omega_\e)}^2 + \e^{2\alpha} \|\nabla(c_{\e} - c_{\e}^{app})\|_{L^2(\Omega_\e)}^2 + \e^{3(\alpha-1)}
\end{align*}
as for the first term in $I_3$.
In total, we have (for any fixed $\delta$ and $\e$ sufficiently small)
\begin{align*}
    I_3 \leq C \e^{3(\alpha-1)} \|c_{\e} - c_{\e}^{app}\|_{L^2(\Omega_\e)}^2 + C_\delta \e^{3(\alpha-1)} + \delta \|\nabla(c_{\e} - c_{\e}^{app})\|_{L^2(\Omega_\e)}^2
\end{align*}
For $I_4$, by local Lipschitz continuity of $F$ and uniform boundedness of $c_\e, c$,
\begin{align*}
    |I_4| \lesssim  \|c - c_{\e}^{app}\|_{L^2(\Omega_\e)}^2 + \|c_{\e} - c_{\e}^{app}\|_{L^2(\Omega_\e)}^2 \lesssim \e^{4} + \|c_{\e} - c_{\e}^{app}\|_{L^2(\Omega_\e)}^2
\end{align*}

We now turn to the genuinely nonstandard part of the concentration estimate, namely the homogenization of the interfacial exchange contained in the remaining terms $I_5$ and $I_6$.
Here, $I_5$ measures the local mismatch between the microscopic exchange law and the boundary-layer corrected approximation, while $I_6$ accounts for the replacement of the discrete particle distribution by its macroscopic limit.
For $I_5$, we split $I_5 = I_{5,1} + I_{5,2}$ with 
\begin{align} \label{I_5.1}
    I_{5,1} &= \e^{-3} \sum_i \int_{\Gamma_{\e,i}} \left( G(c_\e,r_{\e,i}) -  G(c_\e^{app},r_{\e,i}) \right) ( c_{\e} - c_{\e}^{app}), \\
    I_{5,2} &= \e^{-3} \sum_i \int_{\Gamma_{\e,i}} \left(  G(c_\e^{app},r_{\e,i}) -  r_{i,\e}^{-1} H_{\e,i}\right) ( c_{\e} - c_{\e}^{app}) \label{I_5.2}
\end{align}
The first term is non-positive since 
\begin{align*}
    \left(G(c_\e,r_{\e,i}) -  G(c_\e^{app},r_{\e,i}) \right) ( c_{\e} - c_{\e}^{app}) = -g(r_{\e,i}) (c_\e - c_{\e}^{app})  ( c_{\e} - c_{\e}^{app}) \leq 0
\end{align*}
%
Regarding $I_{5,2}$, we observe that $c_\e^{app} = c +c_{\e,i}^{app}$ on $\Gamma_{\e,i}$. Moreover, by  its definition in \eqref{corrector.c} $c_{\e,i}^{app}$ is constant in $\Gamma_{\e,i}$ and we have for all $x \in \Gamma_{\e,i}$
\begin{align*}
    \left|c(x) + c_{\e,i}^{app}(x) - (c(z_{\e,i}) + H_{\e,i})\right|
    &\le\e^\alpha r_{\e,i} \|\nabla c\|_\infty + 4 \e^2 \frac{|r_{i,\e}|}{\dmin} | H_{\e,i}| \lesssim \e^2.
\end{align*}
Since $G$ is linear in its first argument, we therefore have\footnote{Note that the same convergence order can still be established if $G$ is nonlinear in $c$ as long as it is locally Lipschitz.}
\begin{align} \label{G.H.Lipschitz}
    |G(c_\e^{app}(x),r_{\e,i}) - G\left(c(z_{\e,i}) + H_{\e,i},r_{\e,i}\right) |&=g(r_{\e,i})|c_\e^{app}(x)-c(z_{\e,i})-H_{\e,i}|\\
    &\lesssim \e^2.
\end{align}
By the definition of $H$ in \eqref{H}, we have
\begin{align} \label{G.H.relation}
    G(c(z_{\e,i}) + H_{\e,i},r_{\e,i})) = r_{i,\e}^{-1} H_{\e,i}.
\end{align}
Gathering these observations yields
\begin{align*}
   I_5 \leq  I_{5,2} &\lesssim \e^{-1}  \|c_{\e} - c_{\e}^{app}\|_{L^1(\Gamma_\e)} \\
    &\lesssim  \e^{-1} \e^{\alpha - \frac 3 2}  \|c_{\e} - c_{\e}^{app}\|_{L^2(\Gamma_\e)} \\
    &\lesssim \e^{-1} \e^{2\alpha - 3}  (\|c_{\e} - c_{\e}^{app}\|_{L^2(\Omega_\e)} + \e^{3-\alpha}\|\nabla(c_{\e} - c_{\e}^{app})\|_{L^2(\Omega_\e)}) \\
    &\lesssim  \e^2 \left(1 + \|c_{\e} - c_{\e}^{app}\|_{L^2(\Omega_\e)}^2 +\|\nabla(c_{\e} - c_{\e}^{app})\|_{L^2(\Omega_\e)}^2\right)
\end{align*}
where we used that the surface area of $\Gamma_{\e}$ scales like $\e^{2\alpha -3}$ and we set $\alpha = 3$. 

%
Finally, we turn to $I_6$ and split it into $I_6 = I_{6,1} + I_{6,2} + I_{6,3} + I_{6,4}$, where
\begin{align*}
I_{6,1} &=  4 \pi \e^3 \sum_i \langle  r_{i,\e} H_{\e,i} (\delta_{\partial B_{\frac {\dmin \e}{4}}(z_{\e,i})} - \delta_{B_{\frac {\dmin \e}{4}}(z_{\e,i})}), \mathcal E_\e (c_{\e} - c_{\e}^{app}) \rangle ,\\
I_{6,2} &=  4 \pi \e^3 \sum_i \langle  r_{i,\e} (H_{\e,i} - H(c,r_{\e,i}))\delta_{B_{\frac {\dmin \e}{4}}(z_{\e,i})}, \mathcal E_\e (c_{\e} - c_{\e}^{app}) \rangle,\\
     I_{6,3} &=    \int_{\Omega\setminus \Omega_\e} \int_0^\infty 4 \pi\sigma H(c,\sigma) f(\cdot,\d\sigma) \mathcal E_\e (c_{\e} - c_{\e}^{app}) \\
     I_{6,4} &=     4 \pi \e^3 \sum_i \langle  r_{i,\e}  H(c,r_{\e,i})\delta_{B_{\frac {\dmin \e}{4}}(z_{\e,i})}, \mathcal E_\e  (c_{\e} - c_{\e}^{app}) \rangle - \int_{\Omega} \int_0^\infty 4 \pi\sigma H(c,\sigma) f(\cdot,\d\sigma) (\mathcal E_\e  (c_{\e} - c_{\e}^{app})).
\end{align*}
Here, $\mathcal E_\e$ denotes the extension operator from Lemma \ref{lemma:extension_operators} with $A_i = \Id$ for all $1\leq i \leq N_\e$. 

The first three terms are easily estimated: For $I_{6,1}$, one observes by scaling and Poincar\'e inequality that for any $R>0$, $x \in \R^3$ and $\psi \in H^1(B_R(x))$, we have
\begin{align} \label{Poincare.averages.0}
   \left| \fint_{B_R(x)} \psi  \dd y - \fint_{\partial B_R(x)} \psi \dd y \right| \leq   \fint_{\partial B_R(x)} \left|\psi  - \fint_{B_R(x)} \psi \dd y\right| \dd z  \lesssim R^{-\frac 1 2} \|\nabla \psi\|_{L^2(B_R(x))}.
\end{align}
Hence, using the estimates from Lemma \ref{lemma:extension_operators}, 
\begin{align*}
    I_{6,1} &\leq C \e^{\frac 5 2} \sum_i \|\nabla \mathcal E_\e (c_{\e} - c_{\e}^{app})\|_{L^2(B_{\frac {\dmin \e}{4}}(z_{\e,i}))} \\
    &\leq C \e \|\nabla \mathcal E_\e (c_{\e} - c_{\e}^{app})\|_{L^2(\Omega)} \\
    &\leq C_\delta \e^2 + \delta  \|\nabla (c_{\e} - c_{\e}^{app})\|_{L^2(\Omega_\e)}^2.
\end{align*}
Lipschitz continuity of $H$ and $c$ yields
\begin{align*}
    I_{6,2} \lesssim \e \| \mathcal E_\e (c_{\e} - c_{\e}^{app})\|_{L^1(\Omega)}^2 \lesssim \e^2 +   \|c_{\e} - c_{\e}^{app}\|_{L^2(\Omega_\e)}^2 + \e^3  \|\nabla (c_{\e} - c_{\e}^{app})\|_{L^2(\Omega_\e)}^2.
\end{align*}
Furthermore, by H\"older's inequality using the smallness of the measure $|\Omega\setminus \Omega_\e| \lesssim \e^{6}$ for $\alpha = 3$
\begin{align*}
    I_{6,3} \lesssim \e^3 \| \mathcal E_\e (c_{\e} - c_{\e}^{app})\|_{L^2(\Omega)} \lesssim \e^6 +  \| c_{\e} - c_{\e}^{app}\|_{L^2(\Omega_\e)}^2 +  \e^3  \|\nabla (c_{\e} - c_{\e}^{app})\|_{L^2(\Omega_\e)}^2.
\end{align*}
Finally, we turn to $I_{6,4}$.
We rewrite
\begin{align*}
    I_{6,4} =  4\pi \int_{\Omega \times [0,\infty)} \mathcal E_\e(c_{\e} - c_{\e}^{app}) \sigma H(c,\sigma) (\dd f_\e^{app}(t)-\dd f(t)) 
\end{align*}
where we remind the definition of $f^{app}_\e$ from \eqref{f^app}. 
We would like to  show an estimate of the form 
\begin{align*}
    I_{6,4} \lesssim \|\nabla(c_{\e} - c_{\e}^{app})\|_{L^2(\Omega_\e)}  \mathcal W_{2}(f_\e^{app},f).
\end{align*}

It is classical that the $\mathcal W_1$ distance, which is controlled by the $\mathcal W_2$ distance is just the dual Lipschitz norm, i.e. that for two probability densities   $f_1,f_2 \in L^\infty(U)$ for some convex set $U$
\begin{align*}
    \int_U \varphi \dd (f_1 - f_2) \leq  \|\nabla \varphi\|_{L^\infty(U)} \mathcal W_{1}(f_1,f_2).
\end{align*}
However, such an estimate is not  sufficient for our purposes since we only control the Dirichlet energy of $c_\e - c_{\e}^{app}$.
On the other hand, it is well known (see e.g. \cite[Lemma 5.33]{Santambrogio}) that one has
\begin{align*}
    \int \varphi \dd (f_1 - f_2) \leq  \|\nabla \varphi\|_2 \sqrt{\|f_1\|_\infty + \|f_2\|_\infty} \mathcal W_{2}(f_1,f_2).
\end{align*}
Clearly $f_\e^{app}$ is not even absolutely continuous with respect to the Lebesgue measure due to the Dirac measures in the distribution of the radii. However its marginal with respect to the space variable $\rho_\e^{app} := \int_{[0,\infty)} f_\e^{app} \dd \sigma$ is indeed uniformly bounded in $L^\infty$. Since, we only test with Lipschitz functions with respect to the $\sigma$ variable, one could therefore hope to relate $\mathcal W_2(f_\e,f)$ to the norm $\|f_\e^{app}(t) - f(t)\|_{(H^1_xW_\sigma^{1,\infty})^\ast}$. 
It seems not obvious how to generalize the proof of \cite[Lemma 5.33]{Santambrogio} in this way, though. 
We therefore exploit that the spatial evolution $f_\e,f$ is trivial to prove the following slightly weaker estimate.

\begin{lemma} \label{f^app,f}
    We have 
    \begin{align*}
        \|f_\e^{app}(t) - f(t)\|_{(H^1_xW_\sigma^{1,\infty})^\ast} \leq C (\|f_\e^{app}(0) - f_0\|_{(H^1_xW_\sigma^{1,\infty})^\ast}  + \sqrt{\eta(t)}  + \e).
    \end{align*}
\end{lemma}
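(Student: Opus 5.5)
Fix a test function $\psi\in H^1_xW^{1,\infty}_\sigma$ with norm at most one. The plan is to bound $\langle f^{app}_\e(t)-f(t),\psi\rangle$ by transporting $\psi$ back to time zero along the two characteristic flows $\Sigma_\e$ from \eqref{eq:floweps} and $\Sigma$ from \eqref{eq:flow}. The key point is that space is only a parameter in both evolutions.

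Set $\tilde\psi(x,\sigma):=\psi(x,\Sigma(t,x,\sigma))$. Then
\[
\langle f(t),\psi\rangle=\langle f_0,\tilde\psi\rangle, \qquad
\langle f^{app}_\e(t),\psi\rangle=\e^3\sum_i\fint_{B_{\frac{\dmin\e}4}(z_{\e,i})}\psi(x,\Sigma_\e(t,z_{\e,i},r_{\e,i,0}))\dd x .
\]
This gives the splitting
\[
\langle f^{app}_\e(t)-f(t),\psi\rangle = \langle f^{app}_\e(0)-f_0,\tilde\psi\rangle + \bigl(\langle f^{app}_\e(t),\psi\rangle-\langle f^{app}_\e(0),\tilde\psi\rangle\bigr).
\]

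\emph{First term.} Since $c\in W^{1,\infty}$ and $H$ is locally Lipschitz on the compact range of relevant radii, Gronwall applied to the flow equation gives $\|\partial_\sigma\Sigma\|_\infty+\|\nabla_x\Sigma\|_\infty\lesssim1$. The radii stay in a compact subset of $(0,\infty)$ by \cref{theorem:maximum_principle} and the support assumption on $f$, so we may cut off outside this range. Hence $\|\tilde\psi\|_{H^1_xW^{1,\infty}_\sigma}\lesssim\|\psi\|_{H^1_xW^{1,\infty}_\sigma}$, using the chain rule $\nabla_x\tilde\psi=\nabla_x\psi+\partial_\sigma\psi\,\nabla_x\Sigma$. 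The $x$-gradient has to be read in $L^2_x$ with a sup over $\sigma$, which I expect to be compatible with the norm. The first term is therefore bounded by $C\|f^{app}_\e(0)-f_0\|_{(H^1_xW^{1,\infty}_\sigma)^\ast}$.

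\emph{Second term.} It equals
\[
\e^3\sum_i\fint_{B_i}\bigl[\psi(x,\Sigma_\e(t,z_{\e,i},r_{\e,i,0}))-\psi(x,\Sigma(t,x,r_{\e,i,0}))\bigr]\dd x ,
\]
and Lipschitz continuity of $\psi$ in $\sigma$ bounds it by $\e^3\sum_i\fint_{B_i}|\Sigma_\e(t,z_{\e,i},r_{\e,i,0})-\Sigma(t,x,r_{\e,i,0})|\dd x$. We replace $\Sigma(t,x,\cdot)$ by $\Sigma(t,z_{\e,i},\cdot)$ at cost $\lesssim\e$, using the Lipschitz bound in $x$ and $|x-z_{\e,i}|\le\dmin\e/4$. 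It remains to bound
\[
\e^3\sum_i|\Sigma_\e(t,z_{\e,i},r_{\e,i,0})-\Sigma(t,z_{\e,i},r_{\e,i,0})|=\int|\Sigma_\e(t,x_1,\sigma_1)-\Sigma(t,x_1,\sigma_1)|\,\pi_0 .
\]
Through the optimal plan $\pi_0$ with first marginal $f_\e(0)$, this is at most
\[
\int|\Sigma_\e(t,x_1,\sigma_1)-\Sigma(t,x_2,\sigma_2)|\,\pi_0+\int|\Sigma(t,x_2,\sigma_2)-\Sigma(t,x_1,\sigma_1)|\,\pi_0 .
\]
By Cauchy–Schwarz the first integral is $\le\sqrt{\eta(t)}$ by definition \eqref{eq:def_eta}. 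By the Lipschitz continuity of $\Sigma(t,\cdot,\cdot)$ the second is $\lesssim\int(|x_1-x_2|+|\sigma_1-\sigma_2|)\pi_0\lesssim\mathcal W_2(f_\e(0),f_0)\le\sqrt{\eta(0)}\le\sqrt{\eta(t)}$, since $\eta$ is defined as a supremum over $s\le t$.

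Collecting the three bounds gives the claim. The main obstacle is the first step: verifying that composition with $\Sigma(t,\cdot,\cdot)$ is bounded on $H^1_xW^{1,\infty}_\sigma$. This needs uniform Lipschitz bounds on the limit characteristics, which rely on $c\in W^{1,\infty}$, on the regularity of $H$ on the compact radius range (away from $\sigma=0$, where the factor $1/\sigma$ in \eqref{eq:flow} appears), and on a careful reading of the mixed norm.
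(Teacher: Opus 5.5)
Your decomposition is the same as the paper's. Both arguments pull the test function back along $\Sigma(t)$, so that $\langle f_\e^{app}(t)-f(t),\psi\rangle$ splits into $\langle f_\e^{app}(0)-f_0,\psi\circ(P_x,\Sigma(t))\rangle$ plus a term measuring $\Sigma_\e-\Sigma$. You treat the first piece exactly as the paper does: the paper also just invokes the Lipschitz continuity of $\Sigma(t)$, which amounts to reading the norm as $\|\psi\|_{L^2_xW^{1,\infty}_\sigma}+\|\nabla_x\psi\|_{L^2_xL^\infty_\sigma}$, as in your chain-rule computation. For the second piece your route is more elementary. The paper builds the plan $\tilde\pi_t$ that collapses each ball onto its centre and glues it with $\pi_0$. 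You instead evaluate $\Sigma_\e$ only at the centres $z_{\e,i}$ and pay $O(\e)$ for moving $x$ to $z_{\e,i}$ inside $\Sigma$. This also avoids the paper's auxiliary convention that $H_\e$ is constant on each ball.

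There is, however, one step that fails as written: ``Lipschitz continuity of $\psi$ in $\sigma$ bounds it by $\e^3\sum_i\fint_{B_i}|\Sigma_\e-\Sigma|\dd x$''.
\begin{itemize}
\item A unit element of $H^1_xW^{1,\infty}_\sigma$ only has an $x$-dependent Lipschitz modulus $L(x):=\|\partial_\sigma\psi(x,\cdot)\|_{L^\infty}$ with $\|L\|_{L^2(\R^3)}\le1$. It does not satisfy $L\le1$ pointwise, since $H^1(\R^3)\not\subset L^\infty$.
\item A symptom: your bound on this term never uses that $f_\e^{app}$ is spatially smeared. It would equally ``apply'' to point masses, where $\psi(z_{\e,i},\cdot)$ is not even defined for $\psi\in H^1_x$.
\end{itemize}
The repair is what the paper does.
\begin{itemize}
\item Write the term as $\int L\,a\,\rho_\e^{app}(0)\dd x$, where $a=|r_{\e,i}(t)-\Sigma(t,x,r_{\e,i,0})|$ on $B_{\frac{\dmin\e}{4}}(z_{\e,i})$.
\item Apply Cauchy--Schwarz together with $\|\rho_\e^{app}(0)\|_{L^\infty}\lesssim1$. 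This holds because the balls are disjoint, each has volume $\sim\e^3$, and each carries mass $\e^3$.
\item Keep squares throughout. The replacement $x\to z_{\e,i}$ then costs $C\e^2$.
\item The remaining quantity is $\e^3\sum_i|r_{\e,i}(t)-\Sigma(t,z_{\e,i},r_{\e,i,0})|^2=\int|\Sigma_\e(t,x_1,\sigma_1)-\Sigma(t,x_1,\sigma_1)|^2\dd\pi_0\le2\eta(t)+C\mathcal W_2(f_\e(0),f_0)^2\lesssim\eta(t)$. This is exactly the squared form of your triangle argument through $\pi_0$.
\end{itemize}
With this change you obtain the refined bound of Remark~\ref{rem:improved.f.f^app}.

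A minor point: your worry about $\sigma=0$ is unnecessary. Indeed $H(c,\sigma)/\sigma=\frac{g(\sigma)}{1+\sigma g(\sigma)}(\cequ(\sigma)-c)$ is locally Lipschitz on all of $[0,\infty)$. Moreover, in the critical theorem the support of $f$ is only assumed compact in $[0,\infty)$, so you could not claim that the limit radii stay away from $0$. Only the upper cutoff in $\sigma$ is needed.
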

\begin{remark} \label{rem:improved.f.f^app}
    We actually prove the refined estimate
    \begin{align*}
        \langle f_\e^{app}(t) - f(t), \varphi \rangle &\lesssim \|f_\e^{app}(0) - f_0\|_{(H^1_xW_\sigma^{1,\infty})^\ast} \|\varphi\|_{H^1_xW_\sigma^{1,\infty}}  \\
        &+ (\sqrt{\eta(t)} + \e) \|\nabla_\sigma \varphi\|_{L^2(\R^3;L^\infty([0,\infty))}. 
    \end{align*}
\end{remark}

Before we proof this lemma, we apply it with $\varphi = \mathcal E_\e(c_{\e} - c_{\e}^{app}) \sigma H(c,\sigma)$ to estimate $I_{6,4}$. To be precise, we first use that $f(t)$ and $f^{app}_\e(t)$ are compactly supported in $\overline \Omega \times [0,\infty)$ thanks to the upper bounds for the radii $r_{\e,i}$ which allows us to consider a truncated version of  $\mathcal E_\e(c_{\e} - c_{\e}^{app}) \sigma H(c,\sigma)$ in the $\sigma$ variable, and may assume that it is defined in all of $\R^3$ with respect to the $x$ variable. Hence,
\begin{align*}
     I_{6,4} &\leq C (\|f_\e^{app}(0) - f_0\|_{(H^1_xW_\sigma^{1,\infty})^\ast} + \sqrt{\eta}  + \e)\| \mathcal E_\e (c_{\e} - c_{\e}^{app})\|_{H^1(\Omega)} \\
     &\leq   C_\delta (\|f_\e^{app}(0) - f_0\|_{(H^1_xW_\sigma^{1,\infty})^\ast}^2 + \eta +  \e^2) + \|c_{\e} - c_{\e}^{app}\|_{L^2(\Omega_\e)}^2 + \delta \|\nabla(c_{\e} - c_{\e}^{app})\|_{L^2(\Omega_\e)}^2.
\end{align*}
In total, we get
\begin{align*}
    I_6 \leq C_\delta (\eta +  \e^2+ \|f_\e^{app}(0) - f_0\|_{(H^1_xW_\sigma^{1,\infty})^\ast}^2) + \|c_{\e} - c_{\e}^{app}\|_{L^2(\Omega_\e)}^2 + \delta \|\nabla(c_{\e} - c_{\e}^{app})\|_{L^2(\Omega_\e)}^2.
\end{align*}

Gathering the estimates for $I_k$, $k=1,\dots,6$ and choosing $\delta$ sufficiently small, the proof of Proposition \ref{pro:chemistry} is complete.

\begin{proof}[Proof of Lemma \ref{f^app,f}]
    We first construct a transport plan for $f_\e^{app}(t) - f(t)$. To this end, let us assume that the map $H_\e$ from \eqref{H_e} satisfies
    \begin{align*}
        H_\e(t,x,\sigma) = H_\e(t,z_{\e,i},\sigma) \qquad \text{for all } x \in B_{\frac{\dmin \e}{4}}(z_{\e,i})
    \end{align*}
    such that also the flow $\Sigma_\e$ in \eqref{eq:floweps} satisfies
        \begin{align*}
        \Sigma_\e(t,x,\sigma) = \Sigma_\e(t,z_{\e,i},\sigma) \qquad \text{for all } x \in B_{\frac{\dmin \e}{4}}(z_{\e,i}).
    \end{align*}
    Then we have $f_\e^{app}(t) = (P_x, \Sigma_\e(t))_\# f_\e^{app}(0)$.
    Let
\begin{align*}
    \tilde \pi_t = (\mathrm{Id},\mathcal T_\e, P_\sigma)_\#  f_\e^{app}(t),
\end{align*}
where $P_\sigma(x,\sigma) = \sigma$ and $\mathcal T_\e \colon \Omega \to \Omega$ is any measurable map that satisfies
\begin{align*}
   \mathcal  T_\e  = z_{\e,i} \quad \text {in } B_{\frac {\dmin \e}{4}}(z_{\e,i}).
\end{align*}
Then $\tilde \pi_t$ is a transport plan for $(f_{\e}^{app}(t), f_\e(t))$ with
\begin{align*}
    \int_{(\R^3 \times [0,\infty))^2} |x_1 - x_2|^2 + |\sigma_1 - \sigma_2|^2 \tilde \pi_t 
    &= \int_{(\R^3 \times [0,\infty))^2} |x_1 - \mathcal T_\e(x_1)|^2 \dd f_{\e}^{app}(t) 
    \lesssim \e^2,
\end{align*}
since $|x_1 - \mathcal T_\e(x_1)| \lesssim  \e$ in the support of $f_\e^{app}(t)$.

By the gluing lemma, there exists $\gamma_0 \in \mathcal (\R^3 \times [0,\infty))^3 $ such that $P^{1,2}_\# \gamma_0 = \tilde \pi_0$, $P^{2,3}_\# \gamma_0 = \pi_0$, where $P^{1,2}$ and $P^{2,3}$ denotes the projection to the first two (pairs of) coordinates and the last two (pairs of) coordinates, respectively.
Hence, $\pi_{0}^{app} :=P^{1,3}_\# \gamma_0 $ is a transport plan for $(f_{\e}^{app}(0), f_0)$.  
Moreover, 
$$
\pi_t^{app}:= ( P^1_x, \Sigma_\e(t)\circ P^1, P^2_x,  \Sigma(t)\circ P^2)\# \pi^{app}_0
$$
is a transport plan for $(f_{\e}^{app}(t), f(t))$. 
Indeed, we have
\begin{align*}
    \int_{(\R^3 \times [0,\infty))^2} \varphi(x_1,\sigma_1) \dd \pi_t^{app}(x_1,\sigma_1,x_2,\sigma_2) &=   \int_{(\R^3 \times [0,\infty))^2} \varphi(x_1,\Sigma_\e(t,x_1,\sigma_1)) \dd \pi_0^{app}(x_1,\sigma_1,x_2,\sigma_2) \\
    &= \int_{(\R^3 \times [0,\infty))^2} \varphi(x_1,\Sigma_\e(t,x_1,\sigma_1)) \dd f_\e^{app}(0, x_1,\sigma_1) \\
    &= \int_{(\R^3 \times [0,\infty))^2} \varphi(x_1,\Sigma(t,x_1,\sigma_1)) \dd  ((P_x, \Sigma_\e(t))_\#  f_\e^{app})(0, x_1,\sigma_1) \\
    &= \int_{(\R^3 \times [0,\infty))^2} \varphi(x_1,\sigma_1)) \dd   f_\e^{app}(t,x_1,\sigma_1)
\end{align*}
and similarly one shows that the second marginal of $\pi_t^{app}$ is $f(t)$.
Analogously, one shows that 
\[
\pi_t^{\mathrm{app}}=P^{1,3}_\# \gamma_t,\qquad \pi_t = P^{2,3}_\# \gamma_t,  \qquad
    \tilde \pi_t = P^{1,2}_\# \gamma_t,
\]
where
\[
\gamma_t := \bigl( P_x^1,\,
\Sigma_\e(t)\circ P^1,\,
P_x^2,\,
\Sigma_\e(t)\circ P^2,\,
P_x^3,\,
\Sigma(t)\circ P^3
\bigr)_\# \gamma
\]
In particular, 
\begin{align*}
    \int_{(\R^3 \times [0,\infty))^2} |x_1 - x_3|^2 + |\sigma_1 - \sigma_3|^2  \dd \pi_t^{app}  &= \int_{(\R^3 \times [0,\infty))^3} |x_1 - x_3|^2 + |\sigma_1 - \sigma_3|^2  \dd \gamma_t \\
    &\leq2\int_{(\R^3 \times [0,\infty))^3} |x_1 - x_2|^2 + |\sigma_1 - \sigma_2|^2  \dd \gamma_t  \\
    &\qquad + 2\int_{(\R^3 \times [0,\infty))^3} |x_2 - x_3|^2 + |\sigma_2 - \sigma_3|^2  \dd \gamma_t\\
    &= 2\int_{(\R^3 \times [0,\infty))^2} |x_1 - x_2|^2 + |\sigma_1 - \sigma_2|^2  \dd \tilde \pi_t  \\
    &\qquad + 2\int_{(\R^3 \times [0,\infty))^2} |x_2 - x_3|^2 + |\sigma_2 - \sigma_3|^2  \dd \pi_t  \\
    &\lesssim \e^2 + \eta.
\end{align*}

    Let now $\varphi \in H^1_xW_\sigma^{1,\infty}$ with $ \|\varphi\|_{H^1_xW_\sigma^{1,\infty}} \leq 1$. Then
    \begin{align*}
        \langle \varphi, f^{app}_\e(t) - f(t) \rangle &= \int_{(\R^3 \times [0,\infty))^2} (\varphi(x_1,\sigma_1) - \varphi(x_2,\sigma_2)) \dd \pi^{app}_t \\
        &= \int_{(\R^3 \times [0,\infty))^2} (\varphi(x_1,\Sigma_\e(t,x_1,\sigma_1)) - \varphi(x_2,\Sigma(t,x_2,\sigma_2))) \dd \pi^{app}_0
    \end{align*}
    which implies
    \begin{multline*}
        \left| \langle \varphi, f^{app}_\e(t) - f(t) \rangle\right|
        \le
        \left|\int_{(\R^3 \times [0,\infty))^2} (\varphi(x_1,\Sigma(t,x_1,\sigma_1)) - \varphi(x_2,\Sigma(t,x_2,\sigma_2))) \dd \pi^{app}_0\right| \\
        +\left|\int_{(\R^3 \times [0,\infty))^2} (\varphi(x_1,\Sigma_\e(t,x_1,\sigma_1)) - \varphi(x_1,\Sigma(t,x_1,\sigma_1))) \dd \pi^{app}_0\right|
    \end{multline*}
    Since $\Sigma(t)$ is Lipschitz, the first right-hand side term is estimated by 
    \begin{align*}
        \Biggl|\int_{(\R^3 \times [0,\infty))^2} (\varphi(x_1,\Sigma(t,x_1,\sigma_1)) - \varphi(x_2,\Sigma(t,x_2,\sigma_2))) \dd \pi^{app}_0\Biggr| &= \left|\langle \varphi \circ (P_x,\Sigma(t)), f^{app}_\e(0) - f(0)\rangle\right| \\
        &\lesssim  \|f_\e^{app}(0) - f_0\|_{(H^1_xW_\sigma^{1,\infty})^\ast}.
    \end{align*}
    For the second right-hand side term, we have
    \begin{align*}
       & \left|\int_{(\R^3 \times [0,\infty))^2} (\varphi(x_1,\Sigma_\e(t,x_1,\sigma_1)) - \varphi(x_1,\Sigma(t,x_1,\sigma_1))) \dd \pi^{app}_0\right| \\
        &\leq \int_{(\R^3 \times [0,\infty))^2} \|\partial_\sigma \varphi(x_1,\cdot)\|_{L^\infty([0,\infty))} |\Sigma_\e(t,x_1,\sigma_1) - \Sigma(t,x_1,\sigma_1)| \dd \pi^{app}_0 \\
        &\leq  \left(\int_{(\R^3 \times [0,\infty))^2} \|\partial_\sigma \varphi(x_1,\cdot)\|_{L^\infty([0,\infty))}^2 \dd \pi^{app}_0 \right)^{\frac 1 2}  \left(\int_{(\R^3 \times [0,\infty))^2} |\Sigma(t,x_1,\sigma_1)) - \Sigma_\e(t,x_1,\sigma_1))|^2  \dd \pi^{app}_0\right)^{\frac 1 2}
    \end{align*}
    with 
    \begin{align*}
        \int_{(\R^3 \times [0,\infty))^2} \|\partial_\sigma \varphi(x_1,\cdot)\|_{L^\infty([0,\infty))}^2 \dd \pi^{app}_0 =  \int_{\R^3} \|\partial_\sigma \varphi(x_1,\cdot)\|_{L^\infty([0,\infty))}^2 \dd \rho^{app}_0(x_1) \leq \|\rho^{app}_0\|_{L^\infty(\R^3)} \lesssim 1
    \end{align*}
    and using again that $\Sigma$ is Lipschitz
    \begin{align*}
        &\int_{(\R^3 \times [0,\infty))^2} |\Sigma(t,x_1,\sigma_1)) - \Sigma_\e(t,x_1,\sigma_1))|^2  \dd \pi^{app}_0 \\
        &\lesssim  \int_{(\R^3 \times [0,\infty))^2} |\Sigma(t,x_1,\sigma_1)) - \Sigma(t,x_2,\sigma_2))|^2  \dd \pi^{app}_0 \\
        &\qquad+ 2\int_{(\R^3 \times [0,\infty))^2} |\Sigma(t,x_2,\sigma_2) - \Sigma_\e(t,x_1,\sigma_1)|^2  \dd \pi^{app}_0 \\
        &\lesssim \int_{(\R^3 \times [0,\infty))^2} |x_1 - x_2|^2 + |\sigma_1 - \sigma_2|^2  \dd \pi^{app}_0 \\
        &\qquad+ \int_{(\R^3 \times [0,\infty))^2} |\sigma_1 - \sigma_2|^2  \dd \pi^{app}_t \\
        &\lesssim \eta(t)  + \e^2   
    \end{align*}
    Collecting these estimates finishes the proof.
\end{proof}

\subsection{Proof of Proposition \ref{pro:eta}: Estimate of \texorpdfstring{$\eta$}{eta}}\label{ssec:estimate_eta}

We compute the time derivative of $\eta$. Note that due to the supremum in time in the definition of $\eta$ in \eqref{eq:def_eta}, $\eta(t)$ is of the form $\eta(t) = \sup_{s\le t} \zeta(s)$. The function $\zeta$ is continuously differentiable, hence $\eta$ is Lipschitz, and its (weak) derivative satisfies 
\begin{align*}
    \frac{\d}{\d t} \eta(t) = \begin{cases}
    \frac{\d}{\d t}  \zeta(t) &\qquad \text{if } \eta(t) =\zeta(t) \text{ and } \frac{\d}{\d t}  \zeta(t) >0, \\
    0 &\qquad \text{otherwise.} 
        \end{cases}
\end{align*}
In particular $\frac{\d}{\d t} \eta(t) \leq  
   \left| \frac{\d}{\d t}  \zeta(t) \right| $ and hence
\begin{align*}
    \frac{\d}{\d t} \eta(t) &\leq \left|\frac{\d}{\d t} \int_{(\Omega \times (0,\infty))^2} (|x_1-x_2|^2+|\Sigma_\e(t,x_1,\sigma_1)-\Sigma(t,x_2,\sigma_2)|^2)  \pi_t (\d x_1, \d \sigma_1, \d x_2, \d \sigma_2) \right| \\
    & = 2 \left| \int_{(\Omega \times (0,\infty))^2} (\Sigma_\e(t,x_1,\sigma_1)-\Sigma(t,x_2,\sigma_2)) \right. \\
    & \qquad \qquad \left.\left( H_\e(t,x_1,\Sigma_\e(t,x_1,\sigma_1))  - \frac {H(c(t,x_2),\Sigma(t,x_2,\sigma_2))}{\Sigma(t,x_2,\sigma_2)} \right) \pi_0 (\d x_1, \d \sigma_1, \d x_2, \d \sigma_2) \right|\\
    & \leq 2 \eta^{1/2} \left(\int_{(\Omega \times (0,\infty))^2}\left| H_\e(t,x_1,\sigma_1) - \frac {H(c(t,x_2),\sigma_2)}{\sigma_2} \right|^2 \pi_t (\d x_1, \d \sigma_1, \d x_2, \d \sigma_2) \right)^{\frac 1 2}
\end{align*}
We split 
\begin{align*}
 \int_{(\Omega \times (0,\infty))^2}\left| H_\e(t,x_1,\sigma_1) - \frac {H(c(t,x_2),\sigma_2)}{\sigma_2} \right|^2 \pi_t (\d x_1, \d \sigma_1, \d x_2, \d \sigma_2) \lesssim I_1 + I_2,
\end{align*}
where
\begin{align*}
    I_1 &:= \int_{(\Omega \times (0,\infty))^2}\left|\frac {H(c(t,x_1),\sigma_1)}{\sigma_1} - \frac{H(c(t,x_2),\sigma_2)}{\sigma_2}\right|^2 \pi_t (\d x_1, \d \sigma_1, \d x_2, \d \sigma_2), \\
    I_2 &:= \int_{(\Omega \times (0,\infty))^2}\left| H_\e(t,x_1,\sigma_1) - \frac {H(c(t,x_1),\sigma_1)}{\sigma_1} \right|^2 \pi_t (\d x_1, \d \sigma_1, \d x_2, \d \sigma_2).
\end{align*}
Recall that by Theorem \ref{theorem:maximum_principle} $r_{\eps,i} \leq \bar \xi$. Hence, integration with respect to $\sigma$ can be restricted to $[0,\bar \xi]$. Since $(x,\sigma) \mapsto \frac {H(c(t,x),\sigma)}{\sigma}$ is Lipschitz for $\sigma \in [0,\bar \xi]$ (as can easily be seen by its definition in \eqref{H}) to which we can restrict all integration, 
\begin{align*}
    I_1 \lesssim \eta(t).
\end{align*}
We compute
\begin{align*}
    I_2 = \frac 1 {N_\e} \sum_i \left|\frac 1 {r_{\e,i}} H(c(t,z_{\e,i}),r_{\e,i}) -  \fint_{\Gamma_{\e,i}(t)}G\left(c_\e(t,x),r_{\e,i}(t)\right) \dd x \right|^2
\end{align*}
This term can be treated very similarly as \eqref{I_5.1}--\eqref{I_5.2} above: We estimate for all $x \in \Gamma_{\e,i}$
\begin{align*}
 &\left| \frac 1 {r_{\e,i}} H(c(t,z_{\e,i}),r_{\e,i}) -G\left(c_\e(t,x),r_{\e,i}(t)\right) \right|  \\
 &\lesssim \left| \frac 1 {r_{\e,i}} H(c(t,z_{\e,i}),r_{\e,i}) -G\left(c_\e^{app}(t,x),r_{\e,i}(t)\right) \right| +\left| G\left(c_\e(t,x),r_{\e,i}(t)\right) - G\left(c_\e^{app}(t,x),r_{\e,i}(t)\right) \right| \\
 &\lesssim \e^2 + |c_\e^{app}(x) - c_\e(x)| 
\end{align*}
where the last estimate follows from \eqref{G.H.Lipschitz}--\eqref{G.H.relation} and the Lipschitz continuity of $G$ on $[0,r_{\max}]$.
Hence, we deduce thanks to Lemma \ref{lemma:trace_estimates} (ii)
\begin{align*}
    I_2 &\lesssim \e^{-3}\|c_{\e} - c_{\e}^{app}\|^2_{L^2(\Gamma_\e)} + \e^4 \\
    &\lesssim \|c_{\e} - c_{\e}^{app}\|^2_{L^2(\Omega_\e)} +\|\nabla(c_{\e} - c_{\e}^{app})\|^2_{L^2(\Omega_\e)} + \e^4
\end{align*}

We conclude
\begin{align*}
    \frac {\dd}{\dd t} \eta \leq 2 \eta^{\frac 1 2} (I_1 + I_2)^{\frac 1 2} \leq C \eta + \|c_{\e} - c_{\e}^{app}\|^2_{L^2(\Omega_\e)} +\frac 1 2 \|\nabla(c_{\e} - c_{\e}^{app})\|^2_{L^2(\Omega_\e)} + \e^4 
\end{align*}
as claimed.

\section{Passage to the limit in the supercritical case \texorpdfstring{$\alpha \in (1,3)$}{alpha less than 3}}\label{sec:hom_supercrit}

In this section, we prove the quantitative homogenization result for the supercritical regime $\alpha\in(1,3)$ stated in \cref{thm:hom.supercritical}.
The proof follows the same general strategy as in the critical case $\alpha=3$ and we therefore only detail the modifications required by the different scaling.
On the one hand, the interfacial energy is now subcritical, so no boundary-layer correction of the concentration is needed  and the microscopic exchange law $G$ enters the limit problem directly.
However, on the other hand, the fluid dynamics converge to the Darcy-type system \eqref{Darcy}, which provides weaker boundary information than the Brinkman system.
In particular, the no-slip condition $u_\e=0$ on $\Sigma_2$ is lost in the limit where only normal outflow is prohibited via $u\cdot n=0$.
This prevents us from using \(u_\e-u\) directly as an admissible test function in the microscopic Stokes problem and necessitates an additional boundary-layer correction near \(\Sigma_2\).

As before, we construct a transport plan $\pi_t$ between $f(t)$ and $f_\e(t)$ but now we consider the characteristics to the limit equation \eqref{evolution.f.supercritical}.
Instead of \eqref{rel.energy}, we define the relative energy without any corrector for the concentration, i.e.
\begin{align*}
    E_\e(t):=  \|c_{\e} - c\|_{L^2(\Omega_\e(t))}^2 + \eta(t) 
\end{align*}
and replace Propositions \ref{pro:fluid}--\ref{pro:eta} by the following statements.
These statements already imply \cref{thm:hom.supercritical} and are proved in the following sections.
\begin{proposition} \label{pro:fluid.super}
Under the assumptions of Theorem \ref{thm:hom.supercritical}, for $\e < \e_0$
    \begin{align*}
    \| u_\e - u\|_{L^2(\Omega_\e(t))}^2 \lesssim  \e^{\alpha - 1} + \e^{\frac{3-\alpha}2} + \eta +  \e^{\alpha -3}\|f_\e^{app}(0) - f_0\|_{(H^1_xW_\sigma^{1,\infty})^\ast}^2 + \|h_\e - h\|_{L^2({\Omega})}^2 .
\end{align*}
\end{proposition}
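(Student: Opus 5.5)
We follow the structure of the proof of \cref{pro:fluid}, with two changes. The Darcy limit has no viscous term, and the no-slip condition on $\Sigma_2$ is lost. We therefore again build an approximation $u_\e^{app}$ with $u_\e-u_\e^{app}\in H^1_{\Sigma_2\cup\Gamma_\e}(\Omega_\e)$, divergence free, and test the microscopic Stokes system with it. The difference from the critical case is that we now compare with $\e^{3-\alpha}$ times the Dirichlet energy. The approximation is
\[
u_\e^{app}:=w_\e u_{bl}-\mathcal B_\e(u_{bl})+u_\e^{inh}, \qquad u_{bl}:=u-\bar u_{\e}^{bl}.
\]
Here $\bar u_{\e}^{bl}$ is a divergence-free boundary-layer corrector of thickness $\kappa$ near $\Sigma_2$. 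It cancels the tangential trace of $u$ there, so that $u_{bl}=0$ on $\Sigma_2$. We construct it from the vector potential of the extension $\bar u\in C^{1,\mu}(\overline U)$, as in \cite[Lemma 4.10]{BalaziAllaireOmnes25}: write $\bar u=\curl\Phi$ and set $\bar u_\e^{bl}=\curl((1-\theta_\kappa)\Phi)$ with a cutoff $\theta_\kappa$ vanishing near $\partial U$. This gives
\[
\|\bar u^{bl}_\e\|_{L^2}\lesssim\kappa^{1/2}, \qquad \|\nabla \bar u^{bl}_\e\|_{L^2}\lesssim \kappa^{-1/2}.
\]

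Testing the Stokes equation with $\phi:=u_\e-u_\e^{app}$ and using $h=\nabla p+\mathcal R u$ gives
\[
\e^{3-\alpha}\|\nabla\phi\|_{L^2}^2=\int(h_\e-\nabla p)\cdot\phi-\e^{3-\alpha}\langle-\Delta u_\e^{app},\phi\rangle.
\]
The pressure term vanishes after integrating by parts: $\phi$ is divergence free, $p=0$ on $\Sigma_1$, and $\phi=0$ on $\Sigma_2\cup\Gamma_\e$. We expand $-\e^{3-\alpha}\Delta(w_\e u_{bl})$ as in \cref{ssec:estimate_ueps_u}. By \cref{lem:M_eps}, the main part $\e^{3-\alpha}(-\Delta w_\e+\nabla q_\e)u_{bl}$ equals $M_\e u_{bl}$ tested against $\phi$. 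This combines with $\int\mathcal R u\cdot\phi$ to give $\langle M_\e-\mathcal R,u_{bl}\phi\rangle$, plus $\langle\mathcal R\bar u^{bl}_\e,\phi\rangle$. We bound $\langle M_\e-\mathcal R,u_{bl}\phi\rangle$ by \eqref{M.W^-1,infty}.

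The key point is that the Poincaré inequality of \cref{lemma:poincare}, available under \eqref{ass:uniform.covering}, yields $\|\phi\|_{L^2}\lesssim\e^{\frac{3-\alpha}{2}}\|\nabla\phi\|_{L^2}$. The left-hand side therefore controls $\|\phi\|_{L^2}^2$. The $H^1$ part of \eqref{M.W^-1,infty} costs $\e^{\frac{\alpha-3}{2}}$, which produces the term $\e^{\alpha-3}\|f^{app}_\e(0)-f_0\|^2$. The $L^2$ part gives $\eta+\e^{2(\alpha-1)}$. The remaining terms are handled by Young's inequality. The forcing term contributes $\|h_\e-h\|^2_{L^2}$. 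The terms $\e^{3-\alpha}\nabla(w_\e-I)\nabla u_{bl}$, $\nabla\mathcal B_\e$ and $\nabla u^{inh}_\e$ are bounded by \cref{lem:correctors} and \eqref{est.u^inh}, contributing $\e^{3-\alpha}\e^{2(\alpha-1)}\lesssim \e^{\alpha-1}$. The terms involving $u_{bl}$ second derivatives also stay bounded after the scaling.

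The main obstacle is the boundary layer. It contributes $\e^{3-\alpha}\|\nabla\bar u^{bl}_\e\|^2\lesssim\e^{3-\alpha}\kappa^{-1}$ from the viscous term, and $\|\bar u^{bl}_\e\|_{L^2}^2\lesssim\kappa$ from the Darcy and resistance terms. Optimizing in $\kappa$ gives $\kappa=\e^{\frac{3-\alpha}{2}}$, which yields the error $\e^{\frac{3-\alpha}2}$. Care is needed because $w_\e$ multiplies $u_{bl}$, so $\nabla w_\e\,\bar u^{bl}_\e$ must be controlled. We plan to use that $\bar u_\e^{bl}$ is bounded in $L^\infty$ and supported in a $\kappa$-strip. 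Then $\|\nabla w_\e\|_{L^2(\text{strip})}^2\lesssim\kappa\e^{\alpha-3}$, and this term is of the same order.

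Finally, we combine the bound on $\|\phi\|_{L^2}$ with $\|u_\e^{app}-u\|_{L^2}^2\lesssim\e^{2(\alpha-1)}+\kappa$. Since $\e^{2(\alpha-1)}\le \e^{\alpha-1}$, this gives the stated estimate.
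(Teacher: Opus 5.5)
Your proposal is essentially the paper's proof: your $\bar u^{bl}_\e=\curl((1-\theta_\kappa)\Phi)$ is the paper's $u_\varpi=\curl(\theta_\varpi A)$, and the approximation $w_\e(u-u_\varpi)-\mathcal B_\e(u-u_\varpi)+u_\e^{inh}$, the use of \cref{lem:M_eps} together with the perforated Poincar\'e inequality, and the choice $\varpi=\e^{\frac{3-\alpha}{2}}$ all coincide with it. Two details need tidying: because $\phi=u_\e-u_\e^{app}$ need not vanish on $\Sigma_1$, the integration by parts produces a boundary term $\e^{3-\alpha}\int_{\Sigma_1}\partial_n u_\e^{app}\cdot\phi$ (the paper's $\bar I_6$, harmless at order $\e^{3-\alpha}$), and the paper keeps $\e^{3-\alpha}\int\nabla(w_\e u_\varpi+\mathcal B_\e(u-u_\varpi)):\nabla\phi$ in weak form, bounded via exactly your strip estimate for $\nabla w_\e$, instead of expanding $\Delta(w_\e u_{bl})$, since that expansion would need third derivatives of the vector potential, which its $C^{2,\mu}$ regularity does not provide.
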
 

\begin{proposition} \label{pro:chemistry.super}
Under the assumptions of Theorem \ref{thm:hom.supercritical}, for $\e < \e_0$
\begin{multline*}
    \frac{\d}{\d t } \frac 1 2 \|c_{\e} - c\|_{L^2(\Omega_\e(t))}^2+ \|\nabla(c_{\e} - c)\|_{L^2(\Omega_\e(t))}^2 \\\lesssim \e^{3-\alpha} + \e^{3(\alpha-1)}+ \|f_\e^{app}(0) - f_0\|_{(H^1_xW_\sigma^{1,\infty})^\ast}^2 + \eta + \|c_{\e} - c\|_{L^2(\Omega_\e(t))}^2 + \|u_\e - u\|_{L^2(\Omega_\e(t))}^2.
\end{multline*}
\end{proposition}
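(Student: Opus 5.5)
We argue as in the proof of \cref{pro:chemistry}, but now without local correctors, since $c_{\e}^{app}$ is replaced by $c$. By \cref{lemma:times_derivative}, the time derivative of $\frac12\|c_\e-c\|_{L^2(\Omega_\e(t))}^2$ equals $\langle\partial_t(c_\e-c),c_\e-c\rangle$ minus the transport boundary term $\frac{\e^\alpha}{2}\sum_i\int_{\Gamma_{\e,i}}\partial_tr_{\e,i}|c_\e-c|^2$. We insert the weak form \eqref{eq:micro_system_moving_weak} for $c_\e$ and the strong form of \eqref{limit.chemisty.supercritical} for $c$, integrated over $\Omega_\e(t)$ against $c_\e-c$. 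The latter produces the boundary term $\int_{\Gamma_\e}(uc-\nabla c)\cdot\nu_\e(c_\e-c)$. This yields
\[
\tfrac{\d}{\d t}\tfrac12\|c_\e-c\|^2+\|\nabla(c_\e-c)\|^2=J_1+J_2+J_3+J_4+J_5 ,
\]
where the terms are as follows:
\begin{itemize}
\item $J_1$ is the advection difference $\int(u_\e c_\e-uc)\cdot\nabla(c_\e-c)$.
\item $J_2$ collects the $\partial_t r$ boundary terms and the flux term $\int_{\Gamma_\e}(uc-\nabla c)\cdot\nu(c_\e-c)$.
\item $J_3=\int(F(c_\e)-F(c))(c_\e-c)$.
\item $J_4=\e^{3-2\alpha}\sum_i\int_{\Gamma_{\e,i}}G(c_\e,r_{\e,i})(c_\e-c)$.
\item $J_5=-\int_{\Omega_\e}\int 4\pi\sigma^2G(c,\sigma)f(\d\sigma)(c_\e-c)$.
\end{itemize}

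Three of these terms are handled by the arguments already used for \cref{pro:chemistry}.
\begin{itemize}
\item $J_1$ is treated as $I_2$ there, using the uniform bound on $c_\e$ from \cref{theorem:maximum_principle}. This gives $\delta\|\nabla(c_\e-c)\|^2+C(\|u_\e-u\|^2+\|c_\e-c\|^2)$.
\item $J_2$ is treated as $I_3$: Lemma~\ref{lemma:trace_estimates} together with the zero-average trick $[V\cdot\nu]_{\Gamma_{\e,i}}=0$ gives $C\e^{3(\alpha-1)}(1+\|c_\e-c\|^2)+\delta\|\nabla(c_\e-c)\|^2$.
\item $J_3$ is bounded by $C\|c_\e-c\|^2$ by the local Lipschitz continuity of $F$.
\end{itemize}

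For $J_4$ we split $G(c_\e,r)=G(c,r)+(G(c_\e,r)-G(c,r))$. The second piece equals $-g(r)|c_\e-c|^2\le0$ and is dropped. In the first piece we replace $c(x)$ on $\Gamma_{\e,i}$ by $c(z_{\e,i})$, at an error $\lesssim\e^\alpha$. The surface integral $\e^{3-2\alpha}\int_{\Gamma_{\e,i}}(c_\e-c)$ is then compared with the ball average $4\pi\e^3r_{\e,i}^2\fint_{B_{\dmin\e/4}(z_{\e,i})}\mathcal E_\e(c_\e-c)$. This comparison is the crucial subcritical step. I would use a local Poincaré/trace inequality on the annulus between $\Gamma_{\e,i}$ and $\partial B_{\dmin\e/4}$. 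This inequality bounds the surface average minus the ball average by $(\e^{\alpha})^{-1/2}\|\nabla\mathcal E_\e(c_\e-c)\|_{L^2(B_i)}$, with the capacity-type weight from the $1/|x|$ profile. Multiplying by $\e^3$ and summing with Cauchy--Schwarz over $\e^{-3}$ balls gives
\[
\e^{(3-\alpha)/2}\|\nabla(c_\e-c)\|\le C\e^{3-\alpha}+\delta\|\nabla(c_\e-c)\|^2 .
\]
This is exactly where the $\e^{3-\alpha}$ rate arises, and it replaces the corrector needed at $\alpha=3$.

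What remains after this replacement is
\[
4\pi\int\sigma^2G(c,\sigma)\,\mathcal E_\e(c_\e-c)\,\d(f^{app}_\e-f)
\]
plus harmless errors: the $\e$-Lipschitz error from $c(z_{\e,i})$ versus $c(x)$, and the contribution $I_{6,3}$-type over $\Omega\setminus\Omega_\e$, which is small since $|\Omega\setminus\Omega_\e|\lesssim\e^{3(\alpha-1)}$. Combined with $J_5$, this remainder is bounded by Lemma~\ref{f^app,f}, adapted to the supercritical characteristics. Its proof only used that the flows are constant on the balls and Lipschitz in $(x,\sigma)$, which still holds for $G(c,\sigma)$. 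We obtain
\[
C(\|f^{app}_\e(0)-f_0\|+\sqrt\eta+\e)\,\|\mathcal E_\e(c_\e-c)\|_{H^1},
\]
and Young's inequality with the extension bounds of Lemma~\ref{lemma:extension_operators} turns this into the stated right-hand side.

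Collecting all terms and absorbing the $\delta$-gradient terms gives the claim. The main obstacle is the surface-to-ball average comparison in $J_4$. It requires a sharp annular Poincaré estimate with constant $\e^{-\alpha/2}$ (rather than $\e^{-1/2}$) so that the resulting error is $\e^{3-\alpha}$ and not worse. I would first try the radial identity
\[
v(\e^\alpha r)-v(\rho)=-\int_{\e^\alpha r}^{\rho}\partial_sv\,\d s
\]
together with Cauchy--Schwarz, where the weight $\int s^{-2}\,\d s\sim\e^{-\alpha}$ produces the needed constant.
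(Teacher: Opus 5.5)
Your proposal is correct and follows essentially the paper's route: the same decomposition, a surface-versus-ball-average comparison via the Poincar\'e-type estimate \eqref{poincare.type.averages} with $s\sim\e^\alpha$ (giving $\e^{(3-\alpha)/2}\|\nabla(c_\e-c)\|$ and hence the $\e^{3-\alpha}$ error), a volume term over $\Omega\setminus\Omega_\e$, and an $f^{app}_\e-f$ term handled by Lemma~\ref{f^app,f}. The differences are cosmetic: you drop the nonpositive term $-g(r_{\e,i})|c_\e-c|^2$ instead of bounding it by the trace estimate, and you freeze $c$ at $z_{\e,i}$ before comparing averages, whereas the paper applies the Poincar\'e estimate directly to $h_i=G(c,r_{\e,i})\mathcal E_\e(c_\e-c)$.
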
 

\begin{proposition} \label{pro:eta.super}
Under the assumptions of Theorem \ref{thm:hom.supercritical}, for $\e < \e_0$
    \begin{align*}
    \frac{\d}{\d t } \eta  \leq \frac 1 2 \|\nabla (c_{\e} - c)\|_{L^2(\Omega_\e(t))}^2 + C(\eta + \e^2 +   \|c_{\e} - c\|_{L^2(\Omega_\e(t))}^2 ) .
\end{align*}
\end{proposition}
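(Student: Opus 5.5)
The plan is to copy the argument of \cref{ssec:estimate_eta}, replacing the limit characteristics \eqref{eq:flow} by those of \eqref{evolution.f.supercritical}, namely $\partial_t\Sigma=-G(c(t,x),\Sigma)$ with $\Sigma(0,x,\sigma)=\sigma$. Since no concentration corrector is used now, the comparison on $\Gamma_\e$ is made directly with $c$.

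\emph{Step 1: differentiate $\eta$.} As before, $\eta=\sup_{s\le t}\zeta(s)$ is Lipschitz with $\frac{\d}{\d t}\eta\le|\frac{\d}{\d t}\zeta|$. Differentiating along the two flows $\Sigma_\e$ and $\Sigma$ and applying Cauchy--Schwarz with respect to $\pi_0$ gives
\[
\frac{\d}{\d t}\eta\le 2\eta^{\frac12}\,(I_1+I_2)^{\frac12}.
\]
Here
\[
I_1=\int|G(c(t,x_1),\sigma_1)-G(c(t,x_2),\sigma_2)|^2\,\d\pi_t,
\qquad
I_2=\int|H_\e(t,x_1,\sigma_1)-G(c(t,x_1),\sigma_1)|^2\,\d\pi_t .
\]
By \cref{theorem:maximum_principle} and the compact support of $f$, all $\sigma$-integrations may be restricted to a fixed compact interval. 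On this interval $(x,\sigma)\mapsto G(c(t,x),\sigma)=g(\sigma)(\cequ(\sigma)-c(t,x))$ is Lipschitz, because $c\in W^{1,\infty}$ and $g,\cequ$ are locally Lipschitz. Hence $I_1\lesssim\eta(t)$.

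\emph{Step 2: the particle term $I_2$.} The first marginal of $\pi_t$ is $f_\e(t)$ and $\sigma_1=r_{\e,i}$ on its support, so
\[
I_2=\e^3\sum_{i}\Bigl|\fint_{\Gamma_{\e,i}}\bigl(G(c_\e,r_{\e,i})-G(c(z_{\e,i}),r_{\e,i})\bigr)\d\gamma\Bigr|^2 .
\]
The integrand equals $g(r_{\e,i})\bigl(c(z_{\e,i})-c_\e\bigr)$, and $g(r_{\e,i})$ is bounded uniformly. On $\Gamma_{\e,i}$ we have $|c-c(z_{\e,i})|\le\e^\alpha\overline\xi(T)\|\nabla c\|_\infty$. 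Combining this with Jensen's inequality gives
\[
I_2\lesssim \e^{2\alpha}+\e^3\sum_i\fint_{\Gamma_{\e,i}}|c_\e-c|^2\d\gamma\lesssim\e^{2}+\e^{3-2\alpha}\|c_\e-c\|_{L^2(\Gamma_\e)}^2 ,
\]
using $\alpha>1$ and $|\Gamma_{\e,i}|\sim\e^{2\alpha}$.

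\emph{Step 3: trace estimate and Young's inequality.} This is the only step where the scaling matters, so it is where I expect the main obstacle. I would apply the perforated-domain trace estimate of \cref{lemma:trace_estimates} in the form already used in the discussion of \eqref{eq:simplified-problem}:
\[
\e^{3-2\alpha}\|v\|_{L^2(\Gamma_\e)}^2\lesssim\|v\|_{L^2(\Omega_\e)}^2+\e^{3-\alpha}\|\nabla v\|_{L^2(\Omega_\e)}^2 .
\]
Taking $v=c_\e-c$ yields $I_2\lesssim\e^2+\|c_\e-c\|_{L^2}^2+\e^{3-\alpha}\|\nabla(c_\e-c)\|^2_{L^2}$. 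Since $\e^{3-\alpha}\le1$, the gradient term enters with a bounded constant; for $\alpha<3$ it is even small. Finally, Young's inequality $2\eta^{1/2}(I_1+I_2)^{1/2}\le C\eta+\kappa(I_1+I_2)$, with $\kappa$ chosen so that $\kappa C_{\mathrm{tr}}\e^{3-\alpha}\le\frac12$, gives the claimed bound. I must check that the trace lemma applies on the moving domain $\Omega_\e(t)$ with constants uniform in $t$. This should follow from the uniform radius bounds $\underline\xi\le r_{\e,i}\le\overline\xi$ of \cref{theorem:maximum_principle} and the separation guaranteed for $\e\le\e_0$.
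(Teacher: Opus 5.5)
Your proposal is correct and follows essentially the same route as the paper. You differentiate $\eta$ along the two characteristic flows, bound the Lipschitz part by $I_1\lesssim\eta$, reduce $I_2$ to $\e^{2}+\e^{3-2\alpha}\|c_\e-c\|_{L^2(\Gamma_\e)}^2$ using the linearity of $G$ in $c$ and the Lipschitz continuity of $c$, and close with the trace estimate of \cref{lemma:trace_estimates}~(ii) and Young's inequality. Your explicit choice of the Young parameter, which can be made uniformly in $\e$ since $\e^{3-\alpha}\le 1$, is just a slightly more careful version of the paper's ``for $\e$ sufficiently small''.
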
 

\begin{proof}[Proof of Theorem \ref{thm:hom.supercritical}]
    Combining the above propositions, we have
 \begin{multline*}
     \frac{\d}{\d t } E_\e(t) + \frac 1 2 \|\nabla (c_{\e} - c)\|_{L^2(\Omega_\e(t))}^2 \\
     \lesssim E_\e(t) +\e^{\frac{3-\alpha}2} +  \e^{\alpha - 1}  +  \e^{\alpha-3}\|f_\e^{app}(0) - f_0\|_{(H^1_xW_\sigma^{1,\infty})^\ast}^2 + \|h_\e - h\|_{L^2(\Omega)}^2.
 \end{multline*}
 Hence, by Gronwall's inequality
 \begin{multline*}
     E_\e(t) + \|\nabla (c_{\e} - c)\|_{L^2(0,t;L^2(\Omega_\e(t)))}^2 \\\lesssim E_\e(0) +  \e^{\frac{3-\alpha}2}+  \e^{\alpha - 1}  +  \e^{\alpha-3}\|f_\e^{app}(0) - f_0\|_{(H^1_xW_\sigma^{1,\infty})^\ast}^2 + \|h_\e - h\|_{L^2(\Omega)}^2.
 \end{multline*}
Inserting the definition of $E_\e$ yields \eqref{quantitative.est.supercritical}. 
\end{proof}

\subsubsection*{Estimate of $u_\e - u$}
Please note that we again drop the time dependency in the notation; in particular, we write $\Omega_\e$ instead of $\Omega_\e(t)$.
Note that unlike in the critical case,  $u_\e^{app} - u_\e$ with $u_\e^{app}$ as in \eqref{eq:ueps_app} is not an admissible test function for the equation of $u_\e$ anymore since $u \neq 0$ on $\Sigma_2$ in general. We therefore invoke a boundary layer corrector as in \cite{BalaziAllaireOmnes25, HoferLuOschmann26}.

With the assumptions of \cref{thm:hom.supercritical}, there exists a $C^{3,\mu}$ domain $U \subset \R^3$ such that $\Omega \subset U$ and $ \Sigma_2 \subset \partial U$ and $\bar u$ a  $C^{1,\mu}$ extension of $u$ to $U$ such that $\bar u \cdot n = 0$ on $\partial U$.
We then invoke \cite[Lemma~4.10]{BalaziAllaireOmnes25} to obtain $A \in L^\infty(S;C^{2,\mu}(U;\R^{3}))$ such that $\curl A = \bar u$ and $A = 0$ on $\partial U$.  

For $\varpi > 0$, let $U^\varpi = \{ x \in U : \dist(x, \partial U) < \varpi \}$ and $\theta_\varpi \in C^2(U)$ be such that $\theta_\varpi = 1$ on $U^{\varpi /2}$, $\theta_\varpi = 0$ in $U \setminus U^\varpi$, and
\begin{align*}
0\leq \theta_\varpi\leq 1, \qquad  \|\nabla \theta_\varpi\|_{L^\infty(U)} \lesssim \varpi^{-1}, \qquad \|\nabla^2 \theta_\varpi\|_{L^\infty(U)} \lesssim \varpi^{-2}.
\end{align*}

Set $u_\varpi = {\rm curl}(\theta_\varpi A)$, 
and replace the definition of $u_\e^{app}$ from \eqref{eq:ueps_app} by
\begin{align}\label{eq:ueps_app.super}
    u_{\e}^{app} := w_\e (u-u_\varpi) - \mathcal B_\e(u-u_\varpi) + u_{\e}^{inh}.
\end{align}
and note that $u_{\e}^{app} = 0$ on $\Sigma_2$ such that $u_\e^{app} - u_\e$ is an admissible testfunction for the equation of $u_\e$.

Then, testing the weak forms for System~\eqref{system:stokes_eps} and System~\eqref{Darcy} with $u_\e - u_\e^{app}$:
\begin{align*}
    \e^{3-\alpha} \|\nabla( u_\e - u_\e^{app})\|_{L^2(\Omega_\e)}^2 = \sum_{j=1}^5 \bar I_j,
\end{align*}
where
\begin{align*}
   \bar I_1 &=   \int_{\Omega_\e} (h_\e - w_\e h) \cdot (u_\e - u_\e^{app}), \\
   \bar I_2 &= - \e^{3-\alpha}\int_{\Omega_\e} 2 (\nabla (w_\e - {\rm I}_3) \nabla u) + w_\e \Delta u)) \cdot (u_\e - u_\e^{app}), \\
  \bar  I_3 &= - \e^{3-\alpha} \int_{\Omega_\e} \nabla (w_\e u_\varpi + \mathcal B_\e (u - u_\varpi)) :  \nabla( u_\e - u_\e^{app}), \\
        \bar I_4 &= - \e^{3-\alpha}\int_{\Omega_\e} \nabla u_\e^{inh}:  \nabla( u_\e - u_\e^{app}), \\
      \bar I_5 &= -\langle \e^{3-\alpha} \Delta w_\e u + 6 \pi u\int_0^\infty \sigma f(\cdot,\d\sigma), u_\e - u_\e^{app} \rangle ,\\
      \bar I_6 &= \e^{3-\alpha}\int_{\Sigma_1} \partial_n (w_\e u) \cdot ( u_\e^{app} - u_\e)
\end{align*}
We estimate using the Poincar\'e inequality from Lemma \ref{lemma:poincare}, $h\in L^\infty(\Omega)$, and \cref{lem:correctors}
\begin{align*}
    \bar I_1 &\lesssim \|w_\e -{\rm I}_3\|_{L^2(\Omega_\e)} \|h\|_{L^\infty(\Omega_\e)}  \|u_\e - u_\e^{app}\|_{L^2(\Omega_\e)} + \|h_\e - h\|_{L^2(\Omega)} \|u_\e - u_\e^{app}\|_{L^2(\Omega_\e)} \\
    &\lesssim \e^{3-\alpha} \delta \|\nabla (u_\e - u_\e^{app})\|_{L^2(\Omega_\e)}^2 + C_\delta \e^{2\alpha -2 } + C_\delta\|h_\e - h\|_{L^2(\Omega)}^2
\end{align*}
Next, integrating by parts in the first term of $\bar I_2$ and using the boundary conditions, we obtain via \cref{lem:correctors}
\begin{align*}
    \bar I_2 &\lesssim \e^{3-\alpha} \|w_\e - {\rm I}_3\|_{L^2(\Omega_\e)}\left( \| \Delta u\|_{L^\infty(\Omega_\e)} \|u_\e - u_\e^{app}\|_{L^2(\Omega_\e)} +
   \| \nabla  u\|_{L^\infty(\Omega_\e)} \|\nabla (u_\e - u_\e^{app})\|_{L^2(\Omega_\e)}\right) \\
    &\qquad + \e^{3-\alpha} \|w_\e \Delta u\|_{L^2(\Omega_\e)} \|u_\e - u_\e^{app}\|_{L^2(\Omega_\e)} \\
    & \lesssim C_\delta \e^{3-\alpha} \|u\|_{W^{2,\infty}}^2 \|w_\e - {\rm I}_3\|_{L^2(\Omega_\e)}^2 + \e^{6-2\alpha} \|\Delta u\|_{L^2(\Omega_\e)}^2 + \delta \e^{3-\alpha} \|\nabla (u_\e - u_\e^{app})\|_{L^2(\Omega_\e)}^2 \\
    &\lesssim C_\delta (\e^{6-2 \alpha} + \e^{\alpha +1}) +\delta \e^{3-\alpha} \|\nabla (u_\e - u_\e^{app})\|_{L^2(\Omega_\e)}^2.
\end{align*}
Regarding $\bar I_3$, we use the estimate
\begin{align*}
    \|\nabla (w_\e u_\varpi + \mathcal B_\e (u - u_\varpi))\|_{L^2(\Omega_
    \e)} \lesssim \varpi^{1/2} \e^{\frac{\alpha - 3}{2}} + \varpi^{-\frac 1 2}
\end{align*} 
which is shown as in \cite[p. 23]{HoferLuOschmann26}. Hence,
\begin{align*}
    \bar I_3 \lesssim C_\delta \e^{3-\alpha} \left(\varpi \e^{\alpha - 3} + \varpi^{- 1 }\right) + \delta \e^{3-\alpha} \|\nabla( u_\e - u_\e^{app})\|_{L^2(\Omega_\e)}^2.
\end{align*}
Further,
\begin{align*}
       |\bar I_4| \leq  C_\delta  \e^{3-\alpha} \| \nabla u_\e^{inh}\|_{L^2(\Omega_\e)}^2  + \delta \e^{3-\alpha}\|\nabla( u_\e - u_\e^{app})\|_{L^2(\Omega_\e)}^2 \lesssim C_\delta \e^{2 \alpha}  + \delta \e^{3-\alpha}\|\nabla( u_\e - u_\e^{app})\|_{L^2(\Omega_\e)}^2.
\end{align*}
By Lemma \ref{lem:M_eps}
\begin{align*}
    \bar I_5 &\lesssim  \left(\e^{\alpha - 1} + \eta^{\frac 1 2} \right) \|u_\e - u_\e^{app}\|_{L^2(\Omega_\e)} \\
    &\qquad + (\e + \|f_\e^{app}(0) - f_0\|_{(H^1_xW_\sigma^{1,\infty})^\ast}) \| u_\e - u_\e^{app}\|_{H^1(\Omega_\e)}  \\
    & \lesssim  C_{\delta}\left(\eta+ \e^{\alpha-1} + \e^{\alpha -3}\|f_\e^{app}(0) - f_0\|_{(H^1_xW_\sigma^{1,\infty})^\ast}^2\right) + \delta \e^{3-\alpha}\| \nabla(u_\e - u_\e^{app})\|^2_{L^2(\Omega_\e)}.
\end{align*}
Finally, since $w_\e = {\rm I}_3$ on $\Sigma_1$, a trace estimate and the Poincar\'e inequality yield
\begin{align*}
     \bar I_6 = \e^{3-\alpha} \int_{\Sigma_1} \partial_n u \cdot ( u_\e^{app} - u_\e) \lesssim C_\delta \e^{3-\alpha} + \e^{3-\alpha} \delta \|\nabla(u_\e^{app} - u_\e)\|^2_{L^2(\Omega_\e)}.
\end{align*}
Choosing $\varpi = \e^{\frac{3-\alpha}2}$, we conclude
\begin{multline*}
   \e^{3-\alpha} \|\nabla (u_\e - u_\e^{app})\|_{L^2(\Omega_\e)}^2 +  \|u_\e - u_\e^{app}\|_{L^2(\Omega_\e)}^2 \\ \lesssim \e^{\alpha - 1} + \e^{\frac{3-\alpha}2} + \eta  + \e^{\alpha -3}\|f_\e^{app}(0) - f_0\|_{(H^1_xW_\sigma^{1,\infty})^\ast}^2 +  \|h_\e - h\|_{L^2(\Omega)}^2.
\end{multline*}

To conclude the proof, we estimate
\begin{align*}
    \|u_\e - u\|_{L^2(\Omega_\e)}^2 \lesssim \|u_\e - u_\e^{app}\|_{L^2(\Omega_\e)}^2 + \|u -  u_\e^{app}\|_{L^2(\Omega_\e)}^2
\end{align*}
and 
\begin{align*}
     \|u -  u_\e^{app}\|_{L^2(\Omega_\e)}^2 &\lesssim  \|u_\e^{inh}\|_{L^2(\Omega_\e)}^2
     + \|w_\e u_\varpi + \mathcal B_\e (u - u_\varpi)\|_{L^2(\Omega_\e)}^2
     + \|({\rm I}_3-w_\e)u\|_{L^2(\Omega_\e)}^2\\
      &\lesssim \e^{3(\alpha -1)}+  \e^{\frac{3-\alpha}2}+\e^{2(\alpha -1)},
\end{align*}
where the estimate for $w_\e u_\varpi + \mathcal B_\e (u - u_\varpi)$ can be found again in in \cite[p. 23]{HoferLuOschmann26}.

\subsubsection*{Estimate of $\|c_{\e} - c\|_{L^2(\Omega_\e)}^2 $}
This is largely analogous to the corresponding arguments in \cref{ssec:estimate_ceps_c}.
Indeed, in the same way as we have obtained \eqref{Gronwall.c.0}, we get\footnote{To keep the notation consistent with \cref{ssec:estimate_ceps_c}, we retain the numbering of the corresponding error terms.
Since no concentration corrector is required in the present regime, there is no analogue of the term \(I_1\), and we therefore start with \(\bar I_2\).}
\begin{align} \label{Gronwall.c.0.sup}
     \frac 1 2 \frac{\d}{\d t}  \|c_{\e} - c\|_{L^2(\Omega_\e)}^2  +   \|\nabla( c_{\e} - c)\|_{L^2(\Omega_\e)}^2   =  \sum_{j=2}^5 \bar I_j
\end{align}
where
\begin{align*}
    \bar I_2 &=  \int_{\Omega_\e}( u_\e c_\e - u c) \cdot  \nabla (c_{\e} - c), \\
    \bar I_3 &= \sum_i  \int_{\Gamma_{\e,i}} \e^\alpha\partial_t r_{\e,i} c_\e (c_{\e} - c)  - \frac 1 2 \e^{\alpha} \sum_i  \int_{\Gamma_{\e,i}} \partial_t r_{\e,i} |c_{\e} - c|^2 - \int_{ \Gamma_\e} ( u c -  \nabla c ) \cdot \nu (c_{\e} - c)\\
    \bar I_4 &= \int_{\Omega_\e} ( F( c_\e) - F(c))  (c_{\e} - c ) \\
    \bar I_5 &= \sum_i    \int_{\Gamma_{\e,i}}  \e^{3-2\alpha} G(c_\e,r_{\e,i}) (c_{\e} - c)  -    4 \pi\int_{\Omega_\e} \int_0^\infty \sigma^2 G(c,\sigma) f(\cdot,\d\sigma) (c_{\e} - c) 
\end{align*}
The terms  $\bar I_j$, $j=2,3,4$, are estimated as in the previous section. Regarding $I_5$, we split
\begin{align*}
   \bar I_5 &= \sum_i   \int_{\Gamma_{\e,i}}  \e^{3-2\alpha} (G(c_\e,r_{\e,i}) - G(c,r_{\e,i})) (c_{\e} - c)\di{\gamma} \\  &+   4 \pi\int_{\Omega} \int_0^\infty  \sigma^2 G(c,\sigma)  \mathcal E_\e (  c_{\e} - c) \dd (\bar f^{app}_\e -   f^{app}_\e) \\ 
   &+4 \pi \int_{\Omega\setminus \Omega_\e} \int_0^\infty \sigma^2 G(c,\sigma)  f(\cdot,\d\sigma) \mathcal E_\e (  c_{\e} - c)\di{x} \\
    &+     4 \pi\int_{\Omega} \int_0^\infty \sigma^2 G(c,\sigma)  \mathcal E_\e (  c_{\e} - c) \dd (   f^{app}_\e -   f) \\
    &=:  \bar I_{5,1} + \bar I_{5,2} +  \bar I_{5,3} +  \bar I_{5,4}
\end{align*}
where $f^{app}_\e$ is defined as in \eqref{f^app} and 
\begin{align} \label{bar.f^app}
   \bar f_\e^{app}(t) := \frac 1 {N_\e} \sum_i \delta_{\Gamma_{\e,i}} \otimes \delta_{r_{\e,i}}.
\end{align}
Since $G$ is Lipschitz in the first variable, we get by the trace estimate in Lemma, for $\e$ sufficiently small
\ref{lemma:trace_estimates} (ii)
\begin{align*}
   | \bar I_{5,1}| \leq C \e^{3-2\alpha} \|c_\e - c\|^2_{L^2(\Gamma_\e)} \leq  \delta  \|\nabla(c_\e - c)\|^2_{L^2(\Omega_\e)} + C_\delta \|c_\e - c\|^2_{L^2(\Omega_\e)}.
\end{align*}
Furthermore, by H\"older's inequality using $|\Omega\setminus \Omega_\e| \lesssim \e^{3(\alpha-1)}$, $\int_0^\infty \sigma^2 G(c,\sigma)  f(\cdot,\d\sigma)\in L^\infty(S\times\Omega)$, and \cref{lemma:extension_operators}
\begin{align*}
    \bar I_{5,3} \lesssim \e^{3(\alpha-1)/2} \| \mathcal E_\e (c_{\e} - c)\|_{L^2(\Omega)} \lesssim \e^{3(\alpha-1)} +  \| c_{\e} - c\|_{L^2(\Omega_\e)}^2 +  \e^{2\alpha}  \|\nabla (c_{\e} - c)\|_{L^2(\Omega_\e)}^2.
\end{align*}
Moreover,  $ \bar I_{5,4}$ is estimated with the help of Lemma \ref{f^app,f} as the term $I_{6,4}$ before, i.e.
\begin{align*}
     \bar I_{5,4} \leq C_\delta (\|f_\e^{app}(0) - f_0\|_{(H^1_xW_\sigma^{1,\infty})^\ast}^2 + \eta+  \e^2) + \|c_{\e} - c\|_{L^2(\Omega_\e)}^2 + \delta \|\nabla(c_{\e} - c)\|_{L^2(\Omega_\e)}^2.
\end{align*}
We set
\[
h_i=G(c,r_{\e,i})\mathcal E_\e(c_{\e} - c)
\]
and use the definitions of $\bar f_\e^{app}$ in \eqref{bar.f^app} and  $f_\e^{app}$ in \eqref{f^app} to write 
\begin{align*}
    \bar I_{5,2}=   4\pi  \e^3 \sum_i\left( \fint_{\Gamma_{\e,i}} h_i\di{\gamma}-   \fint_{B_{\frac {\dmin \e}{4}}(z_{\e,i})}h_i\di{x}\right).
\end{align*}
By the Poincar\'e type inequality \eqref{poincare.type.averages} and since the balls $B_{\frac {\dmin \e}{4}}(z_{\e,i})$ are disjoint, we have
\begin{align} 
     \e^3 \sum_i \left|\fint_{\Gamma_{\e,i}} h_i\di\gamma - \fint_{B_{\frac {\dmin \e}{4}}(z_{\e,i})} h_i\di x \right| 
     \lesssim\e^{\frac{3-\alpha}2}\left(\sum_i \|\nabla h_i\|^2_{L^2(B_{\frac {\dmin \e}{4}}(z_{\e,i}))}\right)^\frac12
     \lesssim\e^{\frac{3-\alpha}2}\|c_\e-c\|_{H^1(\Omega_\e)}
\end{align}
Here, we have also used the estimates for the extension $\mathcal E_\e$ (\cref{lemma:extension_operators}).
Via Young's inequality, we get
\begin{align*}
    |\bar I_{5,2}| \lesssim  C_\delta \e^{3-\alpha} + \delta\|\nabla (c_{\e} - c)\|_{L^2(\Omega_\e)}^2 + \|c_{\e} - c\|_{L^2(\Omega_\e)}^2.
\end{align*}
Combining all estimates proves \cref{pro:chemistry.super}.

\subsubsection*{Estimate of $\eta$}
We compute 
\begin{align*}
    \frac{\d}{\d t} \eta(t) &\leq  \left|\frac{\d}{\d t} \int_{(\Omega \times (0,\infty))^2} (|x_1-x_2|^2+|\Sigma_\e(t,x_1,\sigma_1)-\Sigma(t,x_2,\sigma_2)|^2)  \pi_t (\d x_1, \d \sigma_1, \d x_2, \d \sigma_2) \right|\\
    & = 2 \left|\int_{(\Omega \times (0,\infty))^2} (\Sigma_\e(t,x_1,\sigma_1)-\Sigma(t,x_2,\sigma_2)) \right. \\
    &\qquad \left. \phantom{\int_{(\Omega \times (0,\infty))^2}} \left(H_\e(t,x_1,\Sigma_\e(t,x_1,\sigma_1)) -  G(c(t,x_2),\Sigma(t,x_2,\sigma_2))  \right) \pi_0 (\d x_1, \d \sigma_1, \d x_2, \d \sigma_2) \right| \\
    & \leq 2 \eta^{1/2} \left(\int_{(\Omega \times (0,\infty))^2}\left| H_\e(t,x_1,\sigma_1)  -G(c(t,x_2),\sigma_2) \right|^2 \pi_t (\d x_1, \d \sigma_1, \d x_2, \d \sigma_2) \right)^{\frac 1 2}
\end{align*}
where $H_\e$ is given by \eqref{H_e}.
We split 
\begin{align*}
 \int_{(\Omega \times (0,\infty))^2}\left| H_\e(t,x_1,\sigma_1)  -G(c(t,x_2),\sigma_2) \right|^2 \pi_t (\d x_1, \d \sigma_1, \d x_2, \d \sigma_2) \lesssim I_1 + I_2,
\end{align*}
where
\begin{align*}
    I_1 &:= \int_{(\Omega \times (0,\infty))^2}\left|G(c(t,x_1),\sigma_1) - G(c(t,x_2),\sigma_2)\right|^2 \pi_t (\d x_1, \d \sigma_1, \d x_2, \d \sigma_2), \\
    I_2 &:= \int_{(\Omega \times (0,\infty))^2}\left| H_\e(t,x_1,\sigma_1)  -G(c(t,x_1),\sigma_1) \right|^2 \pi_t (\d x_1, \d \sigma_1, \d x_2, \d \sigma_2).
\end{align*}
Since $(x,\sigma) \mapsto G(c(t,x),\sigma)$ is Lipschitz (up to a truncation at large $\sigma$)
we have 
\begin{align*}
    I_1 \lesssim \eta(t).
\end{align*}
We compute
\begin{align} \label{I_2.eta.sup}
    I_2 = \frac 1 {N_\e} \sum_i \left|G(c(t,z_{\e,i}),r_{\e,i}) -  \fint_{\Gamma_{\e,i}(t)}G\left(c_\e(t,x),r_{\e,i}(t)\right) \dd x \right|^2
\end{align}
We deduce thanks to the local Lipschitz continuity of $G$ and Lipschitz continuity of $c$ and to Lemma \ref{lemma:trace_estimates} (ii), for $\e $ sufficiently small
\begin{align*}
    I_2 &\leq \e^{3 - 2\alpha}\|c_{\e} - c\|^2_{L^2(\Gamma_\e)} + \e^2 \\
    &\lesssim  C \|c_{\e} - c\|^2_{L^2(\Omega_\e)} + \e^{3-\alpha} \|\nabla(c_{\e} - c)\|^2_{L^2(\Omega_\e)} + \e^2.
\end{align*}
Combining this with the estimate for $I_1$ and using Young's inequality, we directly obtain the claimed estimate for sufficiently small $\e$, which proves \cref{pro:eta.super}.

\appendix 
\section{Coordinate transform}\label{appen:trafo}
During our analysis, we rely on certain properties and estimates for the coordinate transform.
Since the statements and proofs are somewhat technical but neither surprising nor particularly insightful, we collect them here.
In the following, let $S=(0,T)$ ($T>0$), $M>0$, and $\e>0$.

The transformation is defined in \cref{eq:transform_psi} via
\begin{equation}
\label{eq:diffeomorphism_psieps}
\Psi_{\e}(t)\colon\overline{\Omega_\e}
\to\overline{\Omega_\e(t)},\quad
\Psi_{\e}(t;y)
=y+
\e^\alpha
\sum_{i\in\mathcal I_\e}
\bigl(r_i(t)-r_{\e,i,0}\bigr)
\chi\left(1+
\frac{|y-z_{\e,i}|-\e^\alpha r_{\e,i,0}}
{\e^\alpha(2\lambda-r_{\e,i,0})}
\right)
\frac{y-z_{\e,i}}{|y-z_{\e,i}|}.
\end{equation}
We introduce the shorthand notation
\[
s_{\e,i}(y)
:=1+
\frac{|y-z_{\e,i}|-\e^\alpha r_{\e,i,0}}
{\e^\alpha(2\lambda-r_{\e,i,0})},
\qquad
\nu_{\e,i}(y)
:=
\frac{y-z_{\e,i}}{|y-z_{\e,i}|},\qquad 
P_{\e,i}(y)
:=
{\rm I}_3-\nu_{\e,i}(y)\otimes\nu_{\e,i}(y).
\]
We note that $\nu_{\e,i}\otimes \nu_{\e,i}$ and $P_{\e,i}$ are complementary orthogonal projections; in particular $P_{\e,i}(\nu_{\e,i}\otimes \nu_{\e,i})=(\nu_{\e,i}\otimes \nu_{\e,i})P_{\e,i}=0$ and 
\(
P_{\e,i}+(\nu_{\e,i}\otimes \nu_{\e,i})={\rm I}_3.
\)
Moreover, we set
\[
a_{\e,i}(t,y)
:=
\e^\alpha
\frac{r_i(t)-r_{\e,i,0}}{|y-z_{\e,i}|}
\chi\bigl(s_{\e,i}(y)\bigr),\qquad
b_{\e,i}(t,y)
:=
\frac{r_i(t)-r_{\e,i,0}}
{2\lambda-r_{\e,i,0}}
\chi'\bigl(s_{\e,i}(y)\bigr).
\]
We also recall the quantities defined via the transformation $\Psi_\e$:
\[
F_\e := \nabla \Psi_\e^{T},
\qquad
J_\e := \det F_\e,
\qquad
A_\e := J_\e F_\e^{-1},
\qquad
D_\e := J_\e F_\e^{-1}F_\e^{-T},
\]
and
\[
J_\e^\Gamma := J_\e \lvert F_\e^{-T}\nu_\e\rvert
\qquad\text{on }\Gamma_\e.
\]

\begin{lemma}[Estimates for the transformation]
\label{lemma:transform_estimates}
Let $\lambda>\overline r$, $\e\le\e_\lambda$, and let $r\in\mathcal R_\e(S)$ satisfy
\[
\frac1\lambda\le r_i(t)\le \lambda,
\qquad
|\partial_t r_i(t)|\le M
\quad\text{for all } i\in\mathcal I_\e,\ t\in S.
\]
Then,
\begin{align*}
\e^{-\alpha}\|\Psi_\e-\ID\|_\infty
+\e^{-\alpha}\|\Psi_\e^{-1}-\ID\|_\infty
&\lesssim
\|r-r_{\e,0}\|_{L^\infty(S;\ell^\infty(\mathcal I_\e))},
\\
\|F_\e-{\rm I}_3\|_\infty+\|F_\e^{-1}-{\rm I}_3\|_\infty+\|J_\e-1\|_\infty+\|A_\e-{\rm I}_3\|_\infty
+\|D_\e-{\rm I}_3\|_\infty
&\lesssim_{\lambda}
\|r-r_{\e,0}\|_{L^\infty(S;\ell^\infty(\mathcal I_\e))},
\\
\e^{-\alpha}\|\partial_t\Psi_\e\|_\infty
+\|\partial_tF_\e\|_\infty+\|\partial_tJ_\e\|_\infty+\|\partial_tA_\e\|_\infty
&\lesssim_\lambda
\|\partial_t r\|_{L^\infty(S;\ell^\infty(\mathcal I_\e))},\\
\|\nabla J_\e^{-1}\|_\infty&\lesssim_{\lambda}\e^{-\alpha}\|r-r_{\e,0}\|_{L^\infty(S;\ell^\infty(\mathcal I_\e))}
\end{align*}
where ${\rm I}_3$ denotes the unit matrix and $\ID$ the identity transformation.
Moreover, for $\phi\in H^1(\Omega_\e)^3$,
\begin{align*}
\|e_\e(\phi)\|_{L^2(\Omega_\e)}
&\lesssim_\lambda
\|\nabla \phi\|_{L^2(\Omega_\e)}.
\end{align*}
Finally,
\[
F_\e^{-1}(t,x)F_\e^{-T}(t,x)\xi\cdot\xi\ge
    \min\left\{
\frac1{\lambda^4},
\frac19
\right\}
    |\xi|^2,
\qquad
J_\e(t,x)\ge \frac{1}{\lambda^4(2\lambda^2-1)},
\]
uniformly in $\xi\in\mathbb R^3$ and $(t,x)\in S\times\Omega_\e$.
\end{lemma}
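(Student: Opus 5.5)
The plan is a direct computation with the explicit formula \eqref{eq:diffeomorphism_psieps}, done one annulus at a time. Since the annuli $\mathcal U_{\e,i}^\lambda$ are pairwise disjoint for $\e\le\e_\lambda$ and $\Psi_\e=\ID$ outside them, every sup-norm bound reduces to a bound on a single $\mathcal U_{\e,i}^\lambda$, uniform in $i$.

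On such an annulus write $\Psi_\e(t,y)=y+\e^\alpha(r_i(t)-r_{\e,i,0})\chi(s_{\e,i}(y))\nu_{\e,i}(y)$. Then
\[
\nabla\Psi_\e^T={\rm I}_3+a_{\e,i}P_{\e,i}+b_{\e,i}\,\nu_{\e,i}\otimes\nu_{\e,i},
\]
using $\nabla\nu_{\e,i}=P_{\e,i}/|y-z_{\e,i}|$ and $\nabla s_{\e,i}=\nu_{\e,i}/(\e^\alpha(2\lambda-r_{\e,i,0}))$. On the annulus we have $|y-z_{\e,i}|\ge\e^\alpha r_{\e,i,0}\ge\e^\alpha/\overline r$ and $2\lambda-r_{\e,i,0}\ge\lambda$. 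Together with $|\chi|\le1$ and $|\chi'|\le2$, this gives $|a_{\e,i}|+|b_{\e,i}|\lesssim_\lambda|r_i-r_{\e,i,0}|$. The bounds for $\Psi_\e-\ID$ and $F_\e-{\rm I}_3$ follow immediately.

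Because $P_{\e,i}$ and $\nu\otimes\nu$ are complementary orthogonal projections, everything else is explicit:
\[
F_\e^{-1}=(1+a)^{-1}P+(1+b)^{-1}\nu\otimes\nu,\qquad J_\e=(1+a)^2(1+b),
\]
and similarly for $A_\e$ and $D_\e$. The main step is positivity of $1+a$ and $1+b$. For $1+a$: at radius $\rho$ the radial map is $R(\rho)=\rho+\e^\alpha(r_i-r_{\e,i,0})\chi$, and $1+a=R(\rho)/\rho$. Here $R(\rho)\ge\e^\alpha r_i(t)\ge\e^\alpha/\lambda$, which I would check by monotonicity of $R$, while $\rho\le2\e^\alpha\lambda$. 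Hence $1+a\ge1/(2\lambda^2)$.

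For $1+b=R'(\rho)$, use $\chi'\le0$:
\begin{itemize}
\item if $r_i\ge r_{\e,i,0}$, then $1+b\ge1-2(\lambda-1/\overline r)/(2\lambda-r_{\e,i,0})$, which should be bounded below by something like $1/(2\lambda^2-1)$ after bookkeeping with $r_{\e,i,0}\ge1/\overline r>1/\lambda$;
\item if $r_i<r_{\e,i,0}$, then $b\ge0$ directly.
\end{itemize}
Matching the exact constants $\lambda^{-4}(2\lambda^2-1)^{-1}$ and $\min\{\lambda^{-4},1/9\}$ is the step I expect to be fiddly. I would obtain them from $J_\e=(1+a)^2(1+b)$ and from the eigenvalues $(1+a)^{-2}$ and $(1+b)^{-2}$ of $F^{-1}F^{-T}$, using upper bounds $1+a\le\lambda^2$ and $1+b\le3$. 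These lower bounds then also give the uniform bounds on the inverses, and hence the sup-norm estimates for $F_\e^{-1}$, $A_\e$ and $D_\e$ by Lipschitz dependence on $(a,b)$.

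For $\Psi_\e^{-1}-\ID$, note that the inverse is again radial. Its displacement $\rho-R^{-1}(\rho)$ equals the forward displacement evaluated at a preimage, so it is at most $\e^\alpha|r_i-r_{\e,i,0}|$.

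Time derivatives only hit the factor $r_i(t)-r_{\e,i,0}$. Hence $\partial_t\Psi_\e=\e^\alpha\partial_tr_i\,\chi\,\nu$, and $\partial_ta$, $\partial_tb$ are bounded by $\lesssim_\lambda|\partial_tr_i|$. The chain rule, together with the already established lower bounds on $1+a$ and $1+b$, then gives the bounds for $\partial_tF_\e$, $\partial_tJ_\e$ and $\partial_tA_\e$.

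For $\nabla J_\e^{-1}$, differentiate $a$ and $b$ in space. Each derivative costs a factor $1/|y-z|$ or $\e^{-\alpha}$, both $\lesssim_\lambda\e^{-\alpha}$ on the annulus, so $\|\nabla J_\e^{-1}\|_\infty\lesssim_\lambda\e^{-\alpha}\|r-r_{\e,0}\|$.

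Finally, $\|e_\e(\phi)\|_{L^2}\le\|F_\e^{-T}\|_\infty\|\nabla\phi\|_{L^2}$, with $\|F_\e^{-T}\|_\infty\le\max\{(1+a)^{-1},(1+b)^{-1}\}\lesssim_\lambda1$.
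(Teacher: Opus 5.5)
Your strategy matches the paper's: reduce to one annulus, write $F_\varepsilon={\rm I}_3+a_{\varepsilon,i}P_{\varepsilon,i}+b_{\varepsilon,i}\,\nu_{\varepsilon,i}\otimes\nu_{\varepsilon,i}$, and read everything off the eigenvalues $1+a_{\varepsilon,i}$ and $1+b_{\varepsilon,i}$. However, the step you call the main one fails as written. In the growing case $r_i\ge r_{\varepsilon,i,0}$ you bound the numerator $r_i-r_{\varepsilon,i,0}$ by $\lambda-1/\overline r$, which is uniform over all admissible initial radii, while keeping the specific $r_{\varepsilon,i,0}$ in the denominator. The resulting quantity $1-2(\lambda-1/\overline r)/(2\lambda-r_{\varepsilon,i,0})$ is in general negative, so no bookkeeping turns it into $1/(2\lambda^2-1)$. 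For $\overline r=2$, $\lambda=3$, $r_{\varepsilon,i,0}=2$ it equals $-\tfrac14$.

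Positivity of $1+b$ is what makes $\Psi_\varepsilon(t,\cdot)$ a diffeomorphism. It is also what your monotonicity argument for $R$ relies on, and what controls $F_\varepsilon^{-1}$, $A_\varepsilon$, $D_\varepsilon$ and the lower bound on $J_\varepsilon$. The repair is to keep the correlation between numerator and denominator. From $r_i-r_{\varepsilon,i,0}\le\lambda-r_{\varepsilon,i,0}$ and $\chi'\ge-2$ you get $1+b\ge 1-2\frac{\lambda-r_{\varepsilon,i,0}}{2\lambda-r_{\varepsilon,i,0}}=\frac{r_{\varepsilon,i,0}}{2\lambda-r_{\varepsilon,i,0}}\ge\frac{1}{2\lambda\overline r-1}\ge\frac{1}{2\lambda^2-1}$, which is the paper's argument.

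Your tangential bound $1+a\ge1/(2\lambda^2)$ is true but loses a factor $2$, because it uses the upper bound $\rho\le2\varepsilon^\alpha\lambda$. Through $J_\varepsilon=(1+a)^2(1+b)$ it only gives $J_\varepsilon\ge\frac{1}{4\lambda^4(2\lambda^2-1)}$, not the constant in the statement. Use the lower bound $\rho\ge\varepsilon^\alpha r_{\varepsilon,i,0}$ together with $0\le\chi\le1$ instead. Then $a\ge0$ if $r_i\ge r_{\varepsilon,i,0}$, and $a\ge(r_i-r_{\varepsilon,i,0})/r_{\varepsilon,i,0}$ otherwise. Hence $1+a\ge\min\{1,r_i/r_{\varepsilon,i,0}\}\ge1/\lambda^2$, with no monotonicity of $R$ needed.

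The rest of your argument matches the paper: the inverse displacement, the time derivatives, $\nabla J_\varepsilon^{-1}$, $e_\varepsilon(\phi)$, and the bound on $F_\varepsilon^{-1}F_\varepsilon^{-T}$. The upper bounds you invoke there do hold:
\begin{itemize}
\item $1+a\le\max\{1,r_i/r_{\varepsilon,i,0}\}\le\lambda\overline r\le\lambda^2$;
\item $b\le0$ when growing, and $b\le 2r_{\varepsilon,i,0}/(2\lambda-r_{\varepsilon,i,0})<2$ when shrinking, so $1+b\le3$.
\end{itemize}
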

\begin{proof}
Very similar estimates were established via the same scaling considerations, see \cite{eden_thermo-elasticity_2026,gahn_homogenization_2023,wiedemann_homogenisation_2023}; but since our case is slightly different in its scaling and setup we present the main calculations.

    Note that $\Psi_{\e}(t;\cdot)=\ID$ on $\Omega\setminus \mathcal U_\e$ so all estimates are trivially satisfied here.
    The estimate for $\Psi_\e-\ID$ follows immediately from the definition \cref{eq:diffeomorphism_psieps}.
Differentiating $\Psi_\e$ in time yields
\[
\e^{-\alpha}\partial_t\Psi_{\e,i}(t;y)
=
\partial_t r_i(t)
\eta_{\e,i}(y)
\frac{y-z_{\e,i}}{|y-z_{\e,i}|}.
\]
From which the estimate for $\partial_t\Psi_\e$ follows.
For the inverse transformation holds (with $x=\Psi_{\e,i}(t,y)$)
\[
\Psi_{\e,i}^{-1}(t;x)-x=y-\Psi_{\e,i}(t;y),
\]
which shows the estimate for $\Psi_\e^{-1}-\ID$.

Next, a simple calculation shows that the Jacobian $F_\e=D\Psi_\e$ takes the form (for $y\in\mathcal U_{\e,i})$
    \begin{align*}
    D\Psi_\e(t;y)&=(1+a_{\e,i}(t,y))P_{\e,i}(y)+(1+b_{\e,i}(t,y))\nu_{\e,i}(y)\otimes \nu_{\e,i}(y)\\
    &={\rm I}_3+a_{\e,i}(t,y)P_{\e,i}(y)+b_{\e,i}(t,y)\nu_{\e,i}(y)\otimes \nu_{\e,i}(y).
    \end{align*}
    Hence, the quantities $1+a_{\e,i}$ and $1+b_{\e,i}$ are the tangential and radial eigenvalues of the Jacobian $F_\e=D\Psi_\e$ over $\mathcal U_{\e,i}$.
    Since $P_{\e,i}$ and $\nu_{\e,i}\otimes\nu_{\e,i}$ are complementary orthogonal projections,
    \[
    |D\Psi_\e(t;y)|
    =
    \begin{cases}
    1,
    &
    y\in\Omega_\e\setminus\mathcal U_\e^\lambda,
    \\
    \displaystyle
    \max\left\{
    |1+a_{\e,i}(t,y)|,
    |1+b_{\e,i}(t,y)|
    \right\},
    &y\in\mathcal U_{\e,i}^\lambda.
    \end{cases}
    \]
    With 
    \[
    |a_{\e,i}(t,y)|
    \le
    \frac{|r_i(t)-r_{\e,i,0}|}{r_{\e,i,0}}
    \le
    \lambda|r_i(t)-r_{\e,i,0}|,
    \]
    and, since $2\lambda-r_{\e,i,0}\ge\lambda$,
    \[
    |b_{\e,i}(t,y)|
    \le
    \frac{2}{\lambda}
    |r_i(t)-r_{\e,i,0}|,
    \]
    we thus get
    \[
    |F_\e(t,y)-{\rm I}_3|=|D\Psi_\e(t;y)-{\rm I}_3|\le
    \max\left\{
    \lambda,\frac2\lambda
    \right\}
    |r_i(t)-r_{\e,i,0}|
    \]
For the tangential eigenvalue, we have
\[
1+a_{\e,i}(t,y)
\ge
\min\left\{
1,\frac{r_i(t)}{r_{\e,i,0}}
\right\}
\ge
\frac1{\lambda^2}
\]
since
\[
|y-z_{\e,i}|
\ge
\e^\alpha r_{\e,i,0},
\]
For the radial eigenvalue, if $r_i(t)\le r_{\e,i,0}$, then
$b_{\e,i}(t,y)\ge0$ and trivially \(1+b_{\e,i}(t,y)\ge1\).
If $r_i(t)\ge r_{\e,i,0}$, then, using $-2\le\chi'\le0$ and
$r_i(t)\le\lambda$, we obtain
\[
\begin{aligned}
1+b_{\e,i}(t,y)
&\ge
1-
2\frac{\lambda-r_{\e,i,0}}
{2\lambda-r_{\e,i,0}}
=
\frac{r_{\e,i,0}}
{2\lambda-r_{\e,i,0}}
\ge
\frac1{2\lambda^2-1}.
\end{aligned}
\]
Consequently,
\[
1+a_{\e,i}(t,y)\ge\frac1{\lambda^2},
\qquad
1+b_{\e,i}(t,y)\ge\frac1{2\lambda^2-1}.
\]
The inverse Jacobian is explicitly given by
\begin{equation}
\label{eq:jacobian_inverse}
[D\Psi_\e(t;y)]^{-1}
=
\begin{cases}
{\rm I}_3,
&
y\in\Omega_\e\setminus\mathcal U_\e^\lambda,
\\[0.6em]
\displaystyle
\frac{1}{1+a_{\e,i}(t,y)}P_{\e,i}(y)
+
\frac{1}{1+b_{\e,i}(t,y)}
\nu_{\e,i}(y)\otimes\nu_{\e,i}(y),
&
y\in\mathcal U_{\e,i}^\lambda,\quad i\in\mathcal I_\e.
\end{cases}
\end{equation}
Equivalently, on $\mathcal U_{\e,i}^\lambda$,
\[
[D\Psi_\e(t;y)]^{-1}
=
{\rm I}_3
-
\frac{a_{\e,i}(t,y)}
{1+a_{\e,i}(t,y)}
P_{\e,i}(y)
-
\frac{b_{\e,i}(t,y)}
{1+b_{\e,i}(t,y)}
\nu_{\e,i}(y)\otimes\nu_{\e,i}(y).
\]
It follows that
\[
\left|[D\Psi_\e(t;y)]^{-1}\right|
=
\begin{cases}
1,
&
y\in\Omega_\e\setminus\mathcal U_\e^\lambda,
\\[0.4em]
\displaystyle
\max\left\{
\frac1{1+a_{\e,i}(t,y)},
\frac1{1+b_{\e,i}(t,y)}
\right\},
&
y\in\mathcal U_{\e,i}^\lambda.
\end{cases}
\]
Using the lower bounds above, we conclude that
\[
|F_\e^{-1}(t,y)|=\left|[D\Psi_\e(t;y)]^{-1}\right|
\le
\max\left\{
\lambda^2,2\lambda^2-1
\right\}
=
2\lambda^2-1
\]
and
\[
|F_\e^{-1}(t,y)-{\rm I}_3|\lesssim_\lambda|r_i(t)-r_{\e,i,0}|.
\]
For the determinant,
    \[
    J_\e=\det F_\e=(1+a_{\e,i})^2(1+b_{\e,i}),
    \]
    we obtain the estimate for $J_\e-1$. Moreover,
    \[
    J_\e(t,x)
    =
    (1+a_{\e,i}(t,x))^2(1+b_{\e,i}(t,x))
    \ge
    \frac{1}{\lambda^4(2\lambda^2-1)}.
    \]
    Next, we use $P_{\e,i}\left(\nu_{\e,i}\otimes\nu_{\e,i}\right)=0$, $P_{\e,i}^2=P_{\e,i}$, and $(\nu_{\e,i}\otimes\nu_{\e,i})^2=\nu_{\e,i}\otimes\nu_{\e,i}$ to see that 
    \[
    F_\e^{-1}F_\e^{-T}
    =\left(\frac{1}{1+a_{\e,i}}P_{\e,i}+\frac{1}{1+b_{\e,i}}\nu_{\e,i}\otimes\nu_{\e,i}\right)^2
    =\frac{1}{(1+a_{\e,i})^2}P_{\e,i}+\frac{1}{(1+b_{\e,i})^2}\nu_{\e,i}\otimes\nu_{\e,i}.
    \]
With
\[
1+a_{\e,i}\le\lambda^2,
\qquad
1+b_{\e,i}\le3,
\]
we obtain
\begin{equation}
F_\e^{-1}F_\e^{-T}\xi\cdot\xi
=
\frac{|P_{\e,i}\xi|^2}{(1+a_{\e,i})^2}
+
\frac{|(\nu_{\e,i}\otimes\nu_{\e,i})\xi|^2}
{(1+b_{\e,i})^2}
\ge
\min\left\{
\frac1{\lambda^4},
\frac19
\right\}
|\xi|^2.
\end{equation}
With $\nabla J_\e^{-1}=-J_\e^{-2}\nabla J_\e$ and 
    \[
    \nabla J_\e=2(1+a_{\e,i})(1+b_{\e,i})\nabla a_{\e,i}+(1+a_{\e,i})^2\nabla b_{\e,i}
    \]
    we get the $\e^{-\alpha}$ estimate for $\nabla J_\e$.
    The corresponding estimates for $A_\e$ and $D_\e$ follow directly from
    \[
    A_\e=J_\e F_\e^{-1},
    \qquad
    D_\e=J_\e F_\e^{-1}F_\e^{-T},
    \]
    together with the uniform bounds on $F_\e^{-1}$ and $J_\e$.
    Finally, the estimate for $e_\e(\phi)$ follows via the estimates for $J_\e$ and $F_\e^{-1}$ and the estimates for the time derivatives can now be calculated directly.
\end{proof}


\begin{lemma}[Lipschitz estimates for the transformation]
\label{lemma:transform_lipschitz}
Let $\lambda>\overline r$, $\e\le\e_\lambda$, and $M>0$. For $j=1,2$, let $r^{(j)}\in\mathcal R_{\e}(S)$ satisfy
\[
\frac1\lambda\le r_i^{(j)}(t)\le \lambda,
\qquad
|\partial_t r_i^{(j)}(t)|\le M
\quad
\text{for all } i\in\indexI,\ t\in S.
\]
Then
\begin{equation*}
\e^{-\alpha}\|\delta\Psi_{\e}\|_{\infty}
+\|\delta F_\e\|_{\infty}+\|\delta F_\e^{-1}\|_{\infty}+\|\delta A_\e\|_{\infty}
+\|\delta J_\e\|_{\infty}
+\|\delta J_\e^\Gamma\|_{\infty}
+\|\delta D_\e\|_\infty
\lesssim_\lambda\|\delta r \|_{L^\infty(S;\ell^\infty(\indexI))}.
\end{equation*}
where $\delta$ denotes the difference of the corresponding functions, e.g., $\delta r=r^{(1)}-r^{(2)}$.
Moreover,
\begin{align*}
\e^{-\alpha}\|\delta\partial_t\Psi_{\e}\|_{_{\infty}}
+\|\delta\partial_tF_\e\|_{\infty}
&\lesssim_\lambda
\|\delta\partial_t r\|_{L^\infty(S;\ell^\infty(\mathcal I_\e))},
\\
\|\delta\partial_tJ_\e\|_{\infty}
&\lesssim_{\lambda}
\|\delta\partial_t r\|_{L^\infty(S;\ell^\infty(\mathcal I_\e))}
+
M\|\delta r\|_{L^\infty(S;\ell^\infty(\mathcal I_\e))}
\end{align*}
Finally,
\[
\|\delta e_\e(\phi)\|_{L^2(\Omega_\e)}
\lesssim_\lambda
\|\delta r\|_{\ell^\infty(\mathcal I_\e)}
\|\nabla\phi\|_{L^2(\Omega_\e)}\qquad (\phi\in H^1(\Omega_\e)^3,\ t\in S).
\]
Here $\delta e_\e(\phi)=e_\e^{(1)}(\phi)-e_\e^{(2)}(\phi)$.
\end{lemma}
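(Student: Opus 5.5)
The plan is to exploit the explicit spectral structure of the Jacobian already obtained in the proof of \cref{lemma:transform_estimates}. On each annulus $\mathcal U_{\e,i}^\lambda$ both transformations $\Psi_\e^{(j)}$ are built from the same reference data ($z_{\e,i}$, $r_{\e,i,0}$, $\lambda$, $\chi$); only the factor $r_i^{(j)}(t)-r_{\e,i,0}$ differs. Hence
\[
\delta\Psi_\e(t;y)=\e^\alpha\,\delta r_i(t)\,\chi(s_{\e,i}(y))\,\nu_{\e,i}(y),
\qquad
\delta F_\e=\delta a_{\e,i}P_{\e,i}+\delta b_{\e,i}\,\nu_{\e,i}\otimes\nu_{\e,i},
\]
with $\delta a_{\e,i}=\e^\alpha\delta r_i\chi(s_{\e,i})/|y-z_{\e,i}|$ and $\delta b_{\e,i}=\delta r_i\chi'(s_{\e,i})/(2\lambda-r_{\e,i,0})$. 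Using $|y-z_{\e,i}|\ge\e^\alpha r_{\e,i,0}\ge\e^\alpha/\lambda$, we get $|\delta a_{\e,i}|+|\delta b_{\e,i}|\lesssim_\lambda|\delta r_i|$. This gives the bounds for $\delta\Psi_\e$ and $\delta F_\e$ directly, since outside $\mathcal U_\e^\lambda$ all differences vanish.

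For the nonlinear quantities, we compute eigenvalue by eigenvalue, using that $P_{\e,i}$ and $\nu\otimes\nu$ are complementary projections. The inverse satisfies
\[
\delta F_\e^{-1}=\Big(\frac{1}{1+a^{(1)}}-\frac{1}{1+a^{(2)}}\Big)P_{\e,i}+\Big(\frac{1}{1+b^{(1)}}-\frac{1}{1+b^{(2)}}\Big)\nu\otimes\nu,
\]
and this is bounded by $\lambda^4|\delta a|+(2\lambda^2-1)^2|\delta b|$, using the uniform lower bounds $1+a\ge\lambda^{-2}$ and $1+b\ge(2\lambda^2-1)^{-1}$ from \cref{lemma:transform_estimates}. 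The same reasoning handles $J_\e=(1+a)^2(1+b)$, since products of bounded Lipschitz factors are Lipschitz. It then extends to $A_\e=J_\eF_\e^{-1}$ and $D_\e=J_\eF_\e^{-1}F_\e^{-T}$ through a telescoping identity $\delta(XY)=\delta X\,Y^{(1)}+X^{(2)}\delta Y$ together with the uniform bounds. For $J_\e^\Gamma$ we use the remark in \cref{sec:analysis} that $J_\e^\Gamma=(r_i/r_{\e,i,0})^2$ on $\Gamma_{\e,i}$, so $|\delta J_\e^\Gamma|\le 2\lambda^3|\delta r_i|$. The symmetric-gradient estimate is immediate from $\delta e_\e(\phi)=\operatorname{sym}(\delta F_\e^{-T}\nabla\phi)$ and the $L^\infty$ bound on $\delta F_\e^{-1}$.

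The time derivatives are the one delicate point. Since the spatial profile does not depend on $t$, we have $\partial_t\delta\Psi_\e=\e^\alpha\delta\partial_t r_i\,\chi\,\nu$ and $\partial_t\delta F_\e=\partial_t\delta a\,P+\partial_t\delta b\,\nu\otimes\nu$, and both are linear in $\delta\partial_t r_i$. These give the first line of time estimates with no dependence on $M$. The determinant is the only place where nonlinearity meets time differentiation:
\[
\partial_tJ_\e=2(1+a)(1+b)\partial_ta+(1+a)^2\partial_tb .
\]
Taking the difference, terms in which $\delta$ falls on $\partial_ta$ or $\partial_tb$ give $\|\delta\partial_t r\|$. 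Terms in which $\delta$ falls on the coefficients $(1+a)$ or $(1+b)$ give $|\delta r|$ multiplied by $|\partial_t a^{(2)}|+|\partial_tb^{(2)}|\lesssim_\lambda M$, which explains the structure of the stated bound. I expect the main care to lie in keeping all constants dependent only on $\lambda$ (and $M$ for $\partial_tJ_\e$), not on $\e$; the explicit lower bound $|y-z_{\e,i}|\ge\e^\alpha/\lambda$ is what cancels the $\e^\alpha$ in $a_{\e,i}$ and guarantees this uniformity.
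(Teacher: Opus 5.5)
Your proposal is correct and follows essentially the same route as the paper, whose proof appeals to the explicit representation $F_\e={\rm I}_3+a_{\e,i}P_{\e,i}+b_{\e,i}\,\nu_{\e,i}\otimes\nu_{\e,i}$ (with the resulting formulas for $\Psi_\e$, $F_\e^{-1}$, $J_\e$) from the proof of \cref{lemma:transform_estimates}, together with the uniform upper and lower eigenvalue bounds. You supply the details the paper leaves implicit, and they check out: linearity of $\delta\Psi_\e$, $\delta F_\e$, $\delta\partial_t\Psi_\e$, $\delta\partial_t F_\e$ in $\delta r$ and $\delta\partial_t r$ respectively, the telescoping argument for $A_\e$ and $D_\e$, the identity $J_\e^\Gamma=(r_i/r_{\e,i,0})^2$, and the origin of the $M\|\delta r\|$ term in $\delta\partial_t J_\e$.
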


\begin{proof}
The assertions follow from the explicit representations of
$\Psi_\e$, $F_\e$, $F_\e^{-1}$, and $J_\e$ established in the proof of \cref{lemma:transform_estimates}, together with the uniform upper and lower bounds for the eigenvalues of $F_\e$.
\end{proof}

The following corollary follows immediately as the transformation is only active on $\mathcal U_\e$ with $|\mathcal U_\e|\sim\e^{3(\alpha-1)}\lambda^3$:
\begin{corollary}\label{cor:transform_lp_estimates}
    For any $p\in[1,\infty)$ and any $r\in\mathcal{R}_\e(S)$ with $\nicefrac1\lambda\leq r_i\le \lambda$, it holds
    \begin{multline*}
    \|J_\e-1\|_{L^p(S\times\Omega_\e)}^p+ \|F_\e-\ID\|_{L^p(S\times\Omega_\e)}^p+ \|F_\e^{-1}-\ID\|^p_{L^p(S\times\Omega_\e)}
    \lesssim_{\lambda}\e^{3(\alpha-1)}T\|r-r_{\e,0}\|_{L^\infty(S;\ell^\infty(\indexI))}^p.
    \end{multline*}
    Similarly,
    \[
    \|\partial_tJ_\e\|_{L^p(S\times\Omega_\e)}^p\lesssim_{\lambda}\e^{3(\alpha-1)}T\|\partial_tr\|_{L^\infty(S;\ell^\infty(\indexI))}^p.
    \]
\end{corollary}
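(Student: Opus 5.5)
The estimates follow by bootstrapping the pointwise bounds of \cref{lemma:transform_estimates} and using the support of the transformation. The key observation is that $\Psi_\e(t;\cdot)=\ID$ on $\Omega_\e\setminus\mathcal U_\e^\lambda$. Consequently $J_\e-1$, $F_\e-\ID$, $F_\e^{-1}-\ID$ and $\partial_tJ_\e$ all vanish identically outside $\mathcal U_\e^\lambda=\bigcup_{i\in\indexI}\mathcal U_{\e,i}^\lambda$, for every $t\in S$. Each of these quantities is therefore supported, for each fixed $t$, in a set whose measure we control explicitly.

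\textbf{Step 1: measure of the support.} Each annulus satisfies $\mathcal U_{\e,i}^\lambda\subset B_{2\e^\alpha\lambda}(z_{\e,i})$, so $|\mathcal U_{\e,i}^\lambda|\le \frac{32\pi}{3}\e^{3\alpha}\lambda^3$. Since there are $N_\e=\e^{-3}$ annuli,
\[
|\mathcal U_\e^\lambda|\lesssim \lambda^3\e^{3\alpha-3}=\lambda^3\e^{3(\alpha-1)} .
\]

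\textbf{Step 2: pointwise bounds.} By \cref{lemma:transform_estimates}, for a.e. $(t,x)\in S\times\Omega_\e$,
\[
|J_\e-1|+|F_\e-\ID|+|F_\e^{-1}-\ID|\lesssim_\lambda \|r-r_{\e,0}\|_{L^\infty(S;\ell^\infty(\indexI))},\qquad |\partial_tJ_\e|\lesssim_\lambda \|\partial_t r\|_{L^\infty(S;\ell^\infty(\indexI))}.
\]
The hypotheses of that lemma involve the bound $|\partial_t r_i|\le M$. However, the first group of estimates uses only $\nicefrac1\lambda\le r_i\le\lambda$. The estimate for $\partial_tJ_\e$ is linear in $\partial_t r$, so it is valid with $\|\partial_t r\|_{L^\infty}$ in place of $M$. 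Any implicit dependence on $M$ can thus be eliminated. To double-check this, differentiate $J_\e=(1+a_{\e,i})^2(1+b_{\e,i})$ in time. The functions $a_{\e,i}$ and $b_{\e,i}$ are linear in $r_i(t)-r_{\e,i,0}$, so $\partial_t a_{\e,i}$ and $\partial_t b_{\e,i}$ are bounded by $C_\lambda|\partial_t r_i|$. The factors $1+a_{\e,i}$ and $1+b_{\e,i}$ are bounded by $\lambda^2$ and $3$, respectively.

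\textbf{Step 3: integration.} For each fixed $t$ we integrate the $p$-th power of these bounds over $\mathcal U_\e^\lambda$, which gives a factor $|\mathcal U_\e^\lambda|\lesssim_\lambda\e^{3(\alpha-1)}$. Integrating in time over $S$ then gives the factor $T$. This yields
\[
\|J_\e-1\|^p_{L^p(S\times\Omega_\e)}\le T\,|\mathcal U_\e^\lambda|\,\|J_\e-1\|_\infty^p\lesssim_{\lambda}\e^{3(\alpha-1)}T\|r-r_{\e,0}\|^p_{L^\infty(S;\ell^\infty(\indexI))}.
\]
The same argument applies verbatim to $F_\e-\ID$, to $F_\e^{-1}-\ID$, and to $\partial_tJ_\e$.

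The only point needing care is confirming that the constant is independent of $p$ apart from the $p$-th power of the $\lambda$-dependent pointwise constant, which is absorbed into $\lesssim_\lambda$. Everything else is immediate. I do not expect a genuine obstacle.
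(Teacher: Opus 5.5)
Your proposal is correct and is exactly the paper's argument: the paper says the corollary follows immediately from the pointwise bounds of \cref{lemma:transform_estimates}, because the transformation is only active on $\mathcal U_\e^\lambda$ with $|\mathcal U_\e^\lambda|\sim\lambda^3\e^{3(\alpha-1)}$, and integrating over $S$ then gives the factor $T$. Your check that the $\partial_tJ_\e$ bound is linear in $\partial_t r$, so that no $M$-dependence enters the constant, is a detail the paper leaves implicit.
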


With the transformation $\Psi_\e$, we can freely switch between coordinates.
To fix the notation: For every function $\phi\colon \Omega_\e(t)\to\R$, we introduce the pullback $\Psi_t^*\phi\colon\Omega_\e\to\R$ defined via $\Psi_t^*\phi=\phi\circ\Psi_\e(t,\cdot)$; analogously, for $\phi\colon \Omega_\e\to\R$, we set the pushforward $(\Psi_t)_*\phi\colon  \Omega_\e(t)\to\R$ via $(\Psi_t)_*\phi=\phi\circ\Psi_t^{-1}$.
For the ease of notation, we also use the shorthand $\hat\phi=\Psi_t^*\phi$ where the hat-notation always refer to pullback to the initial coordinates.

\begin{lemma}[Pushforward and pullback]\label{lemma:pushforward_estimates}
Let $\lambda>\overline r$ and let $p\in[1,\infty]$.
   \begin{itemize}
       \item[$(i)$] Let $r\in\mathcal R_\e(S)$ with $\nicefrac1\lambda\le r_i(t)\le\lambda$.
       The pullback $\Psi_t^*\phi$ induces linear isomorphisms $L^p(\Omega_\e)\simeq L^p(\Omega_\e(t))$ and $W^{1,p}(\Omega_\e)\simeq W^{1,p}(\Omega_\e(t))$.
       In both cases the pushforward is its inverse and it holds
       \begin{align*}
        \|\Psi_t^*\phi\|^p_{L^p(\Omega_\e)}&\lesssim_{\lambda}\|\phi\|^p_{L^p(\Omega_\e(t))}\lesssim_{\lambda}\|\Psi_t^*\phi\|^p_{L^p(\Omega_\e)}\quad (\phi\in L^p(\Omega_\e(t))),\\
        \|\nabla \Psi_t^*\phi\|^p_{L^p(\Omega_\e)}&\lesssim_{\lambda}\|\nabla\phi\|^p_{L^{p}(\Omega_\e(t))}\lesssim_{\lambda}\|\nabla\Psi_t^*\phi\|^p_{L^{p}(\Omega_\e)}\quad (\phi\in W^{1,p}(\Omega_\e(t))).
        \end{align*}

        \item[$(ii)$] Let $r^{(1)},r^{(2)}\in \mathcal R_\e(S)$ with $\nicefrac1\lambda\le r^{(j)}(t)\le \lambda$ with corresponding pushforwards $(\Psi_t^*)^{(j)}$.
        Then,
        \begin{align*}
        \|(\Psi_t^*)^{(1)}\phi-(\Psi_t^*)^{(2)}\phi\|^p_{L^p(\Omega_\e)}&\lesssim_{\lambda}\|\phi\|^p_{L^p(\Omega)}\quad(\phi\in L^p(\Omega)),\\
        \|(\Psi_t^*)^{(1)}\phi-(\Psi_t^*)^{(2)}\phi\|^p_{L^p(\Omega_\e)}&\lesssim_{\lambda}\e^{\alpha p}\|\nabla \phi\|^p_{L^p(\Omega)}\|\delta r\|^p_{L^\infty(S)}\quad (\phi\in W^{1,p}(\Omega)).
        \end{align*}
        
   \end{itemize}
\end{lemma}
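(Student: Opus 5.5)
The plan is to reduce everything to the change-of-variables formula together with the uniform bounds on $F_\e$, $F_\e^{-1}$ and $J_\e$ from \cref{lemma:transform_estimates}. For $(i)$ with $p<\infty$, I substitute $x=\Psi_\e(t,y)$:
\[
\int_{\Omega_\e}|\phi(\Psi_\e(t,y))|^p\di{y}=\int_{\Omega_\e(t)}|\phi(x)|^p\,J_\e^{-1}(t,\Psi_\e^{-1}(t,x))\di{x}.
\]
Then I use the two-sided bound on $J_\e$. The lower bound $J_\e\ge \lambda^{-4}(2\lambda^2-1)^{-1}$ is stated in \cref{lemma:transform_estimates}. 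For the upper bound I use $J_\e=(1+a_{\e,i})^2(1+b_{\e,i})\le 3\lambda^4$. The case $p=\infty$ is immediate because $\Psi_\e(t,\cdot)$ is a bijection $\overline{\Omega_\e}\to\overline{\Omega_\e(t)}$ (see \cref{lemma:transform_estimates}).

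For the gradients I use the chain rule $\nabla(\Psi_t^*\phi)=F_\e\,(\nabla\phi)\circ\Psi_\e$, with $F_\e=\nabla\Psi_\e^T$ as defined in the paper. Since $|F_\e|\lesssim_\lambda 1$, the same substitution gives the first inequality. For the reverse inequality I write $(\nabla\phi)\circ\Psi_\e=F_\e^{-1}\nabla(\Psi_t^*\phi)$ and use $|F_\e^{-1}|\le 2\lambda^2-1$. Together these show that the pullback maps $L^p(\Omega_\e(t))$ and $W^{1,p}(\Omega_\e(t))$ boundedly into the corresponding spaces on $\Omega_\e$. The same computation, applied to $\Psi_\e^{-1}(t,\cdot)$ whose Jacobian is $F_\e^{-1}\circ\Psi_\e^{-1}$, shows that the pushforward is bounded as well. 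Since the two maps are mutually inverse, we obtain the claimed isomorphisms. I would do the computation for smooth $\phi$ and pass to general $\phi$ by density.

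For $(ii)$, both $\Psi_\e^{(j)}(t,\cdot)$ map $\Omega_\e$ into $\Omega$, so for $\phi\in L^p(\Omega)$ the functions $(\Psi_t^*)^{(j)}\phi$ are well defined. The first estimate then follows from the triangle inequality and part $(i)$, applied to the restriction of $\phi$ to $\Omega_\e^{(j)}(t)$. For the second estimate I take $\phi$ smooth, argue by density, and write
\[
\phi\circ\Psi^{(1)}_\e-\phi\circ\Psi^{(2)}_\e=\int_0^1\nabla\phi\bigl(\Psi_\e^{s}\bigr)\cdot\delta\Psi_\e\di{s},\qquad \Psi_\e^s:=\Psi_\e^{(2)}+s\,\delta\Psi_\e .
\]
By \cref{lemma:transform_lipschitz} we have $\|\delta\Psi_\e\|_\infty\lesssim_\lambda\e^\alpha\|\delta r\|_{\ell^\infty(\indexI)}$. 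Minkowski's inequality then reduces the estimate to bounding $\|(\nabla\phi)\circ\Psi_\e^s\|_{L^p(\Omega_\e)}$ uniformly in $s\in[0,1]$.

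This uniform bound is the step that needs care, because it requires a change of variables for the interpolated map. The key observation is that \eqref{eq:diffeomorphism_psieps} depends affinely on the radii. Hence $\Psi_\e^s$ is exactly the transformation built from $r^s:=r^{(2)}+s\,\delta r$, and $r^s$ still satisfies $\nicefrac1\lambda\le r^s_i\le\lambda$ by convexity of the constraint. Therefore \cref{lemma:transform_estimates} applies to $\Psi_\e^s$ with constants independent of $s$: it is a diffeomorphism onto a subset of $\Omega$ whose Jacobian determinant is bounded below uniformly. This gives $\|(\nabla\phi)\circ\Psi_\e^s\|_{L^p(\Omega_\e)}\lesssim_\lambda\|\nabla\phi\|_{L^p(\Omega)}$, and the second estimate of $(ii)$ follows. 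The case $p=\infty$ is again direct.
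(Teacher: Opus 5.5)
Your proposal is correct and follows essentially the same route as the paper: part $(i)$ is the change-of-variables formula with the uniform bounds on $J_\e$, $F_\e$ and $F_\e^{-1}$ from \cref{lemma:transform_estimates}, and part $(ii)$ uses the paper's observation that the transformation depends affinely on the radii. Hence the interpolated map is again an admissible transformation (by convexity of $[\nicefrac1\lambda,\lambda]$), so part $(i)$ applies to it uniformly in the interpolation parameter, and combined with $\|\delta\Psi_\e\|_\infty\lesssim_\lambda\e^\alpha\|\delta r\|_{\ell^\infty(\indexI)}$ this gives the estimate; you additionally spell out details the paper leaves implicit (the reverse inequalities, the gradient case via the chain rule, density, and $p=\infty$).
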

\begin{proof}
   $(i)$. This is clear from the regularity and uniform estimates for the transformation and can be checked directly via transformation of coordinates, e.g.,
   \begin{align*}
   \|\Psi_t^*\phi\|^p_{L^p(\Omega_\e)}=\int_{\Omega_\e}|\Psi_t^*\phi|^p\di{x}&=\int_{\Omega_\e(t)}J_\e^{-1}(t,\Psi_\e^{-1}(t,x))|\phi(x)|^p\di{x}\\
   &\le\|J_\e^{-1}\|_{L^\infty(\Omega_\e)}\|\varphi\|^p_{L^p(\Omega_\e(t))}
   \le\lambda^4(2\lambda^2-1)\|\phi\|^p_{L^p(\Omega_\e(t))}.
   \end{align*}
    
   $(ii)$. The first $L^p$ estimate follows instantly from the $L^p$ estimates in $(i)$.
   For $\theta\in[0,1]$, we set $r_\theta=(1-\theta)r^{(1)}+\theta r^{(2)}$ and note that $r_\theta\in\mathcal{R}_\e(S)$ with $\nicefrac1\lambda\le r_\theta\le \lambda$ by convexity.
   The corresponding transformation is $\Psi_\theta=(1-\theta)\Psi^{(1)}+\theta\Psi^{(2)}$ which allows us to estimate
   \[
   |(\Psi_t^*)^{(1)}\phi(y)-(\Psi_t^*)^{(2)}\phi(y)|\le |\Psi^{(1)}(y)-\Psi^{(2)}(y)|\int_0^1|\Psi_\theta^*\nabla\phi(y)|\di{\theta}.
   \]
   The claimed estimate follows via $(i)$ and the estimates for $\delta\Psi$ in \cref{lemma:transform_lipschitz}.
\end{proof}
%

\section{Poincar\'e type inequalities and other useful estimates}\label{appen:equalities}
In this section, we collect some technical inequalities and estimates.

We write here $B_s = B_s(0)$ for $s >0$.

\begin{lemma}
    Let $s < R$ and $\varphi \in H^1(B_R)$. Then
\begin{align} \label{Poincare.averages}
    \left|\fint_{B_R} \varphi - \fint_{B_s} \varphi \right| \lesssim s^{-\frac 1 2} \|\nabla \varphi \|_{L^2(B_R)}, \\
    \left|\fint_{B_R} \varphi - \fint_{\partial B_s} \varphi \right| \lesssim s^{-\frac 1 2} \|\nabla \varphi \|_{L^2(B_R)} \label{poincare.type.averages}
\end{align}
\end{lemma}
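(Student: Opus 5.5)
The plan is to reduce both inequalities to the model case of concentric balls with comparable radii, using scaling and the standard Poincaré inequality. A chain of dyadic balls then handles the case $s\ll R$. Both estimates are invariant under the rescaling $x\mapsto\lambda x$: if $\varphi_\lambda(x)=\varphi(\lambda x)$, then $\|\nabla\varphi_\lambda\|_{L^2(B_{R/\lambda})}=\lambda^{-1/2}\|\nabla\varphi\|_{L^2(B_R)}$, which matches the factor $s^{-1/2}$. We may therefore normalize $s=1$, and it suffices to prove both bounds with constants independent of $R\ge1$.

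\emph{Step 1 (comparable radii).} For $1\le\rho\le 2$ we will show
\[
\Bigl|\fint_{B_\rho}\varphi-\fint_{B_1}\varphi\Bigr|\lesssim\|\nabla\varphi\|_{L^2(B_\rho)}
\quad\text{and}\quad
\Bigl|\fint_{B_1}\varphi-\fint_{\partial B_1}\varphi\Bigr|\lesssim\|\nabla\varphi\|_{L^2(B_1)}.
\]
For the first bound, write $m=\fint_{B_\rho}\varphi$ and use
\[
\Bigl|\fint_{B_1}(\varphi-m)\Bigr|\le|B_1|^{-1/2}\|\varphi-m\|_{L^2(B_\rho)},
\]
then apply the Poincaré inequality on $B_\rho$. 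Its constant is uniform for $\rho\in[1,2]$ by scaling. For the second bound, apply the trace inequality $H^1(B_1)\to L^1(\partial B_1)$ to $\varphi-\fint_{B_1}\varphi$, followed by Poincaré on $B_1$. This is exactly \eqref{Poincare.averages.0} at unit scale.

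\emph{Step 2 (dyadic chain).} For general $R\ge1$, let $\rho_k=2^k$ for $k=0,\dots,K$, with $K$ chosen so that $2^{K}\le R<2^{K+1}$, and set $\rho_{K+1}=R$. Telescoping and applying the rescaled Step 1 on each annulus pair $B_{\rho_{k+1}}\supset B_{\rho_k}$ gives
\[
\Bigl|\fint_{B_R}\varphi-\fint_{B_1}\varphi\Bigr|\lesssim\sum_{k=0}^{K}\rho_k^{-1/2}\|\nabla\varphi\|_{L^2(B_{\rho_{k+1}})}\le\|\nabla\varphi\|_{L^2(B_R)}\sum_{k\ge0}2^{-k/2}.
\]
The geometric series converges, so the constant is independent of $R$. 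Undoing the scaling yields \eqref{Poincare.averages}.

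\emph{Step 3.} The second estimate \eqref{poincare.type.averages} follows from the triangle inequality: combine \eqref{Poincare.averages} with the scaled second bound of Step 1 on $B_s$, which reads $|\fint_{B_s}\varphi-\fint_{\partial B_s}\varphi|\lesssim s^{-1/2}\|\nabla\varphi\|_{L^2(B_s)}$.

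The only potentially delicate point is that the constant must not depend on the ratio $R/s$. A direct Poincaré inequality on $B_R$ would produce the wrong factor $(R/s)^{3/2}R^{-1/2}$. The dyadic chain in Step 2, which gains the summable weight $2^{-k/2}$ at each scale, is what removes this dependence.
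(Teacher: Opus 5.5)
Your argument is correct. For the second estimate it coincides with the paper's proof (triangle inequality plus the ball-versus-sphere bound at scale $s$). For the first estimate you take a different route.

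The paper normalizes $\int_{B_R}\varphi=0$ and uses the Sobolev--Poincar\'e inequality $\|\varphi\|_{L^6(B_R)}\lesssim\|\nabla\varphi\|_{L^2(B_R)}$. Its constant is independent of $R$ because $6$ is the critical Sobolev exponent in three dimensions. H\"older's inequality with $\|1\|_{L^{6/5}(B_s)}\sim s^{5/2}$ then gives $|\fint_{B_s}\varphi|\lesssim s^{-1/2}\|\nabla\varphi\|_{L^2(B_R)}$ in one line. The uniformity in $R/s$ is hidden in the scale invariance of that embedding.

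Your dyadic chain needs only the $L^2$ Poincar\'e inequality on balls of comparable radii, and it makes the mechanism explicit. The scale $2^ks$ contributes with weight $(2^ks)^{-1/2}$, and the summability of $\sum_k2^{-k/2}$ is what removes the dependence on $R/s$. In dimension $d$ the weight becomes $\rho^{1-d/2}$, so your argument covers $d\ge3$ and breaks down for $d=2$, exactly as the Sobolev route does. It is a bit longer but more elementary. As a byproduct it yields the finer multiscale bound $s^{-1/2}\sum_k 2^{-k/2}\|\nabla\varphi\|_{L^2(B_{\min\{2^{k+1}s,R\}})}$.

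Your Step 1 also gives a clean proof of the ball-versus-sphere estimate, which the paper only sketches: the trace inequality $H^1(B_1)\to L^1(\partial B_1)$, then Poincar\'e on $B_1$, then rescaling.
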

\begin{proof}
    It suffices to prove the estimate assuming in addition that $\int_{B_R} \varphi = 0$. Then, by Sobolev embedding and the Poincar\'e-Wirtinger inequality on $B_R$, we have
\begin{align*}
    \|\varphi\|_{L^6(B_R)} \lesssim \|\nabla \varphi \|_{L^2(B_R)}
\end{align*}
Hence, by H\"older's inequality
\begin{align*}
    \left|\fint_{B_s} \varphi\right| \leq s^{-3} \|\varphi\|_{L^6(B_R)} \|1\|_{L^{6/5}(B_s)} \lesssim s^{-\frac 1 2} \|\nabla \varphi \|_{L^2(B_R)}.
\end{align*}
This shows the first inequality.

For the second inequality, it suffices to combine the first inequality with
\begin{align*}
    \left|\fint_{B_s} \varphi - \fint_{\partial B_s} \varphi \right| \lesssim s^{-3/2}  \int_{B_s} \left|\varphi - \fint_{\partial B_s} \varphi \right| \lesssim s^{-\frac 1 2} \|\nabla \varphi \|_{L^2(B_R)}
\end{align*}
where the last inequality follows can be shown like the standard Poincar\'e-Wirtinger inequality and scaling considerations.
\end{proof}

\begin{lemma}[Improved Poincaré inequality in $\Omega_\e$]\label{lemma:poincare}
Let $\alpha \in [1,3]$ and assume that \eqref{ass:uniform.covering}. There exists $\e_0 > 0$ such that
\[
\|u\|_{L^2(\Omega_\e)}\lesssim \e^{\frac{3-\alpha}{2}}\|\nabla u\|_{L^2(\Omega_\e)}
\]
for all $u\in H_{\Gamma_\e}^1(\Omega_\e)$ and all $\e\le\e_0$.
\end{lemma}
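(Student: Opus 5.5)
The plan is a localization argument: a cell-wise Poincaré inequality on each $\e$-cell containing a hole of capacity of order $\e^\alpha$, summed using the covering assumption \eqref{ass:uniform.covering}. By density, it suffices to treat $u\in H^1_{\Gamma_\e}(\Omega_\e)$ extended by zero into the balls, so $u\in H^1(\Omega)$ with $u=0$ on $\bigcup_i B_{\e,i}$. The radii satisfy $\underline\xi\le r_{\e,i}\le\overline\xi$ on $[0,T]$ (\cref{theorem:maximum_principle}), and $\e\le\e_0$ keeps the balls separated. So every hole contains a ball $B_{\e^\alpha/\lambda}(z_{\e,i})$ and lies inside $B_{\dmin\e/4}(z_{\e,i})$.

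The key local estimate concerns $K=B_{R}(z)$ with $R=2\dmax\e$ and $v\in H^1(K)$ vanishing on $B_{a}(z)$, where $a\sim\e^\alpha$:
\[
\|v\|_{L^2(K)}^2\lesssim \frac{R^3}{a}\|\nabla v\|_{L^2(K)}^2 .
\]
I would prove it in two steps.

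\emph{Step 1.} The Poincaré--Wirtinger inequality gives $\|v-\bar v_K\|_{L^2(K)}^2\lesssim R^2\|\nabla v\|_{L^2(K)}^2$.

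\emph{Step 2.} To bound the mean $\bar v_K$, use \eqref{poincare.type.averages} with $s=a$. Since $v=0$ on $\partial B_a$, this gives $|\bar v_K|\lesssim a^{-1/2}\|\nabla v\|_{L^2(K)}$, and hence $|K|\,|\bar v_K|^2\lesssim R^3a^{-1}\|\nabla v\|^2_{L^2(K)}$.

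Since $R^2\lesssim R^3/a$ (because $a\ll R$), combining the two steps gives the local estimate with constant $\e^{3-\alpha}$, as required.

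For the global estimate, \eqref{ass:uniform.covering} gives $\Omega\subset\bigcup_i B_{\dmax\e}(z_{\e,i})$. Each enlarged ball $B_{2\dmax\e}(z_{\e,i})$ overlaps only boundedly many others, by the separation \eqref{ass:strong.separation} and a volume-counting argument. Parts of these balls may stick out of $\Omega$. To handle this, I would either extend $u$ across $\partial\Omega$ with a bounded $H^1$ extension operator for the Lipschitz domain, or work on the intersection $B\cap\Omega$, which is uniformly John/Lipschitz at scale $\e$ since $\dist(z_{\e,i},\partial\Omega)\ge\dmin\e$. In either case Poincaré--Wirtinger on these sets holds uniformly after rescaling. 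Summing the local inequalities then yields $\|u\|_{L^2(\Omega_\e)}^2\lesssim\e^{3-\alpha}\|\nabla u\|_{L^2(\Omega)}^2$, and $\nabla u=0$ inside the holes.

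The main obstacle is the boundary cells. The natural choice is the extension route: with a bounded extension $E u$, $\|\nabla Eu\|_{L^2}\lesssim\|u\|_{H^1}$ picks up an $L^2$ term, which could only be absorbed if $\e$ is small. To avoid this, I would instead use the intersection sets, which are uniformly star-shaped with respect to a ball of radius $\sim\dmin\e/2$ around $z_{\e,i}$. This gives uniform scale-invariant Poincaré constants, and \eqref{poincare.type.averages} applies on such star-shaped sets after the same Sobolev--Hölder argument.
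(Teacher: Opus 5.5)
Your local estimate is correct and coincides with the paper's. Poincar\'e--Wirtinger on a ball of radius $\sim\e$, combined with an average estimate on the hole, gives the factor $R^3/a\sim\e^{3-\alpha}$; you use \eqref{poincare.type.averages} with $\fint_{\partial B_a}v=0$, the paper uses \eqref{Poincare.averages} with $\fint_{B_s}v=0$. Bounded overlap then lets you sum.

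\textbf{The gap is in the boundary cells.} There you reject the paper's route and commit to one that fails as stated. The claim that $B_{2\dmax\e}(z_{\e,i})\cap\Omega$ is uniformly star-shaped with respect to a ball of radius $\sim\dmin\e/2$ around $z_{\e,i}$ is false for a general Lipschitz $\Omega$. The condition $\dist(z_{\e,i},\partial\Omega)\ge\dmin\e$ says nothing about $\partial\Omega$ inside the larger ball of radius $2\dmax\e$. Moreover, the assumptions put no upper bound on $\dmax/\dmin$, and covering near $\partial\Omega$ even forces $\dmax\ge\dmin$.

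Here is a concrete failure. Near a reentrant edge, take locally $\Omega=\R^3\setminus\{x_1\ge0,\ x_2\ge0\}$.
\begin{itemize}
\item The point $z=(2\dmin\e,-\dmin\e,0)$ has distance exactly $\dmin\e$ to $\partial\Omega$.
\item The point $y=(-\dmin\e,2\dmin\e,0)\in\Omega$ satisfies $|y-z|=3\sqrt2\,\dmin\e<2\dmax\e$ once $\dmax>\tfrac{3}{\sqrt2}\dmin$.
\item The midpoint of $[z,y]$ is $(\tfrac{\dmin\e}{2},\tfrac{\dmin\e}{2},0)\notin\Omega$.
\end{itemize}
For steeper Lipschitz boundaries the intersection can even be disconnected. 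A component not containing the hole then admits no inequality of the required form at all. So neither the uniform Poincar\'e--Wirtinger constant nor \eqref{poincare.type.averages} is justified on these sets.

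\textbf{The fix.} The extension route you discarded is the right one, and the $L^2$ term you worried about disappears if you subtract the mean before extending.
\begin{itemize}
\item After extending $u$ by zero into the holes, $u\in H^1(\Omega)$ with $\|\nabla u\|_{L^2(\Omega)}=\|\nabla u\|_{L^2(\Omega_\e)}$.
\item Let $\bar u=\fint_\Omega u$ and let $E_0\colon H^1(\Omega)\to H^1(U)$, $U\supset\Omega$, be any bounded extension operator. Set $Eu=E_0(u-\bar u)+\bar u$.
\item Then $Eu=u$ on $\Omega$. By Poincar\'e--Wirtinger on the bounded connected Lipschitz domain $\Omega$, $\|\nabla Eu\|_{L^2(U)}\lesssim\|u-\bar u\|_{H^1(\Omega)}\lesssim\|\nabla u\|_{L^2(\Omega_\e)}$.
\end{itemize}
Your local estimate then applies on the full balls $B_{\dmax\e}(z_{\e,i})\subset U$ (for $\e$ small), with no boundary geometry involved. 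Summing with bounded overlap gives the lemma; this is exactly the paper's proof.

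For $\alpha<3$ even the crude bound $\|\nabla E_0u\|_{L^2(U)}\lesssim\|u\|_{H^1(\Omega)}$ would suffice, since $C\e^{3-\alpha}\|u\|_{L^2(\Omega_\e)}^2$ can be absorbed for $\e\le\e_0$. Only $\alpha=3$ needs the mean subtraction.

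A minor point: the lemma is purely geometric and is also applied to prescribed radii in the fixed-point argument. The only input on the radii should therefore be a uniform lower bound such as $r_{\e,i,0}\ge1/\overline r$ from (A1), or $r_i\ge1/\lambda$. It should not be the a priori bounds of \cref{theorem:maximum_principle}.
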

\begin{remark}
    Let  $\Omega_\e(t)$ be given through \eqref{eq:omega_time} for some given $r\in\mathcal{R}_\e$ (defined in \eqref{mathcalR}) with $r(t)\in[\frac1\lambda,\infty)^{N_\e}$. 
Then, the same inequality applies with a constant and $\e_0$ that depend on $\lambda$.
\end{remark}
\begin{proof}
This inequality is well-known in the homogenization community. For periodic monodisperse holes, we refer to \cite[Lemma 3.4.1]{Allaire1991-fp}

Assumption \ref{ass:uniform.covering} guarantees that the proof from \cite{Allaire1991-fp} generalizes straightforwardly.
Fix a set $U \subset \R^3$ such that $\{x \in \R^3 : \dist(x,\Omega) \leq 1\} \subset U$.  This ensures that $B_{\overline d \e}(z_{\e,i}) \subset U$ for $\e$ small enough.
Then, we extend $u$ by $0$ inside of $\cup_i B_{\e,i}(t)$ and then to a function $E u \in H^1(U)$ with such that 
$\|\nabla E u \|_{L^2(U)} \leq C \|\nabla u\|_{L^2(\Omega_\e)}$. 
Then, as in \cite{Allaire1991-fp}, one shows 
\begin{align} \label{Poincare.allaire}
    \|E u\|_{L^2(B_{\overline d \e}(z_{\e,i})}^2 \lesssim_\lambda \e^{3-\alpha} \|\nabla E u\|_{L^2(B_{\overline d \e}(z_{\e,i})}^2
\end{align}
 and 
 summing over $i \in \mathcal I_\e$  concludes the proof.
In fact, \eqref{Poincare.allaire}  follows immediately from \eqref{Poincare.averages}: Choose $R = \overline d \e$ and $s = \e^\alpha r_{\e,i} $, and denote $\varphi = Eu$ and by $(\varphi)_s$ and $(\varphi)_R$, the averages of $\varphi$ on $B_{s}(z_{\e,i})$ and on $B_{R}(z_{\e,i})$, respectively, and note that $(\varphi)_s = 0$. Hence, by the stadard Poincar\'e-Wirtinger inequality in $B_{R}(z_{\e,i})$
\begin{align*}
    \|\varphi\|_{L^2(B_{R}(z_{\e,i})} &\leq  \|\varphi - (\varphi)_R\|_{L^2(B_{R}(z_{\e,i})} +  \|(\varphi)_R\|_{L^2(B_{R}(z_{\e,i})}\\
    &\lesssim \e \|\nabla  \varphi\|_{L^2(B_{R}(z_{\e,i})} + \e^{\frac 3 2}   \left| (\varphi)_R - (\varphi)_s \right| \\
    &\lesssim \e^{\frac{3-\alpha}2}\|\nabla  \varphi\|_{L^2(B_{R}(z_{\e,i})},
\end{align*}
where we used $\e \leq \e^{\frac{3-\alpha}2}$ in the last estimate since $\alpha > 1$.
\end{proof}

\begin{lemma}[Auxiliary estimates]
\label{lemma:trace_estimates}
\begin{itemize}
\item[$(i)$] Let $s\in(2,6)$. For every $\theta>0$, there is a constant $C_{s,\theta}>0$ such that
\[
\|\phi\|_{L^s(\Omega_\e)}^2\leq C_{s,\theta} \|\phi\|^2_{L^2(\Omega_\e)}+(\theta+\e^{2\alpha} C_{s,\theta})\|\nabla\phi\|^2_{L^2(\Omega_\e)}
\]
 for all $\phi\in H^1(\Omega_\e)$.
\item[$(ii)$] It holds 
\[
 \e^{3-2\alpha}\|\phi\|^2_{L^2(\Gamma_\e)}\lesssim\|\phi\|^2_{L^2(\Omega_\e)}+\e^{3-\alpha}\|\nabla\phi\|_{L^2(\Omega_\e)}^2.
 \]
for all $\phi\in H^1(\Omega_\e)$.
 \item[$(iii)$] There is a constant $C>0$ (independent of $\e$) such that
\[
 \|\phi-\sum_{i \in \indexI}[\phi]_{\Gamma_{\e,i}} \1_{\Gamma_{\e,i}}\|_{L^p(\Gamma_\e)}\leq C\e^{\alpha(1-\frac{1}{p})}\|\nabla\phi\|_{L^p(\Omega_\e)}
 \]
  for all $\phi\in W^{1,p}(\Omega_\e)$ where $[\phi]_{\Gamma_\e,i}$ is defined in \eqref{[]_Gamma}.
 \end{itemize}
\end{lemma}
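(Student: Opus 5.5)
The three estimates are local in nature, so the plan is to prove each of them on a single perforation and then sum. Throughout we use that the balls $B_{\e,i}=B_{\e^\alpha r_{\e,i}}(z_{\e,i})$ and the enlarged balls $B_{\dmin\e/4}(z_{\e,i})$ are pairwise disjoint, and that $\frac1\lambda\le r_{\e,i}\le\lambda$. We work in the cells $A_i:=B_{\dmin\e/4}(z_{\e,i})\setminus B_{\e,i}$, after rescaling by the ball radius $a_i:=\e^\alpha r_{\e,i}$. Constants will depend only on $\lambda$, $\dmin$, $s$ and $p$.

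\emph{Step 1: trace estimate (ii).} Fix $i$. Following a radial ray from $\gamma\in\Gamma_{\e,i}$ out to radius $\rho\in(a_i,2a_i)$ and using the fundamental theorem of calculus, we get the pointwise bound
\[
|\phi(\gamma)|^2\lesssim |\phi(\rho\gamma')|^2+a_i\int_{a_i}^{2a_i}|\partial_r\phi|^2\,\d r .
\]
Averaging this over $\rho$ and integrating against $a_i^2\,\d\gamma'$ gives the scaled trace inequality on the annulus $B_{2a_i}\setminus B_{a_i}$:
\[
\|\phi\|_{L^2(\Gamma_{\e,i})}^2\lesssim a_i^{-1}\|\phi\|_{L^2(B_{2a_i}\setminus B_{a_i})}^2+a_i\|\nabla\phi\|^2_{L^2(B_{2a_i}\setminus B_{a_i})}.
\]
The $a_i^{-1}$ factor is too large, since we need $\e^{-3}$ and not $\e^{-\alpha}$ in front of the $L^2$ term. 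We therefore replace $\|\phi\|_{L^2(\text{annulus})}^2$ by $a_i^3|\bar\phi|^2$ plus a gradient term, where $\bar\phi$ denotes the average over the annulus. Next we compare $\bar\phi$ with the average $\langle\phi\rangle$ over $A_i$ by \eqref{Poincare.averages}, which gives $|\bar\phi-\langle\phi\rangle|^2\lesssim a_i^{-1}\|\nabla\phi\|^2_{L^2(A_i)}$. Finally we use $|\langle\phi\rangle|^2\lesssim\e^{-3}\|\phi\|^2_{L^2(A_i)}$. Collecting terms, this yields
\[
\|\phi\|^2_{L^2(\Gamma_{\e,i})}\lesssim \e^{2\alpha-3}\|\phi\|^2_{L^2(A_i)}+a_i\|\nabla\phi\|^2_{L^2(A_i)} .
\]
Multiplying by $\e^{3-2\alpha}$ and summing over $i$ gives (ii), since the $A_i$ are disjoint.

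\emph{Step 2: estimate (iii).} By scaling, $y\mapsto z_{\e,i}+a_iy$ maps the fixed annulus $B_2\setminus B_1$ onto $B_{2a_i}(z_{\e,i})\setminus B_{a_i}(z_{\e,i})$, which lies in $\Omega_\e$ because $2a_i\le\dmin\e/4$ for small $\e$. On $B_2\setminus B_1$ the Poincaré--trace inequality
\[
\|\psi-[\psi]_{\partial B_1}\|_{L^p(\partial B_1)}\le C\|\nabla\psi\|_{L^p(B_2\setminus B_1)}
\]
holds by compactness, because constants are annihilated by both sides. Rescaling multiplies the left side by $a_i^{2/p}$ and the right side by $a_i^{1-3/p}\cdot a_i^{...}$; more precisely, the left side picks up $a_i^{2/p}$ and $\|\nabla\psi\|_{L^p}$ picks up $a_i\cdot a_i^{-3/p}$. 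The net factor is therefore $a_i^{1-1/p}\sim\e^{\alpha(1-1/p)}$. We then sum the $p$-th powers over $i$.

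\emph{Step 3: estimate (i).} This is the step I expect to be the main obstacle, because the Sobolev inequality must hold with a constant uniform in the tiny holes. The plan is to use the extension operator $\mathcal E_\e$ of \cref{lemma:extension_operators} with $A_i=\Id$. Alternatively, we can build it directly by reflecting across each sphere, rescaled from the unit annulus. This gives $\|\nabla\mathcal E_\e\phi\|_{L^2(\Omega)}\lesssim\|\nabla\phi\|_{L^2(\Omega_\e)}$ and $\|\mathcal E_\e\phi\|_{L^2(\Omega)}\lesssim\|\phi\|_{L^2(\Omega_\e)}+\e^\alpha\|\nabla\phi\|_{L^2(\Omega_\e)}$; the lower-order term comes from the scaled Poincaré inequality on each annulus.

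Since $s<6$, we then apply the Gagliardo--Nirenberg inequality on the fixed Lipschitz domain $\Omega$ followed by Young's inequality:
\[
\|\psi\|^2_{L^s(\Omega)}\le\theta\|\nabla\psi\|^2_{L^2(\Omega)}+C_{s,\theta}\|\psi\|^2_{L^2(\Omega)} .
\]
Applying this to $\psi=\mathcal E_\e\phi$, the gradient term gives $\theta\|\nabla\phi\|^2$ after rescaling $\theta$, and the $L^2$ term gives $C_{s,\theta}(\|\phi\|^2+\e^{2\alpha}\|\nabla\phi\|^2)$. This is exactly the form stated in (i).
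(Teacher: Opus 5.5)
Your proof is correct. For (i) and (iii) it is essentially the paper's argument. In (i) you extend with $\mathcal E_{\e,\Id}$ from \cref{lemma:extension_operators} and then use an interpolation inequality on the fixed domain $\Omega$; you use Gagliardo--Nirenberg plus Young, the paper uses the compact embedding $H^1(\Omega)\hookrightarrow L^s(\Omega)$, and these amount to the same step. In (iii) you rescale a Poincar\'e--trace inequality on the unit annulus and sum $p$-th powers, exactly as the paper does; your exponent bookkeeping is garbled in the middle, but the net factor $a_i^{1-1/p}$ is right.

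The real difference is (ii). The paper does not prove it. It refers to Allaire's trace estimate for periodically distributed holes, quoted for $\alpha=3$, and asserts that the local proof carries over. You instead give a self-contained three-scale argument, using the hole radius $a_i$, the inner annulus of radius $2a_i$, and the cell of radius $\dmin\e/4$. The decisive step is the comparison of averages $|\bar\phi-\langle\phi\rangle|^2\lesssim a_i^{-1}\|\nabla\phi\|^2_{L^2(A_i)}$. This is what turns the naive factor $a_i^{-1}$ from the scaled annulus trace inequality into $a_i^2\e^{-3}\sim\e^{2\alpha-3}$. Your route makes explicit where the separation assumption and the radius bounds enter. It also covers the full range $\alpha\in(1,3]$ and non-periodic configurations directly, which is needed in \cref{sec:hom_supercrit}. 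The paper's route is shorter but rests on that citation.

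One detail needs fixing. \eqref{Poincare.averages} is stated for $\varphi\in H^1(B_R)$ on a full ball, while your $\phi$ is only defined on the perforated cell $A_i$. First extend $\phi$ into $B_{\e,i}$ with gradient control, for instance by applying the local extension of \cref{lemma:extension_operators} to $\phi-\bar\phi$ and adding back $\bar\phi$. Then apply \eqref{Poincare.averages} to the extension with $s=2a_i$ and $R=\dmin\e/4$. By Poincar\'e, replacing the resulting ball averages with the annulus and cell averages costs only $O\bigl(a_i^{-1/2}\|\nabla\phi\|_{L^2(A_i)}\bigr)$, so the scaling is unaffected.
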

\begin{proof}
    $(i).$ Let $s\in(2,6)$ and $\vartheta>0$. Using the extension operator in \cref{lemma:extension_operators} with $A_i=\ID$, we find $C_{s,\vartheta}>0$ such that
    \[
    \|\phi\|_{L^s(\Omega_\e)}^2\leq \|\mathcal{E}_{\e,\ID}\phi\|_{L^s(\Omega)}^2
    \leq C_{s,\vartheta} \|\mathcal{E}_{\e,\ID}\phi\|^2_{L^2(\Omega)}+\vartheta\|\nabla(\mathcal{E}_{\e,\ID}\phi)\|^2_{L^2(\Omega)}
    \]
    by making use of the compact embedding of $H^1(\Omega)$ into $L^s(\Omega)$.
    With the estimates in \cref{lemma:extension_operators}, we have
    \[
    \|\phi\|_{L^s(\Omega_\e)}^2\leq C\left(C_{s,\vartheta}\|\phi\|_{L^2(\Omega_\e)}^2+(\vartheta+\e^{2\alpha} C_{s,\vartheta})\|\nabla\phi\|^2_{L^2(\Omega_\e)}\right)
    \]
    With $\theta=C\vartheta$ and $C_{s,\theta}=CC_{s,\vartheta}$, the claim follows.
    
    $(ii).$ This follows from the trace estimate in \cite[Lemma 2.3]{allaire_homogenization_1990} for $\alpha=3$.
    Although stated there for periodically distributed holes of a common size, the proof is local and applies verbatim to our uniformly separated balls using the uniform upper and lower bounds on \(r_{\e,i,0}\).
        
    $(iii).$
    As the holes are well-separated, this trace Poincaré estimate can be shown by a standard scaling argument. More precisely, one shows by scaling that 
\[
\|\phi-[\phi]_{\Gamma_{\e,i}}\|_{L^p(\Gamma_{\e,i})}\leq C\e^{\alpha(1-\frac{1}{p})}\|\nabla\phi\|_{L^p(B_{2 \e^{\alpha} r_{\e,i}}(z_{\e,i}) \setminus B_{\e,i})}
 \]
 holds for any $i \in \mathcal I_\e$ and then sums over $i\in\indexI$.
\end{proof}

For any matrix-valued function $A\in L^\infty(\Omega_\e)^{3\times3}$ and $\phi\in W^{1,p}(\Omega)^3$, we introduce the notation 
\[
e_{\e,A}(\phi)=\frac{1}{2}\left(A\nabla\phi+(A\nabla\phi)^T\right).
\]
which simplifies to $e_{\e,A}(\phi)=e(A\phi)$ for constant $A\in\R^{3\times3}$.
Please note that for $A=F_\e^{-1}$, we write $e_{\e,A}(\phi)=e_\e(\phi)$ for the ease of notation and in accordance with \cref{symmetric_gradient_reference}.

\begin{lemma}[Extension operators]
\label{lemma:extension_operators}
For any $A=(A_i)\subset(\R^{3\times3})^{N_\e}$ with $\det A_i>0$ for all $i\in\indexI$, there exists an extension operator $\mathcal{E}_{\e,A}\colon W^{1,p}(\Omega_\e)^3\to W^{1,p}(\Omega)^3$ such that
\begin{align*}
\|\mathcal{E}_{\e,A}\phi\|_{L^p(\Omega)}&\lesssim\max_i\left(|A_i||A_i^{-1}|\right)\left(\|\phi\|_{L^p(\Omega_\e)}+\e^{\alpha}\|\nabla\phi\|_{L^p(\Omega_\e)}\right),\\
\|\nabla(\mathcal{E}_{\e,A}\phi)\|_{L^p(\Omega)}&\lesssim\max_i\left(|A_i||A_i^{-1}|\right)\|\nabla\phi\|_{L^p(\Omega_\e)},\\
\|e_{\e,A}(\mathcal{E}_{\e,A}\phi)\|_{L^p(\Omega)}&\lesssim\|e_{\e,A}(\phi)\|_{L^p(\Omega_\e)}.
\end{align*}
\end{lemma}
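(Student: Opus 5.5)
The plan is to build $\mathcal{E}_{\e,A}$ hole by hole from a single extension operator on a fixed reference annulus, and then use scaling and the separation of the holes. Fix the reference annulus $Y:=B_{2}(0)\setminus \overline{B_1(0)}$. Take a bounded linear extension operator $P\colon W^{1,p}(Y)\to W^{1,p}(B_2(0))$, for example by reflection across $\partial B_1$ or by Stein's extension. Arrange that $P$ preserves constants and affine maps; the natural way is to subtract the mean (respectively the best affine approximation) before extending and add it back afterwards. Then the Poincaré--Wirtinger inequality and Korn's inequality on $Y$ give
\[
\|\nabla P\psi\|_{L^p(B_2)}\lesssim\|\nabla\psi\|_{L^p(Y)},\qquad \|e(P\psi)\|_{L^p(B_2)}\lesssim \|e(\psi)\|_{L^p(Y)}.
\]
For the second bound I would first extend $\psi-R\psi$, where $R\psi$ is the rigid motion projection, and then add back $R\psi$, whose symmetric gradient vanishes.

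Next I treat a general $A_i$. For each $i$ I rescale $\phi$ to $Y$ via $y\mapsto z_{\e,i}+\e^\alpha r_{\e,i}y$, and on the rescaled annulus (which is contained in $\Omega_\e$ by the separation assumption \eqref{ass:strong.separation}) I set
\[
\psi_i(y):=A_i\,\phi\bigl(z_{\e,i}+\e^\alpha r_{\e,i}y\bigr).
\]
I would check that $e_{\e,A}(\phi)$ corresponds to $e(A_i\phi)$ up to this change of variables, so that the symmetric-gradient estimate for $P$ applies to $\psi_i$. I extend $\psi_i$ by $P$, multiply by $A_i^{-1}$, and scale back into the ball $B_{\e,i}$. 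Outside $\bigcup_i B_{\e,i}$ I keep $\phi$. Since $P\psi_i$ agrees with $\psi_i$ on $Y$, traces match on $\Gamma_{\e,i}$, so the result lies in $W^{1,p}(\Omega)$.

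For the estimates, the gradient bound on each ball follows from scale invariance of $\|\nabla\cdot\|_{L^p}$ up to the factor $(\e^\alpha r)^{3/p-1}$, which appears on both sides and cancels. Conjugating by $A_i$ costs the factor $|A_i||A_i^{-1}|$. The symmetric-gradient bound needs no such factor, because $e(A_i\phi)$ is exactly what is controlled. For the $L^p$ bound, I control $P\psi$ on $B_1$ by $\|\psi\|_{L^p(Y)}+\|\nabla\psi\|_{L^p(Y)}$; after scaling this gives $\|\phi\|_{L^p(\text{annulus})}+\e^\alpha\|\nabla\phi\|_{L^p(\text{annulus})}$. Summing over the disjoint annuli and using the uniform bounds on the radii gives the stated estimates.

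I expect the main obstacle to be the symmetric-gradient estimate. It requires the extension to commute with rigid motions and relies on Korn's inequality on $Y$ for the $A$-transformed field. Another point to check is that the annuli $B_{2\e^\alpha r_{\e,i}}\setminus B_{\e,i}$ stay disjoint and inside $\Omega$ for small $\e$, which follows from $\alpha>1$.
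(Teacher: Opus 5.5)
Your proposal is correct and follows the same route as the paper. Both build a single-hole extension operator with the $|A_i||A_i^{-1}|$-weighted $L^p$ and gradient bounds and an $A$-independent bound on $e(A_i\,\cdot)$, rescale it by $\e^\alpha$, and glue across the disjoint annuli using \eqref{ass:strong.separation} and $\alpha>1$. The paper takes the single-hole operator from \cite[Lemma 4.2]{gahn_extension_2024} as a black box. You instead construct it as $A_i^{-1}P(A_i\,\cdot)$, where $P$ is an extension that reproduces rigid (indeed affine) motions, and use Korn's inequality on the reference annulus, which is valid for $1<p<\infty$. You also rescale by the actual radius $\e^\alpha r_{\e,i}$ rather than a fixed outer radius; this is a legitimate way to fill in the cited step.
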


\begin{proof}
    This proof is quite similar to \cite[Proposition 4.3]{gahn_extension_2024} (where our differences are the $\e$-scaling and the non-periodicity of the domain) and done in two steps:

    $(i).$ \textit{Extending into a single hole.} Let $\overline{r}=\sup_{\e>0}\max_{i}r_{\e,i,0}$ and $\frac1{\overline r}=\inf_{\e>0}\min_{i}r_{\e,i,0}$ denote the largest and smallest possible radius in the reference configuration, respectively. For any invertible matrix $A\in\R^{3\times3}$ and any $\frac1{\overline r}\le r\le\overline{r}$ there exists an linear extension operator $E_{A}\colon W^{1,p}(B_{2\overline{r}}\setminus B_{r})\to W^{1,p}(B_{2\overline{r}})$ such that 
    \begin{align*}
    \|E_A\phi\|_{L^p(B_{2\overline{r}})}&\leq C_p|A||A^{-1}|\|\phi\|_{W^{1,p}(B_{2\overline{r}}\setminus B_{r})},\\
    \|\nabla E_A\phi\|_{L^p(B_{2\overline{r}})}&\leq C_p|A||A^{-1}|\|\nabla \phi\|_{L^p(B_{2\overline{r}}\setminus B_{r})},\\
    \|e(AE_A\phi)\|_{L^p(B_{2\overline{r}})}&\leq C_p\|e(A\phi)\|_{L^p(B_{2\overline{r}}\setminus B_{r})}
    \end{align*}
    where $C_p$ does not depend on the matrix $A$.
    This has been shown in \cite[Lemma 4.2]{gahn_extension_2024}.
    Note that the optimal constant $C_p$ decreases monotonically with the radius $r$ so we may take $C_p:=C_p(\overline{r})$
        
    Via an typical scaling argument, we find that $E_{\e,A}\colon W^{1,p}(B_{2\e^\alpha\overline{r}}\setminus B_{\e^\alpha r})\to W^{1,p}(B_{2\e^\alpha\overline{r}})$ defined via $E_{\e,A}\phi(x)=E_{A}\phi(\frac{x}{\e^\alpha})$ satisfies
    \begin{align*}
    \|E_{\e,A}\phi\|_{L^p(B_{2\e^\alpha\overline{r}})}&\le C_p|A||A^{-1}|\left(\|\phi\|_{L^{p}(B_{2\e^\alpha\overline{r}}\setminus B_{\e^\alpha r})}+\e^\alpha\|\nabla\phi\|_{L^{p}(B_{2\e^\alpha\overline{r}}\setminus B_{\e^\alpha r})}\right),\\
    \|\nabla(E_{\e,A}\phi)\|_{L^p(B_{2\e^\alpha\overline{r}})}&\le C_p|A||A^{-1}|\|\nabla \phi\|_{L^p(B_{2\e^\alpha\overline{r}}\setminus B_{\e^\alpha r})},\\
    \|e(AE_{\e,A}\phi)\|_{L^p(B_{2\e^\alpha\overline{r}})}&\le C_p\|e(A\phi)\|_{L^p(B_{2\e^\alpha\overline{r}}\setminus B_{\e^\alpha r})}.
    \end{align*}

    $(ii).$ \textit{Extending from $\Omega_\e$ to $\Omega$.} Let $A=(A_i)_{i\in\indexI}\subset\R^{3\times3}$ be a set of invertible matrices and set $\overline{A}:=\max_{i}|A_i|$ and $\underline{A}:=\max_{i}|A_i^{-1}|.$
    This implies $|A_i||A_i^{-1}|\le |\overline A||\underline A|$ for all $i\in\indexI$.
    Using the result from $(i)$, we can then construct a linear extension operator $\mathcal{E}_A\colon W^{1,p}(\Omega_\e)\to W^{1,p}(\Omega)$ by using the matrix $A_i$ for the $i$-th hole and the operator $E_{A_i}\colon W^{1,p}(B_{2\e^\alpha\overline{r}}(z_{\e,i})\setminus B_{\e^\alpha r_{\e,i,0}}(z_{\e,i}))\to W^{1,p}(B_{2\e^\alpha\overline{r}}(z_{\e,i}))$ via scaling with $\e^\alpha$ and shift to the center point $z_{\e,i}$.
    Gluing these together works because the holes are well-separated on the order of $\e$ as long as $\e$ is small enough such that $2\overline{r}<\e^{1-\alpha}$.
    We have
    \[
    \|\mathcal{E}_A\phi\|_{L^p(\Omega)}^p=\|\phi\|_{L^p(\Omega\setminus\bigcup_iB_{2\e^\alpha\overline{r}}(z_{\e,i}))}^p
    +\sum_{i}\|E_{A_i}\phi\|_{L^p(B_{2\e^\alpha\overline{r}}(z_{\e,i})}^p
    \]
    and use the estimates from step $(i)$ to conclude
    \begin{align*}
    \|\mathcal{E}_A\phi\|_{L^p(\Omega)}^p&\lesssim_p(\overline{A}\underline{A})^p\left(\|\phi\|_{L^p(\Omega_\e)}^p+\e^{\alpha p}\sum_{i}\|\nabla\phi\|^p_{L^{p}(B_{2\e^\alpha\overline{r}}\setminus B_{\e^\alpha})}\right),\\
    \|\nabla(\mathcal{E}_A\phi)\|^p_{L^p(\Omega))}&\lesssim_p(\overline{A}\underline{A})^p\|\nabla \phi\|^p_{L^p(\Omega_\e)},\\
    \|e_{\e,A}(\mathcal{E}_A\phi)\|^p_{L^p(\Omega))}&\lesssim_p\|e(\phi)\|^p_{L^2(\Omega_\e\setminus\bigcup B_{2\e^\alpha\overline{r}})}+\sum_i\|e(A_i\phi)\|^p_{L^p(B_{2\e^\alpha\overline{r}}(z_{\e,i})\setminus B_{\e^\alpha}(z_{\e,i}))}.
    \end{align*}
\end{proof}

\begin{lemma}[Korn's inequality for transformed symmetric gradients]
\label{lemma:Korn}
Let $\lambda\ge\overline r$ and $r\in\mathcal R_\e(S)$ with $r_i(t)\in[\lambda^{-1},\lambda]$ for all $t\in S$.
    Then,
    \[
    \| u\|_{H^1(\Omega_\e)}\lesssim_{\lambda}\|e_{\e}(u)\|_{L^2(\Omega_\e)}
    \]
    for all $u\in H^1_{\Sigma_2\cup\Gamma_\e}(\Omega_\e)^3$.
\end{lemma}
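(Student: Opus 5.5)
We prove the Korn inequality by a standard reduction to the untransformed Korn inequality on $\Omega$, via the extension operators of \cref{lemma:extension_operators} and the pointwise structure of $F_\e^{-1}$ established in \cref{lemma:transform_estimates}.

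The first step is to pass from the transformed symmetric gradient to an ordinary one. Let $u\in H^1_{\Sigma_2\cup\Gamma_\e}(\Omega_\e)^3$ and set $\check u=(\Psi_t)_*u$ on $\Omega_\e(t)$. By the chain rule, $e_\e(u)=e(\check u)\circ\Psi_\e$. The Jacobian $J_\e$ is bounded above and below by constants depending only on $\lambda$ (\cref{lemma:transform_estimates}). Together with \cref{lemma:pushforward_estimates}, this gives
\[
\|e(\check u)\|_{L^2(\Omega_\e(t))}\lesssim_\lambda\|e_\e(u)\|_{L^2(\Omega_\e)}
\]
and $\|u\|_{H^1(\Omega_\e)}\lesssim_\lambda\|\check u\|_{H^1(\Omega_\e(t))}$. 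It therefore suffices to prove the untransformed estimate $\|\check u\|_{H^1(\Omega_\e(t))}\lesssim_\lambda\|e(\check u)\|_{L^2(\Omega_\e(t))}$ on the perforated domain $\Omega_\e(t)$ for functions vanishing on $\Sigma_2\cup\Gamma_\e(t)$.

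The second step is to extend into the holes. Since $\check u=0$ on each $\Gamma_{\e,i}(t)$, extending $\check u$ by zero inside $\bigcup_i B_{\e^\alpha r_i(t)}(z_{\e,i})$ gives a function $E\check u\in H^1_{\Sigma_2}(\Omega)^3$. Its symmetric gradient is just $e(\check u)$ extended by zero, so
\[
\|e(E\check u)\|_{L^2(\Omega)}=\|e(\check u)\|_{L^2(\Omega_\e(t))}.
\]
This avoids even needing \cref{lemma:extension_operators}, although its third estimate (with $A_i=\ID$) could be used instead if one prefers not to rely on the zero trace. The classical Korn inequality on the fixed Lipschitz domain $\Omega$ for fields vanishing on $\Sigma_2$ (a set of positive surface measure) then yields
\[
\|E\check u\|_{H^1(\Omega)}\le C_\Omega\|e(E\check u)\|_{L^2(\Omega)}.
\]
The constant $C_\Omega$ is independent of $\e$ and $t$. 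Restricting back to $\Omega_\e(t)$ and combining with the first step gives the claim, with a constant depending only on $\lambda$ and $\Omega$.

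The main point of care is the first step. We must check that the implicit constants in the transformation estimates depend only on $\lambda$, and in particular not on $\e$ or $M$. This holds because $F_\e$ and $F_\e^{-1}$ have eigenvalues $1+a_{\e,i}$ and $1+b_{\e,i}$, which are bounded in $[\lambda^{-2},\lambda^2]$ and $[(2\lambda^2-1)^{-1},3]$ respectively. One must also confirm that the definition of $e_\e$ in \eqref{symmetric_gradient_reference} matches the chain-rule identity up to the transpose convention $F_\e=\nabla\Psi_\e^T$. If the convention is off by a transpose, $e_\e(u)$ differs from $e(\check u)\circ\Psi_\e$ by a term bounded by $|F_\e^{-1}-{\rm I}_3||\nabla u|$. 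That would not be small for large $\lambda$, so the convention has to be fixed so that the identity holds exactly.
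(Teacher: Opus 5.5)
Your proof is correct, and it is a different and more elementary route than the paper's. The paper stays in the reference configuration. It freezes $A_i(t)=F_\e^{-1}(t,q_{\e,i})$ at points $q_{\e,i}\in\Gamma_{\e,i}$, extends $u$ into the holes with the matrix-adapted operators $\mathcal E_{\e,A(t)}$ of \cref{lemma:extension_operators}, and then follows Gahn's extension-based argument, approximating $F_\e^{-1}$ by these frozen matrices. You instead use the exact identity $e_\e(u)=e(\check u)\circ\Psi_\e$ to move to $\Omega_\e(t)$, where the zero trace on $\Gamma_\e(t)$ allows extension by zero. Everything then reduces to the classical Korn inequality on the fixed domain $\Omega$ with a Dirichlet condition on $\Sigma_2$, and the only $\lambda$-dependence comes from the bounds on $J_\e^{\pm1}$ and $F_\e$. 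This makes the $\e$-independence of the constant transparent. It also means you never have to control the error between $F_\e^{-1}$ and the frozen matrices $A_i$, which is not small here in general: $F_\e^{-1}={\rm I}_3$ on the outer sphere of $\mathcal U^\lambda_{\e,i}$, while $F_\e^{-1}=\frac{r_{\e,i,0}}{r_i(t)}P_{\e,i}+\nu_{\e,i}\otimes\nu_{\e,i}$ on $\Gamma_{\e,i}$ (direction-dependent, hence not even constant there), an oscillation of order $|r_i(t)-r_{\e,i,0}|$. The paper's machinery would in principle also cover fields that do not vanish on the holes, where extension by zero is unavailable, but that generality is not needed for $H^1_{\Sigma_2\cup\Gamma_\e}(\Omega_\e)^3$. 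Your worry about the transpose convention is harmless. With $F_\e=\nabla\Psi_\e^T=D\Psi_\e$ the chain rule gives $\nabla\check u\circ\Psi_\e=F_\e^{-T}\nabla u$ exactly. This is the same identity the paper relies on when it writes $b_\e(t;q,v)=\int_{\Omega_\e(t)}\check q\dive\check v\di{x}$.
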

\begin{proof}
    Such generalized Korn inequalities have been shown for related but different setups in \cite[Lemma 3.6]{wiedemann_homogenisation_2024} (periodically distributed balls of size $\e$ with functions vanishing only on the internal interface) and in \cite[Theorem 3.1]{gahn_extension_2024} (periodically distributed balls of size $\e$ with functions vanishing on the exterior surface).

    In our case, we have balls with sizes of order $\e^\alpha$ which may be non-periodically distributed and functions vanishing on (part of) the exterior surface as well as the interior interfaces.
    We note that on $\Omega_\e\setminus\mathcal U_\e(t)$, it holds $e_\e(u)=e(u)$.
    We use the same approach as in \cite[Theorem 3.1]{gahn_extension_2024} where extension operators like in \cref{lemma:extension_operators} are used to proof this generalized Korn inequality.

    To that end, let $(q_{\e,i})_{i\in\indexI}$ be a collection of points with $q_{\e,i}\in \Gamma_{\e,i}$ for all $i\in\indexI$.
    We set $A_i(t):=F_{\e}^{-1}(t,q_{\e,i})$ for $t\in S$ and note that (\cref{lemma:transform_estimates}).
    \[
    |A_i(t)||A_i^{-1}(t)|\lesssim_{\lambda}1
    \]
    We can now apply \cref{lemma:extension_operators} to extend to $\Omega$ via $\mathcal{E}_{\e,A(t)}$.
    Using the same approach of approximating $F_{\e}$ by $A$ as outlined in the proof of \cite[Theorem 4.4]{gahn_extension_2024}, we get the desired result.
\end{proof}

\begin{lemma}\label{lemma:times_derivative}
    For any $r\in\mathcal R_\e(S)$ and every $\phi\in W(S;\Omega_\e(t))$, it holds
    \[
    2\int_0^T\langle\partial_t\phi,\phi\rangle_{H^1(\Omega_\e(t))^*}
    =\|\phi(T)\|^2_{L^2(\Omega_\e(T))}-\|\phi(0)\|^2_{L^2(\Omega_\e)}+\e^\alpha\int_0^T\int_{\Gamma_\e(t)}\partial_tr |\phi(t)|^2\di{\gamma}\di{t}.
    \]
    Similarly,
    \begin{multline}
        2\int_0^T\langle\partial_t\phi^+,\phi\rangle_{H^1(\Omega_\e(t))^*}
        =
    2\int_0^T\langle\partial_t\phi,\phi^+\rangle_{H^1(\Omega_\e(t))^*}\\
    =\|\phi^+(T)\|^2_{L^2(\Omega_\e(T))}-\|\phi^+(0)\|^2_{L^2(\Omega_\e)}+\e^\alpha\int_0^T\int_{\Gamma_\e(t)}\partial_tr |\phi^+(t)|^2\di{\gamma}\di{t}.
    \end{multline}
    Here, $\phi^+=\max(\phi,0)$ denotes the positive part of $\phi$.
\end{lemma}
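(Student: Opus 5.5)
The plan is to pull the problem back to the fixed reference domain $\Omega_\e=\Omega_\e(0)$ via $\Psi_\e(t)$ and prove the identity there. The moving-domain statement then follows by pushing forward. For $\phi\in W(S;\Omega_\e(t))$ set $\hat\phi=\Psi_t^*\phi\in W(S;\Omega_\e)$; by \cref{lemma:pushforward_estimates} this is an isomorphism. The duality pairing is understood through the induced dual pullback. Concretely, for smooth $\phi$,
\[
\langle\partial_t\phi,\psi\rangle_{H^1(\Omega_\e(t))^*}=\int_{\Omega_\e}J_\e\bigl(\partial_t\hat\phi-\nabla\hat\phi\cdot F_\e^{-1}\partial_t\Psi_\e\bigr)\hat\psi\di{x},
\]
that is, the material derivative minus the convective correction, in the sense of \cite{alphonse_abstract_2015}.

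First I will take $\phi$ smooth in time and space and apply Reynolds' transport theorem to $t\mapsto\int_{\Omega_\e(t)}|\phi|^2$. The boundary velocity of $\Gamma_{\e,i}(t)$ is $\e^\alpha\partial_t r_i\,\nu_{\e,i}$, with $\nu$ pointing into the ball's exterior, i.e.\ into the fluid. The outer boundary $\partial\Omega$ does not move. Using the outward normal of $\Omega_\e(t)$, which is $-\nu_{\e,i}$ on $\Gamma_{\e,i}$, gives
\[
\ddt\|\phi\|^2_{L^2(\Omega_\e(t))}=2\int_{\Omega_\e(t)}\partial_t\phi\,\phi\di{x}-\e^\alpha\sum_i\partial_t r_i\int_{\Gamma_{\e,i}(t)}|\phi|^2\di\gamma .
\]
Integrating over $(0,T)$ and rearranging yields the identity with the sign as stated. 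The same computation in reference coordinates uses $\partial_tJ_\e=J_\e\operatorname{tr}(F_\e^{-1}\partial_t F_\e)$ and $\partial_t\Psi_\e\cdot\nu=\e^\alpha\partial_t r_i$ on $\Gamma_{\e,i}$. It also serves as a consistency check that the pulled-back pairing is the correct one.

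Second comes density. Smooth functions are dense in $W(S;\Omega_\e)$ by the classical Lions--Magenes argument, and this density transfers to the moving setting via $\Psi_\e\in W^{1,\infty}(S;C^\infty)$, using the uniform bounds of \cref{lemma:transform_estimates}. Both sides of the identity are continuous in the $W$-norm. For the left side and the $L^2$ traces at $t=0,T$ this rests on the embedding $W\hookrightarrow C(\overline S;L^2)$. For the boundary term, $\partial_t r\in L^\infty$ and the trace $H^1(\Omega_\e(t))\to L^2(\Gamma_\e(t))$ is bounded (for fixed $\e$, \cref{lemma:trace_estimates}), so it is controlled by $\|\phi\|_{L^2(S;H^1)}^2$. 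Hence the identity extends to all $\phi\in W$.

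Third, the positive-part version. The main obstacle is justifying $\langle\partial_t\phi,\phi^+\rangle$ together with the chain rule, since $\phi^+\notin W$ in general. I would argue by regularizing the kink: take convex $C^{1,1}$ functions $\beta_k(s)\to\frac12(s^+)^2$ with $\beta_k'\to s^+$. For smooth $\phi$, Reynolds' theorem gives
\[
\ddt\int_{\Omega_\e(t)}\beta_k(\phi)=\int_{\Omega_\e(t)}\beta_k'(\phi)\partial_t\phi-\e^\alpha\sum_i\partial_t r_i\int_{\Gamma_{\e,i}}\beta_k(\phi).
\]
Since $\beta_k'(\phi)\in L^2(H^1)$, density extends this to $\phi\in W$. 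Letting $k\to\infty$ by dominated convergence, with $\beta_k'(\phi)\to\phi^+$ in $L^2(H^1)$ and traces converging, yields the stated formula with $\langle\partial_t\phi,\phi^+\rangle$. This is the standard Stampacchia argument carried out in reference coordinates. The remaining equality, $\langle\partial_t\phi^+,\phi\rangle=\langle\partial_t\phi,\phi^+\rangle$, I would interpret as the symmetric identity $\phi\phi^+=|\phi^+|^2$. It follows from the same regularization, because $\partial_t\phi^+=\1_{\{\phi>0\}}\partial_t\phi$ wherever this makes sense.
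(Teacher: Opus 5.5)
Your proof is correct in substance, but it takes a different route from the paper.

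The paper never computes on the moving domain. It pulls back to $\Omega_\e$ and proves the weighted identity $2\langle\partial_t(J_\e\hat\phi),\hat\phi^+\rangle=\ddt(J_\e\hat\phi^+,\hat\phi^+)_{L^2(\Omega_\e)}+(\partial_tJ_\e\hat\phi^+,\hat\phi^+)_{L^2(\Omega_\e)}$. It does this first by direct calculation when $\partial_t\hat\phi\in L^2$, and then by the density argument of Wachsmuth's Lemma~3.3. Only afterwards does it transform back. The boundary term $\e^\alpha\partial_t r|\phi|^2$ then arises, implicitly, from integrating the convective term $\partial_t\Psi_\e\cdot\nabla$ by parts.

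You instead apply Reynolds' theorem on $\Omega_\e(t)$, so the boundary term comes directly from the normal velocity $\e^\alpha\partial_t r_i$ of $\Gamma_{\e,i}(t)$. Your sign bookkeeping with the outward normal $-\nu_{\e,i}$ is right. You reach the positive part by a Stampacchia-type regularization rather than by Wachsmuth's lemma. Your version makes the geometric origin of the boundary term transparent. The paper's version keeps all the functional analysis on a fixed domain, where it is classical: density of smooth functions, $W\hookrightarrow C(\overline S;L^2)$, and the positive-part lemma.

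Two remarks on your route:
\begin{itemize}
\item Pushforwards of smooth reference functions are only Lipschitz in time, because $r\in W^{1,\infty}(S)$. The Reynolds step for your approximants is therefore really justified by the pulled-back computation you list as a consistency check. At that point your argument and the paper's coincide.
\item $\frac12(s^+)^2$ is already $C^{1,1}$, so the regularization only gains something if $\beta_k\in C^2$ with $\sup_k\|\beta_k''\|_\infty<\infty$. Alternatively, take $\beta=\frac12(s^+)^2$ directly. Since $\partial_t\phi_n\to\partial_t\phi$ strongly, the weak convergence $\beta'(\phi_n)\rightharpoonup\phi^+$ in $L^2(S;H^1)$ already suffices.
\end{itemize}

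The one step that does not work as written is the first equality $\int_0^T\langle\partial_t\phi^+,\phi\rangle=\int_0^T\langle\partial_t\phi,\phi^+\rangle$. You correctly note that $\phi^+\notin W$ in general, but then appeal to ``$\partial_t\phi^+=\mathbf 1_{\{\phi>0\}}\partial_t\phi$''. By Wachsmuth's counterexample \cite{wachsmuth_regularity_2016}, $\partial_t\phi^+$ need not even lie in $L^1(S;H^1(\Omega_\e(t))^*)$. Moreover, an indicator function is not a multiplier on $H^1$. So this product has no meaning in the dual space, and the left-hand pairing is undefined in general.

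The left-hand side has to be \emph{defined} through the integration-by-parts formula $\int_0^T(\langle\partial_t u,v\rangle+\langle\partial_t v,u\rangle)\di{t}=(u(T),v(T))-(u(0),v(0))+\e^\alpha\int_0^T\int_{\Gamma_\e(t)}\partial_t r\,uv\di{\gamma}\di{t}$. Combined with $\phi\phi^+=|\phi^+|^2$ and your second identity, this returns the same value. The paper's one-line justification (``since $\phi=\phi^+$ on $\{\phi>0\}$'') has the same formal status. This is therefore a shared caveat, not a defect of your approach relative to the paper.
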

\begin{proof}
    The first identity can easily be seen by transforming back to the initial configuration via the diffeomorphism defined in \cref{eq:diffeomorphism_psieps}.

    For the second identity, the idea is the same.
    Let $J\in W^{1,\infty}(S;L^2(\Omega_\e))$ such that $\partial_t(J\hat\phi)\in L^2(S;H^1(\Omega_\e))^*$.
    We first show that
    \[
    2\langle\partial_t(J\hat\phi),\hat\phi^+\rangle_{H^1(\Omega_\e)}=\ddt (J\hat\phi^+,\hat\phi^+)_{L^2(\Omega_\e)}+(\partial_tJ\hat\phi^+,\hat\phi^+)_{L^2(\Omega_\e)}
    \]
    holds for almost all $t\in S$.
    For $J$ constant (in space and time), this is exactly \cite[Lemma 3.3]{wachsmuth_regularity_2016} but the proof can easily be extended to the given scenario:
    First, let us assume that $\partial_t\hat\phi,\partial_t(J\hat\phi)\in L^2(S\times\Omega_\e)$. 
    In this case, we can directly calculate
    \begin{align*}
    2\langle\partial_t(J\hat\phi),\phi^+\rangle_{H^1(\Omega_\e)}
    &=2(J\partial_t\hat\phi^+,\hat\phi^+)_{L^2(\Omega_\e)}
    +2(\partial_tJ \hat\phi^+,\hat\phi^+)_{L^2(\Omega_\e)}\\
    &=\ddt(J\hat\phi^+,\hat\phi^+)_{L^2(\Omega_\e)}-(\partial_tJ \hat\phi^+,\hat\phi^+)_{L^2(\Omega_\e)}+2(\partial_tJ \hat\phi^+,\hat\phi^+)_{L^2(\Omega_\e)}.
    \end{align*}
    For $\partial_t(J\hat\phi)\in L^2(S;H^1(\Omega))^*$ this follows via density as in the proof in \cite[Lemma 3.3]{wachsmuth_regularity_2016}. The statement follows after transforming back to the moving domain and integrating in time.
    Finally,
    \[
    \int_0^T\langle\partial_t\phi^+,\phi\rangle_{H^1(\Omega_\e(t))^*}
        =
    \int_0^T\langle\partial_t\phi,\phi^+\rangle_{H^1(\Omega_\e(t))^*}
    \]
    since $\phi=\phi^+$ on $\{\phi>0\}$.
\end{proof}


\section{Proof of Lemma \ref{lem:M_eps}} \label{app:Delta.w}

We recall the definition of $w$ from \eqref{w_k}--\eqref{D_i} and  observe that  $-\Delta w^\e + \nabla q^\e$ is supported on $\bigcup_i \partial C_i^\e \cup \partial D_i^\e = \bigcup_i \partial D_i^\e \cup \Gamma_\e$. 
We define $\gamma_\e$ to be the part supported on $\Gamma_\e$ which consequently satisfies $\langle \gamma_\e, v \rangle = 0$ for all $v \in H^1_{\Gamma_\e}(\Omega_\e)$.
Then \eqref{decomposition.M.gamma} holds with $M_k^\e$, the columns of $M^\e$, being
    \begin{align} \label{M_eps^k}
        M^\e_k  = \e^{3-\alpha}\sum_i \left( m^\e_{k,i} + \dv(\1_{D_i^\e} (q_k^\e {\rm I}_3 - \nabla w_k^\e))\right) 
    \end{align}
    where 
    \begin{align} \label{m_k^eps}
         m^\e_{k,i}= \frac1{\e^{\alpha}r_{\e,i}} (q_k{\rm I}_3 - \nabla w_k)\left( \frac{x-z_{\e,i}}{\e^{\alpha} r_{\e,i}}\right) n |\partial B_{\dmin \e/4}| \delta_{\partial B_{\dmin \e/4}(z_{\e,i})}, && \delta_{\partial B_{\dmin \e/4}(z_{\e,i})} = \frac{\mathcal H^2|_{\partial B_{\dmin \e/4}(z_{\e,i})}}{|\partial B_{\dmin \e/4}(z_{\e,i})|},
    \end{align}
    where $n$ is the unit normal on $\partial B_{\dmin \e/4}(z_{\e,i})$.
    By \cite[Lemma 2.3.5]{allaire_homogenization_1991} (which follows from the fact that $w_k,q_k$ asymptotically behave as the fundamental solution of the Stokes equations) and Stokes law, i.e.,
    \begin{align*}
        \int_{\partial B_1(0)} (p {\rm I}_3-\nabla w_k) \cdot n \dd \mathcal H^2 = 6 \pi e_k,
    \end{align*}
    we have 
    \begin{align*}
        m^\e_{k,i} =  3 \pi \e^\alpha r_{\e,i} \left( e_k+ 3  n_k n +  \e^{\alpha -1} R^\e_{k,i}\right) \delta_{\partial B_{\dmin \e/4}(z_{\e,i})}, &&
        \|R^\e_{k,i}\|_{L^{\infty}(\partial B_{\dmin \e/4})} \lesssim 1.
     \end{align*}
     To conclude the proof, it suffices to show that for all  $\psi \in H^1(\R^3)$
     \begin{align}
        \langle \mathcal R_k - 6 \pi \e^3 \sum_i  r_{\e,i} e_k \delta_{B_{\dmin \e/4}(z_{\e,i})},\psi \rangle &\lesssim  \|f_\e^{app}(0) - f_0\|_{(H^1_xW_\sigma^{1,\infty})^\ast} \|\psi\|_{H^1(\R^3)} \nonumber \\
        & \qquad \qquad \qquad \qquad \qquad + (\sqrt{\eta} + \e) \|\psi\|_{L^2(\R^3)},    \label{est:M.main} \\
        \Bigl\| 3 \pi \e^3 \sum_i r_{\e,i} \Bigl(2 e_k \delta_{B_{\dmin \e/4}(z_{\e,i})} - &\left( e_k + 3  n_k n\right)\delta_{\partial B_{\dmin \e/4}(z_{\e,i})} \Bigr) \Bigr\|_{H^{-1}(\R^3)}  \lesssim  \e,  \label{est:M.remainder.0}\\
          \e^{3-\alpha} \Bigl\| \sum_i \dv(\1_{D_i^\e} (q_k^\e {\rm I}_3 - \nabla w_k^\e)) \Bigr\|_{H^{-1}(\R^3)}  &\lesssim  \e \label{est:M.remainder.2}, \\
         \Bigl\langle \e^{3} \sum_i R^\e_{k,i} \delta_{\partial B_{\dmin \e/4}(z_{\e,i})} , \psi \Bigr\rangle  &\lesssim \|\psi\|_{L^2(\R^3)} +  \e \|\nabla  \psi\|_{L^2(\R^3)}.  \label{est:M.remainder.1} 
     \end{align}
     For \eqref{est:M.main}, we observe that
     \begin{align*}
         \langle \mathcal R_k - 6 \pi \e^3 \sum_i  r_{\e,i} e_k \delta_{B_{\dmin \e/4}(z_{\e,i})}, \psi \rangle = \int_{\Omega \times (0,\infty)} 6 \pi \psi_k  \sigma \dd (f - f_\e^{app}), 
     \end{align*}
     where $f_\e^{app}$ is defined as in \eqref{f^app}.
     In particular, we can apply Lemma \ref{f^app,f}  (using that all relevant radii are upper bounded) such that we can replace $\sigma$ by $\min\{\sigma,\sigma_{
     \max}\}$ together with Remark \ref{rem:improved.f.f^app}  to deduce \eqref{est:M.main}.

     To prove \eqref{est:M.remainder.0}, we  observe
     \begin{align*}
         \fint_{\partial B_{\dmin \e/4}}  \left( e_k + 3  n_k n\right) \dd x = 2 e_k.
     \end{align*}
    Hence, the Poincar\'e type inequality \eqref{Poincare.averages.0} implies
     \begin{align*}
        & \Bigl| \fint_{B_{\dmin \e/4}(x_{\e,i})} 2 \psi_k \dd x-   \fint_{\partial B_{\dmin \e/4}(z_{\e,i})} \psi \cdot \left( e_k + 3  n_k n\right) \dd x\Bigr |  \\
         & =   \Bigl| \fint_{\partial B_{\dmin \e/4}(z_{\e,i})} \left(\psi -\fint_{B_{\dmin \e/4}(x_{\e,i})} \psi \right)  \cdot \left( e_k + 3  n_k n \right) \dd x \Bigr| \\
         &\lesssim  \e^{- \frac 1 2}  \|\nabla \psi\|_{L^2( B_{\dmin \e/4}(z_{\e,i}))}.
     \end{align*}
     Summing over $i$ and using that $r_{\e,i}$ is uniformly bounded yields \eqref{est:M.remainder.0}.
     
     \smallskip
     
          We turn to \eqref{est:M.remainder.2}. We use the pointwise estimates (see  \cite[Eq. (28)]{HoeferHuebnerRosenau26}) \begin{align*}
              |q^\e(x)| + |\nabla w^\e(x)| (x) \lesssim  \frac{\e^\alpha}{|x - z_{\e,i}|^2} \e   \qquad \text{in }   D_i^\e
          \end{align*} to bound
     \begin{align*}
          \e^{6-2\alpha} \Bigl\| \sum_i \dv(\1_{D_i^\e} (q_k^\e {\rm I}_3 - \nabla w_k^\e)) \Bigr\|^2_{H^{-1}(\R^3)} &\leq \e^{6-2\alpha}   \sum_i \|q_k^\e {\rm I}_3 - \nabla w_k^\e\|^2_{L^2(D_i^\e)}  \\
         & \lesssim \e^{6-2\alpha}     \e^{2\alpha -4}\\
         &\lesssim \e^2 .
     \end{align*}

     It remains to show \eqref{est:M.remainder.1}. Using again \eqref{Poincare.averages}
     we have
     \begin{align*}
         \Bigl| \fint_{\partial B_{\dmin \e/4}(z_{\e,i})} \psi \dd x \Bigr|& \lesssim   \fint_{\partial B_{\dmin \e/4}(z_{\e,i})} \Bigl|v  - \fint_{ B_{\dmin \e/4}(z_{\e,i})} v \dd y \Bigr| \dd x + \fint_{ B_{\dmin \e/4}(z_{\e,i})} |\psi|   \\
         &\lesssim  \e^{-\frac 1 2}   \|\nabla \psi\|_{L^2( B_{\dmin \e/4}(z_{\e,i}))} + \e^{-3/2} \|\psi\|_{L^2( B_{\dmin \e/4}(z_{\e,i}))}.
     \end{align*}
     Thus, 
     \begin{align*}
         \Bigl\langle  \e^{3} \sum_i R^\e_{k,i} \delta_{\partial B_{\dmin \e/4}(z_{\e,i})}, \psi \Bigr\rangle &\lesssim   \e^{3 } \sum_i \e^{-\frac 1 2}   \|\nabla \psi\|_{L^2( B_{\dmin \e/4}(z_{\e,i}))} + \e^{-3/2} \|\psi\|_{L^2( B_{\dmin \e/4}(z_{\e,i}))}  \\
         & \lesssim  \e \| \nabla \psi\|_{L^2(\R^3)} + \|\psi\|_{L^2(\R^3)}.
     \end{align*}
     This finishes the proof.

\bibliographystyle{abbrv}
\bibliography{references}

\end{document}